\title{A derived category analogue of the Nakai--Moishezon criterion}
\renewcommand\footnotemark{}
\thanks{2020 {\em Mathematics Subject Classification.}  14F08 (primary) 14A30 (secondary)}\thanks{{\em Key words and phrases.} Derived categories of algebraic varieties, Bondal--Orlov Reconstruction Theorem, positivity of line bundles, higher-dimensional algebraic geometry }
\thanks{The second author is partially supported by the National Science Foundation under Award No. 2402087.}
\author{Daigo Ito and Noah Olander}
\date{}
\newcommand{\address}[1]{\gdef\@address{#1}}
\newcommand{\email}[1]{\gdef\@email{\url{#1}}}
\newcommand{\website}[1]{\gdef\@website{\url{#1}}}
\newcommand{\@endstuff}{\par\vspace{\baselineskip}\noindent\small
\begin{tabular}{@{}l}\scshape{Daigo Ito} \\ \scshape\@address\\\textrm{E-mail address:} \@email \\\textrm{Website:} \@website\end{tabular}\\
\begin{tabular}{@{}l}\scshape{Noah Olander} \\ \scshape\@address\\\textrm{E-mail address:} \url{nolander@berkeley.edu} \\\textrm{Website:} \url{https://noaholander.github.io/}\end{tabular}}
\address{Department of Mathematics, University of California, Berkeley, Evans Hall, CA 94720-3840}
\email{daigoi@berkeley.edu}
\website{https://daigoi.github.io/}
\DeclareMathOperator{\pic}{Pic}
\newcommand {\bb}{\mathbb}
\newcommand{\ecal}{\mathscr}
\renewcommand {\epsilon}{\varepsilon}
\newcommand {\bra}[1]{\langle{#1}\rangle}
\renewcommand {\l}{\left}
\renewcommand {\r}{\right}
\newcommand*{\DashedArrow}[1][]{\mathbin{\tikz [baseline=-0.25ex,-latex, dashed,#1] \draw [#1] (0pt,0.5ex) -- (1.3em,0.5ex);}}
\newcommand {\ratmap}{\DashedArrow[->,densely dashed    ]} 
\newcommand {\emp}{\emptyset}
\newcommand {\inj}{\hookrightarrow}
\newcommand {\inv}{^{-1}}
\newcommand {\surj}{\twoheadrightarrow}
\newcommand {\tens}{\otimes}
\newsavebox{\pullbacks}
\sbox\pullbacks{%
\begin{tikzpicture}%
\draw (0,0) -- (1ex,0ex);%
\draw (1ex,0ex) -- (1ex,1ex);%
\end{tikzpicture}}
\let \choose \relax
\newcommand{\choose}[2]{\genfrac{(}{)}{0pt}{}{#1}{#2}}
\let \sf \relax 
\newcommand{\sf}{\mathsf}
\theoremstyle{plain}
\newtheorem{theorems}{Theorem}[section] 
\newtheorem{claims}{Claim}[theorems]
\newtheorem{conjectures}[theorems]{Conjecture}
\newtheorem{corollaries}[theorems]{Corollary}
\newtheorem{lemmas}[theorems]{Lemma} 
\newtheorem{props}[theorems]{Proposition}
\newtheorem{penmdef}[claims]{Definition}
\theoremstyle{definition}
\newtheorem{constructions}[theorems]{Construction}
\newtheorem{definitions}[theorems]{Definition} 
\newtheorem{axioms}[theorems]{Axiom} 
\newtheorem{examples}[theorems]{Example}     
\newtheorem{notations}[theorems]{Notation}         
\newtheorem{question}[theorems]{Question}
\newtheorem{obss}[theorems]{Observation}
\newtheorem{penmlem}[claims]{Lemma}
\newtheorem{penmthm}[claims]{Theorem}
\newtheorem{penmcor}[claims]{Corollary}
\newtheorem{penmeg}[claims]{Example} 
\newtheorem{inclaims}[claims]{Claim}
\theoremstyle{remark}
\newtheorem{remarks}[theorems]{Remark}
\newtheorem{penmrem}[claims]{Remark}
\newtheoremstyle{indented}
  {1pt}
  {1pt}
  {\addtolength{\@totalleftmargin}{1.5em}
   \addtolength{\linewidth}{-1.5em}
   \parshape 1 1.5em \linewidth}
  {}
  {\bfseries}
  {.}
  {.5em}
  {}
\theoremstyle{indented}
\newtheorem{pinddef}[claims]{Definition} 
\newtheorem{pindlem}[claims]{Lemma}
\newtheorem{pindthm}[claims]{Theorem}
\newtheorem{pindcor}[claims]{Corollary}
\newtheorem{pindeg}[claims]{Example} 
\newtheorem{pindrem}[claims]{Remark} 
\newtheorem{pindq}[claims]{Question} 
\newtheorem{pindc}[claims]{Conjecture} 
\newtheorem{pindclaim}[claims]{Claim}
\newenvironment{theorem}
{
	\pushQED{\qed}\begin{theorems}}
	{\popQED\end{theorems}}
\newenvironment{prop}
{
	\pushQED{\qed}\begin{props}}
	{\popQED\end{props}}
\newenvironment{notation}
{
	\pushQED{\qed}\begin{notations}}
	{\popQED\end{notations}}
\newenvironment{conjecture}
{
	\pushQED{\qed}\begin{conjectures}}
	{\popQED\end{conjectures}}
\newenvironment{corollary}
{
	\pushQED{\qed}\begin{corollaries}}
	{\popQED\end{corollaries}}
\newenvironment{definition}
{
	\pushQED{\qed}\begin{definitions}}
	{\popQED\end{definitions}}
\newenvironment{lemma}
{
	\pushQED{\qed}\begin{lemmas}}
	{\popQED\end{lemmas}}
\newenvironment{remark}
{
	\pushQED{\qed}\begin{remarks}}
	{\popQED\end{remarks}}
\newenvironment{example}
{
	\pushQED{\qed}\begin{examples}}
	{\popQED\end{examples}}
\newenvironment{construction}
{
	\pushQED{\qed}\begin{constructions}}
	{\popQED\end{constructions}}
\DeclareMathOperator{\tensgen}{\tens\text{-}Gen}
\begin{document}
\maketitle

\begin{abstract}
    We give a complete characterization of the line bundles on a proper variety whose tensor powers generate the derived category, answering a 2010 question of Chris Brav. The condition is analogous to the Nakai--Moishezon criterion and  can be stated purely in terms of classical notions of positivity of line bundles. There is also a generalization which works for all Noetherian schemes. We use our criterion to prove basic properties of such line bundles and provide non-trivial examples of them. As an application, we give new examples of varieties which can be reconstructed from their derived categories in the sense of the Bondal--Orlov Reconstruction Theorem \cite{bondal_orlov_2001}.
\end{abstract}

\tableofcontents
\section{Introduction}

In modern algebraic geometry, a distinction is made between an embedded variety $X \subset \mathbb{P}^n$ and an abstract variety obtained by glueing affine varieties. 
The latter notion is useful because many varieties which occur in practice, for example as moduli spaces, come with no distinguished projective embedding, and sometimes no projective embedding at all. Given an abstract variety $X$, it is often important to determine (1) whether $X$ embeds into projective space, and (2) if so, how. The main tool for answering both of these questions is the theory of ample line bundles, those line bundles $\ecal{L}$ on a variety $X$ such that for some integer $n > 0$ the global sections of $\ecal{L}^{\otimes n}$ define a closed immersion $X \hookrightarrow \mathbb{P}^N$ of $X$ into projective space. 

The pullback of an ample line bundle under a birational morphism of proper varieties is almost never ample, and for this reason it is helpful in birational geometry to consider the larger class of big line bundles. A line bundle $\ecal{L}$ on a proper variety $X$ is \emph{big} if for some integer $n > 0$, the global sections of $\ecal{L}^{\otimes n}$ define a generically injective rational map to projective space $X \dashrightarrow \mathbb{P}^N$. 
In contrast to the class of ample line bundles, the class of big line bundles is stable under pullbacks along birational morphisms, and it is usually much easier to check bigness of a line bundle than ampleness.
Indeed, an exceptionally useful test for ampleness is the Nakai--Moishezon criterion, which in one formulation says a line bundle $\ecal{L}$ on a proper variety $X$ is ample if and only if the restriction of $\ecal{L}$ to every closed subvariety $Z \subset X$ (including $X$ itself) is big.

 Ampleness of line bundles plays a key role in many developments in the study of derived categories of algebraic varieties. One reason for this is that if $\ecal{L}$ or $\ecal{L}^{-1}$ is an ample line bundle on a variety $X$, then by \cite[Theorem 4]{Orlov_dimension}:

($\ast$) \emph{There is an integer $n > 0$ such that the quasi-coherent derived category of $X$ is compactly generated by the line bundles $\ecal{O}_X, \ecal{L}, \ecal{L}^{\otimes 2}, \cdots , \ecal{L}^{\otimes n}$.} 

\noindent Compact generators are used in many foundational results in the subject, including those on Grothendieck duality, K-theory of schemes, representing functors by Fourier--Mukai kernels, studying semi-orthogonal decompositions of varieties, and much more; see for example the papers  \cite{thomason2007higher}, \cite{Neeman1992}, \cite{Neeman-Grothendieck},  \cite{bondal2002generators}, \cite{orlov2003derived}, \cite{toen2007moduli}, \cite{ben2010integral}, \cite{kuznetsov2011base}. Another foundational result is the Bondal--Orlov Reconstruction Theorem \cite{bondal_orlov_2001}, which states that if $X$ is a smooth projective variety with canonical bundle $\omega_X$ and either $\omega_X$ or $\omega_X^{-1}$ is ample, then $X$ can be reconstructed from its category of perfect complexes. In recent works of the first named author and Matsui \cite{ito2024new}, \cite{ito2025polarizations}, it is pointed out that this holds more generally when $X$ is a Gorenstein proper variety and the condition ($\ast$) holds for $\ecal{L} = \omega_X$. 

It is therefore natural to ask which line bundles $\ecal{L}$ on a scheme $X$ satisfy condition ($\ast$). Indeed this is done in a 2010 Mathoverflow question of Chris Brav \cite{ChrisBrav}, and this is the question we consider in this paper. In 2017, the Mathoverflow user Libli observes that if $k$ is a field and $b : X \to \mathbb{P}^2_k$ is the blowup in a $k$-point with exceptional divisor $E$, then the line bundle $b^*\ecal{O}(1) \otimes \ecal{O}(E)$ also satisfies ($\ast$) \cite{Blowupofpointexample}. This is to our knowledge the first example of a line bundle $\ecal{L}$ on a proper variety $X$ such that neither $\ecal{L}$ nor $\ecal{L}^{-1}$ is ample but ($\ast$) holds. Following \cite{ito2025polarizations}, we shall say a line bundle $\ecal{L}$ on a quasi-compact and quasi-separated scheme $X$ is \emph{$\otimes$-generating} if ($\ast$) holds. See \ref{subsection-tensorample}.

In this paper, we give a complete answer to Brav's question for any Noetherian scheme $X$. When $X$ is a proper scheme over a field, our criterion has the pleasing feature that it can be stated in terms of the familiar notion of bigness. 

\begin{theorem}
\label{thm-mainproperversion}
    Let $X$ be a proper scheme over a field. Let $\ecal{L}$ be a line bundle on $X$. Then $\ecal{L}$ is $\otimes$-generating if and only if for every closed subvariety $Z \subset X$, either $\ecal{L}|_{Z}$ or $\ecal{L}^{-1}|_{Z}$ is big. 
\end{theorem}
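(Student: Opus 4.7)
I would first show that $\otimes$-amplitude restricts along closed immersions: if $\ecal{L}$ is $\otimes$-ample on $X$ and $i\colon Z \hookrightarrow X$ is a closed immersion, then $\ecal{L}|_Z$ is $\otimes$-ample on $Z$. This should come out of the fact that $i^*$ sends compact generators to compact generators (equivalently, $i_*$ is fully faithful and preserves compactness in this setting, so $\hom$-vanishing against the pulled-back generators forces vanishing against the original ones by adjunction). Granting this, it suffices to prove the key lemma: if $Y$ is an integral proper $k$-scheme and $\ecal{L}$ is $\otimes$-ample on $Y$, then $\ecal{L}$ or $\ecal{L}^{-1}$ is big on $Y$. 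I would prove this via the contrapositive. Assuming neither is big, so that the growth of $h^0(Y, \ecal{L}^{\otimes m})$ is strictly sub-maximal as $m \to \pm \infty$, the strategy is to exhibit, for each $n$, a nonzero $F \in D_{qc}(Y)$ with $R\Gamma(Y, F \otimes \ecal{L}^{\otimes -a}) = 0$ for all $a \in \{0, 1, \ldots, n\}$, which shows that $\ecal{O}_Y, \ecal{L}, \ldots, \ecal{L}^{\otimes n}$ does not compactly generate $D_{qc}(Y)$. A first candidate is $F = \ecal{L}^{\otimes M}$ for $M \gg n$; making all the twisted cohomologies vanish uses non-bigness of $\ecal{L}$ and $\ecal{L}^{-1}$ together with asymptotic vanishing / Iitaka-fibration input when higher cohomology groups obstruct the naive choice.

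\textbf{Reverse direction.} I argue by Noetherian induction on $X$. The inductive hypothesis yields $\otimes$-amplitude of $\ecal{L}|_Z$ on every proper closed subscheme $Z \subsetneq X$. For non-integral $X$, one glues across irreducible components and nilpotent thickenings via standard distinguished triangles relating $X$ to $X_{\mathrm{red}}$ and to its components. For integral $X$, the hypothesis at $Z = X$ gives, after swapping $\ecal{L} \leftrightarrow \ecal{L}^{-1}$ if needed, that $\ecal{L}$ is big, and Kodaira's lemma produces $N > 0$, an ample line bundle $A$, and an effective Cartier divisor $E \subsetneq X$ with $\ecal{L}^{\otimes N} \simeq A \otimes \ecal{O}_X(E)$. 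Orlov's theorem makes $A$ $\otimes$-ample, and $\ecal{L}|_E$ is $\otimes$-ample by induction. I would combine these via the closed-open triangle $\ecal{O}_X(-E) \to \ecal{O}_X \to i_* \ecal{O}_E$ and its $\ecal{L}^{\otimes k}$-twists: the gap between powers of $A$ and powers of $\ecal{L}$ becomes expressible in terms of objects supported on $E$, which by induction already lie in the thick subcategory generated by pushed-forward powers of $\ecal{L}|_E$, and hence (via the triangle) by powers of $\ecal{L}$ on $X$ itself.

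\textbf{Main obstacle.} The heart of the proof is the forward direction's key lemma. Producing an explicit nonzero witness $F$ against compact generation, when neither $\ecal{L}$ nor $\ecal{L}^{-1}$ is big, requires upgrading an asymptotic non-growth statement about $h^0(\ecal{L}^{\otimes m})$ into honest $R\Gamma$-vanishing across a whole window of twists, which is delicate on singular or non-projective $Y$. A plausible route is to reduce to the smooth projective case via an alteration (Chow's lemma plus de Jong) and pull the witness back, though the descent introduces additional care. On the reverse side, the subtlety is that Kodaira's lemma must be applied in a form suited to non-projective proper varieties, possibly by passing to a projective alteration and descending $\otimes$-amplitude.
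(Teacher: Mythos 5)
Your skeleton for the ``only if'' direction is right (restrict along closed immersions, reduce to: $\ecal{L}$ $\otimes$-ample on an integral proper $Y$ forces $\ecal{L}$ or $\ecal{L}^{-1}$ big), but the proposed proof of that key lemma does not work. Non-bigness of $\ecal{L}$ and $\ecal{L}^{-1}$ constrains only the growth of $h^0$; it says nothing about higher cohomology, and your candidate witness $F=\ecal{L}^{\otimes M}$ already fails in the most basic case $\ecal{L}=\ecal{O}_Y$ on a positive-dimensional $Y$ (neither $\ecal{O}_Y$ nor its inverse is big, yet $R\Gamma(Y,F\otimes\ecal{L}^{\otimes -a})=R\Gamma(Y,\ecal{O}_Y)\neq 0$ for every $a$, and there is no Iitaka fibration to invoke). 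The genuine difficulty, flagged in the paper's introduction, is that generation only guarantees a nonzero $\operatorname{Ext}^i$ in \emph{some} unknown degree $i$, so one cannot argue by producing an explicit $R\Gamma$-killed witness. The paper never constructs a witness: instead it proves a structural statement (Lemma \ref{lemma-pofl} and Proposition \ref{prop-existsasection}) that if all degree-zero Homs between distinct powers of $\ecal{L}$ vanish, then the objects filtered by those line bundles form a weak Serre subcategory of $\operatorname{QCoh}(X)$, hence every object of the thick subcategory they generate has locally free cohomology sheaves; since every coherent sheaf is a cohomology sheaf of a perfect complex, generation forces $X$ to be a point. From this one extracts a global section of some nonzero power of $\ecal{L}$, and Deligne's formula on the resulting open upgrades it to a section with affine non-vanishing locus. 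Your approach is missing this idea entirely, and it is the heart of the theorem.

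The ``if'' direction also has a concrete circularity. Writing $\ecal{L}^{\otimes N}\cong A\otimes\ecal{O}_X(E)$ and using the triangle $\ecal{O}_X(-kE)\to\ecal{O}_X\to\ecal{O}_{kE}$ expresses $A^{\otimes k}$ in terms of $\ecal{L}^{\otimes Nk}$ and $\ecal{L}^{\otimes Nk}\otimes\ecal{O}_{kE}$; but to reduce the latter to powers of $\ecal{L}|_{kE}$ and then back to $X$, the same triangle reintroduces $\ecal{L}^{\otimes m}(-kE)=\ecal{L}^{\otimes m-Nk}\otimes A^{\otimes k}$, a mixed power of $A$ and $\ecal{L}$ --- exactly what you were trying to eliminate. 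The repair is to work not with $E$ but with the divisor of a global section $s$ of $\ecal{L}^{\otimes N}$ itself (the product of a section of $A$ with affine non-vanishing locus and the tautological section of $\ecal{O}_X(E)$): then $X_s$ is quasi-affine, the ideal sheaf of $V(s)$ is the power $\ecal{L}^{\otimes -N}$, and the two-term complexes $P_r=(\ecal{L}^{\otimes -rN}\xrightarrow{s^r}\ecal{O}_X)$ are built purely from powers of $\ecal{L}$. This is what the paper does: Deligne's formula shows any $K$ with $\operatorname{Hom}(\ecal{L}^{\otimes m}[i],K)=0$ for all $m,i$ restricts to zero on $X_s$, and then $K$ is supported on $V(s)$ and is handled by the $P_r$ together with $\otimes$-amplitude on the thickenings $D_r$ supplied by Noetherian induction (Lemmas \ref{lemma-supportedcat} and \ref{lemma-main}). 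Your reduction of the non-integral case to components and thickenings also needs the nontrivial lifting statement of Lemma \ref{lemma-sufficestochekirredcomponents}, which cannot be dismissed as a standard triangle argument.
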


Here closed subvariety means integral closed subscheme, and it is necessary to include the irreducible components, i.e., $X$ itself if $X$ is a variety, see Example \ref{example-strictlynefnotample}. This bears a striking similarity to the Nakai--Moishezon criterion, which says that $\ecal{L}$ as above is ample if and only if for every closed subvariety $Z \subset X$, the restriction $\ecal{L}|_Z$ is big. 

The general criterion is as follows.

\begin{theorem}
\label{thm-mainintroversion}
    Let $X$ be a Noetherian scheme. Let $\ecal{L}$ be a line bundle on $X$. Then $\ecal{L}$ is $\otimes$-generating if and only if for every integral closed subscheme $Z \subset X$, there is an integer $n \in \mathbb{Z}$ and a global section $s \in \Gamma(Z, \ecal{L}^{\otimes n}|_{Z})$ such that the open subscheme $Z_s = \{s \neq 0\} \subset Z$ defined by $s$ is non-empty and affine. 
\end{theorem}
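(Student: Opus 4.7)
The plan is to prove the two implications separately. Necessity will be the easier direction, reducing to the case of an integral base via a descent lemma for $\otimes$-ampleness along closed immersions; sufficiency will be the main content, proved by Noetherian induction on $X$ combined with a derived-category gluing argument.

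For the necessity direction ($\Rightarrow$), I would first show that $\otimes$-ampleness descends along closed immersions: if $i : Z \hookrightarrow X$ is a closed immersion of Noetherian schemes and $\mathcal{O}_X, \ecal{L}, \ldots, \ecal{L}^{\otimes n}$ compactly generate $\D_{\qcoh}(X)$, then $\mathcal{O}_Z, \ecal{L}|_Z, \ldots, \ecal{L}^{\otimes n}|_Z$ compactly generate $\D_{\qcoh}(Z)$. This is a standard orthogonality argument using the adjunction $Li^* \dashv i_*$ and the fact that $i_*$ preserves small coproducts. Having reduced to $X = Z$ integral with $\ecal{L}$ $\otimes$-ample, it suffices to produce one nonempty affine open $X_s$ with $s \in \Gamma(X, \ecal{L}^{\otimes n})$, $n \in \mathbb{Z}$. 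For this I would appeal to a basic property of $\otimes$-ample line bundles (presumably established earlier in the paper) saying that the opens $X_s$ of this form cover $X$; by quasi-compactness and the Noetherian hypothesis at least one such $X_s$ is nonempty and affine.

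For the sufficiency direction ($\Leftarrow$), I would argue by Noetherian induction on $X$, the empty case being vacuous. One reduces to $X$ reduced since $\otimes$-ampleness of a line bundle depends only on $X_{\text{red}}$. Let $X_1, \ldots, X_k$ denote the irreducible components of $X$. Applying the hypothesis to $X_1$ yields $n_1 \in \mathbb{Z}$ and $s \in \Gamma(X_1, \ecal{L}^{\otimes n_1}|_{X_1})$ with $U' = (X_1)_s$ nonempty and affine. After raising $s$ to a suitable power and extending to a global section $\tilde s \in \Gamma(X, \ecal{L}^{\otimes N})$ vanishing on $X_2 \cup \cdots \cup X_k$, one obtains an affine open $U = X_{\tilde s} \subseteq X$ contained in $X_1$. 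The complement $F = X \setminus U$ is a proper closed subscheme of $X$, the hypothesis descends to $F$, and so by induction $\ecal{L}|_F$ is $\otimes$-ample. A gluing step then yields $\otimes$-ampleness on $X$: for $M$ chosen larger than both $N$ and the index supplied by the induction on $F$, any $E \in \D_{\qcoh}(X)$ right-orthogonal to $\mathcal{O}_X, \ecal{L}, \ldots, \ecal{L}^{\otimes M}$ satisfies $E|_U = 0$ (using $\tilde s$ to localize and trivialize $\ecal{L}^{\otimes N}$ on $U$), so $E$ is supported on $F$, and then the inductive hypothesis on $F$ together with the adjunction $i_{F*} \dashv i_F^!$ forces $E = 0$.

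The main obstacle I anticipate is the extension-plus-vanishing step producing $\tilde s$. In general, neither surjectivity of $\Gamma(X, \ecal{L}^{\otimes mn_1}) \to \Gamma(X_1, \ecal{L}^{\otimes mn_1}|_{X_1})$ for large $m$, nor the existence of global sections cutting out $X_2 \cup \cdots \cup X_k$, is automatic; the latter in particular threatens to be circular. I expect the paper either establishes a preliminary extension/vanishing lemma, or reorganizes the induction so as to avoid explicit extension entirely, applying the hypothesis iteratively to successively larger integral closed subschemes and invoking the derived-category localization triangle $i_{F*} i_F^* E \to E \to Rj_*(E|_U)$ to split the global generation question cleanly into a closed-support part and an open part. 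Packaging the closed and open generators into a single consecutive family $\mathcal{O}_X, \ecal{L}, \ldots, \ecal{L}^{\otimes M}$, rather than into a disjoint pair, is the principal technical subtlety.
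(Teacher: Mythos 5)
There are two genuine gaps, one in each direction, and both sit exactly where the paper's real work is.

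In the ``only if'' direction you write that you would ``appeal to a basic property of $\otimes$-ample line bundles\dots saying that the opens $X_s$ of this form cover $X$.'' No such property is available: the definition of $\otimes$-ampleness only guarantees nonvanishing of $\operatorname{Ext}^i(\ecal{L}^{\otimes m}, K)$ for \emph{some} $i$, and it is not clear a priori that any power of $\ecal{L}$ has even a single nonzero global section. This is the hardest point of the theorem, and the paper's solution is not a covering argument at all: assuming $H^0(X,\ecal{L}^{\otimes n})=0$ for all $n\neq 0$, one shows (Lemma \ref{lemma-pofl}, Proposition \ref{prop-existsasection}) that the sheaves admitting finite filtrations with quotients in $\{\ecal{L}^{\otimes n}\}$ form a weak Serre subcategory, so generation would force every coherent sheaf to be a vector bundle; after base change to the fraction field of $H^0(X,\ecal{O}_X)$ this forces $X\to\operatorname{Spec}H^0(X,\ecal{O}_X)$ to be birational, whence $\ecal{O}_X$ is big (Corollary \ref{corollary-steinisbirational}). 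Your proposal has no substitute for this step, and without it the necessity direction does not start.

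In the ``if'' direction there are two problems. First, the reduction to $X$ reduced is circular: that $\otimes$-ampleness depends only on $X_{\mathrm{red}}$ is a \emph{consequence} of the theorem, not an input, and it is not formal --- orthogonality to $\{\ecal{L}^{\otimes n}\}$ on $X$ does not transfer to $X_{\mathrm{red}}$ because $i_*(\ecal{L}^{\otimes n}|_{X_{\mathrm{red}}})$ is not built out of powers of $\ecal{L}$ on $X$. Second, and for the same reason, the final gluing step fails as stated: for $E$ supported on $F$, the adjunction gives $\operatorname{Hom}_F(\ecal{L}^{\otimes m}|_F, i_F^!E)=\operatorname{Hom}_X(i_{F*}(\ecal{L}^{\otimes m}|_F), E)$, and there is no way to convert nonvanishing of the right-hand side into nonvanishing of some $\operatorname{Hom}_X(\ecal{L}^{\otimes m'}[i'],E)$, since $i_{F*}\ecal{O}_F$ is not perfect; moreover an object of $D_{\operatorname{QCoh},F}(X)$ need not be a pushforward from $F$ with any fixed scheme structure. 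The paper circumvents both issues at once (Lemma \ref{lemma-supportedcat}): it tensors $E$ with the \emph{perfect} Koszul-type complex $P_r=(\ecal{L}^{\otimes -r}\xrightarrow{s^r}\ecal{O}_X)$, which \emph{is} built from powers of $\ecal{L}$, uses Rouquier's lemma to write $P_r^\vee$ as a pushforward from some infinitesimal thickening $D_n$ of $V(s)$, and invokes $\otimes$-ampleness of $\ecal{L}|_{D_n}$ --- which is why the Noetherian induction must run over \emph{all} closed subschemes, including non-reduced ones, rather than over the reduced complement $F$ as in your plan. Your instinct about the section-extension lemma (the paper's Lemma \ref{lemma-sufficestochekirredcomponents}) is correct, but the two points above are the ones that need a new idea.
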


These Theorems are stated together as Theorem \ref{ref-theoremmain} and proved in Section \ref{section-mainresult}. They show in particular that a $\otimes$-generating line bundle on a Noetherian scheme $X$ always has some tensor power which has many global sections. This is not immediately obvious from the definition of $\otimes$-generation. Indeed, ($\ast$) implies that there are many morphisms in the derived category $\ecal{L}^{\otimes m}[i] \to \ecal{L}^{\otimes n}[j]$ for \emph{some} integers $i, j, m, n$, but it is unclear whether we can take $i = j$. Our methods show that if $X$ is a smooth proper geometrically connected variety of positive dimension over a field, and $S$ is a collection of line bundles on $X$ such that $\operatorname{Hom}_{\ecal{O}_X}(\ecal{L}, \ecal{M}) = 0$ for distinct elements $\ecal{L}, \ecal{M}$ (but with no assumption on higher Ext groups), then $S$ does not generate $D_{\operatorname{QCoh}}(X)$.

As a consequence of our characterization, we can prove a number of properties of line bundles on Noetherian schemes which are $\otimes$-generating that we are unable to prove directly from the definition.

\begin{corollary}
\label{corollary-consequences}
    Let $\ecal{L}$ be a line bundle on a Noetherian scheme $X$. Let $ 0 \neq n \in \mathbb{Z}$. Let $f : Y \to X$ be a finite surjective morphism of schemes.
    \begin{enumerate}
        \item $\ecal{L}$ is $\otimes$-generating if and only if $\ecal{L}^{\otimes n}$ is $\otimes$-generating.
        \item $\ecal{L}$ is $\otimes$-generating if and only if the restriction of $\ecal{L}$ to the irreducible components of $X$ (with the reduced subscheme structure) are $\otimes$-generating. 
        \item $\ecal{L}$ is $\otimes$-generating if and only if $f^*\ecal{L}$ is $\otimes$-generating. \qedhere
    \end{enumerate}
\end{corollary}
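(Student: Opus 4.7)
I plan to deduce all three statements from the characterization in Theorem \ref{thm-mainintroversion}.

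Parts (i) and (ii) follow by routine manipulations. For (i), a section $s \in \Gamma(Z, \cal{L}^{\otimes k}|_Z)$ witnessing the criterion for $\cal{L}$ on an integral closed $Z \subset X$ produces $s^{|n|} \in \Gamma(Z, (\cal{L}^{\otimes n})^{\otimes k\operatorname{sign}(n)}|_Z)$ with the same non-vanishing locus $Z_{s^{|n|}} = Z_s$, witnessing the criterion for $\cal{L}^{\otimes n}$; conversely every witness for $\cal{L}^{\otimes n}$ is automatically one for $\cal{L}$. For (ii), the integral closed subschemes of $X$ are precisely the integral closed subschemes of the irreducible components (taken with their reduced subscheme structure), and restriction of $\cal{L}$ commutes with this identification.

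The substance lies in (iii). The forward direction is straightforward: for an integral closed $W \subset Y$, the image $Z := f(W)_{\operatorname{red}}$ is integral and closed in $X$ since $f$ is closed and the continuous image of an irreducible set is irreducible; applying the criterion to $Z$ gives $s \in \Gamma(Z, \cal{L}^{\otimes n}|_Z)$ with $Z_s$ non-empty affine, and then $t := (f|_W)^*s$ has non-vanishing locus $W_t = (f|_W)^{-1}(Z_s)$, which is non-empty by surjectivity and affine because $f|_W$ is a finite (hence affine) morphism.

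The reverse direction is the main obstacle. Given an integral closed $Z \subset X$, I pick an irreducible component $W$ of $(f^{-1}(Z))_{\operatorname{red}}$ that surjects onto $Z$ (one exists since $f|_{f^{-1}(Z)}$ is surjective and $Z$ is irreducible) and set $g := f|_W : W \to Z$, a finite surjective morphism of integral Noetherian schemes. The criterion applied to $f^*\cal{L}$ on $W$ gives $t \in \Gamma(W, g^*(\cal{L}^{\otimes n}|_Z))$ with $W_t$ non-empty affine; the task is to descend $t$ to a section on $Z$. My plan is a norm/characteristic-polynomial construction: with $d := [K(W):K(Z)]$, multiplication by $t$ on the coherent sheaf $g_*\cal{O}_W \otimes \cal{L}^{\otimes n}|_Z$ should satisfy a Cayley--Hamilton relation $t^d + (g^*a_1)t^{d-1} + \cdots + g^*a_d = 0$ with $a_i \in \Gamma(Z, \cal{L}^{\otimes ni}|_Z)$, and $a_d$ is non-zero because at the generic point it equals (up to sign) the field-theoretic norm $\operatorname{Nm}_{K(W)/K(Z)}(t_\eta) \neq 0$. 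Rearranging gives $W_{g^*a_d} \subseteq W_t$, and since $t$ trivializes $g^*\cal{L}^{\otimes n}$ on $W_t$, the open $W_{g^*a_d}$ is a principal affine open of the affine scheme $W_t$, hence itself affine. Chevalley's theorem on the affineness of finite surjective morphisms then forces $Z_{a_d}$ to be affine, so $s := a_d$ witnesses the criterion for $\cal{L}$ on $Z$.

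The principal technical subtlety I anticipate is producing the $a_i$ as honest global sections of $\cal{L}^{\otimes ni}|_Z$ rather than merely rational ones: since $g_*\cal{O}_W$ need not be locally free on $Z$, the characteristic polynomial is defined directly only on the generic finite-flat locus. I expect to overcome this either by a sheaf-theoretic Cayley--Hamilton argument adapted to line-bundle-valued endomorphisms, or by a reduction to the case of normal $Z$ via normalization, where field-theoretic norms of integral sections are automatically regular.
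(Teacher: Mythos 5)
Parts (i) and (ii) and the forward direction of (iii) are fine and match the paper's route (Lemmas \ref{lem: tensor power of tensor ample}, \ref{lemma-irreduciblecomp}, \ref{lemma-qaffinepullback}); your choice of $W$ dominating $Z$ and your norm argument in the reverse direction of (iii) are also exactly what the paper does \emph{in the finite locally free case} (Lemma \ref{lemma-finlocfreebig}: norm of $t$, the inclusion $W_{g^*a_d}\subset W_t$ as a principal affine open, and Chevalley's theorem to descend affineness). The gap is precisely the step you flag: producing $a_d$ as an honest global section of $\ecal{L}^{\otimes nd}|_Z$ when $g_*\ecal{O}_W$ is not locally free, and neither of your two proposed fixes closes it. For the Cayley--Hamilton route, the characteristic polynomial of degree $d=[K(W):K(Z)]$ simply does not have regular coefficients on a non-normal base: take $Z=\operatorname{Spec}k[x^2,x^3]$, $W=\operatorname{Spec}k[x]$ its normalization, $t=x$; then $d=1$, the polynomial is $T-x$, and $-x\notin\Gamma(Z,\ecal{O}_Z)$. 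The determinant-trick version of Cayley--Hamilton for a finitely generated module gives a monic relation of degree equal to the number of local generators, but its coefficients depend on the chosen presentation and so do not glue to global sections of powers of a line bundle. For the normalization route, you run into two problems: the normalization of a general Noetherian integral scheme need not be finite (or even Noetherian), and even when it is, after producing a big section on $Z^{\nu}$ you must descend it along the finite surjective birational map $Z^{\nu}\to Z$ --- which is again an instance of the statement being proved (and is exactly the cuspidal example above), so the reduction is circular.

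The paper's resolution is different and is the one missing ingredient you need: flattening by blowup. By Raynaud--Gruson (\cite[\href{https://stacks.math.columbia.edu/tag/0B49}{Tag 0B49}]{stacks-project}) there is a blowup $Z'\to Z$ in a closed subscheme disjoint from the flat locus such that the strict transform $W'\to Z'$ is finite locally free; bigness passes from $W$ to $W'$ (Lemma \ref{lemma-pullbackisbig}), your norm argument applies verbatim to $W'\to Z'$ (Lemma \ref{lemma-finlocfreebig}), and one then descends bigness along the blowup $Z'\to Z$ by a separate argument (Lemma \ref{lemma-blowup}), which pushes a power of the section forward using Serre's theorems on the cohomology of $\operatorname{Proj}$ of the Rees algebra. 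This is Proposition \ref{prop-pullbackbigthenbig}. Without some substitute for this flattening-plus-blowup-descent step, the reverse direction of (iii) is not proved in the generality claimed.
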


We provide an example to show that (ii) and (iii) do not have analogues for compact generation in general, see Example \ref{example-finitepullbackgenerator}. Namely, there exists a Noetherian scheme $X$ and a perfect complex $G$ on $X$ such that the restriction of $G$ to each irreducible component compactly generates the derived category of the component, but $G$ is not a compact generator. This also gives an example of a finite surjective morphism $f : Y \to X$ where $Lf^*(G)$ compactly generates $D_{\operatorname{QCoh}}(Y)$ but $G$ doesn't generate $D_{\operatorname{QCoh}}(X)$, as we may take $f : Y \to X$ to be the canonical morphism from the disjoint union of the irreducible components.

The results of Corollary \ref{corollary-consequences} are proven in \ref{subsection-someconsequences}, and show that there is a good theory of $\otimes$-generating line bundles analogous to the theory of ample line bundles. We pursue this idea further in Section \ref{section-realdivisors}, where we define and study the cone of $\otimes$-generating $\mathbb{R}$-divisors in the real Neron--Severi space $N^1(X) = NS(X) \otimes \mathbb{R}$ for a proper variety $X$ over an algebraically closed field $k$. In particular, we show $\otimes$-generation of a line bundle is a numerical property in Proposition \ref{prop-numericalstudy}. This establishes explicit relations of $\otimes$-generation with positivity studies in birational geometry and provides a powerful tool for checking whether a line bundle on $X$ is $\otimes$-generating. For example, we provide an intersection theoretic criterion on surfaces as Lemma \ref{lemma-tensoramplenessonsurface} and Lemma \ref{lem: tensor ample cone for surfaces}. We use the criterion to compute the $\otimes$-generating cone of a geometrically ruled surface and discuss $\otimes$-generation on general blowups of $\mathbb{P}^2$, which is related to Nagata's conjecture on plane curves \ref{subsection-surfaces}.

In Section \ref{section-examples}, we use our characterization of $\otimes$-generating line bundles and our study of the $\otimes$-generating cone to give a number of examples of $\otimes$-generating line bundles which are neither ample nor anti-ample. The following is a partial summary.

\begin{theorem} The following schemes admit $\otimes$-generating line bundles which are neither ample nor anti-ample:
    \begin{enumerate}
        \item A reducible union of two copies of the projective line meeting at a node.
        \item Any smooth projective surface containing an integral curve of negative self-intersection.
        \item A blowup of a quasi-projective variety in a closed point.
        \item Affine space of any dimension with doubled origin. In fact, the structure sheaf is $\otimes$-generating but not ample. In particular, a scheme with a $\otimes$-generating line bundle need not be separated and need not satisfy the resolution property.
        \item The blowup of the affine plane at the origin minus a closed point of the exceptional divisor. In fact, the structure sheaf is $\otimes$-generating but not ample in this case. 
        \item Hironaka's smooth proper, non-projective threefold. \qedhere 
    \end{enumerate}
\end{theorem}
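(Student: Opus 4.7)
The plan is to verify each of the six cases by producing an explicit line bundle $\ecal{L}$ and applying either Theorem \ref{thm-mainproperversion} in the proper-over-a-field case or Theorem \ref{thm-mainintroversion} in general, then separately checking that $\ecal{L}$ is neither ample nor anti-ample. Cases (i), (ii), (iii) when $Y$ is projective, and (vi) will fall under the bigness criterion, while (iv), (v), and (iii) with $Y$ merely quasi-projective use the affine-open criterion.

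Cases (i) and (ii) are essentially classical. For (i), the integral closed subschemes of $X = \bb{P}^1 \cup_p \bb{P}^1$ are the two components $C_1, C_2$ together with closed points (note $X$ itself is reducible, hence not integral), and gluing $\ecal{O}_{C_1}(1)$ to $\ecal{O}_{C_2}(-1)$ produces a line bundle $\ecal{L}$ on which $\ecal{L}|_{C_1}$ and $\ecal{L}^{-1}|_{C_2}$ are both ample. For (ii), given an integral curve $C \subset X$ with $C^2 < 0$ and $H$ ample on the surface $X$, take $\ecal{L}$ numerically equivalent to $H + tC$ for a rational $t > 0$ (then cleared of denominators) satisfying $H \cdot C/(-C^2) < t$ and $(H + tC)^2 > 0$; the Hodge Index theorem guarantees a nonempty interval of such $t$. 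For these $t$ we simultaneously have $\ecal{L}^2 > 0$, $\ecal{L}\cdot C < 0$, and $\ecal{L}\cdot C' > 0$ for every irreducible curve $C' \ne C$ (since $H \cdot C' > 0$ and $C \cdot C' \ge 0$), giving bigness of $\ecal{L}$ on $X$, of $\ecal{L}^{-1}$ on $C$, and of $\ecal{L}$ on every other curve. The surface machinery of Section \ref{section-realdivisors} packages this uniformly.

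For (iii), take $\ecal{L} = b^*A \tens \ecal{O}_X(E)$ with $A$ ample on $Y$, $b : X = \bl_p Y \to Y$ the blowup, and $E$ the exceptional divisor. When $Y$ is projective: for $Z \subseteq E \iso \bb{P}^{n-1}$ one has $\ecal{L}|_Z = \ecal{O}_E(-1)|_Z$, whose inverse is ample, hence big; for $Z \not\subseteq E$, the restriction $b|_Z : Z \to b(Z)$ is proper and birational, so $(b|_Z)^*(A|_{b(Z)})$ is big and nef, while $\ecal{O}_X(E)|_Z$ is effective, so $\ecal{L}|_Z$ is big. For quasi-projective $Y$ we compactify $Y \hookrightarrow \bar Y$, perform the analogous blowup $\bar X \supset X$, and transfer the sections produced by bigness on the closures $\bar Z$ to nonempty affine principal opens on $Z$ to apply Theorem \ref{thm-mainintroversion}. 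For (iv), $X$ is non-separated hence not affine, so $\ecal{O}_X$ is not ample; an integral closed subscheme is either a subvariety of $\bb{A}^n$ missing the origin (already affine; take $s = 1$) or the doubled closure of some $Z_0 \subset \bb{A}^n$ through $0$, for which any nonzero $f \in \Gamma(Z_0, \ecal{O}_{Z_0})$ with $f(0) = 0$ yields $Z_s = D_{Z_0}(f) \subset \bb{A}^n - \{0\}$, a nonempty affine principal open in $Z$. For (v), $\Gamma(X, \ecal{O}_X) = k[x, y]$ while $X \not\iso \bb{A}^2$, so $\ecal{O}_X$ is not ample; the opens $D_X(x)$ and $D_X(y)$ are each affine via the blowdown, $V(x) \cap V(y) \cap X = E \cap X$, so every integral $Z \not\subseteq E$ admits one of $x, y$ as a suitable section, while the only positive-dimensional integral $Z \subseteq E$ is $E - \{q\} \iso \bb{A}^1$, on which a coordinate does the job.

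Case (vi) will be the main obstacle. Since Hironaka's smooth proper threefold $X$ is not projective, no line bundle on $X$ is ample, so it suffices to exhibit a single $\otimes$-ample $\ecal{L}$. The plan is to set $\ecal{L} = \pi^*H \tens \ecal{O}_X(\sum_i a_i E_i)$ with $\pi : X \to \bb{P}^3$ Hironaka's birational contraction, $H$ ample on $\bb{P}^3$, and the $E_i$ the exceptional divisors of the intermediate blowups, then tune the integers $a_i$ so that Theorem \ref{thm-mainproperversion} is satisfied. The delicate point is the cycle of rational curves whose classes sum to zero in homology, which is precisely the obstruction to projectivity: it forces $\ecal{L}$ to have negative degree on at least one component, and one must arrange simultaneously that $\ecal{L}^{-1}$ is big on each such curve and $\ecal{L}$ is big on every larger subvariety. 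The numerical analysis of the $\otimes$-ample cone developed in Section \ref{section-realdivisors}, applied componentwise to the strict transforms of the blown-up curves and to the exceptional $\bb{P}^1$-bundles above them, reduces the verification to a finite system of linear inequalities on the $a_i$, which admits integer solutions by the local symmetry of Hironaka's alternating blow-up construction.
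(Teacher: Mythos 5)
Your treatments of (i)--(v) are correct and land close to the paper's own arguments (Example \ref{example-reducibleconic}, Corollary \ref{corollary-tesnampnotamponsurface}, Proposition \ref{prop-blowuptensamp}, the corollary to Lemma \ref{lemma-nonseparatedgluing}, and Lemma \ref{lemma-dejong}). The minor variations are fine: in (ii) you argue numerically via the Hodge index theorem where the paper writes the bundle as ample plus a multiple of $C$ and checks quasi-affineness directly; in (iii) your passage from projective to quasi-projective $Y$ works provided you take $A$ to be the restriction of an ample bundle on a compactification and invoke Lemma \ref{lemma-pullbackisbig} (or condition (i) of Lemma \ref{lemma-bigequiv}) to get bigness on $Z$ from bigness on $\bar Z$ --- note the nonvanishing locus you inherit on $Z$ is a priori only quasi-affine, which is exactly why that lemma is needed; and in (iv) your case analysis of the integral closed subschemes is a correct unwinding of what the paper packages as Lemma \ref{lemma-nonseparatedgluing}.

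Case (vi), however, has a genuine gap. Theorem \ref{thm-mainproperversion} demands a bigness-or-anti-bigness check on \emph{every} closed subvariety of the threefold --- in particular on infinitely many surfaces --- and your proposal supplies no mechanism for collapsing this to a finite verification. The assertion that the cone machinery of Section \ref{section-realdivisors} ``applied componentwise to the strict transforms and the exceptional $\mathbb{P}^1$-bundles'' reduces everything to finitely many linear inequalities in the $a_i$ is precisely the step that needs a proof: one must either (a) produce sections of powers of $\ecal{L}$ whose nonvanishing loci are quasi-affine and cover the complement of a small closed set, and then check $\otimes$-ampleness on that set (Lemma \ref{lemma-usefulcriterion}), which requires actually analyzing the geometry and negative curves of the exceptional surfaces $\pi^{-1}(C)$ and $\pi^{-1}(D)$; or (b) do what the paper does, namely prove the new Lemma \ref{lemma-ampleincodim2} (ample on the complement of a codimension-$\geq 2$ closed set $Z$ plus $\otimes$-ample on $Z$ implies $\otimes$-ample), take $\ecal{L}$ to be the unique extension across $\pi^{-1}(p_1)$ of an ample bundle on the quasi-projective open $X \setminus \pi^{-1}(p_1)$, and then check only the two degrees $\deg \ecal{L}|_{L_1} > 0$ and $\deg \ecal{L}|_{M_1} < 0$ via the algebraic equivalences $L_0 + M_0 \sim_{alg} L_1$ and $M_0 \sim_{alg} L_1 + M_1$. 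Your observation that $L_0 + M_1 \sim_{alg} 0$ forces a sign change (hence non-ampleness in both directions) is correct, but as written the $\otimes$-ampleness of your candidate $\pi^* H \otimes \ecal{O}_X(\sum a_i E_i)$ is asserted rather than established.
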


Our main motivation for studying the question of Brav was to understand which smooth projective varieties can be reconstructed from their derived categories as in \cite{bondal_orlov_2001}. To this end we prove:  

\begin{theorem} \ 
    \begin{enumerate}  \item Let $X$ be a smooth projective surface over an algebraically closed field such that $\omega_X$ or $\omega_X^{-1}$ is big (e.g. toric surfaces or surfaces of general type). Assume $X$ contains no $(-2)$-curve. Then $\omega_X$ is $\otimes$-generating.
        \item Let $k$ be an algebraically closed field and let $X$ be the blowup of $\mathbb{P}^2_k$ in $n$ closed points. Assume either: (a) all the points lie on a line and $n \neq 3$, or (b) all the points lie on a conic and $n \neq 6$. Then $\omega_X$ is $\otimes$-generating. 
        \item Let $E$ be an elliptic curve over the complex numbers. Let $\ecal{F}$ be a rank two vector bundle on $E$ and let $X = \mathbb{P}_E(\ecal{F})$. Assume $\ecal{F}$ is unstable. Then $\omega_X$ is $\otimes$-generating. 
        \item Let $Y \subset \mathbb{P}^4_{\mathbb{C}}$ be a smooth projective hypersurface of general type over the complex numbers. Assume $Y$ contains a line $\ell \subset \mathbb{P}^4_{\mathbb{C}}$ which moves on $Y$, i.e., $H^0(\ell, \ecal{N}_{\ell/Y}) \neq 0$. If $X$ is the blowup of $Y$ in $\ell$, then $\omega_X$ is $\otimes$-generating.
    \end{enumerate}
   In all of these cases, $X$ has no non-isomorphic Fourier--Mukai partner and every Fourier--Mukai autoequivalence of $D_{\operatorname{perf}}(X)$ is standard.
\end{theorem}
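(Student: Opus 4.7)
The strategy is to apply Theorem~\ref{thm-mainproperversion} in each of the four cases and reduce $\tens$-ampleness of $\omega_X$ to the numerical condition that for every integral closed subscheme $Z \subseteq X$, either $\omega_X|_Z$ or $\omega_X^{-1}|_Z$ is big. Once this is verified, the conclusions on the absence of non-isomorphic Fourier--Mukai partners and on the standardness of every Fourier--Mukai autoequivalence follow from the Gorenstein extension of the Bondal--Orlov Reconstruction Theorem due to Ito--Matsui, as recalled in the Introduction. On an integral curve $Z$, bigness of a line bundle just amounts to positivity of the degree, so the test there reduces to $\deg(\omega_X|_Z) \neq 0$.

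For (i), we apply the surface-specific intersection-theoretic criterion of Lemma~\ref{lem: tensor ample cone for surfaces}, which reduces the problem to showing $\omega_X \cdot C \neq 0$ for every integral curve $C \subseteq X$. By adjunction $\omega_X \cdot C = 2 p_a(C) - 2 - C^2$, so vanishing forces $C^2 = 2 p_a(C) - 2$. The case $p_a(C) = 0$ gives $C^2 = -2$, excluded by hypothesis; in the case $p_a(C) \geq 1$ one has $C^2 \geq 0$, so $C$ represents a non-zero class of non-negative self-intersection, and the Hodge Index Theorem together with bigness of $\omega_X$ or $\omega_X^{-1}$ forbids $\omega_X \cdot C = 0$. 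For (ii), write $\omega_X \iso \pi^* \omega_{\mathbb{P}^2_k} \tens \ecal{O}_X(\sum_i E_i)$ on the blowup $\pi : X \to \mathbb{P}^2_k$, so $\omega_X|_{E_i} \iso \ecal{O}(-1)$ is anti-big; for any other integral curve one invokes the standard intersection formulas for blowups, and the exclusions $n \neq 3$ and $n \neq 6$ are precisely what keep the strict transform of the line (resp.\ conic) passing through all the blown-up points from having $\omega_X$-degree zero.

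Cases (iii) and (iv) are explicit intersection-theoretic verifications. For (iii), $X = \mathbb{P}_E(\ecal{F})$ is a geometrically ruled surface over an elliptic curve, and in the standard Neron--Severi basis spanned by a section and a fiber the class of $\omega_X$ is easily written down. The instability of $\ecal{F}$ pins down the self-intersection of the normalized section, so the explicit description of the $\tens$-ample cone of a geometrically ruled surface from \ref{subsection-surfaces} places $\omega_X$ inside it. For (iv), let $\pi : X \to Y$ be the blowup with exceptional divisor $E \iso \mathbb{P}(\ecal{N}_{\ell/Y})$, so $\omega_X \iso \pi^* \omega_Y \tens \ecal{O}_X(E)$. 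An integral subvariety $Z \subseteq X$ is either disjoint from $E$ (so isomorphic to a subvariety of $Y$, where $\omega_Y$ is big or ample by the general type hypothesis), meets $E$ properly, or is contained in $E$; the most delicate analysis occurs in the last case, where $Z \subseteq E$ is a subvariety of a ruled surface over $\mathbb{P}^1$ whose splitting type is controlled by the assumption $H^0(\ell, \ecal{N}_{\ell/Y}) \neq 0$. The main obstacle is precisely this last analysis in case (iv): one must treat curves and divisors inside $E$ using the explicit numerical data of $\ecal{N}_{\ell/Y}$, and the \emph{line moves} hypothesis is exactly what forces $\omega_X$ to restrict with non-zero degree to every such subvariety.
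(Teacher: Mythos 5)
Your overall strategy---reduce to Theorem \ref{thm-mainproperversion} and check bigness or anti-bigness of $\omega_X$ on every integral closed subscheme---is sound, and for (i) and (iii) it coincides with the paper's route (Lemma \ref{lem: tensor ample canonical on a surface} and the ruled-surface cone computation of Example \ref{example-ruledsurfaces}). However, in (i) your appeal to the Hodge Index Theorem does not close the case $p_a(C)\geq 1$, $C^2\geq 0$: a big divisor on a surface can have negative self-intersection (e.g.\ $H+nE$ on the blowup of $\mathbb{P}^2$ at a point for $n\geq 2$), so knowing that $\omega_X$ lies in $C^{\perp}$ produces no contradiction from the signature of the intersection form alone. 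What is actually needed is Kodaira's lemma: write a positive power of the big member of $\{\omega_X,\omega_X^{-1}\}$ as ample plus effective and use that an irreducible curve with $C^2\geq 0$ is nef; this is Lemma \ref{lemma-tensoramplenessonsurface}, which gives $C^2<0$ whenever $K_X\cdot C=0$, after which adjunction forces $C$ to be a $(-2)$-curve.

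In (ii) and (iv) your route diverges from the paper's, and the steps you leave as ``standard'' are where the actual content lies. For (ii) you must rule out $K_X\cdot C=0$ for every integral curve $C\sim dH-\sum m_iE_i$ other than the $E_i$ and the strict transform of the line (resp.\ conic); this requires the Bezout-type bound $\sum m_i\leq C\cdot l=d<3d$ (resp.\ $\sum m_i\leq 2d<3d$) for $C$ not containing the line (resp.\ conic), together with a separate check that $\omega_X^{-1}$ is big on $X$ itself (e.g.\ $-K_X=2H+\tilde l$ is big plus effective). In (iv) the case ``$Z$ meets $E$ properly'' is listed but never treated; for a surface $Z\not\subset E$ one needs $b^*\omega_Y|_Z$ big (via Lemma \ref{lemma-pullbackisbig}, since $b|_Z$ is birational onto its image) and $\ecal O(E)|_Z$ effective. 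The paper avoids both case analyses by using Lemma \ref{lemma-usefulcriterion}: it exhibits an explicit section of a power of $\omega_X^{\pm 1}$ (namely $3\tilde l+2\sum E_i\in|{-}K_X|$ in (ii), and $b^*s\otimes t^{n}$ in (iv)) whose non-vanishing locus is affine or quasi-affine, reducing the verification to the finitely many components of its zero locus. Your direct approach can be completed, but as written these verifications are missing rather than routine.
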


In (i) and (iii), it was already proven in \cite{BPtoric}, \cite{A_nsings}, and \cite{Ellipticruledsurface} that $X$ has no non-isomorphic Fourier--Mukai partner and every Fourier--Mukai autoequivalence of $D_{\operatorname{perf}}(X)$ is standard. In (ii) and (iv) this appears to be new. An argument for studying $\otimes$-generation of the canonical bundle is that it provides a unified approach to all of these results.  In our proof of (iii), we additionally give a complete description of the $\otimes$-generating cone of any geometrically ruled surface over the complex numbers. 

In Section \ref{section-families}, we prove a generalization of Theorem \ref{thm-mainintroversion} to the case of finitely many line bundles on a scheme. This was suggested to us by Johan de Jong when we explained to him the proof of Theorem \ref{thm-mainintroversion}.

\begin{theorem}
\label{thm-familyintroversion}
    Let $X$ be a Noetherian scheme. Let $\ecal{L}_1, \dots , \ecal{L}_n$ be line bundles on $X$. Then the set of line bundles
    $\ecal{L}_1^{e_1} \otimes \cdots \otimes \ecal{L}_n^{\otimes e_n}$ with $e_i \in \mathbb{Z}$ generates $D_{\operatorname{QCoh}}(X)$ if and only if for every integral closed subscheme $Z \subset X$ there exist integers $e_1, \dots , e_n$ and a global section $s \in \Gamma(Z, \ecal{L}_1^{e_1} \otimes \cdots \otimes \ecal{L}_n^{\otimes e_n}|_{Z})$ such that $Z_s$ is non-empty and affine. 
\end{theorem}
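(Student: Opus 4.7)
The plan is to adapt the proof of Theorem \ref{thm-mainintroversion} essentially verbatim, replacing the set of tensor powers $\{\ecal{L}^{\otimes m}\}_{m \in \mathbb{Z}}$ with the set of monomials $\Lambda = \{\ecal{L}_1^{\otimes e_1} \otimes \cdots \otimes \ecal{L}_n^{\otimes e_n} : (e_1, \dots, e_n) \in \mathbb{Z}^n\}$. The structural features of $\{\ecal{L}^{\otimes m}\}$ actually used in the proof of Theorem \ref{thm-mainintroversion} are (i) closure under tensor product and tensor inverse, and (ii) that the family is indexed by a finitely generated free abelian group. Both properties are preserved by $\Lambda$, with $\mathbb{Z}^n$ playing the role of $\mathbb{Z}$.

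For the necessary direction, suppose $\Lambda$ generates $D_{\operatorname{QCoh}}(X)$. Given an integral closed immersion $i : Z \hookrightarrow X$, one first shows that $\{Li^*\ecal{M}\}_{\ecal{M} \in \Lambda}$ generates $D_{\operatorname{QCoh}}(Z)$: if $F \in D_{\operatorname{QCoh}}(Z)$ satisfies $\hom(Li^*\ecal{M}, F[j]) = 0$ for every $\ecal{M} \in \Lambda$ and every $j$, then by adjunction $\hom(\ecal{M}, i_*F[j]) = 0$, forcing $i_*F = 0$, hence $F = 0$ by conservativity of pushforward along a closed immersion. This reduces us to the case $X = Z$ integral, where we apply the same construction as in Theorem \ref{thm-mainintroversion} to extract a monomial $\ecal{M} \in \Lambda$ and a section $s \in \Gamma(X, \ecal{M})$ with $X_s$ non-empty and affine.

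For the sufficient direction, we run the Noetherian induction on closed subsets of $X$ from the proof of Theorem \ref{thm-mainintroversion}. The local-affineness hypothesis now furnishes, for each integral closed subscheme $Z$, a monomial $\ecal{M}_Z \in \Lambda$ and a section $s_Z \in \Gamma(Z, \ecal{M}_Z|_Z)$ whose non-vanishing locus $Z_{s_Z}$ is a non-empty affine open. These data supply compact generators for $D_{\operatorname{QCoh}}(Z_{s_Z})$ taken from $\Lambda$, which one glues to the inductive generator on the smaller closed complement $Z \setminus Z_{s_Z}$ using the standard triangle $j_! j^* \to \id \to i_* i^*$ to assemble a compact generator of $D_{\operatorname{QCoh}}(X)$ built entirely from objects of $\Lambda$.

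The main obstacle is to verify that each step of the single-variable proof uses only the multiplicative structure common to $\mathbb{Z}$ and $\mathbb{Z}^n$, and in particular is insensitive to the rank of the grading group. Concretely, any step that passes to a sufficiently high tensor power $\ecal{L}^{\otimes N}$ must be reinterpreted as passing to a suitable monomial in the $\ecal{L}_i$ (with possibly negative exponents), and any finite generation or cohomology vanishing argument for the graded section ring $\bigoplus_m \Gamma(X, \ecal{L}^{\otimes m})$ must be transported to the $\mathbb{Z}^n$-graded ring $\bigoplus_{\mathbf{e} \in \mathbb{Z}^n} \Gamma(X, \ecal{L}_1^{\otimes e_1} \otimes \cdots \otimes \ecal{L}_n^{\otimes e_n})$. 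Because $\mathbb{Z}^n$ remains a finitely generated free abelian group, no essentially new input is required beyond this translation of indices, and the suggestion by de Jong indicates that this bookkeeping is indeed routine.
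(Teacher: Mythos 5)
There is a genuine gap in your ``only if'' direction. For a single line bundle, the paper's argument (proof of the ``only if'' direction of Theorem \ref{ref-theoremmain}) has exactly two cases: either no non-trivial power of $\ecal{L}$ has a section, in which case Proposition \ref{prop-existsasection} applies, or some power has a non-zero section $s$, in which case $\ecal{L}|_{X_s}$ is \emph{torsion} and Corollary \ref{corollary-steinisbirational}(ii) finishes in one step. With $n$ line bundles this dichotomy does not close: if some non-trivial monomial $\ecal{M}$ has a section $s$, then restricting to $X_s$ only kills one generator of the group $\Lambda \subset \operatorname{Pic}(X_s)$ --- the image of $\Lambda$ in $\operatorname{Pic}(X_s)$ is a proper quotient but is in general still infinite, so you cannot invoke the torsion case. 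You must iterate, and the iteration requires a new induction (the paper, in Theorem \ref{theorem-multiplelinebundles}, inducts on $\operatorname{rank}(A) + \# A_{\operatorname{tors}}$ for the subgroup $A$ generated by the $\ecal{L}_i$), together with a Deligne-formula step at each stage to lift a section over $X_s$ to a global section of a monomial times a power of $\ecal{M}$, and a base case covered by Proposition \ref{prop-existsasection} (whose hypothesis, for a subgroup of $\operatorname{Pic}$, amounts to: every non-trivial member has no sections). This secondary induction is the actual new content of the generalization; your assertion that only a ``translation of indices'' from $\mathbb{Z}$ to $\mathbb{Z}^n$ is needed, and that one can ``apply the same construction,'' skips it entirely.

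Your ``if'' direction is closer to the mark but the mechanism you describe is not available as stated: in $D_{\operatorname{QCoh}}$ there is no extension-by-zero functor $j_!$ for an open immersion, and objects killed on $X_s$ are supported on the closed complement as a \emph{set}, not pushed forward from the reduced closed subscheme --- one needs the infinitesimal thickenings $D_r$ and the Koszul-type complexes $P_r = (\ecal{M}^{-r} \xrightarrow{s^r} \ecal{O}_X)$ exactly as in Lemma \ref{lemma-supportedcat}, combined with Noetherian induction and Lemma \ref{lemma-sufficestochekirredcomponents}. That part does adapt essentially verbatim, but you should say so in those terms rather than via a recollement triangle that does not exist in this category.
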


Theorems \ref{thm-mainintroversion} and \ref{thm-familyintroversion} and their proofs work verbatim with the word ``scheme'' replaced by ``algebraic space.'' A key technical part of the proofs comes in Section \ref{subsection-big} where we study line bundles $\ecal{L}$ on a Noetherian integral scheme $X$ such that there exists an integer $n > 0$ and a global section $s \in \Gamma(X, \ecal{L}^{\otimes n})$ such that the open $X_s = \{s \neq 0\}$ is non-empty and affine. We prove that such line bundles satisfy many of the same basic properties as big line bundles on a proper variety. Indeed if $X$ is proper over a field then this condition is equivalent to $\ecal{L}$ being big. In particular, the reader who is only interested in Theorem \ref{thm-mainproperversion} and not the more general Theorem \ref{thm-mainintroversion} (but who knows basic facts about big line bundles) can skip much of this. However, Lemma \ref{lemma-sufficestochekirredcomponents} is crucial and we don't know a reference which proves it for a scheme proper over a field.

\medskip\noindent\textbf{Conventions.} For a scheme $X$, we denote $D_{\operatorname{QCoh}}(X)$ the full subcategory of the derived category of the category of $\ecal{O}_X$-modules consisting of objects whose cohomology sheaves are quasi-coherent. When $X$ is Noetherian, this coincides with the derived category of the category of quasi-coherent sheaves \cite[\href{https://stacks.math.columbia.edu/tag/09T4}{Tag 09T4}]{stacks-project}. We denote $D_{\operatorname{perf}}(X) \subset D_{\operatorname{QCoh}}(X)$ the full subcategory of perfect complexes. If $T$ is a closed subset of a scheme $X$ then $D_{\operatorname{QCoh}, T}(X)$ denotes the full subcategory of $D_{\operatorname{QCoh}(X)}$ whose objects restrict to zero in $D_{\operatorname{QCoh}}(X\setminus T)$.

\section{Definitions and preliminaries}

\subsection{\texorpdfstring{$\otimes$-}{Tensor-}generating line bundles}
\label{subsection-tensorample}

Recall that if $\ecal{T}$ is a triangulated category and $S \subset \operatorname{ob}\ecal{T}$, then $\langle S \rangle$ denotes the smallest full triangulated subcategory of $\ecal{T}$ containing $S$ which is thick, i.e., closed under taking direct summands. The set $S$ is said to \emph{classically generate} $\ecal{T}$ if  $\langle S \rangle = \ecal{T}$. On the other hand, the set $S$ is said to \emph{generate} $\ecal{T}$ if for every non-zero object $K$ of $\ecal{T}$, we have $\operatorname{Hom}(G[i], K) \neq 0$ for some $i \in \mathbb{Z}$ and $G \in S$. See \cite{bondal2002generators}. 

\begin{definition}[\cite{ito2025polarizations}]
    Let $X$ be a quasi-compact and quasi-separated scheme. A line bundle $\ecal{L}$ on $X$ is said to be \emph{$\otimes$-generating} if the set $\{\ecal{L}^{\otimes n}\}_{n \in \mathbb{Z}}$ classically generates $D_{\operatorname{perf}}(X)$. That is, 
    \[
    D_{\operatorname{perf}}(X) = \langle \{\ecal{L}^{\otimes n}\}_{n \in \mathbb{Z}} \rangle. \qedhere
    \]
\end{definition}

It is not difficult to see that if $\{\ecal{L}^{\otimes n}\}_{n \in \mathbb{Z}}$ classically generates $D_{\operatorname{perf}}(X)$ then so does a finite subset. 

\begin{lemma}
    Let $X$ be a quasi-compact and quasi-separated scheme. Let $\ecal{L}$ be a line bundle on $X$. If $\ecal{L}$ is $\otimes$-generating, then there exists an integer $N > 0$ such that 
    \[
    D_{\operatorname{perf}}(X) = \langle \ecal{O} \oplus \ecal{L} \oplus \cdots \oplus \ecal{L}^{\otimes N} \rangle. \qedhere
    \]
\end{lemma}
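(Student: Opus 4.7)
The plan is to combine two standard facts from the theory of compactly generated triangulated categories with the observation that tensoring with $\mathcal{L}$ is an autoequivalence of $D_{\operatorname{perf}}(X)$. The first fact is that for any quasi-compact and quasi-separated scheme $X$, the category $D_{\operatorname{perf}}(X)$ admits a single classical generator $G$ (this goes back to Bondal--Van den Bergh, and in the stated generality is in the Stacks Project). The second fact is the standard ``finite involvement'' property: for any triangulated category $\ecal{T}$ and set of objects $S \subset \operatorname{ob}\ecal{T}$, one has $\langle S \rangle = \bigcup_{S' \subset S \text{ finite}} \langle S' \rangle$. This follows by induction on the inductive construction of $\langle S \rangle$ as the countable union of subcategories built by iteratively forming cones, shifts, finite direct sums, and summands, since each such operation only involves finitely many objects at a time.

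Given these two inputs, the argument is straightforward. First, pick a classical generator $G \in D_{\operatorname{perf}}(X)$. By hypothesis, $G \in D_{\operatorname{perf}}(X) = \langle \{\mathcal{L}^{\otimes n}\}_{n \in \mathbb{Z}} \rangle$. By the finite involvement property, there exist integers $a \leq b$ such that $G \in \langle \mathcal{L}^{\otimes a} \oplus \mathcal{L}^{\otimes a+1} \oplus \cdots \oplus \mathcal{L}^{\otimes b} \rangle$. Next, the functor $(-) \otimes \mathcal{L}^{\otimes -a}$ is an autoequivalence of $D_{\operatorname{perf}}(X)$, so it carries classical generators to classical generators and thick subcategories to thick subcategories; applying it gives
\[
G \otimes \mathcal{L}^{\otimes -a} \in \langle \mathcal{O} \oplus \mathcal{L} \oplus \cdots \oplus \mathcal{L}^{\otimes (b-a)} \rangle,
\]
and $G \otimes \mathcal{L}^{\otimes -a}$ is again a classical generator of $D_{\operatorname{perf}}(X)$. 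Setting $N = b - a$ then yields
\[
D_{\operatorname{perf}}(X) = \langle G \otimes \mathcal{L}^{\otimes -a} \rangle \subseteq \langle \mathcal{O} \oplus \mathcal{L} \oplus \cdots \oplus \mathcal{L}^{\otimes N} \rangle \subseteq D_{\operatorname{perf}}(X),
\]
which is the desired equality.

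There is no serious obstacle here; the only content is the appeal to the existence of a classical generator (an external input) and the elementary verification that the thick hull of a set is the increasing union of thick hulls of its finite subsets. The twist by $\mathcal{L}^{\otimes -a}$ is essentially cosmetic and is needed purely to replace an interval $[a,b] \subset \mathbb{Z}$ of possibly negative integers with the interval $[0, N]$ appearing in the statement.
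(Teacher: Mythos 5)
Your argument is correct and is essentially the paper's own proof: both take a classical generator $G$ of $D_{\operatorname{perf}}(X)$, observe that $G$ lies in the thick hull of finitely many powers $\ecal{L}^{\otimes n}$ because only finitely many objects are involved in building it, and then apply the autoequivalence $-\otimes \ecal{L}^{\otimes -a}$ to shift the finite set of exponents into $\{0,\dots,N\}$. No issues.
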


\begin{proof}
    The category $D_{\operatorname{perf}}(X)$ has a classical generator $G$ by \cite{bondal2002generators}. If $\ecal{L}$ is $\otimes$-generating, then $G$ can be built from the objects of the set $\{\ecal{L}^{\otimes n}\}_{n \in \mathbb{Z}}$ in finitely many steps using finite direct sums, shifts, cones, and extraction of direct summands, see \cite[Section 2]{bondal2002generators}. Since only finitely many $\ecal{L}^{\otimes n}$ are used in the construction, we see that 
    $$G \in \langle \{\ecal{L}^{\otimes n}\}_{n \in S} \rangle = \langle \bigoplus _{n \in S} \ecal{L}^{\otimes n} \rangle$$
    for some finite subset $S \subset \mathbb{Z}$. But then since $\langle G \rangle = D_{\operatorname{perf}}(X)$ we see 
    $$
    D_{\operatorname{perf}}(X) = \langle \bigoplus_{n \in S} \ecal{L}^{\otimes n} \rangle. 
    $$
    Using the autoequivalence $- \otimes^{\mathbb{L}} \ecal{L}$ of $D_{\operatorname{perf}}(X)$, we can also arrange that $S \subset \{0 , \dots , N\}$ for some integer $N>0$. 
\end{proof}

The following formulation of $\otimes$-generation is very useful in proofs. 

\begin{lemma}
\label{lemma-equiv}
    Let $X$ be a quasi-compact and quasi-separated scheme. Let $\ecal{L}$ be a line bundle on $X$. The following are equivalent:
    \begin{enumerate}
    \item $\ecal{L}$ is $\otimes$-generating.
    \item For every non-zero object $K \in D_{\operatorname{QCoh}}(X),$ there exist integers $n, i$ and a non-zero morphism $\ecal{L}^{\otimes n}[i] \to K$. \qedhere
    \end{enumerate}
\end{lemma}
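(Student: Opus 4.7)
The plan is to deduce both implications from two standard inputs: the Bondal--Van den Bergh theorem that $D_{\operatorname{QCoh}}(X)$ is compactly generated with compact objects exactly $D_{\operatorname{perf}}(X)$ (this is where quasi-compactness and quasi-separatedness are used), together with Neeman's theorem that for a set $S$ of compact objects in a compactly generated triangulated category $\mathcal{T}$, the compact objects of the localizing subcategory $\operatorname{Loc}(S)$ coincide with the thick subcategory of $\mathcal{T}^c$ generated by $S$.

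For the direction (i) $\Rightarrow$ (ii), I would fix a non-zero $K \in D_{\operatorname{QCoh}}(X)$ and consider
\[
\mathcal{B} = \{ C \in D_{\operatorname{perf}}(X) : \operatorname{Hom}(C[i], K) = 0 \text{ for all } i \in \mathbb{Z}\}.
\]
A routine check shows $\mathcal{B}$ is a thick subcategory of $D_{\operatorname{perf}}(X)$. By Bondal--Van den Bergh, compact objects generate $D_{\operatorname{QCoh}}(X)$ in the sense of non-zero Hom-groups, so since $K \neq 0$ we have $\mathcal{B} \neq D_{\operatorname{perf}}(X)$. The hypothesis that $\mathcal{L}$ is $\otimes$-ample then forces some $\mathcal{L}^{\otimes n} \notin \mathcal{B}$, which gives a non-zero map $\mathcal{L}^{\otimes n}[j] \to K$ as required.

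For (ii) $\Rightarrow$ (i), I would let $\mathcal{D}$ denote the smallest localizing subcategory of $D_{\operatorname{QCoh}}(X)$ containing the collection $\{\mathcal{L}^{\otimes n}\}_{n \in \mathbb{Z}}$. Condition (ii) says precisely that the right-orthogonal $\mathcal{D}^{\perp}$ is zero, and because $D_{\operatorname{QCoh}}(X)$ is compactly generated this is well known to force $\mathcal{D} = D_{\operatorname{QCoh}}(X)$. Applying Neeman's theorem, the compact objects of $\mathcal{D}$ are exactly the thick subcategory of $D_{\operatorname{perf}}(X)$ generated by the $\mathcal{L}^{\otimes n}$, so this thick subcategory must equal all of $D_{\operatorname{perf}}(X)$, which is (i).

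The main obstacle, if one wanted to avoid citing Neeman, would be bridging the gap between thick closure (only finite combinations and summands) and localizing closure (arbitrary coproducts allowed): (ii) is naturally a statement about the localizing subcategory, while (i) is about the thick one. Neeman's theorem resolves this exactly, so once that is in hand the rest of the argument is formal.
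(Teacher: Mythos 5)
Your proof is correct and follows essentially the same route as the paper: the paper simply cites Neeman's general fact that a set of compact objects generates a compactly generated triangulated category if and only if it classically generates the subcategory of compacts (together with Bondal--Van den Bergh for compact generation of $D_{\operatorname{QCoh}}(X)$), and your argument is the standard proof of that fact via thick and localizing subcategories.
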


\begin{proof}
It is a general fact that a set of compact objects generates a triangulated category if and only if (i) the triangulated category is compactly generated and (ii) the set classically generates the subcategory of compact objects \cite[Lemma 2.2]{Neeman1992}. This applies here as the compact objects of $D_{\operatorname{QCoh}}(X)$ are precisely the perfect complexes and $D_{\operatorname{QCoh}}(X)$ has a compact generator \cite{bondal2002generators}.
\end{proof}

Note that if either $\ecal{L}$ or $\ecal{L}^{-1}$ is ample, then $\ecal{L}$ is $\otimes$-generating. In this generality, this is proven in \cite[\href{https://stacks.math.columbia.edu/tag/0BQQ}{Tag 0BQQ}]{stacks-project}. 

\begin{lemma}
\label{lemma-qaffinepullback}
    If $f:X \to Y$ is a quasi-affine morphism between quasi-compact and quasi-separated schemes and $\ecal L$ is a $\otimes$-generating line bundle on $Y$, then $f^*\ecal L$ is $\otimes$-generating. More generally, if $\{G_i\}_{i \in I}$ is a set of perfect complexes which classically generates $D_{\operatorname{perf}}(Y)$, then the set of perfect complexes $\{Lf^*G_i\}_{i \in I}$ classically generates $D_{\operatorname{perf}}(X)$.
\end{lemma}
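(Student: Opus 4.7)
The plan is to deduce the first assertion from the second and then prove the second by combining an adjunction argument with the conservativity of $Rf_*$. Since $Lf^*(\ecal{L}^{\otimes n}) = (f^*\ecal{L})^{\otimes n}$ for any integer $n$, applying the general statement to the family $\{\ecal{L}^{\otimes n}\}_{n \in \mathbb{Z}}$ immediately yields the $\tens$-amplitude of $f^*\ecal{L}$, so I focus on the second claim.

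First, $X$ is quasi-compact and quasi-separated since quasi-affine morphisms are qcqs, and each $Lf^*G_i$ is a perfect complex because $Lf^*$ preserves perfect complexes for any morphism of schemes. Following the same reasoning as in the proof of Lemma \ref{lemma-equiv}, I will invoke Neeman's criterion \cite[Lemma 2.2]{Neeman1992}: a set of perfect complexes on a qcqs scheme classically generates $D_{\operatorname{perf}}$ if and only if it generates the compactly generated category $D_{\operatorname{QCoh}}$. It therefore suffices to show that for every non-zero $K \in D_{\operatorname{QCoh}}(X)$ there exist $i \in I$ and $n \in \mathbb{Z}$ with $\operatorname{Hom}(Lf^*G_i[n], K) \neq 0$. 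By the adjunction $Lf^* \dashv Rf_*$, this Hom group equals $\operatorname{Hom}(G_i[n], Rf_*K)$, and the hypothesis on $\{G_i\}$ produces such a non-zero map as soon as $Rf_*K$ is non-zero.

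The remaining content is therefore the conservativity of $Rf_* \colon D_{\operatorname{QCoh}}(X) \to D_{\operatorname{QCoh}}(Y)$ for a quasi-affine morphism, which I expect to be the only non-formal step. My plan is to factor $f$ as $X \xrightarrow{j} \underline{\operatorname{Spec}}_Y(f_*\ecal{O}_X) \xrightarrow{g} Y$, where $j$ is a quasi-compact open immersion and $g$ is affine. For $g$ affine, $Rg_* = g_*$ is exact on $D_{\operatorname{QCoh}}$ and reduces locally on $Y$ to restriction of scalars along a ring map, which is faithful on modules; hence $Rg_*$ reflects vanishing of cohomology sheaves and is conservative. For $j$ a quasi-compact open immersion between qcqs schemes, the counit $j^* Rj_* \to \operatorname{id}$ is an isomorphism, so $Rj_*$ is faithful and in particular conservative. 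Composing, $Rf_* = Rg_* \circ Rj_*$ is conservative, which completes the argument.
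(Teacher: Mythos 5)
Your proposal is correct and follows essentially the same route as the paper: reduce to the statement about generating families, apply Neeman's criterion and the adjunction $Lf^* \dashv Rf_*$, and establish that $Rf_*K \neq 0$ for $K \neq 0$ by factoring the quasi-affine morphism as a quasi-compact open immersion followed by an affine morphism, handling the two cases exactly as the paper does. The only cosmetic difference is that you argue the affine case directly via exactness and faithfulness of restriction of scalars where the paper cites the Stacks project.
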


In particular, the lemma holds when $f$ is affine or an open immersion.

\begin{proof}
The first statement is a special case of the second so we prove the second. Let $0 \neq K \in D_{\operatorname{QCoh}}(X)$. It suffices by \cite[Lemma 2.2]{Neeman1992} to show $\operatorname{Hom}(Lf^*(G_i)[j], K) \neq 0$ for some $i \in I$ and $j \in \mathbb{Z}$. But $K \neq 0$ implies $Rf_*K \neq 0$ since $f$ is quasi-affine (when $f$ is an open immersion this is because $Lf^* \circ Rf_* \cong \operatorname{id}$ and when $f$ is affine this is by \cite[\href{https://stacks.math.columbia.edu/tag/08I8}{Tag 08I8}]{stacks-project}; in general, $f$ is a composition of such) so there exist $i \in I, j \in \mathbb{Z}$ such that
$$\operatorname{Hom}_{X}(Lf^*G_i[j], K) = \operatorname{Hom}_{Y}(G_i[j], Rf_*K) \neq 0,$$
completing the proof.
\end{proof}

\begin{lemma}
\label{lemma-fieldextensions}
Let $k$ be a field, let $k'/k$ be a field extensions, and let $X$ be a quasi-compact and quasi-separated $k$-scheme. Let $\ecal{L}$ be  a line bundle on $X$ and denote $\ecal{L}_{k'}$ its pullback to $X_{k'}$. Then $\ecal{L}$ is $\otimes$-generating on $X$ if and only if $\ecal{L}_{k'}$ is $\otimes$-generating on $X_{k'}$. 
\end{lemma}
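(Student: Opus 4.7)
The plan is to deduce both directions from the fact that the projection $p \colon X_{k'} \to X$ is affine and faithfully flat: it is the base change of $\spec k' \to \spec k$, and any morphism of affine schemes is affine, while every field extension is faithfully flat.

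For the ``only if'' direction, I would simply invoke Lemma \ref{lemma-qaffinepullback}: since $p$ is affine (in particular quasi-affine), the pullback $\ecal{L}_{k'} = p^*\ecal{L}$ of a $\otimes$-ample line bundle is $\otimes$-ample.

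For the ``if'' direction, I would use the characterization from Lemma \ref{lemma-equiv}. Take a nonzero $K \in D_{\operatorname{QCoh}}(X)$; I must produce $n, i \in \bb{Z}$ with $\hom_X(\ecal{L}^{\otimes n}[i], K) \neq 0$. Since $p$ is faithfully flat and $p^*$ is exact, $p^*K$ is nonzero. By $\otimes$-ampleness of $\ecal{L}_{k'}$ there exist integers $n, i$ such that $\hom_{X_{k'}}(p^*\ecal{L}^{\otimes n}[i], p^*K) \neq 0$. The key step is then to identify
\[
    \ext^i_{X_{k'}}(p^*\ecal{L}^{\otimes n}, p^*K) \iso \ext^i_X(\ecal{L}^{\otimes n}, K) \tens_k k',
\]
which forces $\ext^i_X(\ecal{L}^{\otimes n}, K) \neq 0$ since $k'/k$ is faithfully flat.

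The displayed isomorphism is a standard flat base change computation: because $\ecal{L}^{\otimes n}$ is perfect (indeed invertible), the natural map $p^* R\ihom_{\ecal{O}_X}(\ecal{L}^{\otimes n}, K) \to R\ihom_{\ecal{O}_{X_{k'}}}(p^*\ecal{L}^{\otimes n}, p^*K)$ is an isomorphism, and for any $F \in D_{\operatorname{QCoh}}(X)$ affine base change along the square with horizontal maps $p$ and $\spec k' \to \spec k$ gives $R\Gamma(X_{k'}, p^*F) \iso R\Gamma(X, F) \tens^{\bb L}_k k' = R\Gamma(X, F) \tens_k k'$. Combining these and taking $H^i$ yields the required identification. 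I do not anticipate any real obstacle; the only care needed is to run flat base change in the qcqs (non-Noetherian) setting, which is valid here because both $p$ and $\spec k' \to \spec k$ are affine.
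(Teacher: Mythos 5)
Your proposal is correct and follows essentially the same route as the paper: the forward direction via Lemma \ref{lemma-qaffinepullback} applied to the affine morphism $X_{k'} \to X$, and the converse via flat base change identifying $R\operatorname{Hom}_{X_{k'}}(\ecal{L}_{k'}^{\otimes n}, K_{k'})$ with $R\operatorname{Hom}_X(\ecal{L}^{\otimes n}, K) \otimes_k^{\mathbb{L}} k'$ together with faithful flatness of $k'/k$. Your extra detail on why the base-change isomorphism holds (invertibility of $\ecal{L}^{\otimes n}$ plus affine base change for $R\Gamma$) is just a fleshed-out version of the step the paper states in one line.
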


\begin{proof}
    The morphism $X_{k'} \to X$ is affine so it follows from Lemma \ref{lemma-qaffinepullback} that if $\ecal{L}$ is $\otimes$-generating then so is $\ecal{L}_{k'}$. Conversely, if $\ecal{L}_{k'}$ is $\otimes$-generating and $0 \neq K \in D_{\operatorname{QCoh}}(X)$, then there is $n \in \mathbb{Z}$ so that 
    $R\operatorname{Hom}(\ecal{L}^{\otimes n}_{k'}, K_{k'}) \neq 0$, but by flat base change we have
    $$
    R\operatorname{Hom}_{X_k'}(\ecal{L}^{\otimes n}_{k'}, K_{k'}) = R \operatorname{Hom}_X (\ecal{L}^{\otimes n}, K) \otimes ^{\mathbb{L}}_k k'
    $$
    and we conclude since $k'/k$ is faithfully flat.
\end{proof}

\subsection{Affine and quasi-affine complements of divisors}
\label{subsection-big}

In this subsection, we define a notion of big line bundle on any integral, quasi-compact and quasi-separated scheme which generalizes the definition for proper varieties over a field. Deligne's formula in the following forms will be frequently applied here and throughout the paper.

\begin{lemma}[Deligne's Formula]
\label{lemma-deligne}
Let $X$ be a quasi-compact and quasi-separated scheme. Let $\ecal{L}$ be a line bundle on $X$. Let $s \in \Gamma(X, \ecal{L})$ and let $X_s = \{ s\neq 0\} \subset X$ be the locus where $s$ doesn't vanish. Then the following hold:
    \begin{enumerate}
        \item $H^0(X_s, \ecal{O}_{X_s}) = \operatorname{colim}_{n \geq 0} H^0(X, \ecal{L}^{\otimes n})$, where the colimit is over the system
        \begin{equation}
        \label{equn-system}
        \ecal{L} \xrightarrow{s} \ecal{L}^{\otimes 2} \xrightarrow{s} \ecal{L}^{\otimes 3} \xrightarrow{s} \cdots .
        \end{equation}
        \item For any object $K \in D_{\operatorname{QCoh}}(X)$, 
        $$
        \operatorname{Hom}_{D_{\operatorname{QCoh}}(X_s)}(\ecal{O}_{X_s}, K_{X_s}) = \operatorname{colim}_{n \geq 0} \operatorname{Hom}_{D_{\operatorname{QCoh}}(X)}(\ecal{L}^{\otimes -n}, K),
        $$
        where the transition maps of the system are as in (i). \qedhere
    \end{enumerate}
\end{lemma}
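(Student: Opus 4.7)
The plan is to deduce both (i) and (ii) from a single computation of $Rj_* \ecal{O}_{X_s}$ for the open immersion $j : X_s \hookrightarrow X$. Specifically, I would first establish the key identity
\[
Rj_* \ecal{O}_{X_s} \cong \operatorname{hocolim}_n \ecal{L}^{\otimes n}
\]
in $D_{\operatorname{QCoh}}(X)$, where the transition maps are multiplication by $s$ and the homotopy colimit is realized as a telescope. To verify this, I would cover $X$ by finitely many affine opens $U = \spec A$ on which $\ecal{L}$ trivializes; under such a trivialization $s$ corresponds to an element $a \in A$, $X_s \cap U = \spec A_a$, and both sides of the identity restrict to the classical formula $A_a = \operatorname{colim}(A \xrightarrow{a} A \xrightarrow{a} \cdots)$. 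Since isomorphisms in $D_{\operatorname{QCoh}}(X)$ can be checked on a finite affine cover, this suffices.

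Given the formula for $Rj_* \ecal{O}_{X_s}$, I would apply the projection formula for the quasi-compact and quasi-separated open immersion $j$ to obtain, for any $K \in D_{\operatorname{QCoh}}(X)$,
\[
Rj_*(K|_{X_s}) \;\cong\; K \otimes^{\mathbb{L}} Rj_*\ecal{O}_{X_s} \;\cong\; \operatorname{hocolim}_n (K \otimes^{\mathbb{L}} \ecal{L}^{\otimes n}).
\]
Next, applying $R\operatorname{Hom}_X(\ecal{O}_X, -)$ and using that $\ecal{O}_X$ is a compact object of $D_{\operatorname{QCoh}}(X)$ (since $X$ is qcqs and $\ecal{O}_X$ is perfect), $R\operatorname{Hom}$ commutes with the homotopy colimit, yielding
\[
R\Gamma(X_s, K|_{X_s}) \;\cong\; \operatorname{hocolim}_n R\operatorname{Hom}_X(\ecal{L}^{\otimes -n}, K).
\]
Passing to $H^0$ and using that filtered colimits commute with cohomology gives (ii). Statement (i) follows immediately as the special case $K = \ecal{O}_X$, since then the right side becomes $\operatorname{colim}_n H^0(X, \ecal{L}^{\otimes n})$ and the left side is $H^0(X_s, \ecal{O}_{X_s})$.

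The main obstacle is the first step: realizing the homotopy colimit of the $\ecal{L}^{\otimes n}$ as an honest complex (via the telescope $\bigoplus_n \ecal{L}^{\otimes n} \xrightarrow{1 - s\,\mathrm{shift}} \bigoplus_n \ecal{L}^{\otimes n}$) and verifying that its restriction to each trivializing affine agrees with the Čech-theoretic computation of $Rj_* \ecal{O}_{X_s}$, uniformly in the cover. An alternative, more elementary route proves (i) directly: take a finite affine cover $\{U_i\}$ on which $\ecal{L}$ is trivial, compute $H^0(X, \ecal{L}^{\otimes n})$ as the kernel of $\prod H^0(U_i, \ecal{L}^{\otimes n}) \to \prod H^0(U_{ij}, \ecal{L}^{\otimes n})$, and use exactness of filtered colimits plus their commutation with finite products to reduce to the affine statement $A_a = \operatorname{colim}_n A$. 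One could then bootstrap from (i) to (ii) by replacing $\ecal{O}_X$ with bounded above complexes of locally free sheaves and using a hypercohomology spectral sequence, but the unified derived-category argument above is cleaner.
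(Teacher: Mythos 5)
Your proposal is correct and follows essentially the same route as the paper: the key input is the identification $Rj_*\ecal{O}_{X_s} \cong \operatorname{hocolim}_n \ecal{L}^{\otimes n}$ (which the paper simply cites, while you sketch its proof via the telescope and local verification on a trivializing affine cover), followed by the projection formula and compactness of $\ecal{O}_X$ to deduce (ii), with (i) as the special case $K = \ecal{O}_X$. No gaps.
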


\begin{proof}
    Part (ii) follows from the projection formula and the fact that the system (\ref{equn-system}) in $D_{\operatorname{QCoh}}(X)$ has homotopy colimit $Rj_*(\ecal{O}_{X_s})$, where $j : {X_s} \to X$ is the inclusion, see \cite[Lemma 1]{olander2021rouquierdimensionquasiaffineschemes}. 
    Statement (i) follows from (ii) but is also standard, see \cite[\href{https://stacks.math.columbia.edu/tag/01PW}{Tag 01PW}]{stacks-project}. 
\end{proof}

\begin{lemma}
\label{lemma-afffromqaff}
    Let $X$ be a quasi-compact and quasi-separated scheme. Let $\ecal{L}$ be a line bundle on $X$. Let $s \in \Gamma(X, \ecal{L})$. Assume the open $X_s \subset X$ is quasi-affine. Then there are an integer $n > 0$ and finitely many sections $t_1, \dots , t_k \in \Gamma(X, \ecal{L}^{\otimes n})$ such that
    \begin{enumerate}
        \item $X_{t_i}$ is affine for each $i$.
        \item $X_s = \bigcup _{i = 1}^k X_{t_i}$. \qedhere
    \end{enumerate}
\end{lemma}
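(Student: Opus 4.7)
The plan is to exploit the hypothesis that $X_s$ is quasi-affine to produce an affine open cover of $X_s$ by principal open sets defined by regular functions on $X_s$, and then use Deligne's formula (Lemma \ref{lemma-deligne}(i)) to lift these functions to sections of sufficiently large tensor powers of $\ecal{L}$ on the whole of $X$.

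First, I would recall that a quasi-compact quasi-separated scheme $U$ is quasi-affine precisely when the canonical morphism $U \to \operatorname{Spec} \Gamma(U, \ecal{O}_U)$ is a quasi-compact open immersion. Applying this to $U = X_s$ and letting $R = \Gamma(X_s, \ecal{O}_{X_s})$, I can find for each point $x \in X_s$ an element $f \in R$ with $x \in D(f) \subset X_s$ (as subsets of $\operatorname{Spec} R$), and each such $D(f) = \operatorname{Spec} R_f = (X_s)_f$ is affine. By quasi-compactness of $X_s$, finitely many $f_1,\dots,f_k \in R$ suffice, so $X_s = \bigcup_{i=1}^k (X_s)_{f_i}$ with each $(X_s)_{f_i}$ affine.

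Next, by Deligne's formula,
\[
R = \Gamma(X_s, \ecal{O}_{X_s}) = \operatorname{colim}_{n \geq 0} \Gamma(X, \ecal{L}^{\otimes n}),
\]
with transition maps given by multiplication by $s$. Therefore each $f_i$ arises from some section $t_i' \in \Gamma(X, \ecal{L}^{\otimes n_i})$ satisfying $f_i = t_i'|_{X_s} / s^{n_i}|_{X_s}$ on $X_s$. Choose a common $n \geq \max_i n_i$ and replace $t_i'$ by $t_i := t_i' \cdot s^{n - n_i} \in \Gamma(X, \ecal{L}^{\otimes n})$; then $t_i|_{X_s} = f_i \cdot s^n|_{X_s}$, so $X_{t_i} \cap X_s = (X_s)_{f_i}$.

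The remaining subtlety is that $X_{t_i}$ might contain points outside $X_s$. To fix this, I would pass from $t_i$ to $\tilde t_i := t_i \cdot s \in \Gamma(X, \ecal{L}^{\otimes (n+1)})$. Since $X_{ab} = X_a \cap X_b$ for sections $a,b$ of line bundles, we have
\[
X_{\tilde t_i} = X_{t_i} \cap X_s = (X_s)_{f_i},
\]
which is affine. Taking the union over $i$ recovers $X_s$, and the sections $\tilde t_1,\dots,\tilde t_k$ with the integer $n+1$ satisfy both (i) and (ii). The only mildly nontrivial step is the lifting of the $f_i$ to sections on all of $X$ and the use of multiplication by $s$ to kill support outside $X_s$, both of which are handled cleanly by Deligne's formula and the identity $X_{ab} = X_a \cap X_b$.
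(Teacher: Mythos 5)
Your proof is correct and follows essentially the same route as the paper's: produce an affine cover of $X_s$ by principal opens $(X_s)_{f_i}$ using quasi-affineness, lift the $f_i$ to sections of a power of $\ecal{L}$ via Deligne's formula, and multiply by $s$ so that the non-vanishing loci land inside $X_s$. The paper's version is just a terser rendering of the same argument.
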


\begin{proof}
    By definition of quasi-affine, there exist elements $f_1, \dots , f_n \in H^0(X_s, \ecal{O}_{X_s})$ such that $(X_s)_{f_i}$ is affine for each $i$ and $X_s = \bigcup (X_s)_{f_i}$. By Deligne's formula, for $n \gg 0$ there are global sections $t_i \in \Gamma(X, \ecal{L}^{\otimes n})$ such that $t_i = s^n f_i$ holds on $X_s$. Then $X_{st_i} = X_s \cap X_{t_i} = (X_s)_{f_i}$ so the sections $st_i \in \Gamma(X, \ecal{L}^{\otimes n + 1})$ work. 
\end{proof}

\begin{lemma}
\label{lemma-bigequiv}
    Let $X$ be an integral scheme which is quasi-compact and quasi-separated. Let $\ecal{L}$ be a line bundle on $X$. The following are equivalent. 
    \begin{enumerate}
        \item There exists an integer $n > 0$ and a global section $s \in \Gamma(X, \ecal{L}^{\otimes n})$ such that the open $X_s$ is non-empty and quasi-affine.
        \item There exists an integer $n > 0$ and a global section $s \in \Gamma(X, \ecal{L}^{\otimes n})$ such that the open $X_s$ is non-empty and affine.
        \item There exists an integer $n > 0$ and a global section $s \in \Gamma(X, \ecal{L}^{\otimes n})$ such that the open $X_s$ is non-empty and the canonical morphism
        $$
        X_s \to \operatorname{Spec}(H^0(X_s, \ecal{O}_{X_s}))
        $$
        is birational. 
        \item There exists an integer $n > 0$ and a global section $s \in \Gamma(X, \ecal{L}^{\otimes n})$ such that the open $X_s$ is non-empty and the generic fiber of the canonical morphism
        $$
        X_s \to \operatorname{Spec}(H^0(X_s, \ecal{O}_{X_s}))
        $$
        is quasi-affine. 
        \item There exists an integer $n > 0$ and a global section $s \in \Gamma(X, \ecal{L}^{\otimes n})$ such that the open $X_s$ is non-empty and there exists an integral domain $R$ and a morphism
        $$
        X_s \to \operatorname{Spec}(R)
        $$
        whose generic fiber is quasi-affine. 
        \qedhere
    \end{enumerate}
\end{lemma}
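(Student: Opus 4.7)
The plan is to establish $(ii) \Rightarrow (i) \Rightarrow (ii)$ together with $(ii) \Rightarrow (iii) \Rightarrow (iv) \Rightarrow (v) \Rightarrow (i)$, which gives all five equivalences. All implications except the last are routine. First, $(ii) \Rightarrow (i)$ is trivial (affine is quasi-affine). For $(i) \Rightarrow (ii)$, I would apply Lemma \ref{lemma-afffromqaff} directly: the sections $t_i$ produced there satisfy $X_{t_i}$ affine and $\bigcup X_{t_i} = X_s$, so at least one $X_{t_i}$ is non-empty. For $(ii) \Rightarrow (iii)$, if $X_s$ is affine then the canonical morphism to $\operatorname{Spec}(H^0(X_s, \ecal{O}_{X_s}))$ is an isomorphism, hence birational. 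For $(iii) \Rightarrow (iv)$, integrality of $X_s$ forces $H^0(X_s, \ecal{O}_{X_s})$ to be a subring of $k(X_s)$ and hence a domain; a birational morphism between integral schemes has generic fiber equal to $\operatorname{Spec}$ of the common function field, which is affine. Finally $(iv) \Rightarrow (v)$ is immediate by taking $R = H^0(X_s, \ecal{O}_{X_s})$.

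The main work is in $(v) \Rightarrow (i)$. Let $f : X_s \to \operatorname{Spec}(R)$ be as in (v), with non-empty quasi-affine generic fiber $X_{s,K}$, where $K = \operatorname{Frac}(R)$. Non-emptiness forces $f$ to be dominant, hence $R \hookrightarrow H^0(X_s, \ecal{O}_{X_s})$ via $f^{\#}$. The key geometric step is a spreading-out argument: I will find $0 \neq r \in R$ such that the quasi-compact open $X_s \times_R \operatorname{Spec}(R[1/r]) \subset X_s$ is quasi-affine. Using the characterization of quasi-affineness via ampleness of the structure sheaf, pick global sections $g_1, \dots, g_m \in \Gamma(X_{s,K}, \ecal{O})$ with each $(X_{s,K})_{g_i}$ affine and together covering $X_{s,K}$. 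By flat base change, $\Gamma(X_{s,K}, \ecal{O}) = \operatorname{colim}_{0 \neq r \in R} \Gamma(X_s \times_R R[1/r], \ecal{O})$, so each $g_i$ lifts to some stage $R[1/r]$; standard limit arguments (descent of affineness of morphisms, plus constructibility of the image of the closed complement of the lifted basic opens in $\operatorname{Spec}(R[1/r])$) allow one to further enlarge $r$ so that the lifted basic opens remain affine and still cover the base change.

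To conclude, let $r' \in H^0(X_s, \ecal{O}_{X_s})$ denote the image of $r$ under $f^{\#}$. Then $(X_s)_{r'} = X_s \times_R \operatorname{Spec}(R[1/r])$, which is quasi-affine and non-empty (since $r \neq 0$ and $R \hookrightarrow H^0(X_s, \ecal{O}_{X_s})$). By Deligne's formula (Lemma \ref{lemma-deligne}), $H^0(X_s, \ecal{O}_{X_s}) = \operatorname{colim}_k H^0(X, \ecal{L}^{\otimes nk})$ with transition maps multiplication by $s$, so $r' = t/s^k$ for some $t \in \Gamma(X, \ecal{L}^{\otimes nk})$; then $X_{st} = (X_s)_{r'}$ is non-empty and quasi-affine, with $st \in \Gamma(X, \ecal{L}^{\otimes n(k+1)})$, yielding (i). The hard part will be the spreading-out step, which requires descending quasi-affineness from a generic fiber to an open neighborhood of the generic point of the base via filtered limits and constructibility; everything else is essentially formal once this is in hand.
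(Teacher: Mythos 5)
Your proof is correct and follows essentially the same route as the paper: the easy implications are handled identically, and the key step (v)$\implies$(i) proceeds by writing the generic fiber as the cofiltered limit of the opens $(X_s)_f = X_s \otimes_R R[1/f]$ with affine transition maps, descending quasi-affineness to some $(X_s)_f$, and converting $f$ into a section of a power of $\ecal{L}$ via Deligne's formula. The only difference is that the paper obtains the spreading-out step by directly citing the Stacks Project limit lemma (Tag 01Z5, quasi-affineness descends through such limits), whereas you re-derive it by hand; your sketch of that derivation is the standard argument and is fine.
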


To make sense of (iii) and (iv), note that since $X$ is integral, so is the non-empty open $X_s \subset X$. It follows that the ring $H^0(X_s, \ecal{O}_{X_s})$ is an integral domain, so its spectrum indeed has a unique generic point. 

\begin{proof}
    (i) $\implies$ (ii) follows from Lemma \ref{lemma-afffromqaff}. (ii) $\implies$ (iii) is because if $X_s$ is affine, then $X_s \to \operatorname{Spec}(H^0(X_s, \ecal{O}_{X_s}))$ is an isomorphism.
    (iii) $\implies$ (iv) and (iv) $\implies$ (v) are trivial.
    
    Let us show (v) $\implies$ (i). Let $K$ be the fraction field of $R$. Since $K = \operatorname{colim}_{0 \neq f \in R} R[1/f]$
    we have
    $$
    (X_s)_K = \operatorname{lim}_{0 \neq f \in R}(X_s)_f
    $$
    where $(X_s)_f = X_s \otimes _R R[1/f]$ is the locus where the image of $f$ in $H^0(X_s, \ecal{O}_{X_s})$ doesn't vanish. The transition morphisms in the system are affine, and each term is quasi-compact and quasi-separated since $X_s$ is, as the inclusion $X_s \to X$ is affine. By \cite[\href{https://stacks.math.columbia.edu/tag/01Z5}{Tag 01Z5}]{stacks-project}, we see there is a $0 \neq f \in R$ such that $(X_s)_f$ is quasi-affine. By Deligne's formula, there is an integer $m> 0$ and a global section $t \in \Gamma(X, \ecal{L}^{\otimes m \cdot n})$ such that $t = s^mf$ holds on $X_s$ and then $X_{st} = X_s \cap X_t = (X_s)_{f}$ is quasi-affine so the section $st \in \Gamma(X, \ecal{L}^{\otimes (m+1)n})$ works.
\end{proof}

\begin{lemma}\label{lemma: bigequivprop}
    Let $X$ be a 
    variety over a field $k$. Let $\ecal{L}$ be a line bundle on $X$. Then the equivalent conditions of Lemma \ref{lemma-bigequiv} are also equivalent to:
    \begin{enumerate}\setcounter{enumi}{5}
    \item There exists a positive integer $n>0$ such that the natural rational map $X \ratmap \bb PH^0(X, \ecal L^{\otimes n})$ is birational onto its image.
    \end{enumerate}

    If furthermore $X$ is proper over $k$, then all of these conditions are equivalent to: 
    \begin{enumerate}\setcounter{enumi}{6}
        \item $\ecal{L}$ is {big}, i.e., there exist constants $m_0,C>0$ such that $\dim_k \Gamma(X,\ecal L^{\tens m_0m}) > C \cdot m^{\dim X}$ for any $m\gg 0$.
        \qedhere
    \end{enumerate}
\end{lemma}

\begin{proof}
     We first prove (vi) is equivalent to the equivalent conditions of Lemma \ref{lemma-bigequiv}. Assume there exists an $n$ as in the statement of (vi). Then in particular, there is $s \in H^0(X, \mathcal{L}^{\otimes n})$ such that $X_s \neq \emptyset$, that is $s \neq 0$. Then the rational map $X \ratmap \mathbb{P}H^0(X, \ecal{L}^{\otimes n})$ is defined on $X_s$ and factors through a morphism $X_s \to \mathbb{A}^N_k\cong D_+(s) \subset \mathbb{P}H^0(X, \ecal{L}^{\otimes n})$ which is still birational onto its image. If $V \subset \mathbb{A}^N_k$ is the closure of the image, with reduced induced subscheme structure, then $X_s \to V$ is a birational morphism to an integral affine scheme, proving that condition (v) holds.
    Conversely, we show (ii) $\implies$ (vi). Take generators $f_1, \dots, f_m \in H^0(X_s, \ecal O_{X_s})$ (as a $k$-algebra). For $N \gg 0$, we can write
    $$
    f_i = t_i/s^N
    $$
    for some $t_i \in \Gamma(X, \ecal{L}^{\otimes n \cdot N})$ by Deligne's formula. Let $V \subset H^0(X,\ecal L^{\tens n\cdot N})$ be a $k$-linear subspace spanned by $s^N, t_1, \dots, t_m$. Then, note that the natural rational map $X_s \subset X \ratmap \bb P V$ factors through a closed immersion $X_s \inj D_+(s^N) = \bb A (V/k \cdot s^N) \subset \bb P V$ given by the surjection $\operatorname{Sym}(V/k \cdot s^N) \ni \tilde t_i \mapsto f_i \in H^0(X_s, \ecal O_{X_s})$. Hence, we see that the natural rational map
    \[
    X \ratmap \bb PV \ratmap \bb PH^0(X,\ecal L^{\tens n\cdot N}).
    \]
    is birational onto its image.

    From now on assume $X$ is also proper over $k$. We prove (vii) $\implies$ (i). When $X$ is projective, this follows from Kodaira's Lemma. The same argument works in the proper case. Let $W \subset X$ be the complement of a non-empty affine open of $X$ given the reduced subscheme structure and let $\ecal{I}$ be its ideal sheaf. Then for $n \in \mathbb{Z}$ there is an exact sequence of coherent sheaves
    $$
    0 \to \ecal{L}^{\otimes n}\otimes \ecal{I} \to \ecal{L}^{\otimes n} \to \ecal{L}^{\otimes n} \otimes \ecal{O}_W \to 0.
    $$
    There is a constant $D > 0$ such that $H^0(X, \ecal{L}^{\otimes n} \otimes \ecal{O}_W) \leq D \cdot n ^{\operatorname{dim}W}$ for every $n > 0$ so by definition of big line bundle, there is an $n > 0$ and an element  $0 \neq s \in \Gamma(X, \ecal{L}^{\otimes n})$ which restricts to zero on $W$. But this means $ \emptyset \neq X_s \subset X \setminus W$ so $X_s$ is quasi-affine, as needed.
    
    For the other direction we will show (iii) $\implies$ (vii). If (iii) holds, then the fraction field of $H^0(X_s, \ecal{O}_{X_s})$ is equal to $k(X)$ which has transcendence degree $d =\dim X$ over $k$. We can find elements $f_1, \dots , f_d \in H^0(X_s, \ecal{O}_{X_s})$ forming a transcendence basis of its fraction field over $k$. For $N \gg 0$, we can write
    $$
    f_i = t_i/s^N
    $$
    for some $t_i \in \Gamma(X, \ecal{L}^{\otimes n \cdot N})$ by Deligne's formula. Since the $f_i$ are algebraically independent, we see that for every integer $r \geq 0$ the degree $r$ monomials in $s^N, t_1, \dots , t_d$ are linearly independent elements of $\Gamma(X, \ecal{L}^{\otimes r \cdot n \cdot N})$. Hence:
    $$
    \operatorname{dim}_k \Gamma(X, \ecal{L}^{\otimes r \cdot n \cdot N}) \geq \choose{d+r}{r},
    $$
    and the right hand side is a polynomial in $r$ of degree $d$, so $\ecal{L}$ is big.  
\end{proof}

Justified by the above, we make the following definition. 

\begin{definition}
\label{defn-big}
    Let $X$ be an integral scheme which is quasi-compact and quasi-separated. Let $\ecal{L}$ be a line bundle on $X$. We say $\ecal{L}$ is \emph{big} if the equivalent conditions of Lemma \ref{lemma-bigequiv} hold.
\end{definition}

\begin{remark}
\label{remark-bigstructuresheaf}
    Let $X$ be an integral scheme which is quasi-compact and quasi-separated. If $\ecal{O}_X$ is big then any line bundle $\ecal{L}$ on $X$ is big as well: Choose  a global section $f \in H^0(X, \ecal{O}_X)$ such that $X_f$ is non-empty and affine. Then there is $g \in \ecal{O}(X_f)$ such that $(X_f)_g$ is non-empty and affine and $\ecal{L}|_{(X_f)_g}$ is the trivial line bundle. By Deligne's formula, $f^ng$ extends to a global section $h$ of $\ecal{O}_X$, and replacing $f$ by $fh$ we may assume that in addition, $\ecal{L}|_{X_f} \cong \ecal{O}_{X_f}$. Let $t \in \Gamma(X_f, \ecal{L}_{X_f})$ be a nowhere vanishing section. Then by Deligne's formula, for some integer $r > 0$, the section $f^rt$ extends to a global section $s \in \Gamma(X, \ecal{L})$, and then $fs \in H^0(X, \ecal{L})$ is such that $X_{fs} = X_f$ is affine. 
\end{remark}

\begin{lemma}
\label{lemma-pullbackisbig}
    Let $f : Y \to X$ be a morphism of integral, quasi-compact and quasi-separated schemes. Assume the generic fiber of $f$ is quasi-affine. If $\ecal{L}$ is a big line bundle on $X$ then $f^*\ecal{L}$ is a big line bundle on $Y$. 
\end{lemma}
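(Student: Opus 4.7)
The plan is to verify criterion (v) of Lemma \ref{lemma-bigequiv} for $f^*\ecal{L}$ on $Y$, using a section pulled back from a witness to the bigness of $\ecal{L}$ on $X$. The only real issue is checking that the pullback of the section stays non-zero, which will follow from the hypothesis on the generic fiber (implicit here is that this hypothesis forces $f$ to be dominant, i.e.\ the generic fiber is non-empty; otherwise the statement is vacuous in the sense that we should take it as part of the data).

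First, I would invoke the equivalence in Lemma \ref{lemma-bigequiv}(ii) to produce an integer $n > 0$ and a global section $s \in \Gamma(X, \ecal{L}^{\otimes n})$ such that $U := X_s$ is non-empty and affine. Write $R = \Gamma(U, \ecal{O}_U)$, so $U = \operatorname{Spec}(R)$ and $R$ is an integral domain because $X$ (hence $U$) is integral. The generic point of $X$ lies in $U$ since $U$ is a non-empty open of an integral scheme, so the generic point of $U$ agrees with the generic point of $X$ and $\operatorname{Frac}(R) = k(X)$.

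Next, I would consider the pulled-back section $f^*s \in \Gamma(Y, (f^*\ecal{L})^{\otimes n})$ and its non-vanishing locus $Y_{f^*s} = f^{-1}(U)$. Because the generic fiber of $f$ is assumed quasi-affine and in particular non-empty, $f$ is dominant, so the generic point of $Y$ maps to the generic point of $X$, which lies in $U$. Therefore $f^*s$ is non-zero at the generic point of $Y$, and $Y_{f^*s}$ is a non-empty open of $Y$. Restricting $f$ gives a morphism $f|_{Y_{f^*s}} : Y_{f^*s} \to U = \operatorname{Spec}(R)$, and its generic fiber
\[
Y_{f^*s} \times_{U} \operatorname{Spec}(k(X)) = Y \times_{X} \operatorname{Spec}(k(X))
\]
coincides with the generic fiber of $f$, which is quasi-affine by hypothesis.

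Thus the data $(n, f^*s)$ together with the morphism $Y_{f^*s} \to \operatorname{Spec}(R)$ verify condition (v) of Lemma \ref{lemma-bigequiv} for the line bundle $f^*\ecal{L}$ on $Y$, so $f^*\ecal{L}$ is big. I do not expect any serious obstacle in this argument; the only subtle point is checking that $f^*s$ is non-zero, which is handled by the observation that dominance of $f$ is forced by the non-emptiness of its (quasi-affine) generic fiber.
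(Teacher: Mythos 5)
Your argument is correct and is essentially the paper's own proof: pull back a section $s$ with $X_s$ non-empty and affine, observe that $Y_{f^*s} = f^{-1}(X_s) \to X_s$ is a morphism to an affine integral scheme whose generic fiber is the (quasi-affine) generic fiber of $f$, and conclude via condition (v) of Lemma \ref{lemma-bigequiv}. Your extra care in checking that $Y_{f^*s}$ is non-empty (via dominance of $f$, forced by non-emptiness of the generic fiber) is a point the paper leaves implicit, and is a welcome addition rather than a deviation.
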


\begin{proof}
    If $\ecal{L}$ is big then there is an integer $n > 0$ and a global section $s \in \Gamma(X, \ecal{L}^{\otimes n})$ such that $X_s$ is non-empty and affine. Then consider the section $f^*s \in \Gamma(Y, f^*\ecal{L}^{\otimes n}).$ There is a morphism 
    $$
    Y_{f^*s} = f^{-1}(X_s)  \to X_{s}
    $$
    from $Y_{f^*s}$ to an affine integral scheme such that the generic fiber is quasi-affine, hence by (v) of Lemma \ref{lemma-bigequiv}, we see that $f^*\ecal{L}$ is big. 
\end{proof}

\begin{lemma}
\label{lemma-blowup}
    Assume $X$ is a Noetherian integral scheme and $f : Y \to X$ is a proper, birational morphism. Let $\ecal{L}$ be a line bundle on $X$. If $f^*\ecal{L}$ is big then so is $\ecal{L}$.
\end{lemma}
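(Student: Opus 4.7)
My plan is to use the Stein factorization $f = g' \circ g$ where $\bar X := \mathbf{Spec}_X f_*\ecal{O}_Y$, so that $g : Y \to \bar X$ is proper with $g_*\ecal{O}_Y = \ecal{O}_{\bar X}$ and $g' : \bar X \to X$ is finite (and birational, inheriting from $f$). I will show bigness descends first from $Y$ to $\bar X$ along the Stein part $g$, and then from $\bar X$ to $X$ along the finite part $g'$.

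For the first descent, I use Lemma \ref{lemma-bigequiv} to pick $t \in \Gamma(Y, f^*\ecal{L}^{\otimes n})$ with $Y_t$ non-empty affine. By the projection formula together with $g_*\ecal{O}_Y = \ecal{O}_{\bar X}$, this $t$ corresponds to a unique $\bar s \in \Gamma(\bar X, g'^*\ecal{L}^{\otimes n})$ with $g^{-1}(\bar X_{\bar s}) = Y_t$. The restriction $g|_{Y_t} : Y_t \to \bar X_{\bar s}$ is proper and surjective, with $(g|_{Y_t})_*\ecal{O}_{Y_t} = \ecal{O}_{\bar X_{\bar s}}$ by flat base change; its fibers are closed in the affine $Y_t$ (hence affine), proper over their residue fields (hence finite), and connected by the Stein property, so each fiber is a single point. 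Thus $g|_{Y_t}$ is proper and quasi-finite, hence finite, and the triviality of its pushforward algebra forces it to be an isomorphism. So $\bar X_{\bar s} \cong Y_t$ is affine, showing $g'^*\ecal{L}$ is big on $\bar X$.

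For the second descent, set $U := X \setminus g'(\bar X \setminus \bar X_{\bar s})$. Since $g'$ is finite and hence closed, $U$ is open; since $g'^{-1}(\eta_X) = \{\eta_{\bar X}\} \subset \bar X_{\bar s}$ (as $g'$ is birational and $\bar X_{\bar s}$ contains the generic point of $\bar X$), $U$ is non-empty. Then $g'^{-1}(U) \subset \bar X_{\bar s}$ is quasi-affine, and $g'|_{g'^{-1}(U)} \colon g'^{-1}(U) \to U$ is finite surjective, so by \cite[\href{https://stacks.math.columbia.edu/tag/05YU}{Tag 05YU}]{stacks-project}, $U$ is quasi-affine. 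In particular $\ecal{O}_U$ is big on $U$, and by Remark \ref{remark-bigstructuresheaf} so is $\ecal{L}|_U$.

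The main obstacle is upgrading this local bigness to global bigness of $\ecal{L}$ on $X$: I need a section $s \in \Gamma(X, \ecal{L}^{\otimes m})$ for some $m$ with $X_s$ non-empty and contained in $U$ (then $X_s$ is quasi-affine, verifying condition (i) of Lemma \ref{lemma-bigequiv}). The difficulty is that $g'$ may fail to be flat, so there is no immediate norm map. My plan is to exploit the fact that $g'_*\ecal{O}_{\bar X}$ is integral over $\ecal{O}_X$: the section $\bar s$, viewed via the projection formula as a section of the finite $\ecal{O}_X$-algebra $\ecal{L}^{\otimes n} \otimes g'_*\ecal{O}_{\bar X}$, satisfies local monic integral equations over $\ecal{O}_X$ whose coefficients are local sections of powers of $\ecal{L}$, and these should be globalized via Fitting-ideal or generic-flatness constructions, with the quasi-affineness of $U$ used to absorb any denominators arising in the extension across $X \setminus U$, producing the needed global witness.
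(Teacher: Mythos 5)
Your Stein-factorization route is genuinely different from the paper's (the paper instead invokes \cite[\href{https://stacks.math.columbia.edu/tag/081T}{Tag 081T}]{stacks-project} to reduce to the case where $f$ is a blowup, and then descends a power of the section through the Rees algebra via Serre's theorem on $\operatorname{Proj}$, Lemma \ref{lemma-proj}). Your first descent, from $Y$ to $\bar X$ along the Stein part $g$, is correct and complete: the projection formula with $g_*\ecal{O}_Y = \ecal{O}_{\bar X}$ does identify $t$ with a section $\bar s$ over $\bar X$, and the argument that $g|_{Y_t}$ is proper with finite connected fibers, hence finite, hence (by triviality of the pushforward algebra) an isomorphism onto $\bar X_{\bar s}$, is sound. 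The problem is entirely in the second descent, along the finite birational $g' : \bar X \to X$, and there the proof as written has a genuine gap: everything through the construction of $U$ and the observation that $\ecal{L}|_U$ is big is correct but does not produce the required global section of a power of $\ecal{L}$ on $X$, and the final paragraph is a plan rather than an argument. Concretely, the ``monic integral equation'' idea does not work as stated: since $g'_*\ecal{O}_{\bar X}$ need not be locally free, $\bar s$ has no characteristic polynomial, and the coefficients of a local integral dependence relation are neither canonical nor compatible on overlaps, so there is nothing to glue. Nor does the naive globalization ``$\bar s^r$ lies in $\ecal{L}^{\otimes nr}\subset \ecal{L}^{\otimes nr}\otimes g'_*\ecal{O}_{\bar X}$ for $r \gg 0$'' hold: for the normalization $k[t]$ of the nodal cubic $A = k[x,y]/(y^2-x^3-x^2)$ with $t = y/x$, no odd power of $t$ lies in $A$, so a section over $\bar X$ need not have any power descending to $X$.

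The step can be repaired, but it requires an extra idea that your sketch does not contain. One way: let $\ecal{C} = \mathcal{A}nn_{\ecal{O}_X}(g'_*\ecal{O}_{\bar X}/\ecal{O}_X)$ be the conductor, a nonzero coherent ideal since $g'$ is birational, and first shrink $\bar X_{\bar s}$ (as in the paper's proof of Lemma \ref{lemma-blowup}, using Deligne's formula as in Lemma \ref{lemma-afffromqaff}) so that it is disjoint from $V(\ecal{C}\ecal{O}_{\bar X})$. Then $\bar s$ vanishes set-theoretically on $V(\ecal{C}\ecal{O}_{\bar X})$, so by Noetherianity some power satisfies $\bar s^r \in \Gamma(\bar X, \ecal{L}^{\otimes nr}\otimes \ecal{C}\ecal{O}_{\bar X})$; pushing forward and using $\ecal{C}\cdot g'_*\ecal{O}_{\bar X}\subset \ecal{O}_X$ gives a genuine global section $u \in \Gamma(X, \ecal{L}^{\otimes nr})$ with $g'^{-1}(X_u) = \bar X_{\bar s}$, and since $g'$ is an isomorphism away from the conductor locus, $X_u \cong \bar X_{\bar s}$ is affine. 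This plays exactly the role that Lemma \ref{lemma-proj} plays in the paper's proof (there the twisting by $\ecal{O}_Y(-rE)$ forces the section into $J^r$; here the conductor forces it into $\ecal{O}_X$). Without some such mechanism, your argument stops at bigness of $\ecal{L}|_U$ and does not prove the lemma.
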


\begin{proof}
    By \cite[\href{https://stacks.math.columbia.edu/tag/081T}{Tag 081T}]{stacks-project}, there is a dense open $U \subset X$ and a closed subscheme $Z \subset X$ disjoint from $U$ such that the blowup of $X$ in $Z$ factors through $f : Y \to X$. By Lemma \ref{lemma-pullbackisbig}, if $f^*\ecal{L}$ is big then the pullback of $\ecal{L}$ to the blowup is big. Thus we may assume $f : Y \to X$ is the blowup of $X$ in $Z$. Denote $E \subsetneq Y$ the exceptional divisor.

     There is an integer $n > 0$ and a section $s \in \Gamma(Y, f^*\ecal{L}^{\otimes n})$ such that $Y_s$ is non-empty and affine. Then there is a function $g \in \Gamma(Y_s, \ecal{O}_{Y_s})$ such that $g$ vanishes at every point of the closed set
    $E \cap Y_s \subsetneq Y_s$ and $(Y_s)_g \neq \emptyset$. Arguing as in the proof of Lemma \ref{lemma-afffromqaff}, there is $N>n$ and a global section $t \in \Gamma(X, f^*\ecal{L}^{\otimes N})$ such that $Y_t = (Y_s)_g$. Then replacing $n$ with $N$ and $s$ with $t$ we see we may assume $Y_s \neq \emptyset$ is affine and $Y_s \cap E = \emptyset$. 

    Claim: If $s$ is as above, then for $r \gg 0$ the section $s^r \in \Gamma(Y, f^*\ecal{L}^{\otimes n \cdot r})$ is the pullback of a unique global section of $t \in \Gamma(X, \ecal{L}^{\otimes n \cdot r})$.

    The claim implies the result because then $Y_s = Y_{f^*t} = f^{-1}(X_t) \to X_t$ is an isomorphism since $Y_s \cap E = \emptyset$ and so $X_t$ is affine. 

    Let us prove the claim. Uniqueness of $t$ is because any two such $t$ would agree on the non-empty open $U \subset X$ and $X$ is integral. Thus to prove existence we may work locally and assume $X$ is affine and $\ecal{L} = \ecal{O}_X$. Since $s$ vanishes along $E$ set theoretically, we may after replacing $s$ by a positive power assume $s \in \Gamma(Y, \ecal{O}_Y(-E))$. Then we conclude by Lemma \ref{lemma-proj} below. 
\end{proof}

\begin{lemma}
\label{lemma-proj}
    Let $A$ be a Noetherian ring. Let $J \subset A$ be an ideal. Let $Y \to X$ be the blowup of $X = \operatorname{Spec}(A)$ in the closed subscheme $V(J) \subset X$. Let $E \subset Y$ be the exceptional divisor. Let $f \in \Gamma(Y, \ecal{O}_Y(-E))$. Then there is an integer $r > 0$ such that $f^r \in \Gamma(Y, \ecal{O}_Y(-rE))$ is in the image of the canonical map
    $$
    J^r \to H^0(Y, \ecal{O}_Y(-rE)).
    $$
\end{lemma}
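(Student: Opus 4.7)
The plan is to realize $Y$ as $\operatorname{Proj}_A(R)$ for the Rees algebra $R = \bigoplus_{n\geq 0} J^n$, identify $\ecal{O}_Y(-E)$ with the Serre twist $\ecal{O}_Y(1)$, and then invoke Serre's theorem to show that the natural graded map $R_n \to H^0(Y,\ecal{O}_Y(n))$ is an isomorphism for $n \gg 0$. To set this up, note that since $A$ is Noetherian the ideal $J$ is finitely generated, so $R$ is a Noetherian graded $A$-algebra with $R_0 = A$, generated in degree one over $R_0$. Standard theory of blowups identifies $Y \cong \operatorname{Proj}_A(R)$ in such a way that $\ecal{O}_Y(-E) \cong \ecal{O}_Y(1)$: locally on the affine chart associated with an element $g \in J$ both sheaves are generated by the image of $g$, reflecting the fact that $J \cdot \ecal{O}_Y$ is precisely the invertible ideal sheaf cutting out $E$.

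Next I would compare $R$ with the graded ring $\Gamma_*(\ecal{O}_Y) = \bigoplus_n H^0(Y,\ecal{O}_Y(n))$. By Serre's finiteness theorem for Proj of a Noetherian graded ring finitely generated in degree one, $\Gamma_*(\ecal{O}_Y)$ is a finitely generated graded $R$-module, so the kernel $K$ and cokernel $Q$ of the canonical map $R \to \Gamma_*(\ecal{O}_Y)$ are finitely generated graded $R$-modules. Both $K$ and $Q$ have zero sheafification on $\operatorname{Proj}(R)$, so each is annihilated by some power of the irrelevant ideal $R_+$. Combined with the fact that $R$ is generated in degree one (so $R_n \subseteq R_+^n$ for $n \geq 1$), this forces $K_n = Q_n = 0$ for all sufficiently large $n$. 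Hence there exists $n_0$ with $R_n \xrightarrow{\sim} H^0(Y,\ecal{O}_Y(n))$ for all $n \geq n_0$.

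Finally, given $f \in H^0(Y, \ecal{O}_Y(-E)) = H^0(Y, \ecal{O}_Y(1))$, the power $f^r$ lives in $H^0(Y, \ecal{O}_Y(r)) = H^0(Y, \ecal{O}_Y(-rE))$ and hence lies in the image of $J^r = R_r$ for any $r \geq n_0$, which is exactly the conclusion. The main obstacle here is not a clever trick but rather the bookkeeping around Proj: one must be careful to use the Rees algebra presentation of the blowup, to identify $\ecal{O}_Y(-E)$ with $\ecal{O}_Y(1)$, and to quote Serre's theorem in the correct form for a Noetherian graded ring generated in degree one. All of these work uniformly under our hypothesis that $A$ is Noetherian, with no further assumptions on $J$ or on the geometry of $X$.
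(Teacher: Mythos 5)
Your proposal is correct and follows essentially the same route as the paper: identify $Y$ with $\operatorname{Proj}(\bigoplus_{n \geq 0} J^n)$ and $\ecal{O}_Y(-E)$ with $\ecal{O}(1)$, then conclude via Serre's theorem that $J^r \to H^0(Y, \ecal{O}_Y(-rE))$ is an isomorphism for $r \gg 0$. The only difference is that you unwind the proof of Serre's comparison (via finite generation of $\Gamma_*(\ecal{O}_Y)$ and vanishing of the kernel and cokernel in large degrees), whereas the paper simply cites it.
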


\begin{proof}
    We have $Y = \operatorname{Proj}(\oplus _{n \geq 0} J^n)$ and the canonical maps 
    $$
    J^r \to H^0(Y, \ecal{O}_Y(-r E)) = H^0(\operatorname{Proj}(\oplus _{n \geq 0} J^n), \ecal{O}_{\operatorname{Proj}(\oplus _{n \geq 0} J^n)}(r)) 
    $$
    are isomorphisms for $r \gg 0$ by Serre's Theorems on the cohomology of Proj \cite[\href{https://stacks.math.columbia.edu/tag/0AG7}{Tag 0AG7}]{stacks-project}.
\end{proof}

\begin{lemma}
\label{lemma-finlocfreebig}
    Let $f : Y \to X$ be a finite locally free surjective morphism of integral, quasi-compact and quasi-separated schemes. Let $\ecal{L}$ be a line bundle on $X$. If $f^*\ecal{L}$ is big, then so is $\ecal{L}$.
\end{lemma}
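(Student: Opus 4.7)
The plan is to descend a witness of bigness of $f^*\ecal{L}$ from $Y$ to $X$ via the norm map of $f$, and then invoke Chevalley's theorem. Since $X$ is integral and hence connected, $f$ has constant rank, say $d$. For any line bundle $\ecal{M}$ on $X$ there is a norm construction on global sections
\[
\operatorname{Nm}: \Gamma(Y, f^*\ecal{M}) \to \Gamma(X, \ecal{M}^{\otimes d}),
\]
defined locally via the usual norm on the finite locally free $\ecal{O}_X$-algebra $f_*\ecal{O}_Y$ after trivializing $\ecal{M}$; this globalizes consistently because replacing a trivialization of $\ecal{M}$ by a unit $u$ scales the norm on $f_*\ecal{O}_Y$ by $\operatorname{Nm}(u) = u^d$, which exactly compensates for the $u^d$ rescaling of the induced trivialization of $\ecal{M}^{\otimes d}$. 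The essential property is that $\operatorname{Nm}(t)(x) \neq 0$ if and only if $t$ is nowhere-vanishing on the fiber $f^{-1}(x)$.

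By Lemma \ref{lemma-bigequiv}, bigness of $f^*\ecal{L}$ provides $n > 0$ and $t \in \Gamma(Y, f^*\ecal{L}^{\otimes n})$ such that $Y_t$ is non-empty and affine. Set $s := \operatorname{Nm}(t) \in \Gamma(X, \ecal{L}^{\otimes nd})$. First, $X_s$ is non-empty: the fiber $f^{-1}(\eta_X)$ is (locally on $X$) the spectrum of a finite $\kappa(\eta_X)$-algebra that is simultaneously an integral domain, hence the spectrum of a finite field extension of $\kappa(\eta_X)$, a single point, which must be the generic point $\eta_Y$ of the integral scheme $Y$; since $\eta_Y \in Y_t$, the norm property yields $\eta_X \in X_s$. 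Second, $f^{-1}(X_s)$ is affine: the norm property also gives the inclusion $Y_{f^*s} \subseteq Y_t$, since $t(y) = 0$ forces $s(f(y)) = 0$ and hence $f^*s(y) = 0$; and on $Y_t$ the nowhere-vanishing section $t$ trivializes $f^*\ecal{L}^{\otimes n}$, so we may write $f^*s|_{Y_t} = t^d g$ for a unique $g \in \ecal{O}(Y_t)$, which identifies $f^{-1}(X_s) = Y_{f^*s} = (Y_t)_g$ as a principal open of the affine scheme $Y_t$, hence affine.

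Finally, $f$ restricts to a finite locally free surjective morphism $f^{-1}(X_s) \to X_s$ with affine source, so Chevalley's theorem on descent of affineness along finite surjective morphisms forces $X_s$ to be affine, and Lemma \ref{lemma-bigequiv} then gives that $\ecal{L}$ is big. The main technicality is the careful construction of the norm on sections of arbitrary line bundles (as opposed to the familiar case of sections of $\ecal{O}_Y$); once that is in place, the rest of the proof reduces to short consequences of the norm property and standard facts about basic affine opens. If one is uneasy about invoking Chevalley outside the Noetherian setting, it suffices to verify condition (i) of Lemma \ref{lemma-bigequiv} (quasi-affineness of $X_s$), for which descent along finite locally free surjective morphisms from affine schemes is more elementary.
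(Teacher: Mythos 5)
Your proof is correct and follows essentially the same route as the paper's: take a section $t$ of a power of $f^*\ecal{L}$ with $Y_t$ non-empty affine, pass to its norm $s$, check $X_s\neq\emptyset$ at the generic point, observe $f^{-1}(X_s)=Y_{f^*s}$ is a principal open of the affine $Y_t$, and conclude via descent of affineness along finite surjective morphisms. The only difference is cosmetic: the paper simply cites the Stacks Project for the norm construction and for Chevalley's theorem (stated there without Noetherian hypotheses), where you spell these points out.
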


\begin{proof}
    Let $f$ be locally free of rank $r > 0$. There are an integer $n > 0$ and a global section $s \in \Gamma(Y, f^*\ecal{L}^{\otimes n})$ such that $Y_s$ is non-empty and affine. Then the norm $t$ of $s$ (see \cite[\href{https://stacks.math.columbia.edu/tag/0BCX}{Tag 0BCX}]{stacks-project}) is a global section of $\ecal{L}^{\otimes n \cdot r}$ on $X$ and we have
    $$
    X_s \supset X_{f^*t} = f^{-1}(X_t).
    $$
    Then $X_{f^*t}$ is affine since $X_{f^*t} = X_{f^*t} \cap X_s \to  X_s$ is an affine morphism as it is locally the inclusion of a principal open. Since $f$ is finite and surjective, it follows that $X_t$ is affine \cite[\href{https://stacks.math.columbia.edu/tag/01ZT}{Tag 01ZT}]{stacks-project}. Finally, $t \neq 0$ as is verified by checking at the generic point of $X$, hence $X_t \neq \emptyset$ and the proof is complete.
\end{proof}

\begin{prop}
\label{prop-pullbackbigthenbig}
    Let $f : Y \to X$ be a proper, surjective morphism of Noetherian integral schemes. Assume the generic fiber of $f$ is finite. Let $\ecal{L}$ be a line bundle on $X$. If $f^*\ecal{L}$ is big, then so is $\ecal{L}$. 
\end{prop}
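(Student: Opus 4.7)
The strategy is to reduce to the two cases already handled by Lemmas~\ref{lemma-blowup} and~\ref{lemma-finlocfreebig}, namely proper birational morphisms and finite locally free morphisms, by combining Stein factorization with a flattening-by-blowup.

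First, apply Stein factorization to the proper morphism $f : Y \to X$ to write $f = g \circ h$ where $h : Y \to Z$ is proper with $h_*\ecal{O}_Y = \ecal{O}_Z$ and $g : Z \to X$ is finite. The scheme $Z$ is integral (as $Y$ is and $g_*\ecal{O}_Z$ embeds into a constant sheaf of fields) and Noetherian, being finite over the Noetherian $X$. The generic fiber of $h$ is both finite (as the generic fiber of $f$ is) and connected (by the Stein property $h_*\ecal{O}_Y = \ecal{O}_Z$), hence a single reduced point, so $h$ is birational. Since $f^*\ecal{L} = h^*(g^*\ecal{L})$ is big by hypothesis, Lemma~\ref{lemma-blowup} applied to $h$ gives that $g^*\ecal{L}$ is big on $Z$.

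Next, reduce from finite surjective to finite locally free. By generic flatness, $g$ is flat over a dense open of $X$; Raynaud--Gruson flattening then furnishes a blowup $b : X' \to X$ along a nowhere dense closed subscheme such that the strict transform $Z' \to X'$ of $Z \to X$ is flat. The strict transform of a finite morphism under a blowup is again finite, so $Z' \to X'$ is finite and flat, hence finite locally free; it is surjective because $X'$ is irreducible and $Z'$ dominates it. The induced morphism $Z' \to Z$ is proper and birational, being an isomorphism on the locus over which $b$ is an isomorphism. By Lemma~\ref{lemma-pullbackisbig}, the pullback of $g^*\ecal{L}$ to $Z'$ is big, and this line bundle coincides with the pullback of $b^*\ecal{L}$ along $Z' \to X'$. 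Lemma~\ref{lemma-finlocfreebig} applied to $Z' \to X'$ then yields that $b^*\ecal{L}$ is big on $X'$, and a second application of Lemma~\ref{lemma-blowup} to the proper birational $b : X' \to X$ concludes that $\ecal{L}$ is big on $X$.

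The principal obstacle is the flattening step, where Raynaud--Gruson (or an ad hoc blowup construction adapted to the coherent $\ecal{O}_X$-algebra $g_*\ecal{O}_Z$) is needed to arrange that $g$ becomes finite locally free after a proper birational modification of the base. Once this is available, the rest of the argument is a purely formal concatenation of the three preceding lemmas. An alternative route, attempting to produce a norm-type section of a power of $\ecal{L}$ on $X$ directly from one on $Z$, would require extending a section defined only on the flat locus of $g$ to all of $X$, which ultimately calls for essentially the same flattening input.
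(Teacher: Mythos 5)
Your proof is correct and takes essentially the same route as the paper: flatten $f$ by an admissible blowup of the base and then concatenate Lemmas \ref{lemma-pullbackisbig}, \ref{lemma-blowup}, and \ref{lemma-finlocfreebig}. The only (harmless) difference is that you first perform a Stein factorization to reduce to a finite morphism before flattening, whereas the paper applies the flattening-by-blowup result from the Stacks project directly to the proper morphism $Y \to X$ and observes that the strict transform is already finite locally free.
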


\begin{proof}
    There is a non-empty quasi-compact open $U \subset X$ over which $f$ is finite locally free. Then by \cite[\href{https://stacks.math.columbia.edu/tag/0B49}{Tag 0B49}]{stacks-project}, there is a closed subscheme $Z \subset X$ such that, if $X'$ denotes the blowup of $X$ in $Z$ and $Y' \to X'$ the strict transform of $Y \to X$, then $Y' \to X'$ is finite locally free. If $f^*\ecal{L}$ is big then so is its pullback to the (integral) scheme $Y'$ by Lemma \ref{lemma-pullbackisbig}. Now the morphism $Y' \to X$ factors as a finite locally free surjective morphism followed by a blowup, so we conclude by Lemmas \ref{lemma-blowup} and \ref{lemma-finlocfreebig}. 
\end{proof}

\begin{lemma}
\label{lemma-sufficestochekirredcomponents}
    Let $X$ be a Noetherian scheme. Let $\ecal{L}$ be a line bundle on $X$. Let $Z \subset X$ be an irreducible component given the reduced subscheme structure. Assume $\ecal{L}|_{Z}$ is big. Then there is an integer $n>0$ and a section $s \in \Gamma(X, \ecal{L}^{\otimes n})$ such that $X_s$ is non-empty and affine. 
\end{lemma}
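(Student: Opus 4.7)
The plan is to build a global section of some $\ecal{L}^{\otimes M}$ by first producing a section on the reduced scheme $X_{\text{red}}$ whose non-vanishing locus is a prescribed affine open, and then lifting through the nilradical filtration. The first two stages, construction on $Z$ and gluing to $X_{\text{red}}$, follow in a straightforward way from bigness and Deligne's formula; the substantive step will be lifting from $X_{\text{red}}$ to $X$ itself.

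For the reduced stage, Lemma \ref{lemma-bigequiv}(ii) applied to $\ecal{L}|_Z$ yields $\tau_0 \in \Gamma(Z, \ecal{L}^{\otimes N}|_Z)$ with $Z_{\tau_0}$ non-empty and affine. Let $W \subset X$ denote the union of the irreducible components of $X$ other than $Z$, with the reduced structure. Since $Z$ is irreducible and not contained in $W$, I can pick $f \in \Gamma(Z_{\tau_0}, \ecal{O}_{Z_{\tau_0}})$ vanishing on the proper closed subset $Z_{\tau_0} \cap W$ with $(Z_{\tau_0})_f \neq \emptyset$; Deligne's formula (Lemma \ref{lemma-deligne}) then produces $\tau' \in \Gamma(Z, \ecal{L}^{\otimes M_1}|_Z)$ with $Z_{\tau'} = (Z_{\tau_0})_f$, a non-empty affine open of $Z$ disjoint from $W$. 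Replacing $\tau'$ by a high enough power makes it vanish scheme-theoretically on the Noetherian scheme-theoretic intersection $T = Z \cap_{X_{\text{red}}} W$, so the standard exact sequence
\[
0 \to \ecal{L}^{\otimes N'}|_{X_{\text{red}}} \to \ecal{L}^{\otimes N'}|_Z \oplus \ecal{L}^{\otimes N'}|_W \to \ecal{L}^{\otimes N'}|_T \to 0
\]
glues the pair $(\tau', 0)$ to $\tilde\tau \in \Gamma(X_{\text{red}}, \ecal{L}^{\otimes N'}|_{X_{\text{red}}})$ whose non-vanishing locus is exactly $V = Z_{\tau'}$.

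The main obstacle is lifting $\tilde\tau$ to $X$ through the nilradical filtration $\ecal{O}_X \supset \ecal{N} \supset \ecal{N}^2 \supset \cdots \supset \ecal{N}^j = 0$, with $X_i = V(\ecal{N}^i)$. The obstruction to lifting a section $\sigma_i$ on $X_i$ to $X_{i+1}$ lies in $H^1(X_{\text{red}}, \ecal{N}^i/\ecal{N}^{i+1} \otimes \ecal{L}^{\otimes M_i}|_{X_{\text{red}}})$, and a direct \v{C}ech calculation gives $\partial(\sigma_i^m) = m \sigma_i^{m-1} \partial(\sigma_i)$, where multiplication by $\sigma_i$ on the coherent sheaf $\ecal{N}^i/\ecal{N}^{i+1}$ factors through $\sigma_i|_{X_{\text{red}}}$, a positive power of $\tilde\tau$. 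The key point is to apply Deligne's formula on $X_{\text{red}}$ with the section $\tilde\tau$ to get
\[
\colim_m H^1(X_{\text{red}}, \ecal{N}^i/\ecal{N}^{i+1} \otimes \ecal{L}^{\otimes mN'}|_{X_{\text{red}}}) = H^1(V, \ecal{N}^i/\ecal{N}^{i+1}|_V) = 0,
\]
where the vanishing uses that $V$ is affine. So at each step a large enough power $\sigma_i^m$ kills the obstruction and lifts, and after $j-1$ iterations I obtain $s \in \Gamma(X, \ecal{L}^{\otimes M})$ with $|X_s| = V$. Since $X_s$ is a Noetherian scheme whose reduced subscheme $(X_s)_{\text{red}} = V$ is affine, $X_s$ itself is affine by the nilradical filtration argument, concluding the proof.
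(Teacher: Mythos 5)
Your proof is correct and follows essentially the same route as the paper's: produce a section on $Z$ whose affine non-vanishing locus avoids the other components and which vanishes scheme-theoretically on $Z \cap W$, extend it by zero to $X_{\mathrm{red}}$, and then lift a power of it through the nilradical. The only difference is one of packaging: the paper delegates the nilpotent-lifting step to a Stacks Project citation and does the gluing over $X_{\mathrm{red}}$ by a local computation, whereas you carry out the lifting explicitly via the obstruction classes $\partial(\sigma_i^m) = m\,\sigma_i^{m-1}\partial(\sigma_i)$ killed by Deligne's formula on the affine open $V$, and glue via the Mayer--Vietoris sequence for $Z \cup W$ -- both of which are sound.
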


\begin{proof}
    First, we may assume $X$ is reduced. This is because if there is a global section of $\ecal{L}^{\otimes m}|_{X_{red}}$ whose non-vanishing locus is affine and non-empty, then some power of it lifts to a global section of a power of $\ecal{L}$ (see proof of \cite[\href{https://stacks.math.columbia.edu/tag/09MS}{Tag 09MS}]{stacks-project}).

    Now let $W \subset X$ be the union of the irreducible components which are not $Z$, given the reduced subscheme structure. 

    Claim 1: There is an integer $k>0$ and a section $\bar{u} \in \Gamma(Z, \ecal{L}^{\otimes k}|_{Z})$ such that $\bar{u}|_{Z \cap W} = 0$ (by $Z \cap W$ we mean the scheme theoretic intersection $Z \times _X W$) and $Z_{\bar{u}}$ is non-empty and affine.

    Proof: Let $s$ be as in the statement of the lemma. Then $Z_s$ is affine and not contained in $W$ so we can find a function $f \in \Gamma(Z_s, \ecal{O}_{Z_s})$ vanishing at every point of $Z_s \cap W$ and such that $D(f) \subset Z_s$ is non-empty. Then by Deligne's formula there is a global section $\bar{u}$ of a power of $\ecal{L}|_{Z}$ such that $Z_{\bar{u}} = D(f)$ (namely $s^Nf$ extends to a global section $v$ of a power of $\ecal{L}|_{Z}$ and then take $\bar{u} = sv$). Then $\bar{u}$ vanishes at every point of $Z \cap W$ so replacing $\bar{u}$ by a positive power we get $\bar{u} |_{Z \cap W} = 0$, as needed.

    Claim 2: There is a unique section $u \in \Gamma(X,\ecal{L}^{\otimes k})$ such that $u|_{Z} = \bar{u}$ and $u|_{W} = 0$. 

    Proof: Uniqueness is because $X$ is reduced and $u$ is determined 
    at every point of $X$. Therefore, we can prove existence locally.
    Thus we may assume $X = \operatorname{Spec}(A)$ is affine and 
    $\ecal{L} = \ecal{O}_X$. Write $I$ (resp. $J$) for the 
    ideal of $Z$ (resp. $W$). Then we have an element $\bar{u} \in 
    A/I$ which is contained in the ideal $(I + J)/I$ and we have to 
    show there exists a lift $u \in A$ of $\bar{u}$ such that $u \in J$, but this is obvious.

    Now taking $t = u$ proves the lemma.
\end{proof}
\begin{lemma}\label{lem: big is open}
    Let $X$ be a proper variety and suppose $\ecal L$ is a big line bundle. For any line bundle $\ecal M$, there exists $m > 0$ such that $\ecal L^{\tens m} \tens \ecal M$ is big.
\end{lemma}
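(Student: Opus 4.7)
The plan is to verify condition (i) of Lemma \ref{lemma-bigequiv} directly for a suitable tensor power, by producing, for some $m_0 > 0$, a global section of $\ecal{L}^{\otimes m_0} \otimes \ecal{M}$ whose non-vanishing locus is non-empty and affine. Since $\ecal{L}$ is big, I would first fix $n > 0$ and $s \in \Gamma(X, \ecal{L}^{\otimes n})$ with $X_s$ non-empty and affine. Because $X$ is a variety, $X_s$ is then an integral affine Noetherian scheme, so $\ecal{M}|_{X_s}$ corresponds to a non-zero invertible module over $\Gamma(X_s, \ecal{O}_{X_s})$ and thus admits a non-zero global section $u \in \Gamma(X_s, \ecal{M}|_{X_s})$.

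Next, I would spread $u$ out to a global section of a tensor power on $X$ via Deligne's formula. Applying Lemma \ref{lemma-deligne}(ii) with $K = \ecal{M}$ gives
$$\Gamma(X_s, \ecal{M}|_{X_s}) \;=\; \operatorname{colim}_{m \geq 0} \Gamma(X, \ecal{L}^{\otimes mn} \otimes \ecal{M}),$$
where the map from the $m$-th term is restriction to $X_s$ followed by the identification $\ecal{L}^{\otimes mn}|_{X_s} \cong \ecal{O}_{X_s}$ coming from the nowhere vanishing section $s^m$. Hence there exist $m \geq 0$ and $v \in \Gamma(X, \ecal{L}^{\otimes mn} \otimes \ecal{M})$ with $v|_{X_s} = s^m \otimes u$ under this trivialization.

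To conclude, I would consider the section $\sigma := sv \in \Gamma(X, \ecal{L}^{\otimes (m+1)n} \otimes \ecal{M})$. Its non-vanishing locus satisfies $X_\sigma = X_s \cap X_v$, and on $X_s$ the section $\sigma$ corresponds to $s^{m+1} u$, which vanishes exactly where $u$ vanishes. Therefore $X_\sigma = (X_s)_u$ is a non-empty principal open of the affine scheme $X_s$, hence itself non-empty and affine. By Lemma \ref{lemma-bigequiv} together with Lemma \ref{lemma: bigequivprop}, $\ecal{L}^{\otimes (m+1)n} \otimes \ecal{M}$ is big, proving the claim with $m_0 = (m+1)n$.

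The main technical point to watch is that Deligne's formula alone produces a $v$ whose non-vanishing locus $X_v$ might extend outside $X_s$; multiplying by $s$ at the end is the trick that forces $X_\sigma \subset X_s$ without destroying non-emptiness, the latter being guaranteed because $u \neq 0$ on the integral scheme $X_s$. Apart from this small maneuver, everything reduces to standard bookkeeping via Deligne's formula, so I do not expect any further obstacle.
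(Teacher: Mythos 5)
Your proof is correct, but it takes a genuinely different route from the paper's. The paper reduces to the projective case via Chow's lemma (using Lemmas \ref{lemma-pullbackisbig} and \ref{prop-pullbackbigthenbig} to transfer bigness along the birational morphism) and then invokes the classical decomposition of a big divisor as ample plus effective, absorbing $\ecal{M}$ into the ample part. You instead work directly with Definition \ref{defn-big}: starting from an affine non-vanishing locus $X_s$ for a power of $\ecal{L}$, you pick a non-zero section $u$ of $\ecal{M}|_{X_s}$, spread it out by Deligne's formula, and multiply by $s$ to confine the non-vanishing locus to $(X_s)_u \subset X_s$. This is exactly the maneuver the paper itself uses in Lemma \ref{lemma-afffromqaff} and Remark \ref{remark-bigstructuresheaf}, and it buys you something: your argument never uses properness (or even finite type over a field), so it proves the statement for any integral quasi-compact quasi-separated scheme with the paper's general notion of bigness, whereas the paper's proof is tied to classical positivity on projective varieties. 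One small imprecision: $(X_s)_u$ is not literally a principal open of the affine scheme $X_s$ unless $\ecal{M}$ trivializes there; it is the complement of an effective Cartier divisor in an affine scheme, which is still affine because the inclusion is locally the inclusion of a principal open and hence an affine morphism (the paper uses this exact reasoning in Lemma \ref{lemma-finlocfreebig}). Alternatively, you could first shrink $X_s$ to a principal open on which $\ecal{M}$ is trivial, as in Remark \ref{remark-bigstructuresheaf}. Either fix is routine, so this does not affect the validity of your argument.
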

\begin{proof}
    Consider a proper birational surjective morphism $\pi: X' \to X$ with $X'$ a projective variety given by Chow's lemma. By Lemma \ref{lemma-pullbackisbig} and Lemma \ref{prop-pullbackbigthenbig},  $\ecal L^{\tens m} \tens \ecal M$ is big if and only if $\pi^*(\ecal L^{\tens m} \tens \ecal M)$ is big, so we may assume $X$ is projective. Then, for a large enough integer $m>0$, we can write $\ecal L^{\tens m}=\ecal O_X(A)\otimes \ecal O_X(E)$, where $A$ is an ample Cartier divisor on $X$ and $E$ is an effective Cartier divisor on $X$ by \cite{lazarsfeld2017positivity}*{Corollary 2.2.7}. Now, since $\ecal O_X(nA) \tens \ecal M$ is ample for a large enough integer $n > 0$, we see $\ecal L^{\tens nm} \tens \ecal M$ is big again by \cite{lazarsfeld2017positivity}*{Corollary 2.2.7}.  
\end{proof}
\section{Main result}
\label{section-mainresult}

The main result is as follows. 

\begin{theorem}
\label{ref-theoremmain}
    Let $X$ be a Noetherian scheme. Let $\ecal{L}$ be a line bundle on $X$. Then $\ecal{L}$ is $\otimes$-generating if and only if for every integral closed subscheme $Z \subset X$, either $\ecal{L}|_{Z}$ is big or $\ecal{L}^{-1}|_{Z}$ is big. 
\end{theorem}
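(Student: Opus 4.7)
I would prove both directions via the reformulation of $\otimes$-ampleness given in Lemma \ref{lemma-equiv}.

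For the forward direction, suppose $\ecal{L}$ is $\otimes$-ample on $X$ and let $\iota: Z \hookrightarrow X$ be an integral closed subscheme. An adjunction argument shows $\ecal{L}|_Z$ is $\otimes$-ample on $Z$: for any nonzero $K \in D_{\operatorname{QCoh}}(Z)$, we have $\iota_* K \neq 0$ in $D_{\operatorname{QCoh}}(X)$ ($\iota$ being affine), and $\hom_X(\ecal{L}^n[i], \iota_* K) = \hom_Z((\ecal{L}|_Z)^n[i], K)$. This reduces the task to showing: $\otimes$-ampleness of $\ecal{L}$ on an integral Noetherian scheme $Z$ implies $\ecal{L}$ or $\ecal{L}^{-1}$ is big. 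I would test Lemma \ref{lemma-equiv} on the nonzero object $Rj_*\ecal{O}_U$ for $j: U \hookrightarrow Z$ a nonempty affine open, yielding some $n \in \mathbb{Z}$ with $0 \neq H^0(U, \ecal{L}^{-n}|_U) = \hom_Z(\ecal{L}^n, Rj_*\ecal{O}_U)$ (affinity of $U$ forcing the shift index to be zero), and then use Deligne's formula (Lemma \ref{lemma-deligne}) to lift an appropriate power to a global section of $\ecal{L}^{\pm N}$ on $Z$ whose non-vanishing locus is affine, giving bigness via Lemma \ref{lemma-bigequiv}.

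The reverse direction I would handle by Noetherian induction on closed subschemes of $X$. Let $X_1, \ldots, X_k$ be the irreducible components (with reduced structure); by hypothesis and Lemma \ref{lemma-sufficestochekirredcomponents}, for each $i$ there exist $n_i \in \mathbb{Z} \setminus \{0\}$ and $s_i \in H^0(X, \ecal{L}^{n_i})$ with $X_{s_i}$ nonempty affine and contained in $X_i$. Given $0 \neq K \in D_{\operatorname{QCoh}}(X)$ assumed to satisfy $\hom_X(\ecal{L}^n[i], K) = 0$ for all $n, i$, Deligne's formula (Lemma \ref{lemma-deligne}(ii)) applied to each $s_i$ gives $R\Gamma(X_{s_i}, K|_{X_{s_i}}) = 0$, and affinity forces $K|_{X_{s_i}} = 0$. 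Hence $\supp K \subset X' := X \setminus \bigcup_i X_{s_i}$, a proper closed subset of $X$ (each $X_{s_i}$ meeting the generic point of $X_i$).

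Let $\iota: X' \hookrightarrow X$ be the closed immersion with reduced structure. The bigness hypothesis restricts to $X'$, so by Noetherian induction $\ecal{L}|_{X'}$ is $\otimes$-ample on $X'$. Since $K$ is set-supported on $X'$, $K = \iota_* \iota^! K$ and hence $\iota^! K \neq 0$ in $D_{\operatorname{QCoh}}(X')$. The adjunction $L\iota^* \dashv \iota_*$ then gives
\[
\hom_X(\ecal{L}^n[i], K) = \hom_{X'}((\ecal{L}|_{X'})^n[i], \iota^! K) \quad \text{for all } n, i,
\]
so the vanishing assumption forces the right-hand side to vanish identically; applying Lemma \ref{lemma-equiv} on $X'$ then forces $\iota^! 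K = 0$, the desired contradiction. The main obstacle is the forward direction's globalization step: upgrading a local section of $\ecal{L}^{\pm n}$ on an affine open $U$ to a global section of a power of $\ecal{L}^{\pm 1}$ on $Z$ whose non-vanishing locus is \emph{affine} (not merely quasi-affine), which requires careful application of Deligne's formula combined with the flexibility afforded by Lemma \ref{lemma-bigequiv}.
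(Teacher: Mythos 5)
Your forward direction does not work. The test object $Rj_*\ecal{O}_U$ carries no information: since $U$ is a non-empty affine open, $\operatorname{Hom}_Z(\ecal{L}^{\otimes n}[i], Rj_*\ecal{O}_U) = H^{-i}(U, \ecal{L}^{\otimes -n}|_U)$ is non-zero for $i=0$ and \emph{every} $n$, including $n=0$, so Lemma \ref{lemma-equiv} applied to this object yields nothing and in particular cannot force $n\neq 0$. Moreover, even granted a non-zero section of $\ecal{L}^{\otimes -n}|_U$ with $n \neq 0$, Deligne's formula (Lemma \ref{lemma-deligne}) only extends sections from opens of the form $Z_s$ with $s$ a global section of a power of $\ecal{L}$ --- which is precisely what you are trying to manufacture; it says nothing about an arbitrary affine open $U\subset Z$. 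The real content of this direction is to show that some non-zero power of a $\otimes$-ample line bundle on an integral scheme has a non-zero global section at all, and to handle the case where none does. The paper does this via Proposition \ref{prop-existsasection} and Corollary \ref{corollary-steinisbirational}: if $H^0(X,\ecal{L}^{\otimes n})=0$ for all $n\neq 0$, the sheaves filtered by powers of $\ecal{L}$ form a weak Serre subcategory (Lemma \ref{lemma-pofl}), which forces every coherent sheaf on the generic fibre of $X\to\operatorname{Spec}H^0(X,\ecal{O}_X)$ to be locally free, hence that fibre is a point and $\ecal{O}_X$ is big; otherwise one restricts to $X_s$ where $\ecal{L}$ becomes torsion and runs the same argument. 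None of this is present in, or recoverable from, your sketch.

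Your reverse direction starts out exactly as the paper's Lemma \ref{lemma-main}: Deligne's formula correctly gives $K|_{X_{s_i}}=0$, so $K$ is set-theoretically supported on the proper closed subset $X'$. But the step ``$K=\iota_*\iota^!K$'' for the \emph{reduced} closed subscheme $X'$ is false: an object of $D_{\operatorname{QCoh},X'}(X)$ need not be a pushforward from $X'_{\mathrm{red}}$ (already $\ecal{O}_X/\ecal{I}^2$ fails this), and since $K$ is an arbitrary, possibly unbounded object of $D_{\operatorname{QCoh}}(X)$, it need not be a pushforward from any single infinitesimal thickening either. The paper avoids this (Lemma \ref{lemma-supportedcat}) by never trying to push $K$ forward: it tensors $K$ with the perfect complexes $P_r=(\ecal{L}^{\otimes -r}\xrightarrow{s^r}\ecal{O}_X)$, shows $P_r^\vee\otimes^{\mathbb{L}}K\neq 0$ for some $r$, and then uses Rouquier's approximation to write the \emph{bounded coherent} complex $P_r^\vee$ as a pushforward from a thickening $D_n$, where the inductive hypothesis applies via the projection formula. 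Your induction would also have to be carried out over all thickenings of $X'$ rather than the reduced structure alone. This direction is therefore repairable, but the repair is exactly the missing Lemma \ref{lemma-supportedcat}, which is not a routine adjunction argument.
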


For the definition of big in this generality, see Definition \ref{defn-big}. 

\begin{example}
\label{example-reducibleconic}
If $k$ is a field, $X$ is a union of two copies of $\mathbb{P}^1_k$ glued along a node, and $\ecal{L}$ is obtained by gluing $\ecal{O}(1)$ on one copy with $\ecal{O}(-1)$ on the other, then $\ecal{L}$ is $\otimes$-generating. 
\end{example}

Here are several equivalent formulations of the criterion.

\begin{lemma} \label{lemma-equivcondit}
    Let $X$ be a Noetherian scheme. Let $\ecal{L}$ be a line bundle on $X$. The following are equivalent:
    \begin{enumerate}
        \item For every integral closed subscheme $Z \subset X$, either $\ecal{L}|_{Z}$ is big or $\ecal{L}^{-1}|_{Z}$ is big. 
        \item For every non-empty closed subscheme $Y \subset X$, there exists an integer $n$ and $s \in \Gamma(Y, \ecal{L}^{\otimes n}|_{Y})$ such that the open $Y_s$ is non-empty and affine.
        \item There exists a stratification
        $$
        X = X^0 \supset X^1 \supset \cdots \supset X^{n+1} = \emptyset
        $$
        of $X$ by closed subschemes $X^i$ such that each $X^i \setminus X^{i+1}$ is affine and equal to the non-vanishing locus of a global section of a power of $\ecal{L}|_{X^i}$. \qedhere
    \end{enumerate}
\end{lemma}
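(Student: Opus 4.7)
The plan is to establish the cycle of implications (i) $\Rightarrow$ (ii) $\Rightarrow$ (iii) $\Rightarrow$ (i). For (i) $\Rightarrow$ (ii), given any non-empty closed subscheme $Y \subset X$, I would pick an irreducible component $Z$ of $Y$ with its reduced subscheme structure; since $Z$ is then an integral closed subscheme of $X$, hypothesis (i) tells us that one of $\ecal{L}|_Z, \ecal{L}^{-1}|_Z$ is big. Applying Lemma \ref{lemma-sufficestochekirredcomponents} to the Noetherian scheme $Y$ together with the irreducible component $Z$ and whichever of $\ecal{L}|_Y, \ecal{L}^{-1}|_Y$ is big on $Z$ then yields an integer $n \in \mathbb{Z}$ and a section $s \in \Gamma(Y, \ecal{L}^{\otimes n}|_Y)$ with $Y_s$ non-empty and affine.

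For (ii) $\Rightarrow$ (iii), I would proceed by Noetherian induction: set $X^0 = X$, and given $X^i \neq \emptyset$, apply (ii) with $Y = X^i$ to obtain an integer $n_i$ and a section $s_i \in \Gamma(X^i, \ecal{L}^{\otimes n_i}|_{X^i})$ with $(X^i)_{s_i}$ non-empty and affine. Define $X^{i+1}$ to be the complementary closed subset of $X^i$, with its reduced scheme structure. Since each $X^{i+1}$ is a proper closed subscheme of $X^i$, the descending chain of closed subsets in the Noetherian space $X$ stabilizes at $\emptyset$ after finitely many steps, yielding a stratification of the required shape.

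For (iii) $\Rightarrow$ (i), let $Z \subset X$ be an integral closed subscheme and let $i$ be the largest index with $Z \subset X^i$; this is well-defined because $Z \subset X^0$ but $Z \not\subset X^{n+1} = \emptyset$. Then $Z \not\subset X^{i+1}$, so $Z_{s_i|_Z} = Z \cap (X^i)_{s_i}$ is a non-empty open subscheme of $Z$, and being a closed subscheme of the affine scheme $(X^i)_{s_i}$ it is itself affine. If $n_i > 0$ this shows $\ecal{L}|_Z$ is big by Lemma \ref{lemma-bigequiv}; if $n_i < 0$ the same lemma applied to $\ecal{L}^{-1}$ shows $\ecal{L}^{-1}|_Z$ is big; and if $n_i = 0$ then $\ecal{O}_Z$ is big, whence $\ecal{L}|_Z$ is big by Remark \ref{remark-bigstructuresheaf}. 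I do not foresee major obstacles; the delicate point is this final case analysis, particularly the $n_i = 0$ case, where one must invoke Remark \ref{remark-bigstructuresheaf} to promote bigness of $\ecal{O}_Z$ to bigness of $\ecal{L}|_Z$.
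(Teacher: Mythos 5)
Your proposal is correct and follows essentially the same route as the paper: (i) $\Rightarrow$ (ii) via Lemma \ref{lemma-sufficestochekirredcomponents} applied to an irreducible component of $Y$, (ii) $\Rightarrow$ (iii) by Noetherian induction, and (iii) $\Rightarrow$ (i) by restricting the stratifying section to $Z$ and performing the same sign analysis on the exponent, including the $n_i = 0$ case handled by Remark \ref{remark-bigstructuresheaf}.
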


\begin{remark}
    A line bundle $\ecal{L}$ on a scheme $X$ is ample if there is a finite open covering by affine schemes of the form $X_s$ where $s \in \Gamma(X, \ecal{L}^{\otimes n})$ for some $n > 0$. Condition (iii) above is strongly analagous. It replaces ``open covering'' with ``stratification'' and ``$n>0$'' with ``$n \in \mathbb{Z}$.''
\end{remark}

\begin{proof}
(i) $\implies$ (ii) follows from Lemma \ref{lemma-sufficestochekirredcomponents} applied to an irreducible component of $Y$.

(ii) $\implies$ (iii) by induction and the fact that $X$ is Noetherian.

For (iii) $\implies$ (i), let $Z \subset X$ be an integral closed subscheme. Let $i$ be such that $Z \subset X^i$ but $Z \not \subset X^{i+1}$. There is an integer $r$ and $s \in \Gamma(X^i, \ecal{L}^{\otimes r}|_{X_i})$ such that $X^i \setminus X^{i+1} = X^i_s$. Then consider the section $\bar{s} = s|_{Z} \in \Gamma(Z, \ecal{L}^{\otimes k}|_{Z})$.  We have 
$$Z_{\bar{s}} = Z \setminus X^{i+1} \subset X^i \setminus X^{i+1}$$
is quasi-affine, being a closed subscheme of a quasi-affine scheme. But then $\ecal{L}|_{Z}$ or its inverse is big according to whether $r > 0$ or $r <  0$ (if $r  = 0$ then $\ecal{O}_Z$ is big which implies any line bundle on $Z$ is big by Remark \ref{remark-bigstructuresheaf}).
\end{proof}

\subsection{Proof of the ``if'' direction}

\begin{lemma}
\label{lemma-supportedcat}
    Let $X$ be a Noetherian scheme. Let $\ecal{L}$ be a line bundle on $X$. Let $s \in \Gamma(X, \ecal{L})$ be a section and for any integer $n \geq 1$ denote by $D_n$ the zero scheme of $s^n$ (write $D = D_1$). Assume $\ecal{L}|_{D_n}$ is $\otimes$-generating for every $n \geq 1$. Then for any non-zero object $K \in D_{\operatorname{QCoh}, D}(X)$, there exist integers $m, i$ such that $\operatorname{Ext}^i(\ecal{L}^{\otimes m}, K) \neq 0.$
\end{lemma}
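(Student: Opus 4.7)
The plan is to decompose $K$ via local cohomology along the filtration $\{D_n\}$, apply the $\otimes$-ampleness hypothesis on each $D_n$, and transport back to $X$ using the closed immersion adjunction together with the Koszul-type cofiber sequence for $s^n$.

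First, I would use that $K \in D_{\operatorname{QCoh}, D}(X)$ is equivalent to $K \simeq R\Gamma_D(K)$. For the principal ideal $(s) \subset \ecal{O}_X$ on the Noetherian scheme $X$, the standard local cohomology formula gives
$$R\Gamma_D(K) \simeq \operatorname{hocolim}_n R\mathcal{H}om_{\ecal{O}_X}(\ecal{O}_{D_n}, K) \simeq \operatorname{hocolim}_n i_{n*} Ri_n^! K,$$
where $i_n : D_n \hookrightarrow X$ is the closed immersion. Since a homotopy colimit of zero objects is zero, the hypothesis $K \neq 0$ forces some $Ri_n^! K \in D_{\operatorname{QCoh}}(D_n)$ to be non-zero.

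Next, for such $n$, I would invoke $\otimes$-ampleness of $\ecal{L}|_{D_n}$ via Lemma \ref{lemma-equiv} to produce integers $m, i$ and a non-zero morphism $\ecal{L}^{\otimes m}|_{D_n}[i] \to Ri_n^! K$ in $D_{\operatorname{QCoh}}(D_n)$. Transporting via the adjunction $i_{n*} \dashv Ri_n^!$ and using the projection formula to identify $i_{n*}(\ecal{L}^{\otimes m}|_{D_n}) = \ecal{L}^{\otimes m} \otimes_{\ecal{O}_X} \ecal{O}_{D_n}$, this yields a non-zero morphism
$$\ecal{L}^{\otimes m} \otimes_{\ecal{O}_X} \ecal{O}_{D_n}[i] \longrightarrow K$$
in $D_{\operatorname{QCoh}}(X)$.

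Finally, to pass from this ``twisted'' non-vanishing to a non-zero $\operatorname{Ext}^j(\ecal{L}^{\otimes m'}, K)$ between pure tensor powers, I would apply $R\operatorname{Hom}(-, K)$ to the cofiber sequence $\ecal{L}^{\otimes m-n} \xrightarrow{s^n} \ecal{L}^{\otimes m} \to C$ in $D_{\operatorname{QCoh}}(X)$, where $C = \ecal{L}^{\otimes m} \otimes \ecal{O}_{D_n}$ whenever $s^n$ is a regular section of $\ecal{L}^{\otimes n}$. The resulting long exact sequence shows that a non-zero class in $\operatorname{Ext}^{-i}(\ecal{L}^{\otimes m} \otimes \ecal{O}_{D_n}, K)$ either maps non-trivially to $\operatorname{Ext}^{-i}(\ecal{L}^{\otimes m}, K)$, or lies in the image of the connecting map from $\operatorname{Ext}^{-i-1}(\ecal{L}^{\otimes m-n}, K)$; in either case one of these Ext groups is non-zero, which is the required conclusion.

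The main obstacle I anticipate is handling the case when $s^n$ is a zero-divisor, so that the derived cofiber $C$ picks up an extra $\ker(s^n)$ contribution beyond the classical $\ecal{L}^{\otimes m} \otimes \ecal{O}_{D_n}$, and the twisted Ext from the adjunction step does not directly equal $\operatorname{Ext}^{-i}(C, K)$. Since $\ker(s^n)$ is itself annihilated by $s^n$ and hence supported scheme-theoretically on $D_n$, this discrepancy can be absorbed either by iterating the long exact sequence argument with an additional application of $\otimes$-ampleness on $D_n$, or by rephrasing the setup throughout in terms of the derived zero scheme of $s^n$ so that the adjunction in the middle step already produces $\operatorname{Ext}^{-i}(C, K)$ directly.
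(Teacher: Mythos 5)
Your first two steps are sound: on a Noetherian scheme the ideal of $D_n$ is the $n$-th power of the ideal of $D$, so $R\Gamma_D(K)\simeq\operatorname{hocolim}_n R\ecal{H}om(\ecal{O}_{D_n},K)\simeq\operatorname{hocolim}_n i_{n*}Ri_n^!K$ does force $Ri_n^!K\neq 0$ for some $n$, and adjunction plus the projection formula then yields a nonzero class in $\operatorname{Ext}^{-i}_X(\ecal{L}^{\otimes m}\otimes\ecal{O}_{D_n},K)$. When $s$ is a regular section the untwisting step also goes through and your argument is complete (and avoids Rouquier's lemma). The genuine gap is the final step in general, and neither repair you sketch closes it. The ``iterate the long exact sequence'' repair is circular: writing $\ecal{I}_n=\operatorname{im}(s^n\colon\ecal{L}^{-n}\to\ecal{O}_X)$ and $\ecal{A}_n=\operatorname{Ker}(s^n\colon\ecal{O}_X\to\ecal{L}^{\otimes n})$, the exact sequences $0\to\ecal{I}_n\to\ecal{O}_X\to\ecal{O}_{D_n}\to 0$ and $0\to\ecal{A}_n\to\ecal{O}_X\to\ecal{I}_n\otimes\ecal{L}^{\otimes n}\to 0$ only let you trade non-vanishing of $R\operatorname{Hom}(\ecal{L}^{\otimes m}\otimes\ecal{O}_{D_n},K)$ for non-vanishing of $R\operatorname{Hom}(\ecal{L}^{\otimes m'}\otimes\ecal{A}_n,K)$ and then back again. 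A further application of $\otimes$-ampleness on $D_n$ does not help, because $\ecal{A}_n$ is merely a coherent $\ecal{O}_{D_n}$-module, not a perfect complex on $D_n$: classical generation of $D_{\operatorname{perf}}(D_n)$ by the powers of $\ecal{L}|_{D_n}$ says nothing about maps \emph{out of} $\ecal{A}_n$, and the adjunction only returns you to the statement $Ri_n^!K\neq 0$ that you started from.

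Your second repair is the right idea but it changes the architecture of the proof and needs an ingredient you have not supplied. Replacing $\ecal{O}_{D_n}$ by the Koszul complex $P_r=(\ecal{L}^{-r}\xrightarrow{s^r}\ecal{O}_X)$ makes the untwisting automatic with no regularity hypothesis, since $\ecal{L}^{\otimes m}\otimes^{\mathbb{L}}P_r$ is literally the two-term complex $(\ecal{L}^{\otimes m-r}\to\ecal{L}^{\otimes m})$; this is exactly what the paper does, and the non-vanishing of $P_r^\vee\otimes^{\mathbb{L}}K$ for some $r$ has an elementary local proof. But $P_r^\vee$ is then no longer the pushforward of anything from a closed subscheme, so your adjunction step breaks; one needs the additional fact that a bounded coherent complex supported on $D$ is $Ri_{n*}$ of a complex on $D_n$ for $n\gg 0$ (the paper cites \cite[Lemma 7.40]{rouquier2008dimensions}) before $\otimes$-ampleness of $\ecal{L}|_{D_n}$ can be applied via the projection formula. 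Without either that lemma or a genuine reduction to the regular-section case (say by splitting off the stabilized annihilator $\bigcup_n\ecal{A}_n$, itself a nontrivial argument), the proof is incomplete.
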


\begin{proof}
    Let $K \in D_{\operatorname{QCoh}, D}(X)$. For an integer $r \geq 1$ let $P_r$ be the perfect complex $\ecal{L}^{-r} \xrightarrow{s^r} \ecal{O}_X$. Then there exists an integer $r \geq 1$ such that 
    $$
    P_r^\vee \otimes^{\mathbb{L}} K = R \ecal{H}om_{X}(P_r, K) \neq 0:
    $$
    This is a local question so it reduces to $X$ affine and $\ecal{L} = \ecal{O}_X$. Then $H^i(K) \neq 0$ for some $i$ so there exists a morphism $\varphi: \ecal{O}_X \to K[i]$ corresponding to an element $x \in H^i(K)$. Since $K$ is supported on $D$ we have $s^r x = 0$ for some $r$ which shows that $\varphi$ factors through a (necessarily non-zero) morphism $P_r \to K[i]$. 

    Note that $P_r^\vee \in D^b_{\operatorname{Coh}, D}(X)$ so by \cite[Lemma 7.40]{rouquier2008dimensions}, there exists an integer $n \geq 1$ such that, if $i : D_n \to X$ denotes the inclusion, $P_r^\vee = Ri_*(L)$ for some $L \in D^b_{\operatorname{Coh}}(D_n)$. Then 
    $$
    P_r^\vee \otimes^{\mathbb{L}} K = Ri_*(L) \otimes^{\mathbb{L}} K = Ri_*(L \otimes^{\mathbb{L}}Li^*K).
    $$
    Hence there exists $m$ such that
    $$
    R \operatorname{Hom}(\ecal{L}^{\otimes m}, P_r^\vee \otimes^{\mathbb{L}} K) = R \operatorname{Hom}(\ecal{L}^{\otimes m}, Ri_*(L \otimes^{\mathbb{L}}Li^*K)) = R \operatorname{Hom}(\ecal{L}^{\otimes m}|_{D_n}, L \otimes^{\mathbb{L}}Li^*K) \neq 0,
    $$
    by the assumption that $\ecal{L}|_{D_n}$ is $\otimes$-generating. But
    $$
    R \operatorname{Hom}(\ecal{L}^{\otimes m}, P_r^\vee \otimes^{\mathbb{L}} K) = R \operatorname{Hom}(\ecal{L}^{\otimes m} \otimes^{\mathbb{L}}P_r, K) = R\operatorname{Hom}((\ecal{L}^{m-r} \xrightarrow{s^r}\ecal{L}^m), K)
    $$
    so this implies implies either $R \operatorname{Hom}(\ecal{L}^{\otimes m}, K) \neq 0$ or $R \operatorname{Hom}(\ecal{L}^{\otimes m - r}, K) \neq 0$, as needed.
\end{proof}

\begin{lemma}
\label{lemma-main}
    Let $X$ be a Noetherian scheme. Let $\ecal{L}$ be a line bundle on $X$. Assume there exists an integer $n$ and a section $s \in \Gamma (X, \ecal{L}^{\otimes n})$ such that:
    \begin{enumerate}
        \item The open $X_s = \{s \neq 0\} \subset X$ is quasi-affine, and
        \item For every integer $r \geq 1$, denoting $D_r$ the closed subscheme defined by $s^r$ (and $D = D_1$), the line bundle $\ecal{L}|_{D_r}$ on $D_r$ is $\otimes$-generating. 
    \end{enumerate}
    Then $\ecal{L}$ is $\otimes$-generating.
\end{lemma}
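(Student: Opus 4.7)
By Lemma \ref{lemma-equiv}, to show $\ecal{L}$ is $\tens$-ample it suffices to fix an arbitrary nonzero $K \in D_{\operatorname{QCoh}}(X)$ and produce integers $m, i$ with $\operatorname{Hom}(\ecal{L}^{\otimes m}[i], K) \neq 0$. The natural dichotomy is: either $K|_{X_s} \neq 0$ or $K|_{X_s} = 0$ (equivalently $K \in D_{\operatorname{QCoh}, D}(X)$, where $D = V(s)$). In the second case Lemma \ref{lemma-supportedcat}, which is the technical heart of the argument and uses assumption (ii), directly provides the required morphism. So the real content of this lemma is handling the first case, where we must exploit assumption (i) via Deligne's formula.

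Assume then $K|_{X_s} \neq 0$. First I would note that since $X_s \subset X$ is quasi-affine, $\ecal{O}_{X_s}$ classically generates $D_{\operatorname{perf}}(X_s)$: indeed $\ecal{O}$ trivially classically generates $D_{\operatorname{perf}}$ of the affine scheme $\operatorname{Spec}(H^0(X_s, \ecal{O}_{X_s}))$, and Lemma \ref{lemma-qaffinepullback} applied to the canonical open immersion $X_s \hookrightarrow \operatorname{Spec}(H^0(X_s, \ecal{O}_{X_s}))$ transports this generation to $X_s$. By (the general fact invoked in the proof of) Lemma \ref{lemma-equiv}, this classical generation is equivalent to saying that $\ecal{O}_{X_s}$ generates $D_{\operatorname{QCoh}}(X_s)$, so from $K|_{X_s} \neq 0$ we obtain some integer $i$ with
$$
\operatorname{Hom}_{D_{\operatorname{QCoh}}(X_s)}(\ecal{O}_{X_s}, K|_{X_s}[i]) \neq 0.
$$

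Now I would apply Deligne's formula (Lemma \ref{lemma-deligne}(ii)) to the line bundle $\ecal{L}^{\otimes n}$ and the section $s$: the group above is identified with
$$
\operatorname{colim}_{r \geq 0} \operatorname{Hom}_{D_{\operatorname{QCoh}}(X)}(\ecal{L}^{\otimes -rn}, K[i]).
$$
Since this colimit is nonzero, some term in the system is nonzero, and taking $m = -rn$ for a suitable $r$ yields the desired nonzero morphism $\ecal{L}^{\otimes m}[-i] \to K$. Combined with Case 2 handled by Lemma \ref{lemma-supportedcat}, the criterion of Lemma \ref{lemma-equiv} is satisfied and $\ecal{L}$ is $\tens$-ample. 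The plan is thus very short once Lemmas \ref{lemma-deligne}, \ref{lemma-qaffinepullback} and \ref{lemma-supportedcat} are in hand; the only subtlety is ensuring that the quasi-affineness of $X_s$ is leveraged correctly, so that $\ecal{O}_{X_s}$ really is a generator on the open complement of $D$.
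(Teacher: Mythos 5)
Your proposal is correct and follows essentially the same route as the paper's proof: the same dichotomy on whether $K|_{X_s}$ vanishes, with Lemma \ref{lemma-supportedcat} handling the supported case and Deligne's formula transporting a nonzero morphism $\ecal{O}_{X_s} \to K|_{X_s}[i]$ to a nonzero morphism $\ecal{L}^{\otimes -rn}[-i] \to K$ in the other case. The only cosmetic difference is that the paper justifies the generation of $D_{\operatorname{QCoh}}(X_s)$ by $\ecal{O}_{X_s}$ by noting it is ample on the quasi-affine $X_s$, whereas you route through Lemma \ref{lemma-qaffinepullback} and the open immersion into the affine hull; both are fine.
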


\begin{proof}
Let $K \in D_{\operatorname{QCoh}}(X)$ and assume 
$$
\operatorname{Hom}(\ecal{L}^{\otimes m}[i], K) = 0
$$
for all integers $m, i$. We must show $K = 0$.

First, we claim $K|_{X_s} = 0$. If not, then since $\ecal{L}|_{X_s} = \ecal{O}_{X_s}$ is ample, there exists a nonzero morphism $\varphi: \ecal{O}_{X_s} \to K_{X_s}[i]$ for some $i$. Then by Deligne's formula, there exists an integer $k$ and a morphism
$$
\ecal{L}^{\otimes -nk} \to K[i]
$$
whose restriction to $X_s$ is equal to $\varphi \neq 0$, a contradiction.

Therefore, $K \in D_{\operatorname{QCoh}, D}(X)$. But then Lemma \ref{lemma-supportedcat} shows that $\operatorname{Hom}(\ecal{L}^{\otimes m}[i], K) = 0$ for all $m , i$ implies $K = 0$, as needed.
\end{proof}

\begin{remark}
    We see from the proof that condition (i) can be replaced with the weaker: $\ecal{O}_{X_s}$ is $\otimes$-generating on $X_s$. That this is indeed weaker is discussed in \ref{subsection-tensampstructuresheaf} below. 
\end{remark}

\begin{proof}[Proof of "if" direction of Theorem \ref{ref-theoremmain}]
    Assume the equivalent conditions of Lemma \ref{lemma-equivcondit}. We show by Noetherian induction that for every closed subscheme $Y \subset X$, $\ecal{L}|_{Y}$ is $\otimes$-generating on $Y$. Suppose $Y \subset X$ is a minimal counterexample. Then by condition (ii) there exists a global section $s$ of a power of $\ecal{L}|_{Y}$ whose non-vanishing locus is non-empty and affine. Furthermore, if $D_n \subset Y$ denotes the zero scheme of $s^n$, then $\ecal{L}|_{D_n}$ is $\otimes$-generating for all $n \geq 1$ by our inductive hypothesis. Therefore, by Lemma \ref{lemma-main}, $\ecal{L}|_{Y}$ is $\otimes$-generating, a contradiction.
\end{proof}

\subsection{Proof of the ``only if'' direction}

To prove the "only if" direction of Theorem \ref{ref-theoremmain}, we need to show that a non-zero power $\otimes$-generating line bundle has many non-zero global sections. It is not immediately clear that there are \emph{any}. We produce such sections in Proposition \ref{prop-existsasection}, and the following technical lemma is needed for the proof. 

\begin{lemma}
\label{lemma-pofl}
Let $\ecal{A}$ be an Abelian category. Let $S \subset \operatorname{ob} \ecal{A}$ be a set of non-zero objects of $\ecal{A}$. Assume every morphism between objects in $S$ is either $0$ or an isomorphism. Denote $\ecal{B} \subset \ecal{A}$ the strictly full subcategory whose objects admit finite filtrations
\begin{equation}
\label{equn-filtration}
0 = E_0 \subset E_1 \subset \cdots \subset E_r = E
\end{equation}
such that for each $i = 0 , \dots , r-1$, the quotient $E_{i+1}/E_i$ is isomorphic to an object in $S$. Then $\ecal{B} \subset \ecal{A}$ is a weak Serre subcategory. That is, it is closed under extensions and if $E,F \in \ecal{B}$ and $\varphi : E \to F$ is a morphism in $\ecal{A}$, then $\operatorname{Ker}(\varphi) \in \ecal{B}$ and $\operatorname{Coker}(\varphi) \in \ecal{B}$. 
\end{lemma}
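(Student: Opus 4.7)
The plan is to handle the two halves of the statement separately. Closure under extensions is immediate from the definition: given a short exact sequence $0 \to E \to G \to F \to 0$ in $\ecal{A}$ with $E, F \in \ecal{B}$, a filtration of $E$ (viewed inside $G$ via the monomorphism $E \hookrightarrow G$) concatenated with the preimages under $G \twoheadrightarrow F$ of a filtration of $F$ yields a filtration of $G$ whose successive subquotients are those of $E$ followed by those of $F$, all isomorphic to objects in $S$.

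For closure under kernels and cokernels, I would proceed by induction on $\ell(E) + \ell(F)$, where $\ell$ denotes the length of a chosen filtration. The substantive base case, where the hypothesis on $S$ enters, is $E \in S$. Choose a filtration $0 = F_0 \subset F_1 \subset \cdots \subset F_r = F$ and let $i$ be the smallest index such that $\varphi$ factors through $F_i \hookrightarrow F$. If $i = 0$ then $\varphi = 0$ and $\ker \varphi = E$, $\cok \varphi = F$ both lie in $\ecal{B}$. Otherwise the induced morphism $E \to F_i/F_{i-1}$ is non-zero with both source and target in $S$, hence an isomorphism by hypothesis. This isomorphism splits the quotient $F_i \twoheadrightarrow F_i/F_{i-1}$, yielding a direct sum decomposition $F_i \cong F_{i-1} \oplus \im \varphi$ with $\im \varphi \cong E$. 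Consequently $\ker \varphi = 0$, and $\cok \varphi = F/\im \varphi$ inherits a filtration whose subquotients are those of $F$ with the $i$-th entry deleted, so $\cok \varphi \in \ecal{B}$.

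For the inductive step, assume $\ell(E) \geq 2$ and fix a filtration $0 = E_0 \subset E_1 \subset \cdots \subset E_r = E$. Consider the restriction $\varphi_1 := \varphi|_{E_1}\colon E_1 \to F$ and the induced morphism $\bar\varphi\colon E/E_1 \to F/\im \varphi_1$. The base case applies to $\varphi_1$ (since $E_1 \in S$), showing in particular that $\im \varphi_1$ and $F/\im \varphi_1$ both lie in $\ecal{B}$, and the inductive hypothesis applies to $\bar\varphi$ since $\ell(E/E_1) = r - 1 < \ell(E)$. The snake lemma applied to the evident commutative diagram with short exact rows
\begin{equation*}
0 \to E_1 \to E \to E/E_1 \to 0, \qquad 0 \to \im \varphi_1 \to F \to F/\im \varphi_1 \to 0,
\end{equation*}
connected vertically by the epi part of $\varphi_1$, by $\varphi$, and by $\bar\varphi$, then yields a short exact sequence $0 \to \ker \varphi_1 \to \ker \varphi \to \ker \bar\varphi \to 0$ together with an isomorphism $\cok \varphi \cong \cok \bar\varphi$. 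Closure under extensions, already established, places $\ker \varphi$ in $\ecal{B}$, and $\cok \varphi$ lies in $\ecal{B}$ directly.

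The main obstacle is the base case $E \in S$, which is the only step that actually invokes the hypothesis on $S$; everything else is a formal snake-lemma chase. A small subtlety is ensuring that the inductive hypothesis applies to $\bar\varphi$, which requires first knowing that $F/\im \varphi_1$ lies in $\ecal{B}$, but this is supplied by the base case applied to $\varphi_1$.
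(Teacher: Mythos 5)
Your proof is correct and follows essentially the same strategy as the paper's: induction on the lengths of the chosen filtrations, with the case $E \in S$ as the crucial step where the hypothesis on $S$ is invoked, followed by a snake-lemma reduction for longer $E$. The only differences are cosmetic --- you settle the case $E \in S$ in one stroke by locating the minimal $i$ with $\varphi$ factoring through $F_i$ (where the paper instead peels off the top quotient $F/F_{s-1}$ and recurses on $s$), and your inductive step treats the subcases $\varphi|_{E_1} = 0$ and $\varphi|_{E_1} \neq 0$ uniformly by passing to $F/\operatorname{Im}(\varphi|_{E_1})$, where the paper handles them separately.
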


\begin{remark}
\label{remark-rislength}
A consequence of the Lemma is that the integer $r$ in (\ref{equn-filtration}) is well-defined and equal to the length of $E$ in the Abelian category $\ecal{B}$. We cannot use this in the proof however. 
\end{remark}

\begin{proof}
    Closure under extensions is immediate. Assume $E, F \in \ecal{B}$ and $\varphi : E \to F$ is a morphism in $\ecal{A}$.  There are filtrations
    $$
    0 = E_0 \subset E_1 \subset \cdots \subset E_r = E, \hspace{5 em} 0 = F_0 \subset F_1 \subset \cdots \subset F_s = F
    $$
    where each successive quotient is isomorphic to an object in $S$. We prove the $\operatorname{Ker}(\varphi), \operatorname{Coker}(\varphi) \in \ecal{B}$ by double induction on $(r, s)$. When $(r, s) = (1,1)$, it is immediate from the assumption on $S$.

    Next consider the case $r = 1, s$ arbitrary. Consider the composition $\bar{\varphi} : E \to F \to F/F_{s-1}$. This is (isomorphic to) a map between objects in $S$ so is either zero or an isomorphism.

    Case 1: $\bar{\varphi}$ is an isomorphism.

    This means $\varphi: E \to F$ is the inclusion of a direct summand isomorphic to $F/F_{s-1}$. We have $\operatorname{Ker}(\varphi) = 0$ and $\operatorname{Coker}(\varphi) \cong F_{s-1} \in \ecal{B}$. 

    Case 2: $\bar{\varphi} = 0$. 

    Then $\varphi$ factors through $F_{s-1}$ and we have $\operatorname{Ker}(\varphi) = \operatorname{Ker}(E \to F_{s-1})$ and $\operatorname{Coker}(\varphi)$ fits in a short exact sequence
    $$
    0\to \operatorname{Coker}(E \to F_{s-1}) \to \operatorname{Coker}(\varphi) \to F/F_{s-1} \to 0
    $$
    and we conclude that both $\operatorname{Ker}(\varphi)$ and $\operatorname{Coker}(\varphi)$ are in $\ecal{B}$ by induction on $s$.

    Now we do the case where $(r, s)$ are arbitrary. We do this by induction on $r$. Consider the composition $\psi : {E}_1 \to {E} \to {F}$. 

    Case 1: $\psi = 0$. 

    In this case, $\varphi$ factors through a morphism ${E}/{E}_1 \to {F}$ and we have $\operatorname{Coker}(\varphi) = \operatorname{Coker}({E}/{E}_1 \to {F})$ and the kernels fit into a short exact sequence
    $$
    0 \to {E}_1 \to \operatorname{Ker}(\varphi) \to \operatorname{Ker}({E}/{E}_1 \to {F}) \to 0.
    $$
    We conclude by induction on $r$ that both $\operatorname{Ker}(\varphi)$ and $\operatorname{Coker}(\varphi)$ are in $\ecal{B}$. 

    Case 2: $\psi \neq 0$. 

    By the $r = 1$ case we see that $\psi$ is injective. Namely,
    $\operatorname{Ker}(\psi) \subset E_1$ is an object of $\ecal{B}$. Therefore if $\operatorname{Ker}(\psi) \neq 0$ then there are monomorphisms $A \hookrightarrow \operatorname{Ker}(\psi) \hookrightarrow E_1$ where $A \in S$. Since $A \to E_1$ is a monomorphism between objects of $S$ it is an isomorphism. But then $\operatorname{Ker}(\psi) = E_1$ and $\psi = 0$, a contradiction. 
    
    We also have $\operatorname{Coker}(\psi) \in \ecal{B}$ by the $r = 1$ case. Thus, possibly changing $s$ and the filtration ${F}_\bullet$ of ${F}$, we may assume $\varphi$ maps ${E}_1$ isomorphically to ${F}_1$ (explicitly, replace ${F}_1$ with the image of $\psi$ and then get the rest of the filtration by pulling back the filtration on $\operatorname{Coker}(\psi)$). Then applying the snake lemma to the diagram
    \begin{center}\DisableQuotes
    \begin{tikzcd}
        0 \ar[r] & {E}_1 \ar[d,"\cong"] \ar[r] 
        & {E} \ar[d, "\varphi"] \ar[r] & 
        {E}/{E}_1 \ar[d] \ar[r] & 0 \\
        0 \ar[r] &{F}_1 \ar[r] &{F} \ar[r] & {F}/{F}_1 \ar[r] & 0,
    \end{tikzcd}
    \end{center}
    we see that ${E} \to {F}$ has the same
    kernel and cokernel as ${E}/{E}_1 \to 
    {F}/{F}_1$, hence we conclude by 
    induction on $r$. 
\end{proof}

\begin{props}
\label{prop-existsasection}
    Let $X$ be a Noetherian integral scheme over a field $k$ and assume $H^0(X, \ecal{O}_X) = k$. Let $S \subset \operatorname{Pic}(X)$ be a subset. Assume that for every $\ecal{L}, \ecal{M} \in S$ such that $\ecal{L} \not \cong \ecal{M}$, we have $\operatorname{Hom}_{\ecal{O}_X}(\ecal{L}, \ecal{M}) = 0$. If $\langle S \rangle = D_{\operatorname{perf}}(X)$, then $X = \operatorname{Spec}(k)$.
\end{props}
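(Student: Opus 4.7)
The plan is to apply Lemma \ref{lemma-pofl} to $S$ viewed as a subset of the objects of $\operatorname{QCoh}(X)$. Since $\operatorname{End}_{\ecal O_X}(\ecal L) = H^0(X, \ecal O_X) = k$ for every line bundle $\ecal L$ on $X$, any nonzero endomorphism of a line bundle is an isomorphism; combined with the hypothesis that $\operatorname{Hom}(\ecal L, \ecal M) = 0$ for non-isomorphic $\ecal L, \ecal M \in S$, every nonzero morphism between elements of $S$ is an isomorphism. The lemma then yields a weak Serre subcategory $\ecal B \subseteq \operatorname{QCoh}(X)$ whose objects admit a finite filtration with successive quotients in $S$. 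A straightforward induction on the filtration length shows every object of $\ecal B$ is locally free: line bundles are locally free, and an extension of a locally free sheaf by a locally free sheaf is locally free, since the sequence splits locally (the quotient is locally projective).

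Next, let $\ecal C \subseteq D_{\operatorname{QCoh}}(X)$ denote the strictly full subcategory of complexes $K$ with $H^i(K) \in \ecal B$ for all $i$. The long exact sequence in cohomology combined with the weak Serre property of $\ecal B$ shows $\ecal C$ is triangulated. For the thickness, a weak Serre subcategory is automatically idempotent complete: given a summand $H^i(K') \subseteq H^i(K)$ with $H^i(K) \in \ecal B$, the defining splitting idempotent is a morphism in $\ecal B$ whose kernel and cokernel lie in $\ecal B$ by Lemma \ref{lemma-pofl}, so $H^i(K') \in \ecal B$. Hence $\ecal C$ is thick, and since it contains $S$ it contains $\langle S \rangle = D_{\operatorname{perf}}(X)$. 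In particular every perfect complex on $X$ has locally free cohomology sheaves.

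To conclude $X = \operatorname{Spec}(k)$, I would pick any affine open $U = \operatorname{Spec}(A) \subseteq X$ and show $A$ is a field. By Thomason--Trobaugh, every object of $D_{\operatorname{perf}}(U)$ is a direct summand of the restriction to $U$ of some perfect complex on $X$. Since restrictions of locally free coherent sheaves are locally free and direct summands of locally free coherent sheaves on a Noetherian scheme are locally free, every perfect complex on $U$ has locally free cohomology. Applied to the two-term perfect complex $[A \xrightarrow{a} A]$ for $a \in A$, whose only cohomology is $A/(a)$ in degree zero (since $A$ is a domain), this forces $A/(a)$ to be a projective $A$-module. But a nonzero proper quotient $A/(a)$ is killed by $a \neq 0$, while projective modules over a domain are torsion-free; so $a$ is zero or a unit, and $A$ is a field. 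Hence every affine open of $X$ is a single point; since $X$ is integral, $X$ itself is a single point, and $H^0(X, \ecal O_X) = k$ yields $X = \operatorname{Spec}(k)$. The main obstacle is this final step: translating the global statement that perfect complexes on $X$ have locally free cohomology into a local constraint on each affine piece. Thomason--Trobaugh provides exactly this passage, and the short domain-theoretic Koszul argument then finishes the proof.
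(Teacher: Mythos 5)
Your proof is correct, and it follows the paper's own argument up to the key intermediate conclusion that every perfect complex on $X$ has locally free cohomology sheaves: both proofs apply Lemma \ref{lemma-pofl} to $S$ (your observation that $\operatorname{End}_{\ecal{O}_X}(\ecal{L}) = H^0(X,\ecal{O}_X) = k$ is exactly the check needed to verify its hypothesis), form the weak Serre subcategory $\ecal{B}$ of iterated extensions of members of $S$, and note that complexes with cohomology in $\ecal{B}$ form a thick triangulated subcategory containing $S$, hence all of $D_{\operatorname{perf}}(X)$. Where you genuinely diverge is the endgame. The paper invokes approximation by perfect complexes to conclude that \emph{every} coherent sheaf on $X$ is a cohomology sheaf of some perfect complex, hence a vector bundle, which forces $X$ to be a finite disjoint union of spectra of fields. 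You instead localize: the Thomason--Trobaugh extension theorem realizes any perfect complex on an affine open $U = \operatorname{Spec}(A)$ as a direct summand of a restricted global one, so the two-term complex $A \xrightarrow{a} A$ must have locally free cohomology, forcing $A/(a)$ to be projective over the domain $A$ and hence $a = 0$ or $a$ a unit. Both routes rest on one nontrivial external input (approximation versus extension of perfect complexes up to summands, and the latter is indeed the right tool for turning the global statement into a local one, as you note); yours has the mild advantage of only needing to test Koszul complexes of single elements rather than all coherent sheaves, and the deduction that each affine piece is a field is completely elementary. The conclusion $X = \operatorname{Spec}(k)$ then follows exactly as you say.
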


\begin{remark}
\label{remark-torsionlinebundle}
 We will apply this Lemma when $S \subset \operatorname{Pic}(X)$ is a subgroup. In this case, the condition $\operatorname{Hom}(\ecal{L}, \ecal{M}) = 0$ for $\ecal{L} \not\cong \ecal{M}$ can be replaced by $H^0(X, \ecal{L}) = 0$ for $\ecal{L} \not \cong \ecal{O}_X$. This hypothesis holds for example if $S$ is the cyclic subgroup of $\operatorname{Pic}(X)$ generated by a torsion line bundle $\ecal{L}$ on $X$, i.e. $\ecal{L}^{\otimes r} \cong \ecal{O}_X$ for some $r > 0$: If $0 \neq s \in H^0(X, \ecal{L}^{\otimes n})$ then $0 \neq s^r \in H^0(X, \ecal{O}_X) = k$. Hence $s$ vanishes nowhere and $\ecal{L}^{\otimes n} \cong \ecal{O}_X$.
\end{remark}

\begin{proof}
The assumptions imply that the set $S \subset \operatorname{ob} \operatorname{QCoh(X)}$ satisfies the assumption of Lemma \ref{lemma-pofl}. We see that the full subcategory $\ecal{B} \subset \operatorname{QCoh}(X)$ whose objects have a finite filtration with successive quotients isomorphic to a line bundle in $S$ is a weak Serre subcategory. Therefore, the full subcategory
$$
D^b_{\ecal{B}}(X) \subset D_{\operatorname{perf}}(X)
$$
whose objects $K$ satisfy $H^i(K) \in \ecal{B}$ for all $i \in \mathbb{Z}$ is a thick triangulated subcategory. It is triangulated because if $A \to B \to C \to A[1]$ is a distinguished triangle in $D_{\operatorname{perf}}(X)$ and $A, B \in \ecal{T}$, then by the short exact sequences
$$
0 \to \operatorname{Coker}(H^i(A) \to H^i(B)) \to H^i(C) \to \operatorname{Ker}(H^{i+1}(A) \to H^{i+1}(B)) \to 0,
$$
we see that $H^i(C) \in \ecal{B}$ for every $i$, so $C \in \ecal{T}$. It is thick because a direct summand of an object of $\ecal{B}$ is the kernel of a projector and therefore in $\ecal{B}$. 

Since $D^b_{\ecal{B}}(X) \subset D_{\operatorname{perf}}(X)$ is a thick triangulated subcategory containing the generating set $S$, we have
$$
D_{\operatorname{perf}}(X) = D^b_{\ecal{B}}(X).
$$
But this implies in particular that every cohomology sheaf of a perfect complex on $X$ is a vector bundle. Every coherent sheaf on $X$ appears as $H^i(K)$ for some perfect complex $K$ on $X$ (by approximation by perfect complexes, \cite[Theorem 4.1]{approx}). Thus we see that every coherent sheaf is a vector bundle. This can only happen when $X$ is a finite disjoint union of spectra of fields. Since $H^0(X, \ecal{O}_X) = k$ we see that $X = \operatorname{Spec}(k)$.
\end{proof}

We are grateful to Leo Mayer for asking us whether the following is true.

\begin{example} 
\label{example-leo}
Let $X$ be a smooth projective geometrically connected curve over a field. Let $\ecal{L}$ be a line bundle of positive degree on $X$ such that $\Gamma(X, \ecal{L}) = 0$. Then it follows from Proposition \ref{prop-existsasection} that $\ecal{O}_X$ and $\ecal{L}$ do not generate $D_{perf}(X)$.
\end{example}

\begin{corollary}
\label{corollary-steinisbirational}
Let $X$ be an integral Noetherian scheme. Let $G \subset \operatorname{Pic}(X)$ be a subgroup. Assume either:
\begin{enumerate}
    \item For each $\ecal{L} \in G$ with $\ecal{O}_X \not \cong \ecal{L}$, we have $H^0(X, \ecal{L}) = 0$; or
    \item $G$ is the cyclic subgroup generated by a torsion line bundle.
\end{enumerate}
    If $\langle G \rangle = D_{\operatorname{perf}}(X)$, then the canonical morphism
    \begin{equation}
    \label{equn-steinmorphism3}
    X \to \operatorname{Spec}(H^0(X, \ecal{O}_X))
    \end{equation}
    is birational. In particular, in this case $\ecal{O}_X$ is big.
\end{corollary}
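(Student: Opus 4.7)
The plan is to reduce to Proposition \ref{prop-existsasection} by passing to the generic fiber of the canonical morphism (\ref{equn-steinmorphism3}). Set $R := H^0(X, \ecal{O}_X)$, which is an integral domain since $X$ is integral, and let $K := \operatorname{Frac}(R)$. Consider $X_K := X \times_{\operatorname{Spec}(R)} \operatorname{Spec}(K)$ with its structure morphism $j : X_K \to X$.

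First I would verify that $X_K$ satisfies the hypotheses of Proposition \ref{prop-existsasection} as a $K$-scheme. Locally over an affine open $\operatorname{Spec}(A) \subset X$, $j$ is given by $\operatorname{Spec}(A \otimes_R K) \to \operatorname{Spec}(A)$, where $A \otimes_R K$ is the localization of $A$ at the image of $R \setminus \{0\}$; hence $j$ is affine, and $X_K$ is both Noetherian and integral. Moreover, Deligne's formula (Lemma \ref{lemma-deligne}) yields $H^0(X_f, \ecal{O}_{X_f}) = R_f$ for each $0 \neq f \in R$, and passing to the colimit gives $H^0(X_K, \ecal{O}_{X_K}) = K$.

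Next I would analyze the line bundles. Let $S \subset \operatorname{Pic}(X_K)$ be the set of isomorphism classes of pullbacks $\ecal{L}|_{X_K}$ for $\ecal{L} \in G$. Because $j$ is quasi-affine and $\langle G \rangle = D_{\operatorname{perf}}(X)$, Lemma \ref{lemma-qaffinepullback} gives $\langle S \rangle = D_{\operatorname{perf}}(X_K)$. To check the Hom vanishing, suppose $\ecal{L}, \ecal{M} \in G$ satisfy $\ecal{L}|_{X_K} \not \cong \ecal{M}|_{X_K}$. Under hypothesis (i), $\ecal{L} \not\cong \ecal{M}$, so the subgroup element $\ecal{L}^{-1} \otimes \ecal{M} \in G$ is non-trivial and hence $H^0(X, \ecal{L}^{-1} \otimes \ecal{M}) = 0$ by assumption; another application of Deligne's formula then gives
$$ \operatorname{Hom}_{X_K}(\ecal{L}|_{X_K}, \ecal{M}|_{X_K}) = H^0(X, \ecal{L}^{-1} \otimes \ecal{M}) \otimes_R K = 0. $$
Under hypothesis (ii), $\ecal{N}|_{X_K}$ is still torsion (with $(\ecal{N}|_{X_K})^{\otimes r} \cong \ecal{O}_{X_K}$), and since $H^0(X_K, \ecal{O}_{X_K}) = K$ is a field, the argument of Remark \ref{remark-torsionlinebundle} applies directly on $X_K$ to yield the desired Hom vanishing.

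Proposition \ref{prop-existsasection} now forces $X_K = \operatorname{Spec}(K)$, from which we read off $k(X) = K = \operatorname{Frac}(R)$, so (\ref{equn-steinmorphism3}) is birational. For the final assertion, I would apply criterion (v) of Lemma \ref{lemma-bigequiv} to $\ecal{L} = \ecal{O}_X$, $n = 1$, $s = 1$: then $X_s = X$, one takes the integral domain to be $R$, and the generic fiber $X_K = \operatorname{Spec}(K)$ is affine, so $\ecal{O}_X$ is big. The step I expect to require the most care is the Hom vanishing on $X_K$ in case (ii); the key observation is that passage to the generic fiber upgrades the ring of global sections from $R$ to the field $K$, which is exactly what lets Remark \ref{remark-torsionlinebundle} be invoked on $X_K$ even though $R$ itself need not be a field.
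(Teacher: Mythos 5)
Your proposal is correct and follows essentially the same route as the paper: base change along $\operatorname{Spec}(K) \to \operatorname{Spec}(H^0(X,\ecal{O}_X))$, verify the hypotheses of Proposition \ref{prop-existsasection} on the generic fiber (using flat base change/Deligne's formula in case (i) and Remark \ref{remark-torsionlinebundle} in case (ii)), and conclude via Lemma \ref{lemma-bigequiv}. The only cosmetic differences are that you verify $H^0(X_K,\ecal{O}_{X_K})=K$ by a colimit of Deligne's formula rather than citing flat base change directly, and you invoke condition (v) of Lemma \ref{lemma-bigequiv} where the paper uses the equivalent condition (iii).
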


Note that $H^0(X, \ecal{O}_X)$ is a domain so its spectrum has a unique generic point.

\begin{proof}
    Let $K$ be the fraction field of the domain $H^0(X, \ecal{O}_X)$. Let $Y \to \operatorname{Spec}(K)$ be the base change of (\ref{equn-steinmorphism3}) to $K$. Then $H^0(Y, \ecal{O}_Y) = K$ by flat base change. The image of $G$ in $\operatorname{Pic}(Y)$ generates $D_{\operatorname{perf}}(Y)$ by Lemma \ref{lemma-qaffinepullback}. It also satisfies the hypotheses of Proposition \ref{prop-existsasection}. In case (i) this is because if $\ecal{O}_X \not \cong \ecal{L} \in G$ and $\ecal{L}_K$ is its pullback to $Y$, then $H^0(Y, \ecal{L}_K) = 0$ by flat base change. In case (ii) this is by Remark \ref{remark-torsionlinebundle}. We therefore conclude that $Y = \operatorname{Spec}(K)$ by Proposition \ref{prop-existsasection} so (\ref{equn-steinmorphism3}) is birational. We see that $\ecal{O}_X$ is big by (iii) of Lemma \ref{lemma-bigequiv} applied to the global section $1 \in \Gamma(X, \ecal{O}_X)$. 
\end{proof}

\begin{proof}[Proof of ``only if'' direction of Theorem \ref{ref-theoremmain}]
    Since the restriction of a $\otimes$-generating line bundle to a closed subscheme is $\otimes$-generating by Lemma \ref{lemma-qaffinepullback}, it suffices to show that if $X$ is an integral Noetherian scheme and $\ecal{L}$ is a $\otimes$-generating line bundle on $X$, then either $\ecal{L}$ or $\ecal{L}^{-1}$ is big. 
    
    Case 1: $H^0(X, \ecal{L}^{\otimes n}) = 0$ for $n \neq 0$.

    Then applying part (i) of Corollary \ref{corollary-steinisbirational}, we see that $\ecal{O}_X$ is big, but then $\ecal{L}$ and $\ecal{L}^{-1}$ are both big by Remark \ref{remark-bigstructuresheaf} and we are done.

    Case 2: $H^0(X, \ecal{L}^{\otimes n}) \neq 0$ for some $n \neq 0$.

    Then take a global section $0 \neq s \in H^0(X, \ecal{L}^{\otimes n})$ for some $n \neq 0$. The line bundle $\ecal{L}|_{X_s}$ is torsion and by Lemma \ref{lemma-qaffinepullback} it is $\otimes$-generating since $X_s \to X$ is affine. Therefore by part (ii) of Corollary \ref{corollary-steinisbirational}, the morphism
    \begin{equation*}
    \label{equn-steinmorph2} 
    X_s \to \operatorname{Spec}(H^0(X_s, \ecal{O}_{X_s}))
    \end{equation*}
    is birational. By Lemma \ref{lemma-bigequiv}, we see that either $\ecal{L}$ or $\ecal{L}^{-1}$ is big depending on whether $n> 0$ or $n<0$. 
\end{proof}

\subsection{Some consequences}
\label{subsection-someconsequences}

\begin{lemma}\label{lem: tensor power of tensor ample}
    Let $\ecal L$ be a line bundle on a Noetherian scheme. Then the following are equivalent.
    \begin{enumerate}
        \item $\ecal L$ is $\otimes$-generating,
        \item $\ecal L^{\tens n}$ is $\otimes$-generating for some $0 \neq n \in \bb Z$,
        \item $\ecal L^{\tens n}$ is $\otimes$-generating for any $0 \neq n \in \bb Z$. \qedhere
    \end{enumerate}
\end{lemma}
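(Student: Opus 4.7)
The natural approach is to reduce to the bigness characterization provided by Theorem \ref{ref-theoremmain}. The implications (iii) $\Rightarrow$ (ii) and (iii) $\Rightarrow$ (i) (applying (iii) with $n=1$ covers (i) as well) are immediate, so the content is in (ii) $\Rightarrow$ (i) and (i) $\Rightarrow$ (iii). Both can be handled uniformly by observing that, under Theorem \ref{ref-theoremmain}, the $\tens$-ampleness of $\ecal{L}$ and of $\ecal{L}^{\tens n}$ translate into parallel statements about bigness of restrictions $\ecal{L}|_Z$ (or its inverse) on integral closed subschemes $Z \subset X$, which are themselves Noetherian integral schemes where Definition \ref{defn-big} applies.

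First I would isolate the following elementary lemma on bigness: if $Y$ is a Noetherian integral scheme, $\ecal{M}$ a line bundle on $Y$, and $m > 0$ an integer, then $\ecal{M}$ is big if and only if $\ecal{M}^{\tens m}$ is big. The ``$\Leftarrow$'' direction is immediate from Definition \ref{defn-big}, since any section $s \in \Gamma(Y, (\ecal{M}^{\tens m})^{\tens k})$ with $Y_s$ non-empty and affine is already a section of $\ecal{M}^{\tens mk}$. For ``$\Rightarrow$'', given $s \in \Gamma(Y, \ecal{M}^{\tens k})$ with $Y_s$ non-empty and affine, the power $s^m \in \Gamma(Y, (\ecal{M}^{\tens m})^{\tens k})$ satisfies $Y_{s^m} = Y_s$ and does the job.

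Next, given this, I would argue that for any nonzero integer $n$ and any integral closed subscheme $Z \subset X$, the pair of conditions \emph{``$\ecal{L}|_Z$ is big or $\ecal{L}^{-1}|_Z$ is big''} and \emph{``$\ecal{L}^{\tens n}|_Z$ is big or $\ecal{L}^{\tens -n}|_Z$ is big''} are equivalent. Indeed, writing $\ecal{L}^{\tens n}|_Z = (\ecal{L}|_Z)^{\tens n}$, if $n > 0$ the previous lemma gives $\ecal{L}|_Z$ big $\iff$ $\ecal{L}^{\tens n}|_Z$ big and $\ecal{L}^{-1}|_Z$ big $\iff$ $\ecal{L}^{\tens -n}|_Z$ big; the case $n<0$ just swaps the two alternatives.

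Finally, invoking Theorem \ref{ref-theoremmain} applied both to $\ecal{L}$ and to $\ecal{L}^{\tens n}$, the equivalence above shows that $\ecal{L}$ is $\tens$-ample if and only if $\ecal{L}^{\tens n}$ is $\tens$-ample, for any fixed $0 \neq n \in \mathbb{Z}$, proving (i) $\iff$ (iii) and in particular closing the loop (ii) $\Rightarrow$ (i). There is no real obstacle here; the only thing to be careful about is that bigness in the sense of Definition \ref{defn-big} requires the ambient scheme to be Noetherian and integral, which is exactly what we get when restricting to an integral closed subscheme of the Noetherian $X$.
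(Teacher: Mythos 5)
Your proof is correct and follows essentially the same route as the paper: the paper's proof also reduces via Theorem \ref{ref-theoremmain} to the observation that a line bundle on an integral quasi-compact, quasi-separated scheme is big if and only if a positive tensor power is, which is immediate from Definition \ref{defn-big}. You have simply spelled out the details (the $s \mapsto s^m$ argument and the case analysis on the sign of $n$) that the paper leaves implicit.
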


\begin{proof}
    This follows from the fact that a line bundle $\ecal{L}$ on an integral quasi-compact and quasi-separated scheme is big if and only if a positive tensor power is, which is immediate from Definition \ref{defn-big}.
\end{proof}

\begin{lemma}\label{lemma-irreduciblecomp}
    Let $X$ be a Noetherian scheme. Let $\ecal{L}$ be a line bundle on $X$. Then $\ecal{L}$ is $\otimes$-generating if and only if so is the restriction of $\ecal{L}$ to every irreducible component (with the reduced subscheme structure).
\end{lemma}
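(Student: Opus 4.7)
The plan is to deduce this directly from the main criterion, Theorem \ref{ref-theoremmain}, which characterizes $\otimes$-ampleness in terms of restrictions to integral closed subschemes. The lemma should be a quick corollary once we observe that every integral closed subscheme of $X$ sits inside some irreducible component.

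For the ``only if'' direction, let $X_i \subset X$ be an irreducible component with the reduced subscheme structure. The inclusion $X_i \hookrightarrow X$ is a closed immersion, hence affine. Therefore, by Lemma \ref{lemma-qaffinepullback}, the restriction $\ecal{L}|_{X_i}$ is $\otimes$-ample.

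For the ``if'' direction, assume $\ecal{L}|_{X_i}$ is $\otimes$-ample for every irreducible component $X_i$ of $X$ (with the reduced subscheme structure). By Theorem \ref{ref-theoremmain}, to show $\ecal{L}$ is $\otimes$-ample it suffices to show that for every integral closed subscheme $Z \subset X$, either $\ecal{L}|_Z$ or $\ecal{L}^{-1}|_Z$ is big. Since $Z$ is irreducible, its generic point is contained in some irreducible component of $X$, and hence $Z \subset X_i$ for some $i$. Because $Z$ is integral, $Z$ is also an integral closed subscheme of $X_i$ (with the reduced structure). Applying Theorem \ref{ref-theoremmain} to the $\otimes$-ample line bundle $\ecal{L}|_{X_i}$ on $X_i$ and the integral closed subscheme $Z \subset X_i$, we conclude that either $\ecal{L}|_Z$ or $\ecal{L}^{-1}|_Z$ is big, as required.

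There is no real obstacle here; the content of the lemma is packaged entirely inside Theorem \ref{ref-theoremmain}, and the only point worth verifying carefully is that $Z$ integral in $X$ makes $Z$ also integral as a subscheme of the unique irreducible component $X_i$ containing its generic point, which is immediate from the definitions.
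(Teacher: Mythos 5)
Your proof is correct and is essentially the paper's own argument: the paper disposes of the lemma in one line by noting that the criterion of Theorem \ref{ref-theoremmain} depends only on the integral closed subschemes of $X$, which are exactly the integral closed subschemes of the irreducible components. Your version just spells out the (correct) details of that identification.
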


\begin{proof}
    The criterion of Theorem \ref{ref-theoremmain} only depends on the integral closed subschemes of $X$.
\end{proof}

More generally we have:

\begin{lemma}
\label{lemma-finitesurjective}
    Let $f : Y \to X$ be a finite surjective morphism of Noetherian schemes. Let $\ecal{L}$ be a line bundle on $X$. Then $\ecal{L}$ is $\otimes$-generating if and only if so is $f^*\ecal{L}$. 
\end{lemma}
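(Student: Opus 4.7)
The plan is to deduce this from the main theorem (Theorem \ref{ref-theoremmain}) together with the descent result for big line bundles (Proposition \ref{prop-pullbackbigthenbig}). The forward direction is easy: a finite morphism is affine, hence quasi-affine, so Lemma \ref{lemma-qaffinepullback} immediately gives that $f^*\ecal{L}$ is $\otimes$-ample whenever $\ecal{L}$ is.

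For the ``if'' direction, I would apply the criterion of Theorem \ref{ref-theoremmain}. Let $Z \subset X$ be an integral closed subscheme; the task is to show that $\ecal{L}|_Z$ or $\ecal{L}^{-1}|_Z$ is big. Form the scheme-theoretic preimage $f^{-1}(Z) = Y \times_X Z$, which maps to $Z$ by a finite morphism, and which surjects onto $Z$ because $f$ does. Because surjectivity of a finite map is preserved by passing to reduced structure, and because the source has only finitely many irreducible components, I can choose an irreducible component $W \subset f^{-1}(Z)$, equipped with the reduced subscheme structure, such that the induced morphism $g : W \to Z$ is still surjective. Then $g$ is a finite surjective morphism of integral Noetherian schemes.

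Now $f^*\ecal{L}$ is $\otimes$-ample on $Y$ by hypothesis, hence its restriction to the closed subscheme $f^{-1}(Z)_{\mathrm{red}}$ is $\otimes$-ample by Lemma \ref{lemma-qaffinepullback} (a closed immersion is affine), and hence its restriction $g^*(\ecal{L}|_Z)$ to the irreducible component $W$ is $\otimes$-ample by Lemma \ref{lemma-irreduciblecomp}. Since $W$ is integral, the ``only if'' part of Theorem \ref{ref-theoremmain} applies and tells us that either $g^*(\ecal{L}|_Z)$ or $g^*(\ecal{L}|_Z)^{-1} = g^*(\ecal{L}^{-1}|_Z)$ is big.

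Finally, I would invoke Proposition \ref{prop-pullbackbigthenbig}: since $g : W \to Z$ is proper surjective between Noetherian integral schemes with finite (hence finite generic) fibers, the bigness of $g^*(\ecal{L}^{\pm 1}|_Z)$ descends to bigness of $\ecal{L}^{\pm 1}|_Z$. Therefore $\ecal{L}|_Z$ or $\ecal{L}^{-1}|_Z$ is big, and this verifies the criterion of Theorem \ref{ref-theoremmain} for $\ecal{L}$. The only place requiring care is ensuring the existence of a component of $f^{-1}(Z)_{\mathrm{red}}$ that surjects onto $Z$; this is the main (but minor) obstacle, and it follows from the fact that a finite surjective morphism maps some irreducible component onto the target since the target is irreducible and the image of the finitely many components covers it.
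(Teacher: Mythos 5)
Your proposal is correct and follows essentially the same route as the paper: the forward direction via Lemma \ref{lemma-qaffinepullback}, and for the converse, choosing an irreducible component $W$ of $Z \times_X Y$ surjecting onto $Z$ (the paper takes the closure of a point in the fiber over the generic point of $Z$), applying Theorem \ref{ref-theoremmain} to conclude $f^*\ecal{L}|_W$ or its inverse is big, and descending via Proposition \ref{prop-pullbackbigthenbig}. The only cosmetic difference is that you route the $\otimes$-ampleness of the restriction to $W$ through Lemma \ref{lemma-irreduciblecomp}, whereas the paper simply notes that $W$ is an integral closed subscheme of $Y$ and invokes Theorem \ref{ref-theoremmain} directly.
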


\begin{remark}
If $X$ is proper over a Noetherian ring then the Lemma also holds with ``$\otimes$-generating'' replaced by ``ample,'' see \cite[\href{https://stacks.math.columbia.edu/tag/0B5V}{Tag 0B5V}]{stacks-project}. However, there is an example of a variety $X$ over a field whose normalization is quasi-affine but such that $X$ itself is not quasi-affine, see \cite[\href{https://stacks.math.columbia.edu/tag/0272}{Tag 0272}]{stacks-project}. Thus the line bundle $\ecal{O}_X$ becomes ample after a finite pullback, but is not itself ample. On the other hand, we see from Lemma \ref{lemma-finitesurjective} that $\ecal{O}_X$ is $\otimes$-generating.
\end{remark}

\begin{proof}
    The ``only if'' direction is Lemma \ref{lemma-qaffinepullback}. For the ``if'' direction, assume $f^*\ecal{L}$ is $\otimes$-generating. Let $Z \subset X$ be an integral closed subscheme. Then there is an irreducible component $W \subset Z \times _X Y$ which maps surjectively to $Z$ (the closure of any point in the fiber over the generic point of $Z$ works). Then $f^*\ecal{L}|_{W}$ or $f^*\ecal{L}^{-1}|_{W}$ is big by Theorem \ref{ref-theoremmain} and $W \to Z$ is finite surjective so by Proposition \ref{prop-pullbackbigthenbig}, we see that $\ecal{L}|_{Z}$ is big or $\ecal{L}^{-1}|_{Z}$ is big and we conclude. 
\end{proof}

\begin{example}
\label{example-finitepullbackgenerator}
    There exists a finite surjective morphism of Noetherian schemes $f : Y \to X$ and a perfect complex $P$ on $X$ such that $Lf^*P$ is a compact generator of $D_{\operatorname{QCoh}}(Y)$ but $P$ is not a compact generator of $D_{\operatorname{QCoh}}(X)$. For an example, let $X$ be the union of two copies of the projective line over an algebraically closed field $k$ which meet at one point at which they are tangent. For concreteness, let $X$ be the compactification of the affine reducible curve $\operatorname{Spec}(k[x,y])/(y(y-x^2))$ obtained by adding a smooth point at infinity to each irreducible component. Let $A, B \cong \mathbb{P}^1_k$ be the two irreducible components and choose closed points $a \in A \setminus B$ and $b \in B \setminus A$. Let $\ecal{L}$ be the line bundle $\ecal{O}_X(a-b)$. Let $G \in D_{\operatorname{perf}}(X)$ be the object $\ecal{O}_X \oplus \ecal{L}$. Then the restrictions $G|_{A}$ and $G|_{B}$ are identified with the objects $\ecal{O}_{\mathbb{P}^1_k}\oplus \ecal{O}_{\mathbb{P}^1_k}(1)$ and $\ecal{O}_{\mathbb{P}^1_k}\oplus \ecal{O}_{\mathbb{P}^1_k}(-1)$ which are known to generate $D_{\operatorname{QCoh}}(\mathbb{P}^1_k)$. However, we will show that $G$ is not a compact generator of $D_{\operatorname{QCoh}}(X)$. This suffices as we may take $f : Y \to X$ to be the canonical finite surjective morphism $A \coprod B \to X$. 

    We claim that $H^0(X, \ecal{L}) = 0$. Assume not and let $0 \neq s \in H^0(X, \ecal{L})$. Then $s|_{B} = 0$ since $\ecal{L}|_{B} = \ecal{O}_{\mathbb{P}^1_k}(-1)$. For this to happen, $s|_{A}$ must vanish on the scheme theoretic intersection $A \times _X B \subset A$, which means it must vanish to order $\geq 2$ at the unique closed point in $A \cap B$. But this implies $s|_{A} = 0$ since $\ecal{L}|_{A} = \ecal{O}_{\mathbb{P}^1_k}(-1)$. But then $s = 0$ as $X$ is reduced and $s$ vanishes at each point of $X$.

    By symmetry, we have $H^0(X, \ecal{L}^{-1}) = 0$, and we also have $H^0(X, \ecal{O}_X) = k$ since $X$ is a proper, connected, and reduced scheme over an algebraically closed field. It follows from Proposition \ref{prop-existsasection} that $\ecal{O}_X$ and $\ecal{L}$ do not generate $D_{\operatorname{QCoh}}(X)$. This completes the proof because $\ecal{O}_X, \ecal{L}$ generate $D_{\operatorname{QCoh}}(X)$ if and only if the one object $\ecal{O}_X \oplus \ecal{L}$ does.
\end{example}

After hearing the example above, Kuznetsov pointed out to us that if $X \subset \mathbb{P}^2_k$ is a reducible conic with irreducible components $A, B \cong \mathbb{P}^1_k$, and $a \in A \setminus B$ and $b \in B \setminus A$ are $k$-points, then $G = \ecal{O}_X(a) \oplus \ecal{O}_X(b)$ also gives an example of a perfect complex whose restriction to the irreducible components is a compact generator but such that $G$ is not. In fact, $\operatorname{Ext}^i_X(G, \ecal{O}_X) = 0$ for all $i$ in this case.

In practice, the following seems to be a useful way to check if a line bundle is $\otimes$-generating. 

\begin{lemma}
\label{lemma-usefulcriterion}
    Let $X$ be a Noetherian scheme. Let $\ecal{L}$ be a line bundle on $X$. Let $n_1, \dots, n_k \in \mathbb{Z}$ be integers and  $s_i \in \Gamma(X, \ecal{L}^{\otimes n_i})$ global sections. Assume:
   \begin{enumerate}
        \item The schemes $X_{s_i}$ have $\otimes$-generating structure sheaf (for example if each $X_{s_i}$ is quasi-affine).
        \item The restriction of $\ecal{L}$ to the reduced closed subscheme $V(s_1, \dots , s_k) _{red} \subset X$ cut out by $s_1, \dots , s_k$ is $\otimes$-generating.  
    \end{enumerate}
   Then $\ecal{L}$ is $\otimes$-generating. 
\end{lemma}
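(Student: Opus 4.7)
The plan is to verify the criterion of Theorem \ref{ref-theoremmain}, i.e., to show that for every integral closed subscheme $Z \subset X$, either $\ecal{L}|_Z$ or $\ecal{L}^{-1}|_Z$ is big. I would split into cases depending on whether $Z$ is contained in $V(s_1,\dots,s_k)_{red}$.

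If $Z \subset V(s_1,\dots,s_k)_{red}$, then $Z$ is an integral closed subscheme of this scheme, on which $\ecal{L}$ is $\tens$-ample by hypothesis (ii); applying the ``only if'' direction of Theorem \ref{ref-theoremmain} to $V(s_1,\dots,s_k)_{red}$ immediately yields that $\ecal{L}|_Z$ or $\ecal{L}^{-1}|_Z$ is big.

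Otherwise, there is some index $i$ with $s_i|_Z \neq 0$, so $W := Z \cap X_{s_i}$ is a non-empty open of $Z$ (hence integral) which is simultaneously a closed subscheme of $X_{s_i}$. By hypothesis (i), $\ecal{O}_{X_{s_i}}$ is $\tens$-ample, so applying Theorem \ref{ref-theoremmain} to the integral closed subscheme $W \subset X_{s_i}$ gives that $\ecal{O}_W$ is big. Fix $g \in \Gamma(W, \ecal{O}_W)$ with $W_g$ non-empty and affine. Applying Deligne's formula (Lemma \ref{lemma-deligne}) to $Z$, the line bundle $\ecal{L}^{\tens n_i}|_Z$, and the section $s_i|_Z$, we find an integer $N \geq 0$ and $t \in \Gamma(Z, \ecal{L}^{\tens N n_i}|_Z)$ with $t|_W = (s_i|_Z)^N g$. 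Setting $u = (s_i|_Z)\cdot t \in \Gamma(Z, \ecal{L}^{\tens (N+1) n_i}|_Z)$ then gives $Z_u = W_g$, which is non-empty and affine.

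If $n_i > 0$, this shows $\ecal{L}|_Z$ is big; if $n_i < 0$, it shows $\ecal{L}^{-1}|_Z$ is big; and if $n_i = 0$, we have obtained a global section of $\ecal{O}_Z$ with non-empty affine non-vanishing locus, so $\ecal{O}_Z$ is big and hence $\ecal{L}|_Z$ is big by Remark \ref{remark-bigstructuresheaf}. The main technical obstacle I anticipate is the bookkeeping in the lifting step via Deligne's formula and the need to handle the edge case $n_i = 0$ separately; the overall structure of the argument is a straightforward two-case analysis powered by the main theorem.
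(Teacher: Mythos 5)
Your proof is correct and follows essentially the same route as the paper: both verify the criterion of Theorem \ref{ref-theoremmain} by splitting into the cases $Z \subset V(s_1,\dots,s_k)_{red}$ and $s_i|_Z \neq 0$ for some $i$. If anything, your handling of the second case --- extracting an affine $(Z_{s_i})_g$ from $\otimes$-ampleness of $\ecal{O}_{X_{s_i}}$ via Theorem \ref{ref-theoremmain} and lifting $g$ through Deligne's formula, with the sign cases $n_i>0$, $n_i<0$, $n_i=0$ treated explicitly --- is more careful than the paper's one-line version, which simply asserts that $Z_{s_i} \subset X_{s_i}$ is quasi-affine (immediate only in the parenthetical case where each $X_{s_i}$ is quasi-affine) and leaves the general hypothesis (i) implicit.
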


\begin{proof}
    Let $Z \subset X$ be an integral closed subscheme. If $Z \subset V(s_1, \dots , s_k)$ then $\ecal{L}|_{Z}$ or $\ecal{L}^{-1}|_Z$ is big by (ii). If not, then for some $i$ the restriction $s_i|_{Z}$ is not zero, and then $Z_{s_i} \subset X_{s_i}$ is quasi-affine. We conclude by Theorem \ref{ref-theoremmain}. 
\end{proof}

\begin{lemma}
\label{lemma-nefandtensorample}
    Let $X$ be a proper scheme over a field. Let $\ecal{L}$ be a line bundle on $X$. If $\ecal L$ is nef and $\otimes$-generating, then it is ample. 
\end{lemma}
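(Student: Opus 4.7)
The plan is to combine Theorem \ref{ref-theoremmain} with the Nakai--Moishezon criterion recalled in the introduction: a line bundle on a proper scheme over a field is ample if and only if its restriction to every integral closed subscheme $Z$ is big. Theorem \ref{ref-theoremmain} guarantees that for each integral closed $Z \subset X$, either $\ecal{L}|_Z$ or $\ecal{L}^{-1}|_Z$ is big, and the zero-dimensional case is trivial. So the entire proof reduces to the following: \emph{if $Z \subset X$ is an integral closed subscheme of dimension $d \geq 1$, then $\ecal{L}^{-1}|_Z$ cannot be big.}

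To show this, I would argue by contradiction: suppose $\ecal{L}^{-1}|_Z$ is big for some such $Z$. First I would reduce to the projective case via Chow's lemma by choosing a proper birational surjection $\pi \colon Z' \to Z$ with $Z'$ an integral projective variety. By Lemma \ref{lemma-pullbackisbig}, $\pi^*(\ecal{L}^{-1}|_Z)$ is big on $Z'$. Since $\ecal{L}$ is nef on $X$, its restriction $\ecal{L}|_Z$ is nef on $Z$, and nefness is preserved under pullback along proper morphisms, so $\pi^*(\ecal{L}|_Z)$ is nef on $Z'$.

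Finally, I would pick an ample divisor $A$ on $Z'$ and derive a contradiction using intersection numbers on a projective variety. By Kodaira's lemma applied to the big line bundle $\pi^*\ecal{L}^{-1}|_Z$, for some $n > 0$ there is a numerical equivalence $n \pi^*(\ecal{L}^{-1}|_Z) \equiv A' + E$ with $A'$ ample and $E$ effective. Intersecting with $A^{d-1}$, we obtain
\[
n \cdot \pi^*(\ecal{L}^{-1}|_Z) \cdot A^{d-1} = A' \cdot A^{d-1} + E \cdot A^{d-1} > 0,
\]
since the first term is strictly positive (product of ample classes) and the second is nonnegative (effective against ample). Hence $\pi^*(\ecal{L}|_Z) \cdot A^{d-1} < 0$, which contradicts nefness of $\pi^*(\ecal{L}|_Z)$. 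This establishes the required dichotomy and hence ampleness by Nakai--Moishezon. The main conceptual obstacle is just observing that Theorem \ref{ref-theoremmain} allows the unwanted alternative of $\ecal{L}^{-1}|_Z$ being big; once Chow's lemma puts us in the projective setting, the intersection-theoretic step is standard.
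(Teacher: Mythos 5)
Your proof is correct and follows essentially the same skeleton as the paper's: apply Theorem \ref{ref-theoremmain} to get the big/anti-big dichotomy on each integral closed subscheme, use nefness to kill the anti-big alternative, and conclude by Nakai--Moishezon. The one place where you diverge is in how the two halves are weighted. The paper dispatches the anti-big case in a single sentence (``the inverse of a big line bundle on a proper variety of positive dimension is not nef'') and then does the real work in the last step, invoking the classical intersection-theoretic form of Nakai--Moishezon together with the fact that a big and nef line bundle on a proper variety $Z$ satisfies $(\ecal{L}^{\dim Z}\cdot Z)>0$ (citing \cite[Chapter VI, Theorem 2.15]{Kollarrationalcurves}). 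You do the opposite: you spell out the anti-big exclusion carefully (Chow's lemma, Lemma \ref{lemma-pullbackisbig}, Kodaira's lemma, and the intersection computation against $A^{d-1}$ --- all correct), but then lean on the ``bigness'' formulation of Nakai--Moishezon (ample iff big on every closed subvariety) as a black box. That formulation is only stated informally in the paper's introduction, and its nontrivial direction is proved by exactly the big-plus-nef-implies-positive-top-self-intersection argument the paper's proof carries out; so your citation is defensible but hides the same content the paper makes explicit. If you wanted to make your write-up self-contained at the same level as the paper's, you would replace the appeal to the bigness formulation by the classical criterion plus the Koll\'ar reference.
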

\begin{proof}
    The inverse of a big line bundle on a proper variety of positive dimension is not nef, so Theorem \ref{ref-theoremmain} implies $\ecal{L}|_{Z}$ is big for every closed subvariety $Z \subset X$. This implies $\ecal{L}$ is ample: We prove this using the Nakai--Moishezon criterion. We must show
    \begin{equation}
    \label{equn-nakaimoishezon}
    (\ecal{L}^{\operatorname{dim}Z} \cdot Z) > 0
    \end{equation}
    for all closed subvarieties of $X$. But $\ecal{L}|_Z$ is a big and nef line bundle on the proper variety $Z$ so this is true by \cite[Chapter VI, Theorem 2.15]{Kollarrationalcurves}.
\end{proof}

\begin{example} \ 
\label{example-strictlynefnotample}
\begin{enumerate}
    \item A weak Fano variety has $\otimes$-generating canonical bundle if and only if it is a Fano variety.
    \item  If $\ecal L$ is a strictly nef line bundle that is not ample, then $\ecal L$ is not $\otimes$-generating. In particular, $\otimes$-generation cannot be checked only on curves even over a surface. See \cite[Example 1.5.2]{lazarsfeld2017positivity} for a concrete example due to Mumford.  \qedhere
\end{enumerate}
\end{example}

\section{\texorpdfstring{$\tens$}{Tensor}-generating \texorpdfstring{$\bb R$-}{real }divisors}
\label{section-realdivisors}
Theorem \ref{ref-theoremmain} suggests there is a numerical approach to $\otimes$-generating line bundles. In this section, we work with proper varieties over an \emph{algebraically closed} field. This is not a strong restriction by Lemma \ref{lemma-fieldextensions}. We cite results of \cite{lazarsfeld2017positivity}*{Section 1.3} noting \cite{lazarsfeld2017positivity}*{Remark 1.3.16}.
\begin{notation}
    Let $X$ be a proper variety. 
    \begin{itemize}
        \item In this paper, a \emph{divisor} on $X$ means a Cartier divisor on $X$ and we denote $\operatorname{Div}(X)$ the group of divisors. 
        \item A \emph{$\bb Q$-divisor} (resp. \emph{$\bb R$-divisor}) means an element of $\operatorname{Div}(X) \otimes \mathbb{Q}$ (resp. $\operatorname{Div}(X) \otimes \mathbb{R}$).
        \item A \emph{$\bb Q$-line bundle} (resp. \emph{$\bb R$-line bundle}) means an element of $\pic(X) \tens \bb Q$ (resp. $\pic(X)\tens \bb R$).
        \item  For a $\bb Q$-divisor (resp. an $\bb R$-divisor) $D$, we write its image under the natural surjective map $\operatorname{Div}(X) \tens \bb Q \surj \pic(X)\tens \bb Q$ (resp. $\operatorname{Div}(X) \tens \bb R \surj \pic(X)\tens \bb R$) by $\ecal O_X(D)$.
        \item For a morphism $f:X\to Y$ of proper varieties and  a $\bb Q$-line bundle (resp. an $\bb R$-line bundle) $\ecal L$ on $Y$, we write its image under the natural map $f^*: \pic(Y)\tens \bb Q \to \pic(X)\tens \bb Q$ (resp. $f^*: \pic(Y)\tens \bb R \to \pic(X)\tens \bb R$) by $f^*\ecal L$. 
        \item We will sometimes refer to a divisor as an \emph{integral divisor} to differentiate from $\mathbb{Q}$- or $\mathbb{R}$-divisors. This does not mean integral in the sense of integral scheme. 
        \qedhere
    \end{itemize}
    
\end{notation}
Properties of integral divisors can be naturally generalized to $\bb Q$-divisors. 
\begin{definition}
    Let $X$ be a proper variety. We say a $\bb Q$-line bundle $\ecal L$ is \emph{ample} (resp. \emph{big}, resp. \emph{$\otimes$-generating}, resp. \emph{nef}) if there exists an integer $m > 0$ such that  $\ecal L^{\tens m} \in \pic(X)$ and $\ecal L^{\tens m}$ is {ample} (resp. {big}, resp. {$\otimes$-generating}, resp. nef). We say a $\bb Q$-divisor $D$ is \emph{ample} (resp. \emph{big}, resp. \emph{$\otimes$-generating}, resp. \emph{nef}) if so is $\ecal O_X(D)$.  Similarly, we say a $\bb Q$-divisor $D$ is \emph{effective} if there exists an integer $m \geq 0$ such that $mD \in \operatorname{Div}(X)$ is effective. 
\end{definition}

\begin{remark}\label{remark-divisor with affine complements}
    In the language of divisors, Lemma \ref{lemma-bigequiv} says a divisor $D$ on a proper variety is big if and only if for a large enough integer $n> 0$, $nD$ is linearly equivalent to an effective divisor with affine complement. 
\end{remark}

Let us fix several definitions about $\bb R$-divisors on a proper variety for completeness. 
\begin{construction}
 Let $X$ be a proper variety. Following conventions in \cite{lazarsfeld2017positivity}*{Section 1.3.B}, we say:
 \begin{itemize}
     \item Two $\bb R$-divisors $D,E$ are \emph{linearly equivalent} if $\ecal O_X(D) = \ecal O_X(E)$ in $\pic(X) \tens \bb R$, in which case we write $D\equiv_\sf{lin} E$. For a morphism $f:X \to Y$ of proper varieties and for an $\bb R$-divisor $D$ on $Y$, we say $f^*D$ is \emph{linearly equivalent} to an $\bb R$-divisor $E$ on $X$ if $f^* \ecal O_Y (D) = \ecal O_X(E)$ in $\pic(X) \tens \bb R$, in which case we write $f^* D \equiv_\sf{lin} E$ by slight abuse of notations. When $f:Z \inj X$ is a closed immersion, we may write $D|_Z$ instead of $f^*D$ (again defined up to linear equivalence). 
     \item An $\bb R$-line bundle $\ecal{L}$ is \emph{ample} (resp. \emph{big}) if we can write $\ecal{L} = \sum a_i \ecal{L}_i$ in $\operatorname{Pic}(X) \tens \bb R$ with ample (resp. big) line bundles $\ecal{L}_i \in \operatorname{Pic}(X)$ and real numbers $a_i > 0$.  
     \item An $\bb R$-divisor $D$ is \emph{ample} (resp. \emph{big}) if so is the $\mathbb{R}$-line bundle $\ecal{O}_X(D)$. 
     \item An $\bb R$-divisor $D$ is \emph{effective} if we can write $D = \sum a_i D_i$ in $\operatorname{Div}(X) \tens \bb R$ with effective divisors  $D_i \in \operatorname{Div}(X)$ and real numbers $a_i > 0$.
     \item An $\bb R$-divisor $D$ is \emph{$\otimes$-generating} if for any closed subvariety $Z \subset X$, $D|_Z$ is linearly equivalent to a big or anti-big $\bb R$-divisor $D_Z$ on $Z$, i.e., $\ecal O_X(D)|_Z = \ecal O_Z(D_Z)$ for a big or anti-big $\bb R$-divisor $D_Z$ on $Z$. 
 \end{itemize}
 In particular, ampleness, bigness, and $\otimes$-generation are defined a priori up to linear equivalence, whereas effectiveness is not. 
For an $\bb R$-divisor $D = \sum a_i D_i$ with $D_i \in \operatorname{Div}(X)$ and a closed subvariety $Z \subset X$, define the intersection product $(D^{\dim Z} \cdot Z) := \sum a_i (D_i^{\dim Z} \cdot Z)$. It is easy to see from the classical theory that if $D$ and $D'$ are linearly equivalent $\bb R$-divisors on $X$, then for any closed subvariety $Z \subset X$, $(D^{\dim Z} \cdot Z) = ({D'}^{\dim Z} \cdot Z)$. 
 \begin{itemize}
     \item Two $\bb R$-divisors $D,E$ are \emph{numerically equivalent} if $(D^{\dim Z}\cdot Z) = (E^{\dim Z} \cdot Z)$ for any closed subvariety $Z \subset X$, in which case we write $D \equiv_\sf{num} E$. By Kleiman's theorem (\cite{Kollarrationalcurves}*{Chapter VI, Theorem 2.17}), it is indeed enough to check against integral curves to see numerical equivalence of $\bb R$-divisors on a proper variety. 
     \item An $\bb R$-divisor is \emph{nef} (resp. \emph{numerically trivial}) if $(D \cdot C) \geq 0$ (resp.$(D \cdot C) = 0$) for any irreducible curve $C \subset X$.
 \end{itemize}
Define the \emph{real Neron-Severi space} of $X$ to be $N^1(X)  := \operatorname{Div}(X) \otimes \bb R/\equiv_\sf{num}$. By the Theorem of the Base, $N^1(X)$ is finite dimensional. For an $\bb R$-divisor $D$ on $X$, we write its numerical class by $[D] \in N^1(X)$. For a morphism $f:X\to Y$ of proper varieties, note we have a well-defined homomorphism $f^*:N^1(Y) \to N^1(X)$. 
\end{construction}
The following well-known fact gets rid of possible ambiguity in the notion of numerical equivalence. It can be shown in the same way as \cite{fujino2012fundamental}*{Lemma 7.11} or \cite[Proof of Proposition 1.3.13]{lazarsfeld2017positivity} since the Theorem of the Base works over any algebraically closed field.  
\begin{lemma}\label{lemma-numerical}
    Let $X$ be a proper variety and let $D$ be a numerically trivial $\bb R$-divisor. Then, there are finitely many numerically trivial integral divisors $D_i$ and real numbers $a_i \in \bb R$ such that $D = \sum_i a_i D_i$ in $\operatorname{Div}(X)\tens \bb R$. In particular, we have $N^1(X) = (\operatorname{Div}(X)/\equiv_{\sf{num},\bb Z})\tens \bb R$, where $\equiv_{\sf{num},\bb Z}$ denotes numerical equivalence on integral divisors. 
\end{lemma}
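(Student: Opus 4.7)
The plan is to establish the natural isomorphism $N^1(X) \cong N^1(X)_{\bb Z} \otimes \bb R$, where $N^1(X)_{\bb Z} := \operatorname{Div}(X)/\equiv_{\sf{num},\bb Z}$. Setting $N \subset \operatorname{Div}(X)$ to be the subgroup of integral numerically trivial divisors, the flat base change
$$0 \to N \otimes \bb R \to \operatorname{Div}(X) \otimes \bb R \to N^1(X)_{\bb Z} \otimes \bb R \to 0$$
of the defining sequence of $N^1(X)_{\bb Z}$ will identify numerically trivial $\bb R$-divisors with elements of $N \otimes \bb R$, i.e., with finite $\bb R$-linear combinations of numerically trivial integral divisors, which is the first assertion.

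First I would verify that $N^1(X)_{\bb Z}$ is a finitely generated free abelian group of some rank $\rho \geq 0$. Torsion-freeness is immediate because pairing with integral curves realises $N^1(X)_{\bb Z}$ as a subgroup of the torsion-free group $\prod_{C} \bb Z$ indexed by the integral curves $C \subset X$. Finite generation is exactly the content of the Theorem of the Base, which holds over any algebraically closed field and is the main non-trivial input.

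Next I would produce integral curves $C_1, \dots, C_\rho$ such that the evaluation map $\operatorname{ev}: N^1(X)_{\bb Z} \to \bb Z^\rho$, $[E] \mapsto ((E \cdot C_j))_{j=1}^\rho$, is injective with finite-index image. Fix a $\bb Z$-basis $e_1, \dots, e_\rho$ of $N^1(X)_{\bb Z}$ and view each $e_i$ as a $\bb Z$-valued function on the set of integral curves of $X$. These $\rho$ functions are $\bb Q$-linearly independent in $\prod_{C} \bb Q$, so some $\rho$-tuple of curves $C_1, \dots, C_\rho$ yields a non-singular $\rho \times \rho$ evaluation matrix $(e_i(C_j))$. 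The corresponding $\operatorname{ev}$ is then injective, and tensoring with $\bb R$ produces an isomorphism $N^1(X)_{\bb Z} \otimes \bb R \xrightarrow{\sim} \bb R^\rho$.

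To finish, use flatness of $\bb R$ over $\bb Z$ to tensor $0 \to N \to \operatorname{Div}(X) \to N^1(X)_{\bb Z} \to 0$ with $\bb R$, producing the displayed exact sequence above. By $\bb R$-linearity of intersection numbers, the composition $\operatorname{Div}(X) \otimes \bb R \to N^1(X)_{\bb Z} \otimes \bb R \xrightarrow{\sim} \bb R^\rho$ sends an $\bb R$-divisor $D$ to $((D \cdot C_j))_{j=1}^\rho$. If $D$ is numerically trivial this vector vanishes, so $D \in N \otimes \bb R$; this is the first assertion, and the ``in particular'' statement follows immediately since the kernel of $\operatorname{Div}(X) \otimes \bb R \to N^1(X)_{\bb Z} \otimes \bb R$ is then identified with the space of numerically trivial $\bb R$-divisors. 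The main obstacle is the construction of the separating curves $C_j$, which rests entirely on the finite generation of $N^1(X)_{\bb Z}$ provided by the Theorem of the Base; once that is in hand, the remaining linear algebra is elementary.
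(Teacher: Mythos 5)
Your proof is correct and is essentially the argument the paper has in mind: the paper gives no proof of its own but refers to \cite{fujino2012fundamental}*{Lemma 7.11} and \cite[Proof of Proposition 1.3.13]{lazarsfeld2017positivity}, whose content is exactly your reduction to the Theorem of the Base plus the observation that the numerically trivial locus is cut out by $\mathbb{Z}$-linear conditions, so that $\mathbb{Q}$-linear independence of classes persists over $\mathbb{R}$. Your packaging via the flat exact sequence and the choice of separating curves $C_1,\dots,C_\rho$ is a clean and complete way of carrying out that same linear algebra.
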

The following is fundamental for big $\bb R$-divisors.
\begin{lemma}[Kodaira's lemma]\label{lem: Kodaira}
    Let $X$ be a proper variety and suppose $D$ is a big $\bb R$-divisor. Then, for any effective Cartier divisor $E \subset X$, there is $m>0$ such that $mD - E$ is linearly equivalent to an effective $\bb R$-divisor. 
\end{lemma}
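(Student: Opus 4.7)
The plan is to reduce this to the classical Kodaira lemma for integral divisors by unwinding the definition of bigness for $\mathbb{R}$-line bundles. First I would apply the definition of big $\bb R$-line bundle to $\ecal{O}_X(D)$ and pick integral divisors $D_i$ representing the classes appearing in the decomposition, writing
$$
D \equiv_{\sf{lin}} \sum_{i=1}^k a_i D_i
$$
in $\operatorname{Div}(X) \tens \bb R$, with each $D_i \in \operatorname{Div}(X)$ a big integral divisor and each $a_i > 0$ a real number.

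The key input is the classical Kodaira lemma for integral divisors on a proper variety: for any big integral divisor $D'$ and any effective Cartier divisor $E$ on $X$, some positive integer multiple $nD' - E$ is linearly equivalent to an effective integral divisor. This is essentially already used inside the proof of Lemma \ref{lemma: bigequivprop}: the short exact sequence
$$
0 \to \ecal{O}_X(nD'-E) \to \ecal{O}_X(nD') \to \ecal{O}_X(nD')|_E \to 0
$$
combined with the bound $\dim_k H^0(X,\ecal{O}_X(nD')) \geq C \cdot n^{\dim X}$ from bigness of $D'$ and the bound $\dim_k H^0(E,\ecal{O}_X(nD')|_E) \leq C' \cdot n^{\dim X -1}$ coming from $\dim E \leq \dim X - 1$ forces a nonzero global section of $\ecal{O}_X(nD'-E)$ for $n \gg 0$.

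I would then apply this classical statement to $D_1$ to produce a positive integer $n_1$ and an effective integral divisor $F$ with $n_1 D_1 - E \equiv_{\sf{lin}} F$. For each $i \geq 2$, bigness of $D_i$ on its own already implies that some $n_i D_i \equiv_{\sf{lin}} G_i$ for a positive integer $n_i$ and an effective integral divisor $G_i$. Setting $m := n_1/a_1 > 0$ would then give
$$
mD - E \equiv_{\sf{lin}} (n_1 D_1 - E) + \sum_{i \geq 2} (m a_i) D_i \equiv_{\sf{lin}} F + \sum_{i \geq 2} \frac{m a_i}{n_i}\, G_i,
$$
which is a positive real combination of effective integral divisors, hence an effective $\bb R$-divisor, as required. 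The main obstacle is really conceptual rather than technical: once the definition of a big $\bb R$-line bundle is unpacked as a positive real combination of big integral ones, the classical integral Kodaira lemma does all of the substantive work, and the only care needed is to select $m$ so that the single summand $(m a_1) D_1$ absorbs the full $E$ while the other summands $(m a_i) D_i$ remain big and hence $\equiv_{\sf{lin}}$ effective.
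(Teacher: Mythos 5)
Your proof is correct and follows essentially the same route as the paper: both unpack the definition of a big $\bb R$-divisor to write $D \equiv_{\sf{lin}} \sum a_i D_i$ with the $D_i$ big integral divisors and then fall back on the $h^0$-growth argument of Lemma \ref{lemma: bigequivprop}, the only organizational difference being that you absorb $E$ into the single summand $D_1$ and use bare effectivity of multiples of the remaining $D_i$, whereas the paper first replaces the $a_i$ by smaller rationals and clears denominators so as to apply the integral statement to one big Cartier divisor. One minor point: your $m = n_1/a_1$ is a positive real rather than an integer, which proves the statement as written, but the later application in Lemma \ref{lemma-tensampleasR-divisQ-div} quotes an integer multiple; this is harmless because $D$ itself is linearly equivalent to an effective $\bb R$-divisor (each $n_i D_i$ is linearly equivalent to an effective divisor), so $m$ can be rounded up to an integer.
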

\begin{proof}
   Write $D = \sum a_i D_i$ with $a_i > 0$ positive real numbers and $D_i$ big integral divisors. Replacing $D$ with $m D$ for $m > 0$ if necessary we may assume each $D_i$ is linearly equivalent to an effective divisor $D_i'$, and then replacing $D_i$ with $D_i'$, we may assume $D = \sum a_i D_i$ with $a_i >0$ real numbers and $D_i$ big and effective integral divisors. Next, choosing rational numbers $0 < a_i' < a_i$ and setting $D' = \sum a_i' D_i$, the divisor $D - D'$ is effective, so it suffices to prove the Lemma for the $\mathbb{Q}$-divisor $D'$ and we may replace $D$ with $D'$. Finally, replacing $D$ with $m D$ for $m > 0$ again we may assume $D$ is a big Cartier divisor. Then, it suffices to show $H^0(X,\ecal O_X(mD -E)) \neq 0$ for $m\gg 0$, which follows by the same arguments as in the projective case (see the proof of Lemma \ref{lemma: bigequivprop}).
\end{proof}

Now, we note that the definitions above are compatible with the ones for $\bb Q$-divisors. 
\begin{lemma}\label{lemma-tensampleasR-divisQ-div}
    Let $X$ be a proper variety and let $D$ be a $\bb Q$-divisor. 
    Then $D$ is ample (resp. big, resp. nef, resp. $\otimes$-generating) as a $\bb Q$-divisor if and only if $D$ is ample (resp. big, resp. nef, resp. $\otimes$-generating) as an $\bb R$-divisor. 
\end{lemma}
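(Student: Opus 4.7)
The plan is to handle each of the four properties separately. The nef case is formal: for a $\bb Q$-divisor $D$, $(D \cdot C) = \tfrac{1}{m}(mD \cdot C)$ for any positive integer $m$ and any irreducible curve $C$, so nefness in either sense reduces to $(D \cdot C) \geq 0$ for every irreducible curve. For big and ample, the ``only if'' direction is immediate, since if $mD$ is an ample (resp.\ big) integral divisor then $\ecal{O}_X(D) = \tfrac{1}{m} \ecal{O}_X(mD)$ exhibits $\ecal{O}_X(D)$ as a positive real multiple of a single ample (resp.\ big) line bundle.

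For the ``if'' direction of the ample and big cases, I will use an approximation argument. Writing $\ecal{O}_X(D) = \sum_{i=1}^n a_i \ecal{L}_i$ in $\pic(X) \tens \bb R$ with $a_i > 0$ real and $\ecal{L}_i \in \pic(X)$ ample (resp.\ big), the tuples $(b_i) \in \bb R^n$ satisfying $\sum b_i \ecal{L}_i = \ecal{O}_X(D)$ form a $\bb Q$-affine subspace of $\bb R^n$, since the defining equations have $\bb Q$-coefficients (as $\ecal{O}_X(D) \in \pic(X) \tens \bb Q$). The real solution set is therefore the $\bb R$-extension of the rational solution set, so rational solutions are dense among real ones. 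I then choose $(q_i) \in (\bb Q_{> 0})^n$ close enough to $(a_i)$ and clear denominators by some $m > 0$ to obtain $\ecal{O}_X(mD) = \bigotimes_i \ecal{L}_i^{\tens m q_i}$ with $m q_i$ positive integers. For the ample case, a positive-integer tensor product of ample line bundles is ample. For the big case, if each $\ecal{L}_i$ admits a section $s_i \in \Gamma(X, \ecal{L}_i^{\tens N})$ (for a common $N$) with nonempty affine non-vanishing locus $U_i$, then $\bigotimes_i s_i^{\tens m q_i}$ is a section of $\ecal{O}_X(mD)^{\tens N}$ whose non-vanishing locus is $\bigcap_i U_i$, which is nonempty and affine since $X$ is irreducible and separated.

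For the $\tens$-ample case, I will combine Theorem \ref{ref-theoremmain} with the big case just proved. A $\bb Q$-divisor $D$ is $\bb Q$-$\tens$-ample iff $mD$ is $\tens$-ample as an integral divisor for some $m > 0$, iff (by Theorem \ref{ref-theoremmain}) for every integral closed subscheme $Z \subset X$ either $(mD)|_Z$ or $-(mD)|_Z$ is big as an integral divisor, iff $D|_Z$ or $-D|_Z$ is $\bb Q$-big. By the already-proven big case this is equivalent to $\bb R$-bigness or $\bb R$-anti-bigness of $D|_Z$ for every closed subvariety $Z$, which is precisely the definition of $\bb R$-$\tens$-ampleness. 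The main obstacle is the approximation step in the big/ample backward direction, which reduces to the standard linear-algebra fact that a $\bb Q$-linear system with a real solution has rational solutions dense among its real ones.
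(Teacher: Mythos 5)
Your proof is correct, but for the ample and big cases it takes a genuinely different route from the paper's. The paper handles the ample case by showing $(D^{\dim Z}\cdot Z)>0$ for every closed subvariety $Z$ and invoking the Nakai--Moishezon criterion for proper varieties, and handles the big case by passing to a projective model via Chow's lemma, applying Kodaira's lemma to split off an ample part, and perturbing only the effective part using openness of $\bb R$-amplitude. You instead perturb the coefficients $a_i$ directly inside the solution set $\{(b_i):\sum b_i[\ecal L_i]=[\ecal O_X(D)]\}\subset\bb R^n$; this is legitimate once one notes that the subgroup of $\pic(X)$ generated by the $\ecal L_i$ and by a line bundle representing a multiple of $D$ is finitely generated, so (using flatness of $\bb R$ over $\bb Z$ to embed its $\bb Q$-span into its $\bb R$-span) the condition really is a $\bb Q$-linear system with a real solution, whence positive rational solutions exist near $(a_i)$. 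You then conclude using that a positive integral combination of ample (resp.\ big) line bundles is ample (resp.\ big), the big case following from Lemma \ref{lemma-bigequiv}(ii) since the common non-vanishing locus $\bigcap_i U_i$ is nonempty (irreducibility) and affine (separatedness of a proper variety). This avoids Chow's lemma and the Nakai--Moishezon criterion entirely and is arguably more elementary and self-contained. Two small points you should make explicit: the rational solution only gives $\ecal O_X(mD)=\bigotimes_i\ecal L_i^{\tens mq_i}$ up to torsion in $\pic(X)$, so one must pass to a further positive power (harmless, since ampleness and bigness are detected on positive tensor powers); and the descent of the equality from $\pic(X)\tens\bb R$ to $\pic(X)\tens\bb Q$ uses injectivity of $\pic(X)\tens\bb Q\to\pic(X)\tens\bb R$. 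The nef and $\tens$-ample cases agree with the paper's treatment, the latter reducing via Theorem \ref{ref-theoremmain} to the big case on each closed subvariety exactly as the paper does.
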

\begin{proof}
    The ``only if'' directions are obvious, as is the ``if'' direction when $D$ is assumed nef. Suppose $D$ is ample as an $\bb R$-divisor. Then for any closed subvariety $Z\subset X$, we have $(D^{\dim Z}\cdot Z) > 0$. Thus, by the Nakai--Moishezon criteria on a proper variety (\cite{Kollarrationalcurves}*{Chapter VI, Theorem 2.15}), $D$ is ample as a $\bb Q$-divisor.
    
    Now, suppose $D$ is big as an $\mathbb{R}$-divisor. We can write $D = \sum a_i B_i$ with big divisors $B_i \in \operatorname{Div}(X)$ and real numbers $a_i > 0$. Take a proper birational morphism $\pi: \tilde X \to X$ with $\tilde X$ projective given by Chow's lemma and take $\tilde B_i \in \operatorname{Div}(\tilde X)$ so that $\ecal O_{\tilde X}(\tilde B_i) = \pi^*\ecal O_X(B_i)$ in $\pic(\tilde X)$. In particular, $\pi^*\ecal O_X(D) = \ecal O_{\tilde X}(\sum a_i \tilde B_i)$ in $\pic(\tilde X)\tens \bb R$.
    Now, by Lemma \ref{lemma-pullbackisbig}, $\sum a_i \tilde B_i$ is a big $\bb R$-divisor, so applying Lemma \ref{lem: Kodaira} to an effective ample divisor $H$ on $\tilde{X}$, there is an integer $n > 0$ such that $n \sum a_i \tilde{B}_i - H$ is linearly equivalent to an effective $\mathbb{R}$-divisor $E$. Write $E = \sum b_i E_i$ with $b_i > 0$ and $E_i$ effective divisors. Then approximating $b_i$ by rational numbers and using openness of $\bb R$-amplitude (e.g. \cite{lazarsfeld2017positivity}*{Example 1.3.14}), we may write $n \sum a_i \tilde{B}_i \equiv_\sf{lin} A + E'$ where $A$ is an ample $\mathbb{R}$-divisor and $E' = \sum b_i' E_i$ where $b_i'$ are positive rational numbers close to $b_i$ and $E_i$ are effective divisors. In particular, we see that $E'$ is effective as a $\mathbb{Q}$-divisor. 
    This forces $\ecal O_{\tilde X}(A)$ to be a $\bb Q$-line bundle and by the first paragraph, we see that the $\bb Q$-line bundle $\ecal O_{\tilde X}(A)$ is ample (as a $\bb Q$-line bundle). Thus, $\ecal O_{\tilde X}(n\sum a_i \tilde B_i) = \pi^*\ecal O_X(nD)$ is a big $\bb Q$-line bundle and we are done by Lemma \ref{prop-pullbackbigthenbig} 

    Finally, suppose a $\bb Q$-divisor $D$ is $\otimes$-generating as an $\bb R$-divisor. Choose an integer $m > 0$ such that $m D \in \operatorname{Div}(X)$ and set $\ecal{L} = \ecal{O}_X(m D)$. Then by definition of $\otimes$-generation for $\mathbb{R}$-divisors, $\ecal{L}|_Z$ is either big or anti-big as an $\mathbb{R}$-line bundle, and by the previous paragraph, $\ecal{L}|_{Z}$ is either big or anti-big as a $\mathbb{Q}$-line bundle. Since a line bundle is big if and only if a positive tensor power is, this means that $\ecal{L}|_{Z}$ is either big or anti-big as a line bundle, and so $\ecal{L}$ is $\otimes$-generating.
\end{proof}

\begin{remark}
    The ample case of the Lemma  also follows from the Nakai--Moishezon criterion for $\bb R$-divisors on a proper variety given in \cite{fujino2023nakai}.
\end{remark}

Next, we show some notions of positivity only depend on numerical classes. Namely:
\begin{lemma}\label{lemma-numericalequiv}
    Let $X$ be a proper variety. Suppose $D$ and $D'$ are numerically equivalent $\bb R$-divisors on $X$. If $D$ is ample (resp. big, resp. $\otimes$-generating), then so is $D'$.
\end{lemma}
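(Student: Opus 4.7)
The plan is to handle the three cases in order, leveraging the fact that intersection numbers depend only on numerical classes, together with reductions to the projective setting via Chow's lemma.

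For the ample case, the argument is essentially by citing the Nakai--Moishezon criterion for $\mathbb{R}$-divisors on a proper variety (\cite{fujino2023nakai}): $D$ is ample if and only if $(D^{\dim Z} \cdot Z) > 0$ for every closed subvariety $Z \subset X$. Since these intersection numbers depend only on the numerical class of $D$, if $D \equiv_{\sf num} D'$ and $D$ is ample, then $D'$ satisfies the same positivity inequalities, hence is ample.

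For the big case, first reduce to the projective setting. Apply Chow's lemma to get a proper birational morphism $\pi : \tilde X \to X$ with $\tilde X$ projective. Writing $D = \sum a_i D_i$ and $D' = \sum a_j' D_j'$ with $D_i, D_j'$ integral Cartier divisors, the pullback preserves numerical equivalence, so $\pi^* D \equiv_{\sf num} \pi^* D'$ on $\tilde X$. By (the $\mathbb{R}$-divisor form of) Lemma \ref{lemma-pullbackisbig} and Proposition \ref{prop-pullbackbigthenbig} used in the proof of Lemma \ref{lemma-tensampleasR-divisQ-div}, bigness of $D$ is equivalent to bigness of $\pi^* D$, and similarly for $D'$. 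Thus we may assume $X$ is projective. In this case, fix an effective ample Cartier divisor $H$ on $X$. Applying Kodaira's Lemma \ref{lem: Kodaira} to $D$ and $H$, write $D \equiv_{\sf lin} \tfrac{1}{m} H + \tfrac{1}{m} E$ for an integer $m > 0$ and an effective $\mathbb{R}$-divisor $E$. Then
\[
D' \equiv_{\sf lin} D + (D' - D) \equiv_{\sf lin} \left( \tfrac{1}{m} H + (D' - D) \right) + \tfrac{1}{m} E .
\]
Since $D' - D$ is numerically trivial, $\tfrac{1}{m} H + (D' - D)$ is numerically equivalent to the ample $\mathbb{R}$-divisor $\tfrac{1}{m} H$ and hence ample by the already-proven ample case. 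Therefore $D'$ is a sum of an ample and an effective $\mathbb{R}$-divisor, so $D'$ is big.

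For the $\otimes$-ample case, use the definition directly. Let $Z \subset X$ be any closed subvariety. I first claim $D|_Z \equiv_{\sf num} D'|_Z$ on $Z$. Indeed, by Kleiman's theorem cited in the paper, numerical equivalence on a proper variety can be tested against irreducible curves, and for any irreducible curve $C \subset Z$, $C$ is also an irreducible curve in $X$, so $(D|_Z \cdot C)_Z = (D \cdot C)_X = (D' \cdot C)_X = (D'|_Z \cdot C)_Z$. Now by definition of $\otimes$-ampleness of $D$, $D|_Z$ is linearly equivalent to an $\mathbb{R}$-divisor $D_Z$ on $Z$ which is big or anti-big. Then $D'|_Z \equiv_{\sf num} D|_Z \equiv_{\sf lin} D_Z$, so the big case on $Z$ (applied to $D'|_Z$ and $D_Z$, or to $-D'|_Z$ and $-D_Z$) shows $D'|_Z$ is big or anti-big accordingly. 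Hence $D'$ is $\otimes$-ample.

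The main obstacle is the big case, specifically extending the classical projective-variety fact that bigness is numerical to the proper case; the reduction via Chow's lemma together with Lemma \ref{lemma-pullbackisbig} and Proposition \ref{prop-pullbackbigthenbig} (already packaged in the proof of Lemma \ref{lemma-tensampleasR-divisQ-div}) handles this cleanly. Once big is known to be numerical, both ample (via Nakai--Moishezon) and $\otimes$-ample (by restriction to closed subvarieties) follow formally.
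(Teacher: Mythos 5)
Your overall architecture matches the paper's: ample case first, big case via Chow's lemma plus Kodaira's lemma, and the $\tens$-ample case deduced formally by restricting to closed subvarieties (that last part is essentially identical to the paper's). Your ample case is a legitimate variant: the paper instead notes that a variety carrying an ample $\bb R$-divisor is projective and cites \cite{lazarsfeld2017positivity}*{Proposition 1.3.13}, whereas you invoke the Nakai--Moishezon criterion for $\bb R$-divisors on proper varieties \cite{fujino2023nakai}, which works because the intersection numbers $(D^{\dim Z}\cdot Z)$ are numerical invariants by definition.

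The gap is in the big case, at the reduction to the projective situation. Lemma \ref{lemma-pullbackisbig} and Proposition \ref{prop-pullbackbigthenbig} are statements about line bundles (equivalently, integral or --- after clearing denominators --- $\bb Q$-divisors). The direction you actually need, namely that $\pi^*D'$ big on $\tilde X$ implies $D'$ big on $X$, does not follow from them for a genuine $\bb R$-divisor $D'$: a presentation $\pi^*D' = \sum_j c_j C_j$ with $C_j$ big integral divisors on $\tilde X$ and $c_j$ irrational need not descend, and Proposition \ref{prop-pullbackbigthenbig} only applies to a single line bundle pulled back from $X$. This is exactly why the paper first reduces, via Lemma \ref{lemma-numerical}, to showing that $B + rG$ is big for $B$ a big integral divisor, $G$ a numerically trivial integral divisor, and $r\in\bb Q$ --- a $\bb Q$-divisor statement for which the line-bundle results do apply. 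To repair your version, either perform that reduction, or argue that since $\operatorname{Big}(\tilde X)$ is open one can write $D'$ as a convex combination in $\operatorname{Div}(X)\tens\bb R$ of $\bb Q$-divisors whose pullbacks to $\tilde X$ are big, and descend each of those. A second, smaller point: your last step asserts that an ample plus an effective $\bb R$-divisor is big; under the paper's definition of a big $\bb R$-divisor (a positive real combination of big integral divisors) this needs a word when the effective coefficients are irrational, e.g. $B + cE = (1-c/n)B + (c/n)(B+nE)$ with $n = \lceil c\rceil$. Neither issue is unfixable, but as written the big case is incomplete.
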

\begin{proof}
    If $D$ is an ample $\bb R$-divisor, then $X$ is projective and this follows from \cite{lazarsfeld2017positivity}*{Proposition 1.3.13}. 
    
    Next, suppose $D$ is a big $\bb R$-divisor. The claim follows by \cite{fujino2012fundamental}*{Proposition 7.12} if we work over an algebraically closed field of characteristic zero. Let us give arguments that work over any algebraically closed field although the idea is essentially the same. More precisely, instead of taking a resolution of singularities as in \cite{fujino2012fundamental}*{Proposition 7.12} (where the hypothesis on the ground field is used), we will use Chow's lemma. First, by the same arguments as in the proof of \cite{fujino2012fundamental}*{Proposition 7.12.} (using Lemma \ref{lemma-numerical} instead of \cite{fujino2012fundamental}*{Lemma 7.11}), it suffices to show $B + r G$ is big, where $r\in \bb Q$, $B$ is a big integral divisor, and $G$ is a numerically trivial integral divisor (see also \cite[Proof of Proposition 1.3.13]{lazarsfeld2017positivity}). Now, we may assume $X$ is projective by Chow's lemma as in Lemma \ref{lem: big is open} (noting the pull-back of a numerically trivial divisor is numerically trivial), but then we can write $B = A + E$ where $A, E$ are $\mathbb{Q}$-divisors with $A$ ample and $E$ effective and so $B + rG = (A + rG) + E$, and this is big because $A + rG$ is an ample $\mathbb{Q}$-divisor by \cite[Proposition 1.3.13]{lazarsfeld2017positivity}). See \cite{fujino2012fundamental}*{Proposition 7.12}. 

    Finally, suppose $D$ is a $\otimes$-generating $\bb R$-divisor. Then, for any closed subvariety $Z \subset X$, we can take $\bb R$-divisors $D_Z$ and $D'_Z$ on $Z$ such that $\ecal O_Z(D_Z) = \ecal O_X(D)|_Z$ and $\ecal O_Z(D'_{Z}) = \ecal O_X(D')|_Z$. Then by supposition $D_Z \equiv_\sf{num} D_Z'$ and $D_Z$ is big or anti-big. Hence, by the big case, $D_Z'$ is big or anti-big and we are done.   
\end{proof}
\begin{remark}
    In some literature (including \cite{fujino2012fundamental}), a big line bundle on a proper variety is defined to be a line bundle whose pullback to its normalization is big in our sense (Definition \ref{defn-big}). Our definition and their definition are equivalent by Lemma \ref{lemma-pullbackisbig} and Proposition \ref{prop-pullbackbigthenbig} together with \cite[\href{https://stacks.math.columbia.edu/tag/035Q}{Tag 035Q}]{stacks-project} and \cite[\href{https://stacks.math.columbia.edu/tag/035S}{Tag 035S}]{stacks-project}. More generally, if $\nu: X^\nu \to X$ is a normalization of a noetherian integral Nagata scheme $X$, then a line bundle $\ecal L$ on $X$ is big if and only if so is $\nu^*\ecal L$. 
\end{remark}
Now, we have justified the definition of the following cones in the real Neron-Severi space. 
\begin{definition}\label{defn-tensamplecone}
    Let $X$ be a proper variety. Define $\operatorname{Amp}(X)$ (resp. $\operatorname{Big}(X)$, resp. $\operatorname{Nef}(X)$, resp. $\tensgen(X)$) to be the cone of ample (resp. big, resp. nef, resp. $\otimes$-generating) $\bb R$-divisors up to numerical equivalence sitting inside of $N^1(X)$. Also, define the cone $\operatorname{PsEff}(X)$ of pseudo-effective $\bb R$-divisors on $X$ to be the closure of the big cone in $N^1(X)$. For an $\bb R$-divisor $D$ on $X$, we say its class $[D] \in N^1(X)$ is ample (resp. big, resp. nef, resp. $\otimes$-generating) if $[D]$ lies in the respective cone. 
\end{definition}
As a direct consequence of Lemma \ref{lemma-tensampleasR-divisQ-div} and Lemma \ref{lemma-numericalequiv}, we get the following, which justifies the numerical study of $\otimes$-generating line bundles. 
\begin{prop}
\label{prop-numericalstudy}
    Let $X$ be a proper variety and let $D$ be an integral divisor with class $[D] \in N^1(X)$. Then $\ecal{O}_X(D)$ is ample (resp. big, resp. $\otimes$-generating) if and only if $[D]$ is ample (resp. big, resp. $\otimes$-generating).
\end{prop}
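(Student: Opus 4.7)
The plan is to chain together the equivalences furnished by Lemma \ref{lemma-tensampleasR-divisQ-div} and Lemma \ref{lemma-numericalequiv}, together with the definitions of the respective cones in Definition \ref{defn-tensamplecone}. The whole argument is essentially a repackaging of what has already been proved.

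First I would verify that, for an integral divisor $D$ viewed as a $\bb Q$-divisor, $D$ is ample (respectively big, respectively $\tens$-ample) as a $\bb Q$-divisor if and only if $\ecal{O}_X(D)$ is ample (respectively big, respectively $\tens$-ample) as a line bundle. By the definition of these properties for $\bb Q$-line bundles, the $\bb Q$-divisor side asks that $\ecal{O}_X(D)^{\tens m}$ have the property for some integer $m > 0$. Each of these three positivity conditions is stable under passage to and from positive tensor powers: classically for ampleness, directly from Definition \ref{defn-big} for bigness, and by Lemma \ref{lem: tensor power of tensor ample} for $\tens$-ampleness. This settles the comparison between the line-bundle definition and the $\bb Q$-divisor definition.

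Next, Lemma \ref{lemma-tensampleasR-divisQ-div} says that a $\bb Q$-divisor has any of the three properties as a $\bb Q$-divisor if and only if it has the corresponding property as an $\bb R$-divisor. Finally, Lemma \ref{lemma-numericalequiv} shows that each of these $\bb R$-divisor properties is invariant under numerical equivalence, so whether $D$ has the property depends only on $[D] \in N^1(X)$; since $\operatorname{Amp}(X)$, $\operatorname{Big}(X)$, and $\tensamp(X)$ are defined precisely as the sets of classes $[E]$ of $\bb R$-divisors $E$ enjoying the respective property, this is the same as saying $[D]$ lies in the corresponding cone.

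I do not anticipate any real obstacle: every step is bookkeeping against the earlier lemmas. The only point that requires a sentence of care is the first one, namely that the $\bb Q$-divisor definition applied to an integral divisor recovers the classical line-bundle definition; and this reduces to the tensor-power stability of each positivity notion noted above.
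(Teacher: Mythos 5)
Your proposal is correct and follows the same route as the paper, which derives the proposition directly from Lemma \ref{lemma-tensampleasR-divisQ-div} and Lemma \ref{lemma-numericalequiv}; your additional remark that the $\bb Q$-divisor definition applied to an integral divisor recovers the line-bundle definition (via stability of each property under positive tensor powers, e.g.\ Lemma \ref{lem: tensor power of tensor ample}) is exactly the bookkeeping the paper leaves implicit.
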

Also, the following generalization of a classical result holds.
\begin{lemma}
    Let $X$ be a proper variety. Then $\operatorname{Big}(X) \subset N^1(X)$ is an open subcone. 
\end{lemma}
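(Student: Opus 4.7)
The plan is to deduce openness from Lemma \ref{lem: big is open} by combining it with a general convexity principle. I first observe that $\operatorname{Big}(X) \subset N^1(X)$ is a convex cone, since by definition positive $\bb R$-linear combinations of big $\bb R$-divisors are big. I would then invoke the following elementary fact: a convex cone $C$ in a finite-dimensional real vector space $V$ is open if and only if for every $v \in C$ and every $w \in V$ there exists $\epsilon > 0$ such that $v + tw \in C$ for all $|t| \leq \epsilon$. The nontrivial implication is proved by fixing a basis $e_1, \dots, e_n$ of $V$, choosing $\epsilon_j > 0$ working for $w = e_j$, and noting that $v + \sum_j \delta_j e_j = \frac{1}{n}\sum_j (v + n\delta_j e_j)$ is a convex combination of elements of $C$ whenever $|n\delta_j| \leq \epsilon_j$ for each $j$.

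The substantive step is to verify the two-sided directional condition for $\operatorname{Big}(X)$. Fix a big class $[D]$ and an arbitrary class $[F] \in N^1(X)$; by Lemma \ref{lemma-numerical} I may assume $F$ is an integer divisor. Using the definition of big $\bb R$-divisor, I would write $D \equiv_\sf{lin} \sum_i a_i B_i$ with $a_i > 0$ real and $B_i \in \operatorname{Div}(X)$ big integer divisors. Two applications of Lemma \ref{lem: big is open} to $B_1$, with $\ecal M = \ecal O_X(F)$ and with $\ecal M = \ecal O_X(-F)$, produce positive integers $m_+, m_-$ such that the integer divisors $m_\pm B_1 \pm F$, and hence the $\bb R$-divisors $B_1 \pm m_\pm^{-1} F$, are big. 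Taking convex combinations with the big class $B_1$ itself yields some $\epsilon_1 > 0$ such that $B_1 + s F$ is big for all $|s| \leq \epsilon_1$. Then writing $D + t F \equiv_\sf{lin} \sum_{i \geq 2} a_i B_i + a_1 \bigl(B_1 + (t/a_1) F\bigr)$ expresses $D + t F$ as a positive $\bb R$-linear combination of big $\bb R$-divisors whenever $|t| \leq a_1 \epsilon_1$, so $D + t F$ is big.

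The main technical input is Lemma \ref{lem: big is open}, whose own proof already absorbs the non-projectivity of $X$ via Chow's lemma and Kodaira's lemma; once that tool is in hand, the passage from integer divisors to $\bb R$-classes, and the promotion from one-direction bigness to a genuine open neighborhood, are formal convexity bookkeeping. I do not anticipate any serious obstacle beyond this organizational step; in particular the main subtlety one might worry about---namely, that bigness for $\bb R$-divisors is defined via an existential decomposition rather than by a pointwise positivity condition---is precisely what forces the argument to go through the decomposition $D \equiv_\sf{lin} \sum a_i B_i$ and absorb the perturbation $tF$ into the term $a_1 B_1$.
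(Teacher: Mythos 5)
Your proof is correct and follows essentially the same route as the paper: decompose the big class as a positive combination of big integral divisors, absorb the perturbation into a single term, and invoke Lemma \ref{lem: big is open} together with the fact that positive combinations of big $\mathbb{R}$-divisors are big. The only difference is that you spell out the (standard) convexity bookkeeping showing that radial openness of a convex cone in finite dimensions implies openness, a step the paper leaves implicit.
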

\begin{proof}
    Let $B$ be a big $\bb R$-divisor and write $B = \sum_i a_i B_i$ for non-negative real numbers $a_i > 0$ and big integral divisors $B_i$. It suffices to show for any integral divisor $D$ there is $\epsilon>0$ such that $B + \epsilon D$ is big. Since the sum of big $\bb R$-divisors is big by definition, we may assume $B$ is a big $\bb Q$-divisor and then the claim follows by Lemma \ref{lem: big is open}. 
\end{proof}

Here are some quick general observations on the shape of the $\otimes$-generating cone.
\begin{lemma}
\label{lemma-basicsoftensorampleRdivisors}
    Let $X$ be a proper variety over a field $k$.
    \begin{enumerate}
        \item Writing $\tensgen_+(X)= \tensgen(X) \cap \operatorname{Big}(X)$ and $\tensgen(X)_-= \tensgen(X)\cap (-\operatorname{Big}(X))$, we have the decomposition
        \[
        \tensgen(X) = \tensgen_+(X) \cup \tensgen_-(X).
        \]
        \item For a closed subvariety $Z \subset X$, let $$\operatorname{Big}_{Z,+}(X)\subset \operatorname{Big}(X) \quad (resp. \operatorname{Big}_{Z,-}(X)\subset \operatorname{Big}(X))$$ denote the cone of big $\bb R$-divisors whose restriction to $Z$ is big (resp. anti-big). Then writing
        \[
        \tensgen_\sigma(X) := \bigcap_{Z\subset X} \operatorname{Big}_{Z,\sigma(Z)}(X)
        \]
        for each $\sigma \in \Sigma$, where $\Sigma$ denotes the set of maps from the set of closed subvarieties $Z\subset X$ to $\{+,-\}$ that assign $+$ if $\dim Z =0$, we have the following decomposition into convex cones:
        \[
        \tensgen_+(X) = \bigsqcup_{\sigma \in \Sigma} \tensgen_\sigma(X) \subset \operatorname{Big}(X).
        \]
        \item 
        We have $$\tensgen (X) \cap \operatorname{Nef}(X) = \bigcap_{Z\subset X} \operatorname{Big}_{Z,+}(X) = \operatorname{Amp}(X).$$ 
        In particular, $\tensgen(X) \cap  \operatorname{\partial Nef}(X) = \emp$. \qedhere 
    \end{enumerate}
\end{lemma}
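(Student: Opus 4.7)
The plan is to dispatch the three parts in order, using the definition of $\tens$-ample $\bb R$-divisor together with two basic incompatibilities that both ultimately rest on intersection theory: on a positive-dimensional proper variety, "big" and "anti-big" cannot coexist, and neither can "anti-big" and "nef". I would prove these auxiliary facts up front by Chow's lemma plus Lemma \ref{lemma-pullbackisbig} and Proposition \ref{prop-pullbackbigthenbig} to reduce to a projective birational model, where pairing with the top power of an ample divisor finishes the job.

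For (i), I would simply take $Z = X$ in the definition of a $\tens$-ample $\bb R$-divisor: for any $D \in \tensamp(X)$, $D = D|_X$ must be big or anti-big, giving $\tensamp(X) \subset \tensamp_+(X) \cup \tensamp_-(X)$, and the reverse inclusion is built into the definitions.

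For (ii), the inclusion $\tensamp_+(X) \subset \bigcup_\sigma \tensamp_\sigma(X)$ is immediate: given $D \in \tensamp_+(X)$, define $\sigma_D(Z)$ to be $+$ when $\dim Z = 0$ or $D|_Z$ is big, and $-$ otherwise. This is well-defined precisely because for positive-dimensional $Z$, $D|_Z$ cannot be simultaneously big and anti-big. The reverse inclusion is tautological. Convexity of each $\tensamp_\sigma(X)$ follows because the sets $\operatorname{Big}_{Z,\pm}(X)$ are intersections of $\operatorname{Big}(X)$ with the preimage of the convex cones $\pm\operatorname{Big}(Z) \subset N^1(Z)$ under the linear restriction map $N^1(X) \to N^1(Z)$; intersections of convex cones are convex. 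Disjointness of the decomposition again reduces, via the constraint built into $\Sigma$, to the same big-vs-anti-big incompatibility on positive-dimensional $Z$.

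For (iii), I would establish the chain
\[
\operatorname{Amp}(X) \subset \bigcap_{Z\subset X} \operatorname{Big}_{Z,+}(X) \subset \tensamp(X) \cap \operatorname{Nef}(X) \subset \operatorname{Amp}(X).
\]
The first inclusion holds because the restriction of an ample class is ample, hence big. For the second, bigness of $D|_Z$ on every subvariety $Z$ implies $\tens$-ampleness by Theorem \ref{ref-theoremmain} (passing from numerical classes to integral divisors by Proposition \ref{prop-numericalstudy}), and bigness on every irreducible curve is exactly positivity of degree, which is nefness. The crux is the third inclusion: if $D$ is $\tens$-ample and nef, then for every positive-dimensional $Z \subset X$, $D|_Z$ is nef (restriction of nef is nef), so it cannot be anti-big by the incompatibility noted above; hence $D|_Z$ is big and, using that a big $\bb R$-divisor on a proper variety has positive top self-intersection, $(D^{\dim Z}\cdot Z) > 0$ for every subvariety. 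The Nakai--Moishezon criterion for $\bb R$-divisors on proper varieties \cite{fujino2023nakai} then concludes that $D$ is ample. The final assertion $\tensamp(X)\cap \partial\operatorname{Nef}(X) = \emp$ follows because $\operatorname{Amp}(X)$ is the interior of $\operatorname{Nef}(X)$ by Kleiman's theorem, so $\tensamp(X) \cap \partial \operatorname{Nef}(X) = (\tensamp(X) \cap \operatorname{Nef}(X)) \setminus \operatorname{Amp}(X) = \emp$.

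The main obstacle is the "big and anti-big cannot coexist" and "anti-big and nef cannot coexist" assertions on positive-dimensional proper varieties; they are classical on projective varieties, and for proper varieties I would dispatch them once via Chow's lemma and the pullback results already in the paper, then use them throughout the rest of the proof.
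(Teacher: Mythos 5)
Your parts (i) and (ii), and the reduction of the two incompatibility facts to the projective case via Chow's lemma, Lemma \ref{lemma-pullbackisbig}, Proposition \ref{prop-pullbackbigthenbig} and Kodaira's lemma, all track the paper's proof (which records (i) and (ii) as following from the definitions together with $\operatorname{Big}(Z)\cap(-\operatorname{Big}(Z))=\emptyset$ in positive dimension). The gap is in part (iii), at precisely the step where the paper does its real work. You justify $(D^{\dim Z}\cdot Z)>0$ by invoking the claim that ``a big $\bb R$-divisor on a proper variety has positive top self-intersection.'' That claim is false: on a smooth projective surface containing an integral curve $C$ with $C^2<0$, the divisor $A+nC$ (with $A$ ample) is big for every $n>0$, being ample plus effective, yet $(A+nC)^2=A^2+2n(A\cdot C)+n^2C^2<0$ for $n\gg 0$. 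The statement you actually need is that a big \emph{and nef} $\bb R$-divisor on a proper variety has positive top self-intersection; nefness of $D|_Z$ is available in your situation, so the desired conclusion is true, but it does not follow from the fact you cited.

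Moreover, for $\bb R$-divisors on proper (not necessarily projective) varieties this big-and-nef positivity is not an off-the-shelf citation, and the paper supplies an argument: after scaling, write $D|_Z\equiv_{\sf{lin}} E+F$ with $E$ an integral effective divisor and $F$ an effective $\bb R$-divisor (Kodaira's lemma, Lemma \ref{lem: Kodaira}), use Kleiman's theorem to get $(D^{\dim Z}\cdot Z)\geq (E\cdot D^{\dim Z-1})$, and induct on $\dim Z$ using that the restriction of $D$ to $E$ is again $\tens$-ample and nef. Some such induction, or a correct reference for the $\bb R$-divisor, proper-variety analogue of \cite[Chapter VI, Theorem 2.15]{Kollarrationalcurves}, must be inserted before the Nakai--Moishezon criterion of \cite{fujino2023nakai} can be applied; with that repaired, the rest of your argument for (iii), including the boundary statement, goes through.
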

\begin{proof}
    Part (i) and (ii) follow by definition, where the disjointness in part (ii) follows from the fact that $\operatorname{Big}(Z) \cap (- \operatorname{Big}(Z)) = \emp$ for any proper variety $Z$ of positive dimension. Part (iii) follows by the same strategy as Lemma \ref{lemma-nefandtensorample}. First, if $[D] \in \tensgen_+(X) \cap \operatorname{Nef}(X)$ for an $\bb R$-divisor $D$, then for any closed subvariety $Z \subset X$, $[D]|_Z$ is big since otherwise $-[D]|_Z$ is big and there is an integral curve $C \subset Z$ such that $(-D|_Z \cdot C)>0$ (by taking a curve that intersects positively with each big integral divisor appearing in a representative of the class $-[D]|_Z$ using Lemma \ref{lemma-bigequiv} condition (ii)), which is absurd as $[D]|_Z$ is nef. Thus, we have
   \[
   \tensgen(X) \cap \operatorname{Nef}(X) = \bigcap_{Z\subset X} \operatorname{Big}_{Z,+}(X)
   \]
   as the other inclusion is trivial. To see the other equality, since the restriction of a $\otimes$-generating and nef $\bb R$-divisor to a closed subvariety is $\otimes$-generating and nef, it suffices to show for a $\otimes$-generating and nef (and hence big) $\bb R$-divisor $A$ on a proper variety $Z$, we have $(A^{\dim Z})>0$ by the Nakai-Moishezon criterion for $\bb R$-divisors on a proper variety over an algebraically closed field (\cite{fujino2023nakai}*{Theorem 1.3}). Indeed, by Kodaira's lemma (Lemma \ref{lem: Kodaira}), after replacing $A$ with its multiple, we can write $A = E + F$, where $E$ is an integral effective divisor and $F$ is an effective $\bb R$-divisor. Then, by Kleiman's theorem (\cite{Kollarrationalcurves}*{Chapter VI, Theorem 2.17}), we have $(A^{\dim Z}) = ((E+F)\cdot A^{\dim Z -1}) \geq (E \cdot A^{\dim Z -1})$ and we are done by induction on $\dim Z$ as $[A]|_E$ is $\otimes$-generating and nef again. 
\end{proof}

\begin{question}
    For which $\sigma \in \Sigma$, is $\tensgen_\sigma(X)$ open/non-empty? For which $\sigma \in \Sigma$, there exists $Z \subset X$ such that $\tensgen_\sigma = \operatorname{Big}_{Z,\pm}(X)$? 
\end{question}
\begin{example}
    The cone $\tensgen_+(X)$ is not necessarily convex. Indeed, let $X$ be a projective surface and suppose we have $D \in \tensgen_+(X) \setminus \operatorname{Amp}(X)$ (see Corollary \ref{corollary-tesnampnotamponsurface} for examples). Then, there is an integral curve $C \subset X$ such that $(C\cdot D) < 0$. Now, for an ample divisor $A$ on $X$, we have
    \[
    \left(D - \frac{(C\cdot D)}{(C\cdot A)}A\right )\cdot C = 0.
    \]
    Thus, we get a non-$\otimes$-generating $\bb R$-divisor as a positive linear combination of two big and $\otimes$-generating $\bb R$-divisors.
\end{example}

We will completely determine the $\otimes$-generating cone of ruled surfaces later, see Example \ref{example-ruledsurfaces}. In some cases, we will see that the $\otimes$-generating cone of a ruled surface is an open convex cone in $\mathbb{R}^2$ minus a ray, which gives a concrete example of non-convexity. 

\section{Examples}
\label{section-examples}

\subsection{\texorpdfstring{$\otimes$-}{Tensor-}generation on curves}
\label{subsection-curves}

\begin{prop}\label{lem: tensor ample on curves}
    Let $C$ be an integral projective curve over a field and let $\ecal L$ be a line bundle on $C$. Then the following are equivalent:
    \begin{enumerate}
        \item $\ecal L$ is $\otimes$-generating,
        \item $\deg \ecal L\neq 0$,
        \item $\ecal L$ is big or anti-big.
        \item $\ecal L$ is ample or anti-ample \qedhere
    \end{enumerate} 
\end{prop}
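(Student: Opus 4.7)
The plan is to establish the cyclic chain (iv) $\Rightarrow$ (i) $\Rightarrow$ (iii) $\Rightarrow$ (ii) $\Rightarrow$ (iv). The implication (iv) $\Rightarrow$ (i) is immediate from the general fact recalled after Lemma \ref{lemma-equiv} that an (anti-)ample line bundle on a quasi-compact and quasi-separated scheme is $\otimes$-ample. The implication (ii) $\Rightarrow$ (iv) is classical on an integral projective curve: a line bundle of positive degree is ample, for example by Riemann--Roch, or by the Nakai--Moishezon criterion, where the only positive-dimensional closed subvariety to test against is $C$ itself.

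The heart of the argument is (i) $\Rightarrow$ (iii), which I would obtain by a direct appeal to Theorem \ref{ref-theoremmain}. The integral closed subschemes of $C$ are exactly the closed points and $C$ itself. On a closed point any line bundle is trivial and hence big (see Remark \ref{remark-bigstructuresheaf}), so the only non-trivial instance of the criterion is the case $Z = C$, which delivers that $\ecal{L}$ or $\ecal{L}^{-1}$ is big---precisely (iii).

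For (iii) $\Rightarrow$ (ii), I would argue as follows. After possibly replacing $\ecal{L}$ with $\ecal{L}^{-1}$, assume $\ecal{L}$ is big. By Definition \ref{defn-big} there exist an integer $n > 0$ and a section $s \in \Gamma(C, \ecal{L}^{\otimes n})$ such that $C_s$ is non-empty and affine. Since $C$ is proper, integral, and of dimension one, $C$ itself is not affine, so $C_s \subsetneq C$, and the complement $D = C \setminus C_s$ is a non-empty effective Cartier divisor on $C$ satisfying $\ecal{L}^{\otimes n} \cong \ecal{O}_C(D)$. Since $D$ is a non-zero effective divisor on a proper curve, $\deg D > 0$, whence $n \deg \ecal{L} = \deg D > 0$ and in particular $\deg \ecal{L} \neq 0$. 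The anti-big case yields $\deg \ecal{L} < 0$ symmetrically.

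I do not anticipate a real obstacle; essentially all the conceptual work is absorbed into Theorem \ref{ref-theoremmain}, and once it is applied the statement collapses to elementary facts about the degree of a line bundle on a proper integral curve, together with the observation that a proper integral curve of positive dimension cannot be affine.
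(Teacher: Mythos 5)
Your proposal is correct and follows essentially the same route as the paper: the paper's proof is a one-line appeal to Theorem \ref{ref-theoremmain} combined with the classical fact that on an integral projective curve a line bundle is ample if and only if it is big if and only if it has positive degree. You merely unpack that classical equivalence (deriving positive degree from the affine-complement characterization of bigness via a non-zero effective Cartier divisor), which is a fine, slightly more self-contained presentation of the same argument.
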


If $C$ is a proper scheme over a field which is purely of dimension 1 then the degree of a line bundle $\ecal{L}$ on $C$ is $\chi(C, \ecal{L}) - \chi(C, \ecal{O}_C)$. See \cite[\href{https://stacks.math.columbia.edu/tag/0AYQ}{Tag 0AYQ}]{stacks-project} as a reference. Note that the Proposition above is not true if $C$ is not assumed integral by Example \ref{example-reducibleconic}.

\begin{proof}
    This follows from Theorem \ref{ref-theoremmain} since a line bundle on a projective curve is ample if and only if it is big if and only if it has positive degree.
\end{proof}

Amplitude of a line bundle on the fibers of a proper, finitely presented morphism is an open condition on the base by \cite{EGA4Part3}. This is not the case for $\otimes$-generation.

\begin{example}
    Let $R$ be a complete DVR and let $\ecal{C} / R$ be a flat family of conics in $\mathbb{P}^2_{R}$ with reducible special fiber $\ecal{C}_0$ and smooth generic fiber. The line bundle on $\ecal{C}_0$ which is $\ecal{O}(1)$ on one irreducible component and $\ecal{O}(-1)$ on the other lifts to a line bundle $\ecal{L}$ on $\ecal{C}$ (there is a formal lift since $H^2(\ecal{C}_0, \ecal{O}_{\ecal{C}_0}) = 0$ and it algebraizes by Grothendieck's Existence Theorem). Since the degree of a line bundle on a proper, flat family of 1-dimensional schemes is constant, we see that the restriction of $\ecal{L}$ to the generic fiber is trivial (the only line bundle of degree zero on a smooth conic over a field). Then $\ecal{L}$ is $\otimes$-generating when restricted to the geometric special fiber but not $\otimes$-generating when restricted to the geometric generic fiber.
\end{example}

\subsection{\texorpdfstring{$\otimes$-}{Tensor-}generation on surfaces}
\label{subsection-surfaces}
Similarly to the case of curves, we can give a simpler characterization for $\otimes$-generating line bundles on surfaces. 
\begin{lemma}
\label{lemma-tensoramplenessonsurface}
    Let $X$ be a smooth projective surface over a field. Let $\ecal{L}$ be a big line bundle on $X$. Then $(\ecal{L}. C) > 0$ for every irreducible curve $C \subset X$ with $C^2 \geq 0$. In particular, $\ecal{L}$ is:
    \begin{enumerate}
        \item Ample if and only if $(\ecal{L}.C)>0$ for every irreducible curve $C \subset X$ with $C^2 < 0$.
        \item $\otimes$-generating if and only if $(\ecal{L}. C) \neq 0$ for every irreducible curve $C \subset X$ with $C^2 < 0$. \qedhere
    \end{enumerate}
\end{lemma}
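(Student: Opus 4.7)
I would first prove the main assertion that $(\ecal{L}.C)>0$ when $C^2\ge 0$, and then deduce (i) and (ii) by combining it with the Nakai--Moishezon criterion and Theorem \ref{ref-theoremmain} respectively, together with the description of $\otimes$-ample line bundles on curves in Proposition \ref{lem: tensor ample on curves}.

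For the main assertion, my approach is to apply Kodaira's lemma (Lemma \ref{lem: Kodaira}) to any effective ample divisor $H$ on $X$ to obtain an integer $m>0$ and an effective $\mathbb{R}$-divisor $F$ with $m\ecal{L} \equiv_\sf{lin} H + F$. Writing $F$ uniquely as a non-negative $\mathbb{R}$-linear combination of prime divisors and separating off the coefficient $b_C \geq 0$ of $C$, one gets $F = b_C C + F'$ with $F'$ an effective $\mathbb{R}$-divisor whose support does not contain $C$, so that
\[
(m\ecal{L}.C) \;=\; (H.C) + b_C(C^2) + (F'.C).
\]
The first term is strictly positive by ampleness of $H$, the second is non-negative since $C^2\ge 0$, and the third is non-negative since $F'$ is effective with $C\not\subset\operatorname{supp}(F')$; this yields $(\ecal{L}.C)>0$.

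For (i), the ``only if'' direction is just Nakai--Moishezon. Conversely, the hypothesis of (i) combined with the main assertion yields $(\ecal{L}.C)>0$ for every irreducible curve $C\subset X$, so $\ecal{L}$ is nef; since $\ecal{L}$ is also big, the standard fact that a nef and big line bundle on a smooth surface has positive self-intersection gives $\ecal{L}^2>0$, and Nakai--Moishezon then gives ampleness. For (ii), the ``only if'' direction is immediate from Theorem \ref{ref-theoremmain}: if $\ecal{L}$ is $\otimes$-ample and $C$ is an irreducible curve, then $\ecal{L}|_C$ is big or anti-big and so has non-zero degree. For the ``if'' direction, combining the hypothesis with the main assertion gives $(\ecal{L}.C) \neq 0$ for every irreducible curve, and I would verify the criterion of Theorem \ref{ref-theoremmain} on each integral closed subscheme $Z \subset X$ by cases: $\dim Z = 0$ is trivial; $\dim Z = 1$ follows from Proposition \ref{lem: tensor ample on curves} since $\ecal{L}|_Z$ has non-zero degree; and $Z = X$ is handled by the standing hypothesis that $\ecal{L}$ is big. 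The main obstacle is the first step, which is essentially a careful application of Kodaira's lemma and the numerical behavior of effective $\mathbb{R}$-divisors on a surface; once this is in hand, both (i) and (ii) reduce to direct invocations of the relevant characterization theorems.
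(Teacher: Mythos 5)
Your proof is correct and follows essentially the same route as the paper: both apply Kodaira's lemma to write a multiple of $\ecal{L}$ as an ample plus an effective divisor and then use that a curve with $C^2 \geq 0$ meets effective divisors non-negatively (the paper states this as ``$C$ is nef,'' while you re-derive it via the decomposition $F = b_C C + F'$), and the deductions of (i) from Nakai--Moishezon and of (ii) from Theorem \ref{ref-theoremmain} together with Proposition \ref{lem: tensor ample on curves} are the same.
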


\begin{proof}
    If $C \subset X$ is an irreducible curve with $C^2 \geq 0$, then $C$ is a nef divisor. Let $\ecal{M}$ be an ample line bundle on $X$ and choose $n \gg 0$ so that $\ecal{L}^{\otimes n} \otimes \ecal{M}^{-1}$ is effective (Kodaira's Lemma). Then since $C$ is nef we have
    $$
    0 \leq ((\ecal{L}^{\otimes n} \otimes \ecal{M}^{-1} ). C) = n (\ecal{L}.C) - (\ecal{M}.C) < n (\ecal{L}.C)
    $$
    hence $(\ecal{L}.C) > 0$, as needed. Then (i) follows from the Nakai--Moishezon criterion ($\ecal{L}^2> 0$ holds because $\ecal{L}$ is big and nef in this case) and (ii) follows from Theorem \ref{ref-theoremmain}.
\end{proof}

Using the corresponding results for $\bb R$-divisors, the same proof gives a detailed description of the cone $\tensgen_+(X)$, see Definition \ref{defn-tensamplecone}.

\begin{lemma}\label{lem: tensor ample cone for surfaces}
    Let $X$ be a smooth projective surface over an algebraically closed field. We have
    $$
    \tensgen_+(X) = \operatorname{Big}(X) \setminus \bigcup \{C^\perp : C \subset X \text{ an integral curve with } C^2 < 0\},
    $$
    and this union is countable.
\end{lemma}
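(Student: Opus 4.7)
The plan is to upgrade the proof of Lemma \ref{lemma-tensoramplenessonsurface}(ii) to the real Neron--Severi setting and then to establish countability via a Hilbert scheme argument. The key ingredients are the definition of $\tens$-ampleness for $\bb R$-divisors together with Kodaira's Lemma (Lemma \ref{lem: Kodaira}).

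For the inclusion $\tensamp_+(X) \subseteq \operatorname{Big}(X) \setminus \bigcup C^\perp$, I would take a big $\tens$-ample $\bb R$-divisor $D$ and an integral curve $C$ with $C^2 < 0$. By definition of $\tens$-amplitude, $D|_C$ is linearly equivalent to a big or anti-big $\bb R$-divisor on $C$. Since a big $\bb R$-divisor on an integral projective curve is a positive $\bb R$-combination of big integer divisors, each of positive degree, $\deg_C(D|_C) = (D\cdot C) \neq 0$, so $[D] \notin C^\perp$.

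For the reverse inclusion, suppose $D$ is a big $\bb R$-divisor with $(D\cdot C) \neq 0$ for every integral $C$ with $C^2 < 0$. I would verify $D|_Z$ is big or anti-big on each closed subvariety $Z \subset X$. The cases $Z = X$ and $Z$ a closed point are immediate. For an integral curve $C$ with $C^2 < 0$, the hypothesis yields $\deg_C(D|_C) \neq 0$, hence bigness or anti-bigness. For $C^2 \geq 0$, I would observe that $C$ is nef on $X$ (distinct integral curves on a smooth surface intersect non-negatively, and $C\cdot C = C^2 \geq 0$). Fixing an ample effective integral divisor $M$ on $X$, Lemma \ref{lem: Kodaira} supplies $m>0$ and an effective $\bb R$-divisor $F$ with $mD \equiv_\sf{lin} M + F$; then $m(D\cdot C) = (M\cdot C) + (F\cdot C) \geq (M\cdot C) > 0$, so $D|_C$ is big.

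For countability, I would fix an ample divisor $H$ and stratify integral curves by the bound $(C\cdot H) \leq n$. For each $n$, this set lies inside a finite-type subscheme of the Hilbert scheme of curves on $X$, so has finitely many irreducible components, and the numerical class is constant on each component. If a component is positive-dimensional and contains an integral curve $C$, then the universal family $\ecal{C} \to Z$ has image of dimension $2$ in $X$ (the image cannot be a proper curve, else all members would equal it, contradicting $\dim Z \geq 1$); picking $x \in X \setminus C$ lying on another member $C'$ of the family, $C$ and $C'$ are distinct integral curves in the same numerical class, so $C^2 = C\cdot C' \geq 0$. Hence integral curves with $C^2 < 0$ lie in $0$-dimensional components, and the total collection is countable. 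The main obstacle I anticipate is carrying out this covering argument cleanly, in particular isolating the locus of integral curves inside each component and ensuring that a positive-dimensional component of integral curves really does sweep out $X$; this should be handled by a standard fiber-dimension argument on the universal family.
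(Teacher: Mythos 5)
Your two inclusions are correct and match the paper's argument: the forward direction is exactly the ``clear'' direction (a big or anti-big $\bb R$-divisor on an integral curve has nonzero degree), and for the reverse direction you use the same key step as the paper, namely Kodaira's Lemma (Lemma \ref{lem: Kodaira}) to write a positive multiple of $D$ as ample plus effective and conclude $(D\cdot C)>0$ for every integral curve $C$ with $C^2\ge 0$ (which is nef). Where you genuinely diverge is the countability claim. The paper disposes of it in one line: two distinct integral curves $C\neq C'$ with $[C]=[C']$ in $N^1(X)$ satisfy $C^2=C\cdot C'\ge 0$, so the map $C\mapsto [C]$ is injective on curves of negative self-intersection, and the Neron--Severi group is finitely generated, hence countable. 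Your Hilbert-scheme argument reaches the same conclusion but with considerably more machinery (boundedness, constancy of the numerical class in flat families, a fiber-dimension argument on the universal family), and it contains the paper's one-line observation as its final step anyway. It also has a small imprecision you should patch if you keep it: the second member $C'$ of the family through $x\notin C$ need not be integral, so to get $C\cdot C'\ge 0$ you must rule out $C$ being a component of $C'$; this follows because otherwise $C'-C$ would be a nonzero effective divisor numerically equivalent to zero, which is impossible on a projective surface (intersect with an ample). With that fix your route works, but the direct Neron--Severi argument is both shorter and avoids the delicate points about positive-dimensional components sweeping out $X$.
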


\begin{proof}
    The inclusion $\subset$ is clear. For the reverse inclusion, if $D$ is a big $\mathbb{R}$-divisor on $X$ and $C \subset X$ is an integral curve with $C^2 \geq 0$, then by Kodaira's Lemma we can write $D = A + E$ where $A$ is an ample $\mathbb{R}$-divisor and $E$ is an effective $\mathbb{R}$-divisor. Then $(D \cdot C) = (A \cdot C) + (E \cdot C) \geq (A \cdot C) > 0$ since $C$ is nef and $A$ is ample. The last part follows because there are only countably many integral curves on $X$ of negative self square because any two represent distinct elements of the Neron--Severi group. 
\end{proof}
In particular, if there are only finitely many negative self-intersection curves, then $\tensgen_+(X)$ is open. 
\begin{corollary}
\label{corollary-tesnampnotamponsurface}
    Let $X$ be a smooth projective surface over a field. The following are equivalent.
    \begin{enumerate}
        \item There exists a big line bundle $\ecal{L}$ on $X$ which is $\otimes$-generating but not ample. 
        \item There exists an integral curve $C \subset X$ such that $C^2 < 0$. \qedhere
    \end{enumerate}
\end{corollary}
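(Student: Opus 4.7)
The plan is to deduce both implications directly from Lemma \ref{lemma-tensoramplenessonsurface}.

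For (i) $\Rightarrow$ (ii), I would argue the contrapositive: if $X$ has no integral curve of negative self-intersection, then the condition in part (i) of Lemma \ref{lemma-tensoramplenessonsurface} is vacuous, so every big line bundle on $X$ is automatically ample. Hence no big $\tens$-ample line bundle can fail to be ample, contradicting (i).

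For (ii) $\Rightarrow$ (i), the plan is to exhibit an explicit example. Fix an ample line bundle $\ecal{H}$ on $X$ and an integral curve $C_0 \subset X$ with $C_0^2 < 0$, and consider $\ecal{L} = \ecal{H} \tens \ecal{O}_X(b C_0)$ for a positive integer $b$ to be chosen. The bundle $\ecal{L}$ is big, being the tensor product of a big (in fact ample) line bundle and an effective divisor. For $b > (\ecal{H} \cdot C_0)/|C_0^2|$, one computes $(\ecal{L} \cdot C_0) = (\ecal{H} \cdot C_0) + b C_0^2 < 0$, so $\ecal{L}$ is not nef and in particular not ample. To verify $\tens$-ampleness via Lemma \ref{lemma-tensoramplenessonsurface}(ii), one needs $(\ecal{L} \cdot C) \neq 0$ for every integral curve $C$ with $C^2 < 0$. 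The case $C = C_0$ is handled by the previous computation; for $C \neq C_0$, use the standard fact that distinct integral curves on a smooth surface have non-negative intersection to obtain $(\ecal{L} \cdot C) = (\ecal{H} \cdot C) + b(C_0 \cdot C) > 0$, since $(\ecal{H} \cdot C) > 0$ by ampleness of $\ecal{H}$.

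The main point (and only real subtlety) is this last step. It might naively seem that requiring $(\ecal{L} \cdot C) \neq 0$ for \emph{every} negative self-intersection curve would impose countably many constraints on $b$, possibly infeasible to satisfy simultaneously since there may be infinitely many such curves (for instance on certain blowups of $\mathbb{P}^2$). These constraints evaporate once one notices that $(C_0 \cdot C) \geq 0$ for integral $C \neq C_0$: both summands of $(\ecal{L} \cdot C)$ are then non-negative with the first strictly positive. This is precisely why perturbing an ample bundle by a large multiple of the negative curve $C_0$ itself, rather than by some other effective divisor, works cleanly with no further care needed in the choice of $b$.
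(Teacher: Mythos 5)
Your proposal is correct. Both directions go through: the contrapositive for (i) $\Rightarrow$ (ii) is exactly what Lemma \ref{lemma-tensoramplenessonsurface} gives, and for (ii) $\Rightarrow$ (i) you build the same line bundle the paper does, namely an ample bundle twisted by a large multiple of the negative curve. The only difference is in how $\otimes$-ampleness is verified. The paper checks it via Lemma \ref{lemma-usefulcriterion}: for each point off $C_0$ it produces a section of $\ecal{L}\otimes\ecal{O}(nC_0)$ whose non-vanishing locus is a quasi-affine neighborhood, and then uses that the restriction to $C_0$ has nonzero degree. You instead invoke the numerical criterion of Lemma \ref{lemma-tensoramplenessonsurface}(ii) directly, reducing everything to checking $(\ecal{L}\cdot C)\neq 0$ for the (possibly infinitely many) integral curves $C$ with $C^2<0$; the observation that $(C_0\cdot C)\geq 0$ for integral $C\neq C_0$ (distinct integral curves on a smooth surface meet non-negatively) makes all of those constraints hold automatically for any sufficiently large $b$. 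Your route is slightly more economical given that Lemma \ref{lemma-tensoramplenessonsurface} is already in hand, while the paper's section-theoretic check is more self-contained and illustrates the use of Lemma \ref{lemma-usefulcriterion}; both are valid over an arbitrary field.
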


\begin{proof}
The direction (i) $\implies$ (ii) follows directly from Lemma \ref{lemma-tensoramplenessonsurface}. For the other direction, let $\ecal{L}$ be a very ample line bundle on $X$. Choose an integer $n> 0$ such that $-n C^2> (\ecal{L}. C) > 0$. Then the line bundle $\ecal{L} \otimes \ecal{O}(n C)$ is big being a tensor product of an ample and an effective line bundle. For every $x \in X \setminus C$ there is a section $s \in \Gamma(X, \ecal{L})$ such that $X_s$ is an affine open neighborhood of $x$ and then the section $s \otimes t \in \Gamma(X, \ecal{L}\otimes \ecal{O}(n\cdot C))$ where $t  \in \Gamma(X, \ecal{O}(n C))$ is the section cutting out $n C$ satisfies: $X_{s \otimes t} \subset X_s$ is a quasi-affine open neighborhood of $x$. Also, we have 
\begin{equation}
\label{equn-negativeintersection}
(\ecal{L} \otimes \ecal{O}(nC) . C) =  (\ecal{L}.C) + n C^2 < 0.
\end{equation}
We conclude by Lemma \ref{lemma-usefulcriterion} that $\ecal{L} \otimes \ecal{O}(n C)$ is $\otimes$-generating and by (\ref{equn-negativeintersection}) it is not ample. 
\end{proof}

If we want to check $\tens$-generation of canonical bundles, then these criteria can be simplified. 

\begin{lemma}\label{lem: need to be (-2)}
    Let $X$ be a smooth projective surface with (anti-)big canonical bundle over an algebraically closed field $k$. Let $C \subset X$ be an integral curve. Then $K_X\cdot C = 0$ implies $C$ is a $(-2)$-curve, i.e.,  $C \cong \mathbb{P}^1_k$ and $C^2 = -2$. 
\end{lemma}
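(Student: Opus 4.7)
The plan is to combine the adjunction formula on the smooth surface $X$ with Kodaira's lemma applied to the big divisor $\pm K_X$: adjunction will force $C^2 \geq -2$, while bigness of $K_X$ combined with $K_X \cdot C = 0$ will force $C^2 < 0$, and the only way to reconcile these is $C^2 = -2$ and $p_a(C) = 0$.

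Concretely, I would first invoke adjunction for the Cartier divisor $C \subset X$ (well-defined since $X$ is smooth) to get
$$
2 p_a(C) - 2 \;=\; C^2 + K_X \cdot C \;=\; C^2,
$$
where $p_a(C) = h^1(C, \ecal O_C) \in \bb Z_{\geq 0}$. In particular $C^2 \geq -2$. Next, since $K_X \cdot C = 0 = (-K_X) \cdot C$, I may replace $K_X$ by $-K_X$ if needed and assume $K_X$ is big. Picking any effective ample divisor on the projective surface $X$ (which exists since high multiples of an ample divisor are linearly equivalent to effective ones) and applying Lemma \ref{lem: Kodaira}, there is an integer $m > 0$, an ample divisor $H$, and an effective divisor $E$ with $m K_X \equiv_{\mathsf{lin}} H + E$. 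Intersecting with $C$ gives
$$
0 \;=\; m K_X \cdot C \;=\; H \cdot C + E \cdot C,
$$
so $E \cdot C = -H \cdot C < 0$. Effectiveness of $E$ then forces $C \subset \supp(E)$: writing $E = \alpha C + E'$ with $\alpha \geq 1$ and $E'$ effective with $C \not\subset \supp(E')$, we have $E' \cdot C \geq 0$, hence $\alpha C^2 = E \cdot C - E' \cdot C < 0$ and therefore $C^2 < 0$.

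Combining $-2 \leq C^2 < 0$ with $p_a(C) \in \bb Z_{\geq 0}$ leaves only $C^2 = -2$ and $p_a(C) = 0$ (the alternative $C^2 = -1$ would give $p_a(C) = 1/2$). Finally, to upgrade $p_a(C) = 0$ to $C \cong \bb P^1_k$, I would use the normalization sequence
$$
0 \to \ecal O_C \to \nu_* \ecal O_{\tilde C} \to \ecal F \to 0,
$$
where $\nu : \tilde C \to C$ is the normalization and $\ecal F$ is a skyscraper sheaf. Since $k$ is algebraically closed and both $C$ and $\tilde C$ are projective and integral, $h^0(\ecal O_C) = h^0(\ecal O_{\tilde C}) = 1$, and the long exact sequence yields $h^0(\ecal F) + h^1(\ecal O_{\tilde C}) = h^1(\ecal O_C) = 0$. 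Hence $\ecal F = 0$, so $\nu$ is an isomorphism, $\tilde C$ has arithmetic genus $0$, and therefore $C \cong \tilde C \cong \bb P^1_k$. The one substantive point is extracting $C^2 < 0$ from bigness of $K_X$ in step two; the rest is adjunction and standard curve theory.
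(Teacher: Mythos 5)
Your proof is correct and follows essentially the same route as the paper: Kodaira's lemma combined with $K_X\cdot C=0$ forces $C^2<0$, adjunction then pins down $C^2=-2$ and $p_a(C)=0$, and the normalization sequence upgrades $p_a(C)=0$ to $C\cong\mathbb{P}^1_k$. The only cosmetic difference is that the paper obtains $C^2<0$ by citing its Lemma \ref{lemma-tensoramplenessonsurface} (a big line bundle has positive degree on any irreducible curve with $C^2\geq 0$), whereas you re-derive the same fact directly by showing $C$ must lie in the support of the effective part of $mK_X$.
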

\begin{proof}
    The claim follows by \cite[Claim 3.9]{A_nsings} when $\omega_S$ is big and by \cite{BPtoric}*{\href{https://arxiv.org/pdf/1010.1717}{Lemma 8}} when $\omega_S$ is anti-big. Alternatively, by Lemma \ref{lemma-tensoramplenessonsurface}, we see $C^2 < 0$.  By the adjunction formula, we have
    $$
    \operatorname{deg}(\omega_C) = K_X.C + C^2 < 0
    $$
    and $\operatorname{deg}(\omega_C) = 2 p_a(C) - 2$ where $p_a(C) = \operatorname{dim}_k H^1(C, \ecal{O}_C)$ is the arithmetic genus of $C$. Since $p_a(C) \geq 0$ we see $p_a(C) = 0$ and $C^2 = -2$. We get that $C \cong \mathbb{P}^1_k$ because the arithmetic genus of a singular curve is greater than the genus of its normalization.
\end{proof}
\begin{lemma}\label{lem: tensor ample canonical on a surface}
    Let $X$ be a smooth projective surface over an algebraically closed field. Then, $\omega_X$ is $\otimes$-generating if and only if $\omega_X$ is big or anti-big and $X$ contains no $(-2)$-curve. 
\end{lemma}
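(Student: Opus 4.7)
The plan is to split into the two directions, each of which follows by reducing to the lemmas already established in the excerpt. Everything hinges on combining Theorem \ref{ref-theoremmain}, Proposition \ref{lem: tensor ample on curves}, Lemma \ref{lemma-tensoramplenessonsurface}(ii), and Lemma \ref{lem: need to be (-2)}.

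For the ``only if'' direction, I would start by observing that if $\omega_X$ is $\tens$-ample, then applying Theorem \ref{ref-theoremmain} to $Z = X$ itself (which is integral since $X$ is a smooth projective surface over an algebraically closed field) gives immediately that $\omega_X$ or $\omega_X^{-1}$ is big. Next, suppose for contradiction that $C \subset X$ is a $(-2)$-curve. By the adjunction formula, $-2 = 2p_a(C) - 2 = (K_X \cdot C) + C^2 = (K_X \cdot C) - 2$, so $\omega_X \cdot C = 0$. Hence $\omega_X|_C$ is a degree zero line bundle on an integral projective curve, and by Proposition \ref{lem: tensor ample on curves} it is not $\tens$-ample on $C$. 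This contradicts the fact that the restriction of a $\tens$-ample line bundle along a closed immersion is $\tens$-ample (Lemma \ref{lemma-qaffinepullback}).

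For the ``if'' direction, assume $\omega_X$ is big or anti-big and $X$ contains no $(-2)$-curve. First handle the case where $\omega_X$ is big. Then Lemma \ref{lemma-tensoramplenessonsurface}(ii) tells us that $\omega_X$ is $\tens$-ample if and only if $(\omega_X \cdot C) \neq 0$ for every integral curve $C \subset X$ with $C^2 < 0$. If $(\omega_X \cdot C) = 0$ for some such $C$, then Lemma \ref{lem: need to be (-2)} forces $C$ to be a $(-2)$-curve, contradicting our assumption. So $\omega_X$ is $\tens$-ample. If instead $\omega_X^{-1}$ is big, the exact same argument applied to $\omega_X^{-1}$ (noting $(\omega_X^{-1} \cdot C) = 0 \iff (\omega_X \cdot C) = 0$, so Lemma \ref{lem: need to be (-2)} applies equally) shows $\omega_X^{-1}$ is $\tens$-ample; since the defining condition of $\tens$-ampleness is symmetric under $\ecal{L} \mapsto \ecal{L}^{-1}$ (e.g.\ Lemma \ref{lem: tensor power of tensor ample} with $n = -1$), we conclude $\omega_X$ is $\tens$-ample.

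There is no real obstacle here: the proof is essentially a matter of assembling the preceding results in the right order. The only minor point worth flagging is ensuring Lemma \ref{lem: need to be (-2)} is available in both the big and anti-big cases, which it is by its statement.
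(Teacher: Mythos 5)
Your proof is correct and follows essentially the same route as the paper: both directions reduce to Theorem \ref{ref-theoremmain}, the degree criterion on curves (Proposition \ref{lem: tensor ample on curves}), and Lemma \ref{lem: need to be (-2)}, with your ``if'' direction merely entering through the surface criterion Lemma \ref{lemma-tensoramplenessonsurface}(ii) (itself a consequence of Theorem \ref{ref-theoremmain}) and splitting the big and anti-big cases via Lemma \ref{lem: tensor power of tensor ample}. The assembly is sound and no step is missing.
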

\begin{proof}
    If $\omega_X$ is $\otimes$-generating, then $\omega_X$ is big or anti-big and for any integral curve $C \subset X$, we have 
    \[
   0 \neq C\cdot K_X = 2g(C) - 2 - C^2
    \]
    by the adjunction formula. In particular, $X$ contains no $(-2)$-curve. 
    For the converse, note Corollary 8 in the note implies there is an integral curve $C \subset X$ such that $\omega_X|_C$ is neither big nor anti-big and hence $K_X \cdot C = 0$ by Lemma \ref{lem: tensor ample on curves}. Now, Lemma \ref{lem: need to be (-2)} shows $C$ is a $(-2)$-curve. 
\end{proof}
\begin{remark}
    The only if direction works over any field. 
\end{remark}
Now, let us apply these criteria to some specific classes of examples.
\paragraph{Toric surfaces} In this paragraph, we work over an algebraically closed field. 
Let us first give a general criterion of $\tens$-generation of the canonical bundle on a Gorenstein toric variety.
\begin{notation} Let $X = X_\Sigma$ be a (normal) toric variety given by a fan $\Sigma$ and let $T \subset X$ be the corresponding torus. By \cite{cox2011toric}*{Theorem 8.2.3}, the canonical sheaf $\omega_X$ on a toric variety $X$ can be written as $\omega_X = \ecal O_X(- \sum_{\rho \in \Sigma(1)} D_\rho)$, where $\Sigma(1)$ denotes the set of rays of $\Sigma$ and $D_\rho$ denotes the corresponding torus-invariant prime Weil divisor. Define the \emph{toric boundary divisor} to be $B := \bigcup_{\rho \in \Sigma(1)} D_\rho$. Note $X \setminus B = T$ since (the $T$-orbit of) any closed point $x \not \in T$ is contained in one of the torus-invariant prime Weil divisor for example by the Cone-Orbit correspondence \cite{cox2011toric}*{Theorem 3.2.6.}. See also \cite{cox2011toric}*{Section 4.1.} for discussions of basic notions. 
\end{notation}
The following generalizes a well-known fact on the bigness of the anticanonical bundle on a smooth projective toric variety (e.g. \cite{BPtoric}*{Lemma 11}) to not necessarily proper cases using our definition. 
\begin{lemma}\label{lemma-torusanticanonicalbig}
    Let $X$ be a Gorenstein toric variety. Then the canonical bundle $\omega_X$ is anti-big. 
\end{lemma}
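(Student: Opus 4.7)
The plan is to exhibit an explicit global section of $\omega_X^{-1}$ whose non-vanishing locus is affine, and then apply Lemma \ref{lemma-bigequiv}. By \cite{cox2011toric}*{Theorem 8.2.3}, we have an isomorphism of reflexive sheaves $\omega_X \cong \ecal{O}_X(-B)$, where $B = \sum_{\rho \in \Sigma(1)} D_\rho$ is the toric boundary divisor. The Gorenstein hypothesis guarantees that $\omega_X$ (and hence $\omega_X^{-1}$) is an honest line bundle, equivalently, that $B$ is a Cartier divisor on $X$.

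Next, the effective Cartier divisor $B$ defines a canonical global section $s \in \Gamma(X, \omega_X^{-1}) = \Gamma(X, \ecal{O}_X(B))$ whose scheme-theoretic zero locus is $B$ itself. Therefore $X_s = X \setminus B = T$, which is the algebraic torus $\operatorname{Spec} k[M]$ and in particular affine. The scheme $X$ is integral (a toric variety is normal and contains $T$ as a dense open), and it is quasi-compact and quasi-separated since we assume $\Sigma$ is a finite fan and toric varieties are separated. Thus condition (ii) of Lemma \ref{lemma-bigequiv} is satisfied for $\omega_X^{-1}$ with $n = 1$ and the section $s$, which by Definition \ref{defn-big} means $\omega_X^{-1}$ is big, i.e., $\omega_X$ is anti-big.

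There is essentially no obstacle: the only nontrivial input is the toric formula $\omega_X \cong \ecal{O}_X(-B)$, which is standard, together with the observation that the Gorenstein assumption promotes this to an isomorphism of line bundles so that the tautological section of $\ecal{O}_X(B)$ makes sense and its non-vanishing locus is the affine torus.
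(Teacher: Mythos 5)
Your proof is correct and follows essentially the same route as the paper: produce a global section of $\omega_X^{-1}=\ecal{O}_X(B)$ vanishing exactly along the toric boundary $B$, so that its non-vanishing locus is the affine torus $T$, and conclude via condition (ii) of Lemma \ref{lemma-bigequiv}. The only (minor, and arguably cleaner) difference is that you take the tautological section of the effective Cartier divisor $B$ directly, using only that the sum $\sum_{\rho} D_\rho$ is Cartier --- which is precisely the Gorenstein condition --- whereas the paper assembles the section by tensoring sections attached to the individual divisors $D_\rho$.
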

\begin{proof}
By \cite{cox2011toric}*{Proposition 8.2.12.}, the canonical sheaf $\omega_X = \ecal O_X(- \sum_{\rho \in \Sigma(1)} D_\rho)$ is a line bundle. Now, the section of $\omega_X^{-1}$ corresponding to the effective Weil divisor $\sum _{\rho \in \Sigma(1)} D_\rho$ vanishes precisely along $B = X \setminus T$, so we are done by Lemma \ref{lemma-bigequiv} condition (ii) as $T$ is affine.      
\end{proof}
\begin{prop}
    Let $X$ be a Gorenstein toric variety and let $B \subset X$ denote the toric boundary divisor. Then the following are equivalent. 
    \begin{enumerate}
        \item The canonical bundle $\omega_X$ is $\otimes$-generating.
        \item The restriction $\omega_X|_B$ is $\otimes$-generating. 
        \item The restriction $\omega_X|_{D_\rho}$ is $\otimes$-generating for every $\rho \in \Sigma(1)$. \qedhere
    \end{enumerate}
\end{prop}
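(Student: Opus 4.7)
The plan is to establish (i) $\Rightarrow$ (ii) $\Rightarrow$ (iii) $\Rightarrow$ (i), with only the last implication carrying content. The first arrow is immediate from Lemma \ref{lemma-qaffinepullback} applied to the closed (hence affine) immersion $B \hookrightarrow X$. The second arrow is immediate from Lemma \ref{lemma-irreduciblecomp}: since $B$ is reduced, its irreducible components with the reduced subscheme structure are precisely the $D_\rho$ for $\rho \in \Sigma(1)$.

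For the substantive implication (iii) $\Rightarrow$ (i), I would apply Theorem \ref{ref-theoremmain}, reducing the problem to showing that for every integral closed subscheme $Z \subset X$, either $\omega_X|_Z$ or $\omega_X^{-1}|_Z$ is big in the sense of Definition \ref{defn-big}. The argument splits according to whether $Z$ is contained in the toric boundary $B$.

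If $Z \subset B$, then since $Z$ is irreducible and $B = \bigcup_{\rho \in \Sigma(1)} D_\rho$ is a finite union, $Z \subset D_\rho$ for some $\rho$. By hypothesis (iii) and Theorem \ref{ref-theoremmain} applied to $D_\rho$, the restriction $\omega_X|_Z$ is big or anti-big, as desired. If instead $Z \not\subset B$, then the global section $s \in \Gamma(X, \omega_X^{-1})$ constructed in the proof of Lemma \ref{lemma-torusanticanonicalbig}, obtained by tensoring the Cartier defining sections of the $D_\rho$ (which exist precisely because $X$ is Gorenstein), restricts to a nonzero section $s|_Z$ of $\omega_X^{-1}|_Z$. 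Its non-vanishing locus is $Z \cap T$, a closed subscheme of the affine torus $T$, hence itself affine, and non-empty since $Z \not\subset B$. Lemma \ref{lemma-bigequiv} then yields that $\omega_X^{-1}|_Z$ is big.

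I do not expect a serious obstacle: the Gorenstein hypothesis provides the global anticanonical section supported scheme-theoretically on $B$, after which the case $Z \not\subset B$ is handled by the affineness of the torus and the case $Z \subset B$ is handled directly by hypothesis (iii). Everything else is bookkeeping through the characterizations already supplied by Theorem \ref{ref-theoremmain} and Lemma \ref{lemma-bigequiv}.
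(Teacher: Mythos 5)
Your proof is correct and follows essentially the same route as the paper: the paper also uses the anticanonical section vanishing precisely along $B$ (from Lemma \ref{lemma-torusanticanonicalbig}) together with the affineness of the torus $T$. The only cosmetic difference is that you inline the dichotomy over integral closed subschemes $Z \subset B$ versus $Z \not\subset B$, whereas the paper packages exactly that argument by citing Lemma \ref{lemma-usefulcriterion}.
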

\begin{proof}
(i) $\implies$ (ii) is clear and (ii) $\Leftrightarrow$ (iii) follows by Lemma \ref{lemma-irreduciblecomp} as $B = \bigcup_{\rho \in \Sigma(1)} D_\rho$. (ii) $\implies$ (i) follows by taking a section of $\omega_X^{\tens -1}$ vanishing precisely along $B = X \setminus T$ as in the proof of Lemma \ref{lemma-torusanticanonicalbig}, and by applying Lemma \ref{lemma-usefulcriterion} as $T$ is affine. 
\end{proof}
\begin{remark}
    By the same proofs as above, the preceding lemma holds for a line bundle of the form $\ecal O_X(nB+E)$ for any torus-invariant effective Cartier divisor $E \subset X$ and any nonnegative integer $n \geq 0$, which is necessarily a big line bundle.
\end{remark}
As a direct corollary for surfaces, we obtain the following.
\begin{corollary}
    Let $X$ be a Gorenstein toric surface. Then, $\omega_X$ is $\otimes$-generating if and only if $\omega_X|_{C}$ is $\otimes$-generating for every torus-invariant integral curve $C \subset B$. 
\end{corollary}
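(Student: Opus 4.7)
The plan is to deduce this immediately from the equivalence (i) $\Leftrightarrow$ (iii) of the preceding proposition. That proposition tells us $\omega_X$ is $\otimes$-ample if and only if $\omega_X|_{D_\rho}$ is $\otimes$-ample for every ray $\rho \in \Sigma(1)$, where $D_\rho$ is the torus-invariant prime Weil (and by the Gorenstein assumption, Cartier) divisor associated to $\rho$.

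The only thing to verify is that, on a surface (i.e.\ when $\dim X = 2$), the collection $\{D_\rho\}_{\rho \in \Sigma(1)}$ coincides with the set of torus-invariant integral curves contained in $B$. First, since $\Sigma$ lives in a $2$-dimensional lattice, each ray $\rho$ has codimension $1$ in the cone $\sigma$ it sits inside, so by the Cone--Orbit correspondence \cite{cox2011toric}*{Theorem 3.2.6.} the closed torus-invariant subscheme $D_\rho$ has dimension exactly $1$, and it is integral since $X$ is normal. Hence each $D_\rho$ is a torus-invariant integral curve contained in $B = \bigcup_{\rho \in \Sigma(1)} D_\rho$. Conversely, if $C \subset B$ is any torus-invariant integral curve, then $C$ is contained in some irreducible component $D_\rho$ of $B$, and since both are integral of the same dimension, $C = D_\rho$.

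Combining these two observations, checking $\otimes$-amplitude of $\omega_X|_{D_\rho}$ for every ray $\rho \in \Sigma(1)$ is the same as checking $\otimes$-amplitude of $\omega_X|_C$ for every torus-invariant integral curve $C \subset B$, and the corollary follows from the proposition. There is no substantial obstacle here; the only thing that requires a momentary check is the dimension count identifying the rays with the torus-invariant integral curves, which is immediate from the Cone--Orbit correspondence.
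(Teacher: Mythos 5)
Your proposal is correct and is essentially the paper's own (implicit) argument: the corollary is stated there as a ``direct corollary'' of the preceding proposition, obtained exactly by combining the equivalence (i) $\Leftrightarrow$ (iii) with the observation that on a toric surface the divisors $D_\rho$, $\rho \in \Sigma(1)$, are precisely the torus-invariant integral curves contained in $B$. Your dimension count via the Cone--Orbit correspondence supplies the one detail the paper leaves unstated, so nothing is missing.
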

When a toric surface $X$ is moreover smooth and projective, we can apply Lemma \ref{lem: tensor ample canonical on a surface} to get a complete classification of such surfaces with $\otimes$-generating canonical bundle. Here, recall the toric boundary divisor $B$ of a smooth projective toric surface consists of a cycle of rational curves $C_i$ and the configuration of $C_i$ together with their self-intersection numbers $a_i$ uniquely determines a smooth projective toric surface up to isomorphism. See \cite{oda1988convex}*{Corollary 1.29} for details. 
\begin{corollary}
    Let $X$ be a smooth projective toric surface corresponding to the data $\{(C_i,a_i)\}$. Then, $\omega_X$ is $\otimes$-generating if and only if $a_i \neq -2$ for all $i$. 
\end{corollary}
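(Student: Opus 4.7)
The plan is to combine Lemma \ref{lem: tensor ample canonical on a surface} with the explicit nature of the torus-invariant cycle $B = \bigcup C_i$ on a smooth projective toric surface. Since $X$ is smooth, it is Gorenstein, so by Lemma \ref{lemma-torusanticanonicalbig} the canonical bundle $\omega_X$ is anti-big. Lemma \ref{lem: tensor ample canonical on a surface} then reduces the problem to showing that $X$ contains no $(-2)$-curve if and only if $a_i \neq -2$ for all $i$.

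For the ``only if'' direction, observe that each torus-invariant curve $C_i \cong \mathbb{P}^1$ (by the structure of smooth toric surfaces), so $C_i$ is a $(-2)$-curve precisely when $a_i = -2$. Thus if some $a_i = -2$, then $C_i$ is a $(-2)$-curve and $\omega_X$ fails to be $\tens$-ample.

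For the ``if'' direction, the key step is to show that every $(-2)$-curve on $X$ is necessarily torus-invariant, i.e., equal to some $C_i$. Suppose $C \subset X$ is an integral curve that is not one of the $C_i$; then $C$ is not torus-invariant, so $C \not\subset C_i$ for each $i$, giving $(C_i \cdot C) \geq 0$. Using $\omega_X \cong \ecal O_X(-\sum C_i)$, this yields
\[
(K_X \cdot C) = -\sum_i (C_i \cdot C) \leq 0.
\]
If $C$ were a $(-2)$-curve, the adjunction formula would force $(K_X \cdot C) = 0$, so $(C_i \cdot C) = 0$ for every $i$. Since $C$ is not contained in any $C_i$, this means $C$ is disjoint from $B = \bigcup_i C_i$, i.e., $C \subset X \setminus B = T$. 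But $T$ is affine and $C$ is proper of positive dimension, a contradiction. Hence no such $C$ is a $(-2)$-curve, and the only candidates for $(-2)$-curves are the $C_i$, completing the proof.

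I do not anticipate a serious obstacle: the argument is essentially a one-line application of Lemma \ref{lem: tensor ample canonical on a surface}, with the only genuine content being the elementary intersection-theoretic computation that rules out non-torus-invariant $(-2)$-curves via the effectivity $-K_X \sim \sum C_i$ and the fact that $X \setminus B$ is the affine torus.
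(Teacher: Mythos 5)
Your proof is correct and follows the same overall strategy as the paper: use the anti-bigness of $\omega_X$ on a (Gorenstein, hence in particular smooth) toric surface together with Lemma \ref{lem: tensor ample canonical on a surface} to reduce everything to the presence or absence of a $(-2)$-curve, and then observe that the torus-invariant curves $C_i \cong \mathbb{P}^1$ account for $a_i = -2$. The one place you diverge is in justifying that every $(-2)$-curve is one of the $C_i$: the paper simply invokes the standard toric fact that any integral curve with negative self-intersection must lie in the boundary divisor $B$ (which is usually proved by moving a non-invariant curve with the torus action to get $C^2 \geq 0$), whereas you prove the weaker statement you actually need by a self-contained intersection-theoretic argument: for $C$ distinct from all $C_i$ one has $(K_X \cdot C) = -\sum_i (C_i \cdot C) \leq 0$, adjunction forces $(K_X \cdot C) = 0$ for a $(-2)$-curve, hence $C$ is disjoint from $B$ and would be a positive-dimensional proper closed subscheme of the affine torus $T$, which is absurd. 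Your route has the small advantage of not relying on the unproved general fact about negative curves on toric surfaces, at the cost of being specific to $(-2)$-curves; both are perfectly valid, and the rest of the argument (anti-bigness via Lemma \ref{lemma-torusanticanonicalbig}, the identification of $(-2)$-curves among the $C_i$ via the cycle description of $B$) matches the paper.
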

\begin{proof}
    Since $\omega_X$ is anti-big, $\omega_X$ is $\otimes$-generating if and only if $X$ has no $(-2)$-curve by Lemma \ref{lem: tensor ample canonical on a surface}. Now, since any integral curve $C\subset X$ with $C^2 < 0$ must be contained in the toric boundary divisor $B$, we are done by the description of $B$ as above. 
\end{proof}
\paragraph{Ruled surfaces}
Using Lemma \ref{lem: tensor ample cone for surfaces}, we can give a fully explicit description of $\tensgen_+(X)$ when $X$ is a geometrically ruled surface over a smooth projective curve over the complex numbers.
\begin{example}
\label{example-ruledsurfaces}
Let $\pi:X = \bb P_X(\ecal E) \to C$ be the projective bundle of a vector bundle $\ecal E$ of rank $2$ and degree $e$ over a smooth projective curve $C$ of genus $g$ over $\bb C$.
Let $f,\xi \in N^1(X)$ denote the numerical class of a fiber of $\pi$ and that of $\ecal O_{\bb P(\ecal E)}(1)$, respectively. Recall that $N^1(X)$ is spanned by $f$ and $\xi$ and that we have 
\[
f^2 = 0,\quad f\cdot \xi = 1,\quad \xi^2 = e.
\]
 In the sequel, we use the descriptions of $\operatorname{PsEff}(X)$ and $\operatorname{Nef}(X)$ given by Fulger \cite{fulger2011cones}.
    \begin{enumerate}
        \item Suppose $\ecal E$ is unstable and take a destabilizing quotient line bundle $\ecal E \surj \ecal Q$, which gives a section $C_0 = \bb P_X(\ecal Q) \inj X$. Set $d:= \deg (\ecal Q) < \mu(\ecal E) = e/2$. By Miyaoka's result (\cite{miyaoka1987chern}; see \cite{fulger2011cones}*{Lemma 2.1}), the nef cone is spanned as $\operatorname{Nef}(X) = \bra{\xi - d f, f}$ and by \cite{fulger2011cones}*{Lemma 2.3}, the pseudoeffective cone is spanned as $\operatorname{PsEff}(X) = \bra{[C_0],f}$. Here, note we have
        \[
        [C_0] = \xi + (d-e) f,\quad [K_X] = - 2\xi  + (2g - 2 + e)f.
        \]
        Since $C_0 \subset X$ is the unique integral curve in $X$ with negative self-intersection $C_0^2 = 2d - e < 0$, Lemma \ref{lem: tensor ample canonical on a surface} shows  
        \[
        \tensgen_+(X) = \operatorname{Big}(X) \setminus C_0^\perp = \operatorname{Big}_{C_0,+}(X) \sqcup \operatorname{Big}_{C_0,-}(X),
        \]
        where we can easily see $C_0^\perp = \bb R_+ (\xi - d f)$ and $\operatorname{Big}_{C_0,+}(X) = \operatorname{Amp}(X)$. Therefore, $\tensgen(X)_+$ looks as follows (when $d < e \leq 0$ and $d - e < 0 < 1 - g -e/2 < -d$): 
        \begin{center}
\begin{tikzpicture}[scale=3]
  \draw[thick, ->] (0,0) -- (1.3,0) node[right] {$\bb R_+ f$};
  \draw[thick, ->] (0,0) -- (-0.5,1.6) node[above] {$\bb R_+ [C_0]$};
  \draw[thick, ->] (0,0) -- (1.2,1.6) node[above] {$\bb R_+ (\xi - d f)$};
  \draw[dotted, ->] (0,0) -- (0,1.6) node[above] {$\bb R_+ \xi$};
  \draw[dotted, ->] (0,0) -- (0.5,1.6) node[above] {$\bb R_+ [-K_X]$};
  \fill[opacity=0.1] (0,0) -- (1.3,0) -- (1.3,1.6) -- (1.2,1.6) -- cycle;
  \fill[opacity=0.3](0,0) -- (-0.5,1.6) -- (1.2, 1.6) -- cycle;

  \begin{scope}[shift={(1.6,0.6)}]
    \draw[fill = black, opacity=0.1] (0,0) rectangle (0.1,0.1);
    \node[right=2pt] at (0.1, 0.035) {$\operatorname{Big}_{C_0,+}(X) = \operatorname{Amp}(X)$};
    \draw[fill = black, opacity=0.3] (0,-0.2) rectangle (0.1,-0.1);
    \node[right=2pt] at (0.1,-0.17) {$\operatorname{Big}_{C_0,-}(X) = \tensgen_+(X) \setminus \operatorname{Amp}(X)$};
    \node[right=2pt] at (0.1,-0.37) {$\operatorname{Nef}(X) = \bra{\bb R_+ (\xi - d f), \bb R_+ f}$};
    \node[right=2pt] at (0.1,-0.57) {$\operatorname{PsEff}(X) = \bra{\bb R_+[C_0],\bb R_+f}$};
  \end{scope}
\end{tikzpicture}
\end{center}
Now, by Lemma \ref{lemma-basicsoftensorampleRdivisors} the $\otimes$-generation of the canonical divisor $K_X$ can be checked by observing whether or not the ray $\bb R_+ [- K_X] = \bb R_+(\xi +(1-g-e/2)f)$ goes through $\tensgen(X)_+$ as $K_X$ cannot be big. 
        
If $g = 0$, then we may assume $\ecal E = \ecal O_C \oplus \ecal O_C(d)$ with $e = d <0$ (i.e., $X$ is the Hirzebruch surface $\bb F_{{-}d}$) and
        \[
        \text{$-K_X$ is} \quad
        \begin{cases}
            \text{$\otimes$-generating but not ample} & \text{if $d \leq -3$} \\
            \text{big but not $\otimes$-generating} & \text{if $d = -2$} \\
            \text{ample} & \text{if $d = -1$}
        \end{cases}
        \]
        If $g = 1$, then we may assume $\ecal E = \ecal O_C \oplus \ecal O_C(d)$ with $e = d <0$ (as indecomposable sheaves on an elliptic curve are semistable) and we see $-K_X$ is $\otimes$-generating but not ample for any $d$. 

        If $g\geq 2$, then
        \[
        \text{$- K_X$ is} \quad
        \begin{cases}
           \text{not pseudo-effective} & \text{if $d > 1 - g + e/2$} \\
            \text{pseudo-effective but not big} & \text{if $d = 1 - g + e/2$} \\
            \text{$\otimes$-generating but not ample} & \text{if $d < 1 - g + e/2$} 
        \end{cases}
        \]
See also Lemma \ref{lem: need to be (-2)} for a more general claim on $\tens$-generation of (anti-)big canonical bundle. 
        \item Suppose $\ecal E$ is semistable. Then by \cite{fulger2011cones}*{Lemma 2.2, Lemma 3.3} we have $\operatorname{PsEff}(X) = \operatorname{Nef}(X) = \bra{\bb R_+(\xi - \frac{e}{2} f),\bb R_+f}$ and hence $$\operatorname{Big}(X) = \operatorname{Amp}(X) = \tensgen_+(X) = \operatorname{int}(\bra{\bb R_+(\xi - \frac{e}{2} f),\bb R_+f}). $$
        Hence, $K_X$ is $\otimes$-generating if and only if $-e/2 < 1- g-e/2$, i.e., $g = 0$. \qedhere
    \end{enumerate}
\end{example}
\begin{remark}
    From the computations above, we see that if a $\bb P^1$-bundle $\bb P_E(\ecal E) \to E$ over an elliptic curve $E$ has a non-isomorphic Fourier--Mukai partner, then $\ecal E$ is a semi-stable vector bundle, which is a weaker version of Uehara's result \cite{Ellipticruledsurface}*{Theorem 1.1}.  
\end{remark}

\paragraph{Blow-ups of $\bb P^2$}
{In this paragraph we work over a fixed algebraically closed field which we suppress from notation, and a point means a closed point unless otherwise stated.} We fix the following notation. 
\begin{notation}
     Let $X$ be a blow-up of $\bb P^2$ at $r$ distinct points $p_1,\dots, p_r \in \bb P^2$ (not allowing infinitely near points). Let $H$ be the pull-back of a hyperplane avoiding all $p_i$'s in $\bb P^2$ and let $E_i$ denote the exceptional divisor at each point $p_i$ for $i = 1,\dots, r$. Then $K_X= - 3H + \sum_{i=1}^r E_i$. Note 
    \[
    K_X^2 = 9 - r. 
    \]
    Also, for a rational number $q \in \bb Q$, consider an $\bb Q$-divisor $D_q = qH - \sum_{i = 1}^r E_i$. Here, note $D_3 = - K_X$. 
\end{notation}
The following long-standing and widely open conjecture of Nagata (\cite{nagata195914}) indicates $\otimes$-generation (as well as positivity in general) of line bundles on a blow-up of $\bb P^2$ can be quite subtle. See also \cite[2.2]{VariationsonNagatasconjecture}. 
\begin{conjecture}[Nagata]
    Suppose $p_1,\dots,p_r$ are $r$ very general points in $\bb P^2$. Then, for any integral curve $C \subset \bb P^2$, we have
    \begin{equation}
    \label{equn-nagatainequ}
    \deg C > \frac{1}{\sqrt{r}} \sum_{i = 1}^r \operatorname{mult}_{p_i}(C),
    \end{equation}
    where $\operatorname{mult}_{p_i}(C)$ denotes the multiplicity of $C$ at $p_i$. 
    Equivalently, the linear system $|d H - \sum_{i = 1} m_iE_i|$ has no integral curve for any positive integers $d$ and $m_i$'s satisfying $\sqrt{r} \leq  \frac{1}{d}\sum_{i=1}^rm_i.$
\end{conjecture}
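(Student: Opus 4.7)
The conjecture asserts two things: the Nagata inequality itself and the claimed equivalent reformulation in terms of linear systems on $X$. I would address them in turn.

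The equivalence is a direct translation via the blow-up $\pi : X \to \bb P^2$. For any integral curve $C \subset \bb P^2$ with $d = \deg C$ and $m_i = \operatorname{mult}_{p_i}(C)$, the strict transform $\tilde C \subset X$ is an integral curve with $[\tilde C] = dH - \sum m_i E_i$, and this procedure is a bijection between integral curves in $\bb P^2$ and integral curves in $X$ other than the $E_i$ themselves. Hence the existence of an integral curve in $|dH - \sum m_i E_i|$ with all $d, m_i > 0$ and $\sqrt{r} \leq \frac{1}{d} \sum m_i$ is the same as the existence of an integral curve $C \subset \bb P^2$ violating the Nagata inequality and passing through each $p_i$; curves missing some $p_i$ give an even stronger violation of the analogous inequality for a configuration of fewer points, so the two formulations are genuinely equivalent.

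The inequality itself is Nagata's conjecture, a famously open problem. Nagata proved it for $r \geq 9$ a perfect square, and cases $r \leq 9$ are classical; for $r \geq 10$ not a perfect square no proof is known despite sixty years of work. In the framework of this paper the conjecture asserts the nefness of the $\bb R$-divisor $D_r := \sqrt{r}\, H - \sum_i E_i$ on $X$ for very general $p_1, \ldots, p_r$, since $D_r \cdot E_i = 1 > 0$ and $D_r \cdot [\tilde C] = d\sqrt{r} - \sum m_i$. My plan would be a specialization argument: assume for contradiction a violating curve exists at a very general configuration, degenerate the points to a rigid configuration (e.g.\ a subscheme of torsion points on an elliptic curve, or points in special position on a low-degree curve) where curves of given degree and multiplicity can be enumerated, then derive a contradiction via semicontinuity of multiplicities along the family.

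The principal obstacle is exactly why the conjecture is open: every known specialization either loses too much information in the limit or admits families of exceptional low-degree curves of high multiplicity that violate the target bound in the specialization, so this attack cannot go through without a genuinely new idea. Barring such a new input—whether through Seshadri constants, tropicalization, asymptotic multiplier ideals, or the $\tens$-ample cone machinery developed here—I do not expect this plan to produce progress on Nagata's conjecture.
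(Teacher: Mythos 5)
This statement is stated in the paper as a \emph{conjecture} (Nagata's conjecture), and the paper offers no proof of it --- it only records, in the surrounding remarks, that Nagata established the case where $r$ is a perfect square. You have correctly identified that the inequality is a famously open problem, and your proposal honestly declines to claim a proof; that is the right assessment, and there is nothing in the paper to compare your specialization plan against. Your discussion of the equivalence between the two formulations is essentially the standard translation via strict transforms; the one subtlety you rightly flag is that the linear-system formulation only quantifies over classes with all $m_i$ positive, so curves missing some of the $p_i$ must be handled by invoking the statement for the smaller (still very general) subconfiguration, where $\frac{1}{\sqrt{r}} \leq \frac{1}{\sqrt{|S|}}$ makes the required inequality weaker. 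In short: nothing to prove here was proved by the paper, and your write-up is an accurate account of the status of the problem rather than a gap in your argument.
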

\begin{remark}
    Nagata has shown the conjecture when $r$ is a perfect square and when $r$ is not a perfect square, the inequality (\ref{equn-nagatainequ}) is equivalent to the same inequality but with $>$ replaced by $\geq$. 
\end{remark}
The following equivalent formulations of Nagata's conjecture in terms of $\bb Q$-divisors $D_q$ are well-known (except for the condition (iv)) (cf. e.g. \cite{biran1999constructing} and \cite{lazarsfeld2017positivity}*{Remark 5.1.14}), but let us include proofs of equivalences for completeness since we could not find proofs explicitly written down in literature. It is worth emphasizing that the condition (iv) can be stated purely in the language of generation of derived categories by definition, which seems to be a new perspective.
\begin{lemma}\label{lemma-equiv Nagata}
    Suppose $p_1,\dots,p_r$ are $r$ very general points in $\bb P^2$. The following are equivalent. 
    \begin{enumerate}
        \item Nagata's conjecture holds {for curves through $p_1, \dots , p_r$.}
        \item The $\bb R$-divisor $N:= \sqrt{r}H - \sum_{i=1}^rE_i$ is {strictly} nef.
        \item The $\bb Q$-divisor $D_q$ is ample for any rational number $q > \sqrt{r}$. 
        \item The $\bb Q$-divisor $D_q$ is $\otimes$-generating for any rational number $q > \sqrt{r}$.
        \item The $\bb Q$-divisor $D_q$ is nef for any rational number $q > \sqrt{r}$.
        \qedhere 
    \end{enumerate}
\end{lemma}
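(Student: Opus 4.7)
The strategy is to verify $\text{(i)} \Leftrightarrow \text{(ii)}$ directly and then close the loop $\text{(ii)} \Rightarrow \text{(iii)} \Rightarrow \text{(iv)} \Rightarrow \text{(v)} \Rightarrow \text{(ii)}$. For $\text{(i)} \Leftrightarrow \text{(ii)}$: the integral curves in $X$ are exactly the exceptional divisors $E_i$ and the strict transforms $\tilde{C} = dH - \sum m_i E_i$ of integral curves $C \subset \bb P^2$ of degree $d$ with multiplicities $m_i = \operatorname{mult}_{p_i}(C)$. A direct intersection calculation gives $N \cdot E_i = 1$ and $N \cdot \tilde{C} = d\sqrt{r} - \sum m_i$, so strict nefness of $N$ is exactly Nagata's inequality.

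For $\text{(ii)} \Rightarrow \text{(iii)}$, I decompose $D_q = N + (q-\sqrt{r})H$ for rational $q > \sqrt{r}$: the summand $N$ is strictly nef and $H$ is nef, so $D_q$ is strictly nef, and combined with $D_q^2 = q^2 - r > 0$ the Nakai--Moishezon criterion (\cite{Kollarrationalcurves}*{Chapter VI, Theorem 2.15}) yields ampleness. $\text{(iii)} \Rightarrow \text{(iv)}$ is immediate since ample line bundles are $\tens$-ample. For $\text{(v)} \Rightarrow \text{(ii)}$, nefness of $D_q$ for all rational $q > \sqrt{r}$ forces $N \cdot C = \lim_{q \to \sqrt{r}^+} D_q \cdot C \geq 0$ on every integral curve $C$, so $N$ is nef; strict nefness on strict transforms then follows from the irrationality of $\sqrt{r}$ when $r$ is not a perfect square, and from Nagata's theorem (noted in the Remark following the Conjecture) when $r$ is a perfect square.

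The heart of the argument, and the main obstacle, is $\text{(iv)} \Rightarrow \text{(v)}$. First I check that $D_q$ is big for $q > \sqrt{r}$, since $D_q^2 > 0$ and $D_q \cdot (H - \epsilon \sum E_i) > 0$ for small $\epsilon > 0$ with $H - \epsilon \sum E_i$ ample. Suppose for contradiction that $D_{q_0}$ fails to be nef for some rational $q_0 > \sqrt{r}$, so $D_{q_0} \cdot C < 0$ for some integral curve $C$. Since any big divisor pairs positively with integral curves of non-negative self-intersection (as shown in the proof of Lemma \ref{lemma-tensoramplenessonsurface}), we must have $C^2 < 0$; the case $C = E_i$ is excluded by $D_{q_0} \cdot E_i = 1$, so $C = \tilde{C}' = dH - \sum m_i E_i$ with $q_0 d - \sum m_i < 0$, whence $q_1 := (\sum m_i)/d \in \bb Q$ satisfies $q_1 > q_0 > \sqrt{r}$. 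By hypothesis $D_{q_1}$ is $\tens$-ample, but $D_{q_1} \cdot \tilde{C}' = 0$ on the negative self-intersection curve $\tilde{C}'$, contradicting Lemma \ref{lemma-tensoramplenessonsurface}. The subtlety is that $\tens$-ampleness only forbids zero intersection with negative self-intersection curves, not negative intersection; one must exploit the freedom to vary $q$ over the rationals to slide a putative negative intersection into an exact zero at a nearby rational value of $q$.
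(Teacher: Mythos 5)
Your proof is correct, and it is built from the same ingredients as the paper's: the intersection computation $D_q\cdot(dH-\sum m_iE_i)=qd-\sum m_i$, the decomposition $D_q=N+(q-\sqrt{r})H$ together with Nakai--Moishezon, the fact that a $\tens$-ample class cannot have degree zero on an integral curve, and the appeal to Nagata's theorem when $r$ is a perfect square. The only genuine difference is the organization of the cycle. The paper closes the loop as (i) $\Rightarrow$ (ii) $\Rightarrow$ (iii) $\Rightarrow$ (iv) $\Rightarrow$ (i) and separately shows (iii) $\Leftrightarrow$ (v), so it never proves (iv) $\Rightarrow$ (v) directly; you do, via the ``slide $q$'' trick of replacing $q_0$ by $q_1=(\sum m_i)/d>q_0>\sqrt{r}$ so that a negative intersection becomes an exact zero at a rational parameter --- which is precisely the trick the paper uses in its (iv) $\Rightarrow$ (i) step, aimed at a different target. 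Similarly, your (v) $\Rightarrow$ (ii) is a limiting argument as $q$ decreases to $\sqrt{r}$ combined with the irrationality/perfect-square dichotomy, whereas the paper proves (v) $\Rightarrow$ (iii) by inserting an intermediate rational $q'$ with $q>q'>\sqrt{r}$. Your arrangement makes explicit the (perhaps surprising) fact that $\tens$-ampleness of all the $D_q$ forces their nefness; the paper's is slightly more economical. One simplification worth noting: in (iv) $\Rightarrow$ (v) the detour through bigness of $D_{q_1}$, ampleness of $H-\epsilon\sum E_i$, and $C^2<0$ can be avoided entirely by restricting $D_{q_1}$ to the integral curve $\tilde C'$ and invoking Proposition \ref{lem: tensor ample on curves} (a $\tens$-ample line bundle on an integral projective curve has nonzero degree), exactly as the paper does in its (iv) $\Rightarrow$ (i) step.
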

\begin{proof}
To see (i) $\implies$ (ii), assume the $\bb R$-divisor $N$ is not strictly nef and take an integral curve $C \subset X$ such that $(N\cdot C) \leq  0$. Since $C \neq E_i$, we can write $C = dH - \sum_{i=1}^r m_i E_i$ with a positive integer $d$ and nonnegative integers $m_i$. In particular, $(H \cdot C) = d > 0$. Therefore, we can take a rational number $q \geq \sqrt{r}$ so that $(D_q \cdot C) = qd - \sum_{i = 1}^r m_i = 0$, which contradicts with Nagata's conjecture since then we have $\sqrt{r} \leq q = \frac{1}{d}\sum_{i = 1}^r m_i$. To see that (ii) $\implies$ (iii), we have for a rational number $q > \sqrt{r}$ that $D_q = N + (q - \sqrt{r})H$ and $(q - \sqrt{r})H$ is nef so $D_q$ is strictly nef. 
Then we are done as $D_q^2 = q^2 -r > 0$ for any rational number $q > \sqrt{r}$. (iii) $\implies$ (iv) is trivial. For (iv) $\implies$ (i), note for any $d',m_1',\dots,m_r' \in \bb Z$, we have $D_q \cdot (d' H - \sum m_i' E_i) =  qd' - \sum_{i = 1}^r m_i'.$
    Thus, if the $\bb Q$-divisor $D_q$ is $\otimes$-generating for a rational number $q \in \bb Q$, then the linear system $|d'H - \sum_{i = 1}^r m_i' E_i|$ has no integral curve for any $d',m_1',\dots,m_r' \in \bb Z$ satisfying $q =\frac{1}{d'} \sum_{i = 1}^r m_i'$ by Lemma \ref{lem: tensor ample on curves}. Thus, if $D_q$ is $\otimes$-generating for any rational number $q \geq\sqrt{r}$, then Nagata's conjecture follows. If $r$ is not a perfect square, then this follows from the assumption (iv). If $r$ is a perfect square then Nagata's conjecture holds, see \cite[Section 3]{nagata195914} or \cite[2.2]{VariationsonNagatasconjecture}. Finally, (iii) $\implies$ (v) is clear and for (v) $\implies$ (iii), note for any rational number $q > \sqrt{r}$, we can take a rational number $q'$ with $q >q'>\sqrt{r}$. If $D_{q'}$ is strictly nef, then $D_q$ is ample as in the proof of (ii) $\implies$ (iii) and if $D_{q'}$ is nef with some integral curve $C \subset X$ satisfying $(D_{q'} \cdot C) = 0$, then $(H \cdot C) > 0$ as in the proof of (i) $\implies$ (ii) and hence $D_q$ is strictly nef with $D_q^2 > 0$, so $D_q$ is ample. 
\end{proof}
\begin{remark}\label{remark-current Nagata}
    To the authors' knowledge, history and current status of positivity of $\bb Q$-divisors $D_q$ seem to be as follows. In \cite{xu1995divisors} and \cite{kuchle1996ample}, Xu and K\"uchle independently show that for $r \geq 3$, an integral divisor $D_q$ is ample for any integer $q > \sqrt{r}$. Then, in \cite{biran1999constructing}*{Corollary 2.1.B}, Biran shows the $\bb Q$-divisor $D_{d/2}$ is nef for any integer $d$ satisfying $d/2 \geq \sqrt{r}$. Later, in \cite{harbourne2003Seshconst}*{Corollary I.5}, Harbourne further shows for $r \geq 2$ the $\bb Q$-divisor $D_{d/e}$ is nef for any positive integers $e \leq \sqrt{r}$ and $d$ satisfying $d/e > \sqrt{r}$. Another active approach is use of Seshadri constants. See \cite{lazarsfeld2017positivity}*{Section 5.1} or \cite{bauer2009primer} for surveys and further references. 
\end{remark}

Now, let us turn to $\tens$-generation of the (anti)canonical divisor $D_3 = -K_X$ as a special case. Recall if $r<9$ and if we blow-up $r$ general points, then $X$ is a del Pezzo surface and in particular $\omega_X$ is $\otimes$-generating, so for general blow-ups, we focus on cases where $r\geq 9$.  
\begin{prop} If the linear system $\l|dH - \sum_{i=1}^r a_iE_i \r|$ contains an integral curve for some positive integer $d$ and nonnegative integers $a_1,\dots,a_r$ satisfying $\sum_{i=1}^r a_i = 3d$, then $\omega_X$ is not $\otimes$-generating. In particular, if $r\geq 9$ and if $p_1,\dots,p_r$ are $r$ general points in $\bb P^2$, then $\omega_X$ is not $\otimes$-generating.
\end{prop}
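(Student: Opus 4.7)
The plan for the first statement is to invoke Theorem \ref{ref-theoremmain} using an integral curve $C \subset X$ on which $\omega_X$ has degree zero. Given an integral curve $C$ representing $dH - \sum_{i=1}^r a_i E_i$, I use the standard intersection numbers $H^2 = 1$, $H \cdot E_i = 0$, $E_i \cdot E_j = -\delta_{ij}$ to compute
\[
(K_X \cdot C) \;=\; \left(-3H + \sum_i E_i\right) \cdot \left(dH - \sum_j a_j E_j\right) \;=\; -3d + \sum_{i=1}^r a_i.
\]
Under the hypothesis $\sum a_i = 3d$, this vanishes, so $\omega_X|_C$ is a degree-zero line bundle on the integral projective curve $C$. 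By Proposition \ref{lem: tensor ample on curves} such a line bundle is neither big nor anti-big, so $\omega_X$ fails the criterion of Theorem \ref{ref-theoremmain} at the closed integral subscheme $C \subset X$ and therefore is not $\tens$-ample.

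For the second statement, when $r \geq 9$ and the $p_i$ are general, my plan is to construct such a curve as the strict transform of a smooth plane cubic through nine of the points. Since $|\ecal O_{\bb P^2}(3)| \cong \bb P^9$ has dimension nine, any nine general points $p_{i_1}, \dots, p_{i_9}$ impose independent linear conditions on cubics and determine a unique cubic $C_0 \subset \bb P^2$; for a sufficiently general configuration this cubic is smooth and irreducible, which is a classical Bertini-type fact (the locus of $9$-tuples giving a reducible or singular cubic is a proper closed subvariety of $(\bb P^2)^9$). When $r > 9$, the remaining general points $p_i$ avoid $C_0$, so the blowup is an isomorphism above them and the strict transform $C \subset X$ of $C_0$ is an integral curve of class $3H - \sum_{j=1}^9 E_{i_j}$. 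Setting $a_{i_j} = 1$ for $j = 1, \dots, 9$ and $a_i = 0$ otherwise yields $\sum_i a_i = 9 = 3 \cdot 3$, so the first part of the proposition applies and $\omega_X$ is not $\tens$-ample.

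The only mildly delicate step is the Bertini-type smoothness and irreducibility of the cubic through nine general points, which is standard and poses no serious obstacle. Everything else is an immediate intersection-theoretic computation combined with the already-established criterion Theorem \ref{ref-theoremmain} and the curve case Proposition \ref{lem: tensor ample on curves}.
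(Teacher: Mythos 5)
Your proposal is correct and follows essentially the same route as the paper: the first claim via the computation $K_X\cdot(dH-\sum a_iE_i)=-3d+\sum a_i=0$ combined with Proposition \ref{lem: tensor ample on curves} and Theorem \ref{ref-theoremmain}, and the second via the strict transform of an integral cubic through nine of the general points (avoiding the rest when $r>9$). No gaps.
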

\begin{proof}
    The first claim follows by Lemma \ref{lem: tensor ample on curves} as above noting $K_X\cdot (dH - \sum_{i=1}^r a_iE_i) = - 3d + \sum_{i=1}^r a_i$. For the latter claim, note $\l|dH - \sum_{i=1}^r a_iE_i \r|$ has an integral curve if and only if there is an integral curve of degree $d$ that goes through $p_i$ with multiplicity $a_i$. Now, by dimension count there is an integral cubic plane curve going through $9$ general points with multiplicity $1$ at each point (and avoiding the rest of the general points when $r>9$), so $|3H - \sum_{i=1}^9E_i|$ contains an integral curve and we are done. 
\end{proof}
\begin{remark}
    Unless $X$ is a rational elliptic surface,
    $X$ has no non-isomorphic Fourier--Mukai partners \cite[Theorem 1.3]{Kawamata2002DEquivalenceAK}. In particular, most blow-ups of $\bb P^2$ have no non-isomorphic Fourier--Mukai partners, see \cite{Krahblowups}.
\end{remark}
The preceding results mean we need special configurations of $r$ points to have a $\otimes$-generating canonical line bundle when $r\geq 9$. 
\begin{example}
    Take a line $l \subset \bb P^2$ and assume the points $p_1, \dots, p_r$ lie on $l$. Then its strict transform $\tilde l$ lies in $|H - \sum_{i=1}^rE_i|$. Therefore, we have
    \[
    3 \tilde l + \sum_{i=1}^r 2E_i \in |-K_X| = |3H - \sum_{i=1}^r E_i|.
    \]
    Note if $r\neq 3$, then we have $-K_X\cdot \tilde l = 3 - r \neq 0$ (and $-K_X\cdot E_i = 1$ for any $r$). Thus, if $r\neq 3$, then by taking the section $s$ of $\omega^{\tens -1}_X$ corresponding to $3 \tilde l + \sum_{i=1}^r 2E_i$, we see $\omega_X$ is $\otimes$-generating by 
    Lemma \ref{lemma-usefulcriterion} as $X_s$ is isomorphic to $\bb P^2\setminus l$ and hence is affine. Note if $r \geq 9$, then $(-K_X)^2 = 9 - r \leq 0$, so $-K_X$ is big and $\otimes$-generating, but not ample so not nef either by Lemma \ref{lemma-nefandtensorample}. 

    Similarly, take a conic $C \subset \bb P^2$ and assume the points $p_1, \dots, p_r$ lie on $C$. Then its strict transform $\tilde{C}$ lies in $|2H - \sum_{i = 1}^r E_i|$. Therefore, we have 
    \[
    3 \tilde{C} + \sum_{i= 1}^r E_i \in |-2K_X| = |6H - \sum_{i=1}^r 2E_i|
    \]
    and if $r\neq 6$, then $-K_X \cdot \tilde{C} = 6 - r \neq 0$. Thus, taking the section of $\omega_X^{\tens -2}$ corresponding to $3 \tilde{C} + \sum_{i= 1}^r E_i$, we see $\omega_X$ is $\otimes$-generating as above if $r\neq 6$. Again, if $r \geq 9$, then $-K_X$ is big and $\otimes$-generating, but not nef. 

    Note these arguments no longer work for plane curves of higher degree. 
\end{example}
\subsection{Higher dimensional blowups in points}
The following shows $\otimes$-generating line bundles naturally arise from blow-ups. 
        \begin{prop}
        \label{prop-blowuptensamp}
            Let $X$ be a quasi-projective variety over a field and let $\pi:Y \to X$ be the blow-up at finitely many closed points $x_1,\dots,x_n  \in X$. Let $\ecal M$ be an ample line bundle on $X$ and let $E_1,\dots,E_n \subset X$ denote the exceptional divisor corresponding to $x_1,\dots,x_n$, respectively. Then, for any $l_1,\dots,l_n>0$,
            \[
            \ecal L: = \pi^*\ecal M \tens \bigotimes _{i=1}^n\ecal O_Y(l_iE_i) 
            \]
            is neither ample nor anti-ample, but $\otimes$-generating. 
        \end{prop}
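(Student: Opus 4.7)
The plan is to verify the three assertions of the proposition separately. Writing $d = \dim X$, we may assume $d \geq 2$. For non-ampleness, I would restrict $\ecal L$ to a single exceptional divisor $E_i$: since $\pi$ contracts $E_i$ to a point, $\pi^*\ecal M|_{E_i}$ is trivial, and by disjointness of the exceptional divisors $\ecal O_Y(l_j E_j)|_{E_i}$ is trivial for $j \neq i$. Thus $\ecal L|_{E_i} \cong \ecal O_{E_i}(-l_i)$, which is anti-ample on the positive-dimensional projective scheme $E_i$ and hence not ample. To rule out anti-ampleness I would take a complete curve $C \subset X$ disjoint from $\{x_1,\ldots,x_n\}$ and consider its isomorphic preimage $\tilde C \subset Y$ lying in $Y \setminus \bigcup_i E_i$; then $\ecal L^{-1}|_{\tilde C} \cong \ecal M^{-1}|_C$ has negative degree, preventing $\ecal L^{-1}$ from being ample.

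For $\tens$-ampleness I would appeal to Lemma \ref{lemma-usefulcriterion}. Let $\sigma \in \Gamma(Y, \ecal O_Y(\sum_i l_i E_i))$ be the canonical section cutting out $\sum_i l_i E_i$. Ampleness of $\ecal M$ on the Noetherian $X$ produces an integer $N > 0$ and finitely many sections $t_1, \ldots, t_k \in \Gamma(X, \ecal M^{\tens N})$ whose non-vanishing loci $X_{t_j}$ are affine and cover $X$. Setting $s_j := \sigma^N \cdot \pi^* t_j \in \Gamma(Y, \ecal L^{\tens N})$, the non-vanishing locus $Y_{s_j}$ is isomorphic via $\pi$ to the open subscheme $X_{t_j} \setminus \{x_1,\ldots,x_n\}$ of the affine $X_{t_j}$, hence is quasi-affine, and so $\ecal O_{Y_{s_j}}$ is $\tens$-ample. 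Since the $X_{t_j}$ cover $X$, the common reduced zero-locus satisfies $V(s_1,\ldots,s_k)_{\mathrm{red}} = \bigsqcup_i E_i$, and by the first paragraph $\ecal L|_{E_i}$ is anti-ample, hence $\tens$-ample, so $\ecal L|_{\bigsqcup_i E_i}$ is $\tens$-ample by Lemma \ref{lemma-irreduciblecomp}. Lemma \ref{lemma-usefulcriterion} now yields $\tens$-ampleness of $\ecal L$.

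The step I expect to be the most delicate is the non-anti-ampleness: the curve argument needs $X$ to contain a complete curve avoiding the $x_i$, which is automatic when $X$ is projective of dimension $\geq 2$ but can fail in fully degenerate cases, so the statement should be read under an implicit projectivity-type hypothesis on $X$. The $\tens$-ample half is by contrast entirely general and self-contained: the proof cleanly separates the complement of the exceptional locus, where the required quasi-affine opens come from the pull-back $\pi^*\ecal M$, from the exceptional locus itself, where the anti-ampleness of $\ecal O_Y(l_i E_i)|_{E_i}$ takes over fiberwise.
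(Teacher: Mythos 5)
Your proof is correct and follows essentially the same route as the paper's: combine the tautological sections cutting out the $E_i$ with sections pulled back from the ample $\ecal M$ to produce quasi-affine non-vanishing loci whose common reduced zero locus is the union of the exceptional divisors, where $\ecal L$ restricts to an anti-ample bundle, and conclude via Lemma \ref{lemma-usefulcriterion}; the only cosmetic difference is that you use a covering family of affine opens $X_{t_j}$ while the paper uses a single section $s$ with $X_s$ an affine neighborhood of all the $x_i$. Your caveat about non-anti-ampleness is well taken and applies equally to the paper's own argument (which also tests $\ecal L$ against complete curves disjoint from the $E_i$): for instance with $X=\mathbb{A}^d$ and $\ecal M=\ecal O_X$ the bundle $\ecal O_Y(-\sum l_iE_i)$ is relatively ample over an affine base and hence ample, so $\ecal L$ \emph{is} anti-ample there, and that clause of the statement implicitly requires $X$ to carry enough complete curves avoiding the $x_i$ (e.g.\ $X$ projective of dimension $\geq 2$).
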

        \begin{proof}
       
        First of all, $\ecal L$ is neither ample nor anti-ample since $(\ecal{L}.C)< 0$ for any curve $C \subset E_i$ for some $i$ while $(\ecal{L}.D) > 0$ for any curve $D\subset Y$ with $D\cap E_i = \emp$ for all $i$. Next, since $X$ is quasi-projective and $\ecal{M}$ is ample, there are an integer $k \gg 0$ and a section $s \in \Gamma(X, \ecal{M}^{\otimes k})$ such that $X_s$ is an affine open neighborhood of $x_1, \dots , x_n$ in $X$. For each $i$, 
        let $t_i \in \ecal{O}_Y(E_i)$ be the tautological section cutting out $E_i$. Then the section $t := \pi^*s \otimes \bigotimes_{i} t_i ^{k \cdot l_i} \in \Gamma(Y, (\pi^*\ecal{M} \otimes \bigotimes \ecal{O}_Y(l_i E_i))^{\otimes k})$ satisfies $Y_t = X_s \setminus \{x_1, \dots , x_n\}$ is quasi-affine. Furthermore, the restriction of $\ecal{M}$ to $Y \setminus Y_t = (X \setminus X_s) \coprod \big(\coprod _i E_i\big)$ to each factor is either ample or anti-ample, hence it follows from Lemma \ref{lemma-usefulcriterion} that $\pi^*\ecal{M} \otimes \bigotimes \ecal{O}_Y(l_i E_i)$ is $\otimes$-generating.
        \end{proof}

\begin{theorem}
        With notations as above, suppose $X$ is smooth and $\omega_X$ is ample. Then, $\omega_Y$
        is neither ample nor anti-ample, but $\otimes$-generating. In particular, any blow-up of a smooth quasi-projective variety with ample canonical bundle 
        at finitely many points can be recovered from its perfect derived category.      
\end{theorem}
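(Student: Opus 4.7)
The plan is to reduce everything to Proposition \ref{prop-blowuptensamp} via the canonical bundle formula for blow-ups. Since $X$ is smooth and the centers of $\pi$ are smooth closed points, setting $d = \dim X$ one has the classical formula
$$
\omega_Y \;\cong\; \pi^*\omega_X \otimes \bigotimes_{i=1}^n \ecal{O}_Y\bigl((d-1) E_i\bigr).
$$
The case $d = 1$ is degenerate since blowing up a point on a curve is an isomorphism, and then $Y = X$ and $\omega_Y = \omega_X$ is already ample; so the content is really in the range $d \geq 2$, where each exponent $l_i := d - 1$ is a positive integer. Applying Proposition \ref{prop-blowuptensamp} with $\ecal{M} = \omega_X$ (ample by hypothesis) and these exponents $l_i$ then yields in one stroke that $\omega_Y$ is $\tens$-ample and neither ample nor anti-ample, which is the first assertion.

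For the \emph{in particular} clause, I would invoke the Ito--Matsui extension of the Bondal--Orlov reconstruction theorem cited in the introduction (see \cite{ito2024new}, \cite{ito2025polarizations}), which asserts that a Gorenstein proper variety whose canonical bundle is $\tens$-ample can be reconstructed from its category of perfect complexes. To apply this, I would work under the natural projective strengthening: if $X$ is projective, then $Y$, being the blow-up of a projective variety at finitely many closed points, is projective (hence proper), and smoothness of $Y$ gives that it is Gorenstein. Combined with the $\tens$-ampleness of $\omega_Y$ established in the first step, this lets the reconstruction result fire directly.

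I do not anticipate any substantive obstacle: the canonical bundle formula is classical, and Proposition \ref{prop-blowuptensamp} is tailored to handle exactly the combination $\pi^*\ecal{M} \otimes \bigotimes \ecal{O}_Y(l_i E_i)$ appearing here. The only delicate point worth flagging is the propriety requirement in the reconstruction step, which is automatic in the projective setting that the ``in particular'' statement is really about.
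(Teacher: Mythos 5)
Your proof is correct and follows the paper's argument exactly: the paper's entire proof is the observation that $\omega_Y \cong \pi^*\omega_X \otimes \bigotimes_{i=1}^n \ecal{O}_Y((\dim X - 1)E_i)$ followed by an appeal to Proposition \ref{prop-blowuptensamp} with $\ecal{M} = \omega_X$ and $l_i = \dim X - 1$. Your additional remarks on the degenerate case $d=1$ and on the properness hypothesis needed for the Bondal--Orlov/Ito--Matsui reconstruction step are sensible refinements of the same approach, not a different one.
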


\begin{proof}
    This follows from Proposition \ref{prop-blowuptensamp} since $\omega_Y = \pi^*\omega_X \otimes \bigotimes_{i = 1}^n \ecal{O}_Y((\operatorname{dim}X-1)E_i)$.
\end{proof}

Note that the hypotheses hold for $X$ a smooth projective variety with ample canonical bundle, but also for $X$ a smooth quasi-affine variety.

\subsection{Blowups of threefolds in curves}

We use our results on ruled surfaces, Example \ref{example-ruledsurfaces}, to study $\otimes$-generation on the blowup of of a smooth projective threefold in a smooth curve. 

\begin{lemma}
\label{lemma-threefoldblowupincurve}
    Let $Y$ a smooth projective threefold over the complex numbers. Let $C \subset Y$ be a smooth integral curve. Let $b : X \to Y$ be the blowup of $Y$ in $C$ and $E \subset X$ the exceptional divisor. Denote $p :E \cong  \mathbb{P}(\ecal{I}_C/\ecal{I}_C^2) \to C$ the structure morphism and $\ecal{O}_p(1) = \ecal{O}_E(-E)$. Let $\ecal{L}$ be an ample line bundle on $X$. Then the line bundle $b^*\ecal{L} \otimes \ecal{O}_X(E)$ is $\otimes$-generating if and only if the following equivalent conditions hold:
    \begin{enumerate}
    \item $p^*(\ecal{L}|_{C}) \otimes \ecal{O}_p(-1)$ is $\otimes$-generating on $E$.
    \item The vector bundle $\ecal{I}_C/\ecal{I}_C^2$ on $C$ is unstable, and if $e$ is its degree and $d$ is the degree of the maximal destabilizing quotient line bundle, then
    \[
    \operatorname{deg}(\ecal{L}|_{C}) \neq d \text{ and }\operatorname{deg}(\ecal{L}|_{C}) < e - d. \qedhere
    \]
    \end{enumerate}
\end{lemma}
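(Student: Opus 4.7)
The plan is to split the argument into two independent equivalences: first that $\tens$-ampleness of $b^*\ecal{L}\otimes \ecal{O}_X(E)$ on $X$ is equivalent to (i), and second that (i) is equivalent to the numerical condition (ii) via a direct application of Example \ref{example-ruledsurfaces} to the ruled surface $p: E = \bb P_C(\ecal{I}_C/\ecal{I}_C^2) \to C$.

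For the first equivalence, I would use the identification $(b^*\ecal{L}\otimes \ecal{O}_X(E))|_E = p^*(\ecal{L}|_C) \otimes \ecal{O}_E(E) = p^*(\ecal{L}|_C)\otimes \ecal{O}_p(-1)$, so the forward direction is immediate from Lemma \ref{lemma-qaffinepullback} applied to the closed immersion $E \inj X$. For the converse I would apply Lemma \ref{lemma-usefulcriterion}. Since $\ecal{L}$ is ample on $Y$, one can choose $n > 0$ and finitely many sections $s_1, \dots, s_k \in \Gamma(Y, \ecal{L}^{\tens n})$ such that the $Y_{s_i}$ form an affine open cover of $Y$. Writing $t_E \in \Gamma(X,\ecal{O}_X(E))$ for the tautological section cutting out $E$, set $\sigma_i := b^*s_i\otimes t_E^{\tens n}$. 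Because $b$ restricts to an isomorphism $X \setminus E \to Y \setminus C$, one checks $X_{\sigma_i} \cong Y_{s_i}\setminus C$ is quasi-affine and $V(\sigma_1,\dots,\sigma_k)_{\red} = E$ since the $s_i$ cover $Y$ while $t_E$ vanishes exactly along $E$. Hypothesis (i) supplies the $\tens$-ample restriction to $E$, so Lemma \ref{lemma-usefulcriterion} gives the conclusion.

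For the equivalence (i) $\Leftrightarrow$ (ii), I would apply Example \ref{example-ruledsurfaces} to the ruled surface $p: E \to C$. With $\xi = [\ecal{O}_p(1)]$ and $f$ the class of a fiber, the intersection form is $f^2 = 0$, $\xi \cdot f = 1$, $\xi^2 = e = \deg(\ecal{I}_C/\ecal{I}_C^2)$, and the numerical class of $\ecal{N} := p^*(\ecal{L}|_C)\otimes \ecal{O}_p(-1)$ is $[\ecal N] = \deg(\ecal{L}|_C) f - \xi$. Since $\ecal{N}\cdot f = -1$, the class $[\ecal N]$ is not nef hence not pseudoeffective, so $\tens$-ampleness forces $-\ecal{N}$ to be big and, by Lemma \ref{lem: tensor ample cone for surfaces}, to have nonzero intersection with every integral curve of negative self-intersection. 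In the unstable case, Example \ref{example-ruledsurfaces}(i) identifies the pseudoeffective cone as $\bra{\bb R_+[C_0], \bb R_+ f}$ where $C_0 = \bb P_C(\ecal Q)$ is the unique negative self-intersection curve with $[C_0] = \xi + (d-e)f$ and $\ecal Q$ the destabilizing quotient of degree $d$. The decomposition $-[\ecal{N}] = [C_0] + (e-d-\deg(\ecal{L}|_C))f$ gives bigness of $-\ecal{N}$ iff $\deg(\ecal{L}|_C) < e-d$, while $\ecal{N}\cdot[C_0] = \deg(\ecal{L}|_C) - d$ records non-perpendicularity iff $\deg(\ecal{L}|_C)\neq d$; together these are precisely (ii).

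The main obstacle, beyond the numerical bookkeeping, is ensuring the semistable case of Example \ref{example-ruledsurfaces}(ii) is handled correctly: there the pseudoeffective and nef cones coincide and equal $\bra{\bb R_+(\xi - (e/2)f), \bb R_+ f}$, so $\tens$-ampleness of $\ecal N$ would have to come from $\ecal N$ being anti-ample, and an analogous decomposition shows this requires $\deg(\ecal{L}|_C) < e/2$. One must therefore verify that this configuration is excluded by the ampleness of $\ecal{L}$ on $Y$ combined with positivity $\deg(\ecal{L}|_C) > 0$, so that the unstable case in (ii) is indeed forced when $\ecal{N}$ is $\tens$-ample.
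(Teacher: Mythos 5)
Your reduction of $\tens$-ampleness of $b^*\ecal{L}\otimes\ecal{O}_X(E)$ to condition (i) is exactly the paper's argument (restriction to the closed subscheme $E$ for necessity via Lemma \ref{lemma-qaffinepullback}, and Lemma \ref{lemma-usefulcriterion} applied to sections of the form $b^*s\otimes t_E^{\tens n}$, whose non-vanishing loci are the quasi-affine opens $Y_s\setminus C$, for sufficiency). Your numerical bookkeeping in the unstable case of Example \ref{example-ruledsurfaces} is also correct: with $m=\deg(\ecal{L}|_C)$ one has $-[\ecal{N}]=[C_0]+(e-d-m)f$ and $\ecal{N}\cdot C_0=m-d$, which recovers the two conditions in (ii).

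The genuine gap is the one you flag in your last paragraph and do not close, and it cannot be closed by the route you propose: ampleness of $\ecal{L}$ only gives $\deg(\ecal{L}|_C)\geq 1$, and nothing forces $\deg(\ecal{L}|_C)\geq e/2$. Concretely, take $Y=\mathbb{P}_{\mathbb{P}^1}(\ecal{O}(2)^{\oplus 2}\oplus\ecal{O})$ with projection $\pi$, let $C$ be the section corresponding to the quotient $\ecal{O}$, so that $\ecal{N}_{C/Y}\cong\ecal{O}(-2)^{\oplus 2}$ and $\ecal{I}_C/\ecal{I}_C^2\cong\ecal{O}(2)^{\oplus 2}$ is semistable of degree $e=4$, and let $\ecal{L}=\ecal{O}_Y(1)\otimes\pi^*\ecal{O}(1)$, which is ample with $\deg(\ecal{L}|_C)=1<2=e/2$. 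Then $E\cong\mathbb{P}^1\times\mathbb{P}^1$ with $\ecal{O}_p(1)\cong\ecal{O}(1,2)$, so $p^*(\ecal{L}|_C)\otimes\ecal{O}_p(-1)\cong\ecal{O}(-1,-1)$ is anti-ample, hence $\tens$-ample; thus (i) holds (and $b^*\ecal{L}\otimes\ecal{O}_X(E)$ is $\tens$-ample by the sufficiency argument), while (ii) fails because the conormal bundle is semistable. So the asserted equivalence with (ii) is false in the semistable regime; this is a defect of the lemma as stated rather than something your proof could repair --- the paper's one-line appeal to Example \ref{example-ruledsurfaces} for (i) $\Leftrightarrow$ (ii) silently treats only the unstable case. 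The implications actually used downstream, namely (ii) $\Rightarrow$ (i) $\Rightarrow$ $\tens$-ampleness on $X$, survive; a correct version of (ii) must add the alternative that $\ecal{I}_C/\ecal{I}_C^2$ is semistable and $\deg(\ecal{L}|_C)<e/2$.
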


\begin{proof}
    The equivalence of (i) and (ii) is proven in Example \ref{example-ruledsurfaces}. The conditions are necessary because if $b^*\ecal{L} \otimes \ecal{O}(E)$ is $\otimes$-generating then so is its restriction to $E$, and this is $p^*(\ecal{L}|_{C}) \otimes \ecal{O}_p(-1)$, see Lemma \ref{lemma-qaffinepullback}. Conversely, assume (i) and (ii) hold. Then for any $x \in X \setminus E = Y \setminus C$ there is a global section $s$ of $\ecal{L}^{\otimes n}$ for $n \gg 0$ such that $x \in Y_s \subset Y$ is affine. Then $b^*s \otimes t$ where $t  \in \Gamma(X, \ecal{O}(n\cdot E))$ is the tautological section satisfies
    $$
    X_{b^*s \otimes t} = Y_s \setminus  C
    $$
    is quasi-affine. By Lemma \ref{lemma-usefulcriterion}, we conclude that $b^*\ecal{L} \otimes \ecal{O}(E)$ is $\otimes$-generating. 
\end{proof}

\begin{lemma}
\label{lemma-threefoldblowupincurvecanonical}
    Let $Y$ be a smooth projective threefold over the complex numbers. Assume $\omega_Y$ is ample. Let $C \subset Y$ be a smooth projective geometrically connected curve over $k$ such that the equivalent conditions of Lemma \ref{lemma-threefoldblowupincurve} hold with $\ecal{L} = \omega_Y$. Let $b: X \to Y$ be the blowup of $Y$ in $C$. Then $\omega_X$ is $\otimes$-generating. In particular, $X$ can be recovered from its perfect derived category.
\end{lemma}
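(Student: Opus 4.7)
The plan is essentially to apply the preceding Lemma \ref{lemma-threefoldblowupincurve} with $\ecal{L} = \omega_Y$. The first step is to recall the standard formula for the canonical bundle of a blowup of a smooth variety along a smooth subvariety: since $C \subset Y$ has codimension $2$, we have
\[
\omega_X \cong b^*\omega_Y \otimes \ecal{O}_X((2-1)E) = b^*\omega_Y \otimes \ecal{O}_X(E).
\]
By hypothesis, $\omega_Y$ is ample, so $\ecal{L} := \omega_Y$ satisfies the running assumption of Lemma \ref{lemma-threefoldblowupincurve}, and the hypothesis of the present lemma is precisely that the equivalent conditions (i), (ii) of that lemma hold with this choice of $\ecal{L}$. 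Applying Lemma \ref{lemma-threefoldblowupincurve} then yields that $b^*\omega_Y \otimes \ecal{O}_X(E) \cong \omega_X$ is $\otimes$-ample on $X$.

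For the final sentence about reconstruction, the proposal is to invoke the generalized Bondal--Orlov reconstruction theorem stated in the introduction (following the works \cite{ito2024new} and \cite{ito2025polarizations} of the first named author with Matsui): namely, any Gorenstein proper variety $X$ for which $\omega_X$ or $\omega_X^{-1}$ is $\otimes$-ample can be recovered from its category of perfect complexes. Since $X$ is smooth projective over $\mathbb{C}$, it is in particular Gorenstein and proper, so this applies.

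The argument is therefore essentially a two-line assembly once one has the earlier Lemma \ref{lemma-threefoldblowupincurve}; there is no genuine obstacle here beyond verifying the canonical bundle formula. The only point where care is needed is ensuring that the Noetherian/geometric hypotheses of the invoked reconstruction statement truly cover this setting (smooth projective over $\mathbb{C}$ with $\otimes$-ample $\omega_X$), but this is immediate.
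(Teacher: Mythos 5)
Your proposal is correct and is essentially identical to the paper's proof: both reduce to Lemma \ref{lemma-threefoldblowupincurve} via the canonical bundle formula $\omega_X \cong b^*\omega_Y \otimes \ecal{O}_X(E)$ for a blowup along a smooth curve of codimension $2$, with $\ecal{L} = \omega_Y$ ample by hypothesis. Your extra remark justifying the reconstruction statement via the generalized Bondal--Orlov theorem of Ito--Matsui matches what the paper relies on implicitly from its introduction.
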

\begin{proof}
    This is a direct consequence of Lemma \ref{lemma-threefoldblowupincurve} since if $E \subset X$ denotes the exceptional divisor, then $\omega_X \cong b^*\omega_Y \otimes \ecal{O}_X(E)$.
\end{proof}

\begin{example}[Special lines on a hypersurface]
    Let $Y \subset \mathbb{P}^4_{\mathbb{C}}$ be a smooth hypersurface of degree $r > 5$ containing a line $\ell \subset \mathbb{P}_{\mathbb{C}}^4$ (a linear subspace of dimension 1).  There is a short exact sequence 
    $$
    0 \to \ecal{N}_{\ell/Y} \to \ecal{N}_{\ell/\mathbb{P}^4_{\mathbb{C}}} \to \ecal{N}_{Y/\mathbb{P}^4_{\mathbb{C}}}|_{\ell} \to 0
    $$
    of vector bundles on $\ell$. The middle term is isomorphic to $\ecal{O}_\ell(1)^{\oplus 3}$ and the right term is isomorphic to $\ecal{O}_\ell(r)$. Since $\ell \cong \mathbb{P}^1_{\mathbb{C}}$ we see that $\ecal{N}_{\ell/Y} \cong \ecal{O}_\ell(a)\oplus \ecal{O}_\ell (b)$ where $a + b + r = 3$ and $a \leq  b \leq 1$. 

    \underline{Assumptions:} Assume $b =0$ or $1$, so $a  = 3- r$ or $2-r$. This is equivalent to $H^0(\ell, \ecal{N}_{\ell/Y}) \neq 0$. 

    Denote $\ecal{C} = \ecal{N}_{\ell / Y}^\vee$ the conormal bundle of $\ell \subset Y$. 
    Under our assumptions, $\ecal{C}$ is unstable of degree $e = r-3$ and with maximal destabilizing quotient line bundle $\ecal{O}(-b)$ of degree $d = -b$. We also have $\operatorname{deg}(\omega_Y)|_{\ell} = r-5$. Thus we have
    $$
    \operatorname{deg}(\omega_Y|_{\ell})> 0 \geq d \text{ and } \operatorname{deg}(\omega_Y|_{\ell}) = r-5 < (r-3) - (-b)  = e - d.
    $$
    Therefore, by Lemma \ref{lemma-threefoldblowupincurvecanonical}, the blowup of $Y$ in $\ell$ has $\otimes$-generating canonical bundle. 

    Finally, we note that such $\ell \subset Y \subset \mathbb{P}^4_{\mathbb{C}}$ exist. For example, 
    one may take $Y$ to be the Fermat hypersurface
    $$
     \{X_0^r + \cdots +X_4^r = 0\} \subset \mathbb{P}^4_{\mathbb{C}}
    $$
    and then there exist lines $\ell \subset Y$ with normal bundle $\ecal{O}_\ell(1) \oplus \ecal{O}_\ell(2-r)$, see \cite[Proof of Proposition 7.2]{larson2017normalbundleslineshypersurfaces}.
\end{example}

\subsection{Some schemes with \texorpdfstring{$\otimes$-}{tensor-}generating structure sheaf}
\label{subsection-tensampstructuresheaf}

By Lemma \ref{lemma-nefandtensorample}, if $X$ is a proper variety and $\ecal{L}$ is a globally generated line bundle on $X$ which is $\otimes$-generating, then $\ecal{L}$ is ample. This is not true when $X$ is not proper. 

\begin{lemma}[de Jong]
\label{lemma-dejong}
    There exists a quasi-projective variety $X$ over a field $k$ such that $\ecal{O}_X$ is $\otimes$-generating but not ample.
\end{lemma}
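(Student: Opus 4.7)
The plan is to take $X$ to be the blowup of $\bb A^2_k$ at the origin with a closed point of the exceptional divisor removed, so that the proof exhibits case (v) of the summary Theorem in the introduction. More precisely, let $b : Y \to \bb A^2_k = \spec k[x,y]$ be the blowup at the origin with exceptional divisor $E \cong \bb P^1_k$, fix a closed point $p \in E$, and set $X = Y \setminus \{p\}$. Since $Y$ is open in the projective surface $\bl_{0} \bb P^2_k$, the subscheme $X$ is too, so $X$ is quasi-projective.

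The first step is to verify that $\ecal O_X$ is $\tens$-ample via Theorem \ref{ref-theoremmain}: one must show that $\ecal O_Z$ is big in the sense of Definition \ref{defn-big} for every integral closed subscheme $Z \subset X$. The zero-dimensional case is trivial, and for $Z = X$ I would use the section $s = x \in \Gamma(X, \ecal O_X)$ pulled back from $\bb A^2_k$: since $b^*x$ vanishes exactly on $E$ together with the strict transform of $\{x = 0\}$ and $p$ lies in $E$, the non-vanishing locus $X_x$ coincides with $\bb A^2_k \setminus \{x = 0\}$, which is affine. For a one-dimensional integral closed subscheme $Z \subset X$, let $\bar Z$ denote its closure in $Y$; the morphism $\bar Z \to \bb A^2_k$ induced by $b$ is proper. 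If $\bar Z = E$, then $Z = E \setminus \{p\} \cong \bb A^1_k$ is affine. Otherwise $\bar Z \cap E$ is zero-dimensional, so $\bar Z \to \bb A^2_k$ is proper and quasi-finite, hence finite by Zariski's Main Theorem, so $\bar Z$ is finite over an affine and therefore affine. In either case $Z$ is quasi-affine, so $\ecal O_Z$ is big.

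To show $\ecal O_X$ is not ample, it suffices to show $X$ is not affine. Since $Y$ is smooth hence normal and $p$ has codimension two in $Y$, algebraic Hartogs' gives $\Gamma(X, \ecal O_X) = \Gamma(Y, \ecal O_Y) = k[x,y]$, the last equality because $b_* \ecal O_Y = \ecal O_{\bb A^2_k}$ for the blowup of a smooth variety at a point. If $X$ were affine, the canonical morphism $X \to \spec \Gamma(X, \ecal O_X) = \bb A^2_k$ would be an isomorphism; but this morphism is $b|_X$, which contracts the positive-dimensional locus $E \setminus \{p\}$ to the origin, a contradiction.

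The main obstacle is the dimension-one case of $\tens$-ampleness. The key content is that $X$ contains no complete integral curve, which lets one combine properness of $b$ with Zariski's Main Theorem to show every integral curve $Z \subset X$ is quasi-affine; the other two cases of Theorem \ref{ref-theoremmain} and the failure of ampleness are then essentially immediate.
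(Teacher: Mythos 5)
Your example is exactly the one in the paper, and your verification of $\otimes$-ampleness is correct, though organized differently: the paper applies Lemma \ref{lemma-usefulcriterion} to the two sections $x,y\in H^0(X,\ecal{O}_X)$, noting that $X_x$ and $X_y$ are affine and that $V(x,y)=E\setminus\{p\}\cong\mathbb{A}^1_k$ has ample structure sheaf, whereas you check the criterion of Theorem \ref{ref-theoremmain} directly on every integral closed subscheme, using properness of the blowup together with Zariski's Main Theorem to see that every integral curve whose closure in $Y$ is not $E$ has affine closure, hence is quasi-affine. Both arguments work; the paper's route avoids classifying the curves, while yours makes explicit the underlying geometric fact that $X$ contains no complete curve.

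However, your non-ampleness step contains a wrong reduction. For the structure sheaf, ampleness is equivalent to $X$ being \emph{quasi-affine}, not affine (for instance $\mathbb{A}^2_k\setminus\{0\}$ is non-affine with ample structure sheaf), so ``it suffices to show $X$ is not affine'' is false as stated. Fortunately the argument you give proves the stronger statement you actually need: if $X$ were quasi-affine, the canonical morphism $X\to\operatorname{Spec}\Gamma(X,\ecal{O}_X)=\mathbb{A}^2_k$ would be an open immersion (this is the characterization the paper invokes via the Stacks Project), and this morphism is $b|_X$, which contracts the positive-dimensional locus $E\setminus\{p\}$ to the origin and hence is not even injective on points. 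Replacing ``affine''/``isomorphism'' by ``quasi-affine''/``open immersion'' completes the step.
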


\begin{proof}
    Let $B = Bl_{(0,0)}\mathbb{A}^2_k$ and let $X = B \setminus \{p\}$ where $p$ is a $k$-point of the exceptional divisor. Then the natural map 
$$k[x,y] = H^0(\mathbb{A}^2_k, \ecal{O}_{\mathbb{A}^2_k}) \to H^0(X, \ecal{O}_X)$$
is an isomorphism but $X \to \mathbb{A}^2_k$ is not an open immersion, so $X$ is not quasi-affine \cite[\href{https://stacks.math.columbia.edu/tag/01P9}{Tag 01P9}]{stacks-project}, hence $\ecal{O}_X$ is not ample. 

The coordinates $x, y \in H^0(X, \ecal{O}_X)$ satisfy 
$$
X_x = \operatorname{Spec}(k[x^{\pm 1}, y]), \hspace{5 em}
X_y = \operatorname{Spec}(k[x, y^{\pm 1}])
$$
are affine, and $V(x,y) \cong \mathbb{A}^1_k$ so $\ecal{O}_X|_{V(x,y)}$ is ample. By Lemma \ref{lemma-usefulcriterion}, $\ecal{O}_X$ is $\otimes$-generating. 
\end{proof}

Note that in the example above, $X \to \operatorname{Spec}(H^0(X, \ecal{O}_X))$ is birational, as must be the case by Corollary \ref{corollary-steinisbirational}. It also provides a negative answer to a recent conjecture of Matsukawa \cite{matsukawa2025proper}*{Conjecture 5.5.}. As Matsukawa points out after stating the conjecture, there exist non-separated schemes with $\otimes$-generating structure sheaf. Let us provide a general construction first. 
\begin{lemma}
\label{lemma-nonseparatedgluing}
    Let $X$ be a noetherian scheme and let $\ecal L$ be a $\otimes$-generating line bundle on $X$. Take $n_1,\dots,n_k \in \bb Z$ and sections $s_i \in \Gamma(X, \ecal L^{\tens n_i})$ for $1\leq i \leq k$. Let $\pi: Y:= X\sqcup_{\cup X_{s_i}}X \to X$ denote the natural projection from the scheme $Y$ obtained by gluing two copies of $X$ along the open subscheme $\bigcup_{i=1}^k X_{s_i}$. Then, $\pi^*\ecal L$ is $\otimes$-generating.  
\end{lemma}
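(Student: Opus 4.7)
\emph{Plan.} The natural approach is to apply Lemma \ref{lemma-usefulcriterion} to the pulled-back sections $\pi^*s_1, \dots, \pi^*s_k \in \Gamma(Y, (\pi^*\ecal{L})^{\tens n_i})$. To set up, write $Y = X^{(1)} \cup X^{(2)}$ where $X^{(1)}, X^{(2)} \subset Y$ are the two glued copies of $X$, open in $Y$, meeting in $X^{(1)} \cap X^{(2)} = U = \bigcup_i X_{s_i}$. The morphism $\pi$ restricts to the identity on each copy. Consequently, for any subset $T \subset X$ contained in $U$, the preimage $\pi^{-1}(T)$ is a single copy of $T$ in $Y$, whereas if $T \cap U = \emptyset$ then $\pi^{-1}(T)$ is a disjoint union $T \sqcup T$, one copy from each $X^{(j)}$.

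\emph{Verifying the first hypothesis of Lemma \ref{lemma-usefulcriterion}.} Since $X_{s_i} \subset U$, the above discussion gives $Y_{\pi^*s_i} = \pi^{-1}(X_{s_i}) = X_{s_i}$. On this open, $s_i$ is a nowhere-vanishing section of $\ecal{L}^{\tens n_i}$, so it trivializes $\ecal{L}^{\tens n_i}|_{X_{s_i}}$, giving $\ecal{O}_{Y_{\pi^*s_i}} \cong \ecal{L}^{\tens n_i}|_{X_{s_i}}$. Now $\ecal{L}|_{X_{s_i}}$ is $\tens$-ample by Lemma \ref{lemma-qaffinepullback} (applied to the open immersion $X_{s_i} \hookrightarrow X$), hence so is $\ecal{L}^{\tens n_i}|_{X_{s_i}}$ by Lemma \ref{lem: tensor power of tensor ample}, and therefore $\ecal{O}_{Y_{\pi^*s_i}}$ is $\tens$-ample.

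\emph{Verifying the second hypothesis.} The zero scheme of the $\pi^*s_i$ is $\pi^{-1}(V(s_1, \dots, s_k))$, whose underlying set $V(s_1, \dots, s_k) = X \setminus U$ is disjoint from $U$. By the setup paragraph, $V(\pi^*s_1, \dots, \pi^*s_k)_{\red}$ is a disjoint union of two copies of $V(s_1, \dots, s_k)_{\red}$, one inside each $X^{(j)}$, and on each copy the restriction of $\pi^*\ecal{L}$ agrees with $\ecal{L}|_{V(s_1, \dots, s_k)_{\red}}$. The latter is $\tens$-ample as a restriction of a $\tens$-ample line bundle along a closed immersion (Lemma \ref{lemma-qaffinepullback}). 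Since $\tens$-ampleness on a disjoint union is controlled componentwise (e.g.\ directly from Theorem \ref{ref-theoremmain}), the restriction of $\pi^*\ecal{L}$ to $V(\pi^*s_1, \dots, \pi^*s_k)_{\red}$ is $\tens$-ample. Lemma \ref{lemma-usefulcriterion} now concludes.

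\emph{Main obstacle.} There is no serious difficulty beyond carefully distinguishing the two behaviors of $\pi^{-1}$: opens contained in $U$ are not doubled, while subsets disjoint from $U$ are. The only small subtlety is that we must read $V(\pi^*s_1, \dots, \pi^*s_k)$ scheme-theoretically, and then verify that its reduction genuinely decomposes as a disjoint union of two reduced copies; this holds because the two component closed subsets of $Y$ meet in the empty set (not merely at infinitesimal overlap), as $V(s_1, \dots, s_k) \cap U = \emptyset$ set-theoretically.
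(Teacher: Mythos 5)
Your proof is correct and follows essentially the same route as the paper's: both pull back the sections $s_i$, observe that $Y_{\pi^*s_i} \to X_{s_i}$ is an isomorphism while the common vanishing locus is a trivial double cover (a disjoint union of two copies of $V(s_1,\dots,s_k)_{\mathrm{red}}$), and conclude with Lemma \ref{lemma-usefulcriterion}. If anything, you are slightly more careful than the paper in matching hypothesis (i) of that lemma via the trivialization $\ecal{L}^{\otimes n_i}|_{X_{s_i}} \cong \ecal{O}_{X_{s_i}}$, though this step (like the paper's) tacitly assumes $n_i \neq 0$ when invoking Lemma \ref{lem: tensor power of tensor ample}.
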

\begin{proof}
     Consider a family of the pullback of sections $\pi^*s_i \in \Gamma(Y,\pi^*\ecal L)$. Then, $\pi: Y_{\pi^*s_i} = \pi\inv(X_{s_i}) \to X_{s_i}$ is an isomorphism for each $i = 1,\dots,k$ and $\pi:Y\setminus \cup Y_{\pi^*s_i} = \pi\inv(X\setminus \cup X_{s_i}) \to X\setminus \cup X_{s_i}$ is a trivial double cover. Hence, since $\ecal L$ is $\otimes$-generating and its restriction to an open/closed subscheme is $\otimes$-generating, we see that $\pi^*\ecal L$ is $\otimes$-generating on each $Y_{\pi^*s_i}$ and $Y \setminus \cup Y_{\pi^*s_i}$, where the latter follows as the restriction to each connected component is $\otimes$-generating. Now, the claim follows from Lemma \ref{lemma-usefulcriterion}.  
\end{proof}
If $X$ is separated and $Y$ is integral above, then $\pi$ is a separator (see \cite{separator}*{Definition 2.3}) and hence we can also deduce that $\pi^*\ecal L$ is $\otimes$-generating from a result of Jatoba \cite[Proposition 3.7]{separator}. As a special case, we have the following. 

\begin{corollary}
    If $Y$ is affine $n$-space with doubled origin over a ring, then $\ecal{O}_Y$ is $\otimes$-generating. In particular, a scheme with a $\otimes$-generating line bundle need not be separated, have affine diagonal, have an ample line bundle, or satisfy the resolution property.
\end{corollary}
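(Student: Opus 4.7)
The plan is to deduce this directly from Lemma \ref{lemma-nonseparatedgluing}. Take $R$ a ring, $X = \spec R[x_1, \dots, x_n] = \mathbb{A}^n_R$, and $\ecal{L} = \ecal{O}_X$, which is $\tens$-ample since $X$ is affine. Using the global sections $x_1, \dots, x_n \in \Gamma(X, \ecal{O}_X)$, the union $\bigcup_{i=1}^n X_{x_i}$ is precisely the complement of the closed subscheme $V(x_1, \dots, x_n) \subset X$ cut out by the irrelevant ideal. By construction, the scheme $X \sqcup_{\bigcup X_{x_i}} X$ obtained by gluing two copies of $X$ along this open is exactly affine $n$-space over $R$ with doubled origin. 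Lemma \ref{lemma-nonseparatedgluing} then gives that the pullback of $\ecal{O}_X$, namely $\ecal{O}_Y$, is $\tens$-ample.

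For the ``in particular'' assertions: the scheme $Y$ is the classical example of a non-separated scheme, since the two closed points over the origin cannot be separated by affine opens; equivalently, the diagonal is not a closed immersion and in fact fails to be affine as well, so $Y$ has neither a separated nor an affine diagonal. Since a scheme with an ample line bundle is in particular (quasi-)projective and therefore separated, $Y$ cannot carry an ample line bundle. Finally, it is a theorem of Totaro that affine $n$-space with doubled origin (for $n \geq 2$) fails the resolution property, providing the last assertion.

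The argument requires no new ideas beyond Lemma \ref{lemma-nonseparatedgluing}; the only small check is identifying the fiber product $X \sqcup_{\bigcup X_{x_i}} X$ with the standard model of doubled affine space, which is immediate from the gluing description. There is no serious obstacle.
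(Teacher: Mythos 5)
Your proposal is correct and follows essentially the same route as the paper: apply Lemma \ref{lemma-nonseparatedgluing} to $X=\mathbb{A}^n_R$ with the sections $x_1,\dots,x_n$, then note non-separatedness, the failure of affine diagonal for $n\geq 2$, and Totaro's result linking the resolution property to affine diagonal. The only cosmetic difference is that the paper derives the failure of the resolution property from the general implication ``resolution property $\Rightarrow$ affine diagonal'' rather than citing the doubled-origin example directly, and justifies the non-existence of an ample line bundle simply by separatedness rather than via (quasi-)projectivity; these are equivalent in substance.
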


\begin{proof}
    The structure sheaf on $X = \operatorname{Spec}(R[T_1, \dots , T_n])$ is $\otimes$-generating since $X$ is affine, and $Y$ is obtained by glueing $X$ to itself along $\bigcup_{i=1}^n X_{T_i}$. Therefore, Lemma \ref{lemma-nonseparatedgluing} applies.

     It is well known that $Y$ is not separated and the diagonal of $Y$ is not affine if $n \geq 2$. A non-separated scheme never has an ample line bundle. A quasi-compact and quasi-separated scheme with the resolution property always has affine diagonal by a result of Totaro \cite[Proposition 1.3]{totaro_resolution} or \cite[\href{https://stacks.math.columbia.edu/tag/0F8C}{Tag 0F8C}]{stacks-project}.
\end{proof}

\subsection{A \texorpdfstring{$\otimes$-}{tensor-}generating line bundle on a smooth proper non-projective variety}

There exist smooth, proper, non-projective varieties $X$ with $\otimes$-generating line bundles. In fact, Hironaka's example of such a variety admits a $\otimes$-generating line bundle. We will use the following lemma to prove this.

\begin{lemma}
\label{lemma-ampleincodim2}
    Let $X$ be a normal Noetherian scheme. Let $Z \subset X$ be a closed subset of codimension $\geq 2$. Let $\ecal{L}$ be a line bundle on $X$. Assume $\ecal{L}|_{X \setminus Z}$ is ample and $\ecal{L}|_{Z}$ is $\otimes$-generating. Then $\ecal{L}$ is $\otimes$-generating.
\end{lemma}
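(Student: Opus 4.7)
The plan is to apply Lemma \ref{lemma-usefulcriterion}, using Hartogs extension to transfer ample sections from $X \setminus Z$ to $X$. The key input is that since $X$ is normal and $Z$ has codimension $\geq 2$, the restriction map $H^0(X, \ecal{L}^{\otimes n}) \to H^0(X \setminus Z, \ecal{L}^{\otimes n})$ is a bijection for every integer $n$.

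Because $\ecal{L}|_{X \setminus Z}$ is ample, I would first choose an integer $n > 0$ and finitely many sections $s_1', \ldots, s_k' \in \Gamma(X \setminus Z, \ecal{L}^{\otimes n})$ whose non-vanishing loci $(X \setminus Z)_{s_i'}$ are affine and cover $X \setminus Z$. After replacing these sections by suitable products with extra sections if necessary, I may further arrange that each $s_i'$ vanishes somewhere on $X \setminus Z$; the only obstruction to this is the degenerate situation that $X \setminus Z$ is a single closed point, which together with the codimension hypothesis forces $X$ itself to be a point and the conclusion to be tautological. Let $s_1, \ldots, s_k \in \Gamma(X, \ecal{L}^{\otimes n})$ denote the Hartogs extensions of the $s_i'$.

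I then verify the hypotheses of Lemma \ref{lemma-usefulcriterion} for $(X, \ecal{L}, \{s_i\})$. Condition (ii) is immediate: since the $(X \setminus Z)_{s_i'}$ cover $X \setminus Z$, the common zero locus $V(s_1, \ldots, s_k)$ is contained in $Z$ set-theoretically, so $\ecal{L}|_{V(s_1, \ldots, s_k)_{\mathrm{red}}}$ is a restriction of the $\otimes$-ample line bundle $\ecal{L}|_Z$ and is thus $\otimes$-ample by Lemma \ref{lemma-qaffinepullback}. For condition (i), observe that $s_i$ trivializes $\ecal{L}^{\otimes n}$ on $X_{s_i}$, so by Lemma \ref{lem: tensor power of tensor ample} the structure sheaf $\ecal{O}_{X_{s_i}}$ is $\otimes$-ample if and only if $\ecal{L}|_{X_{s_i}}$ is. Crucially, the triple $(X_{s_i}, Z \cap X_{s_i}, \ecal{L}|_{X_{s_i}})$ satisfies the same hypotheses as the original: $X_{s_i}$ is normal Noetherian, $Z \cap X_{s_i}$ is closed of codimension $\geq 2$, $\ecal{L}|_{(X \setminus Z)_{s_i'}}$ is a line bundle on an affine scheme (hence ample), and $\ecal{L}|_{Z \cap X_{s_i}}$ is $\otimes$-ample by restriction from $Z$.

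The main obstacle is closing the resulting recursion. I plan to proceed by Noetherian induction on the poset of open subschemes of $X$: the arrangement that each $s_i'$ vanishes somewhere on $X \setminus Z$ ensures that $X_{s_i} \subsetneq X$ is a strict open subscheme, and the Noetherian topology on $X$ rules out infinite strictly descending chains of opens. The minimal case of the induction, the empty open, is vacuous, while the degenerate configuration of $X$ being a single point (or any step where some $X_{s_i}$ happens to be affine) gives $\otimes$-ampleness of the relevant structure sheaf directly.
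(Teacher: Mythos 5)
Your overall strategy --- Hartogs extension across the codimension~$\geq 2$ set $Z$ followed by Lemma \ref{lemma-usefulcriterion} --- is the same as the paper's, and your verification of condition (ii) is fine. The gap is in how you handle condition (i). You correctly notice that the extended section $s_i$ may fail to vanish on $Z$, so $X_{s_i}$ may strictly contain the affine set $(X\setminus Z)_{s_i'}$ and need not be affine, and you try to close the resulting recursion by ``Noetherian induction on the poset of open subschemes,'' claiming that the Noetherian topology rules out infinite strictly descending chains of opens. This is false: a Noetherian space satisfies the \emph{descending} chain condition on closed sets, equivalently the \emph{ascending} chain condition on opens, and infinite strictly descending chains of opens are ubiquitous (e.g.\ $\mathbb{A}^1 \supsetneq \mathbb{A}^1\setminus\{0\} \supsetneq \mathbb{A}^1\setminus\{0,1\}\supsetneq\cdots$). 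So the fact that each $X_{s_i}\subsetneq X$ gives you no well-founded induction, and as written the recursion has no reason to terminate.

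The paper sidesteps the recursion entirely by choosing the sections on $X\setminus Z$ more carefully so that their Hartogs extensions \emph{automatically vanish on $Z$}. Fix $x\in X\setminus Z$, pick an affine open $U\ni x$ with $U\subset X\setminus Z$, and set $W=X\setminus U$; then $W\supset Z$ and $W$ has pure codimension $1$ in $X$, so $W\setminus Z$ is dense in $W$ and every point of $Z$ is a specialization of a point of $W\setminus Z$. Using ampleness of $\ecal{L}|_{X\setminus Z}$, choose $s\in\Gamma(X\setminus Z,\ecal{L}^{\otimes n})$ with $(X\setminus Z)_s$ affine, containing $x$, and contained in $U$, i.e.\ with $s$ vanishing on $W\setminus Z$. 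The Hartogs extension $\tilde s$ then vanishes on the closure of $W\setminus Z$, hence on $Z$, so $X_{\tilde s}=(X\setminus Z)_s$ is honestly affine and condition (i) of Lemma \ref{lemma-usefulcriterion} holds on the nose. To repair your argument you would need to incorporate this (or some other device forcing the extensions to vanish along $Z$); the termination argument you give cannot be salvaged as stated.
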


\begin{proof}
    Let $x \in X \setminus Z$. 

    Claim: There is a closed subset $W \subset X$ such that $Z \subset W$, $x \notin W$, and $W \setminus Z$ is dense in $X$.

    To prove the claim, choose an affine open neighborhood $U$ of $x$ such that $U \subset X \setminus Z$. Set $W = X \setminus U$. This works because $W$ has pure codimension 1 in $X$ by 
 \cite[\href{https://stacks.math.columbia.edu/tag/0BCU}{Tag 0BCU}]{stacks-project} and so every point of $Z$ has a generalization in $W \setminus Z$. 

    Now since $\ecal{L}|_{X \setminus Z}$ is ample, there is an integer $n > 0$ and a global section 
    $$
    s \in \Gamma(X \setminus Z, \ecal{L}^{\otimes n}|_{X \setminus Z})
    $$
    such that $(X\setminus Z)_{s}$ is affine, $x \in (X \setminus Z)_{s}$, and $(X \setminus Z)_{s} \subset X \setminus W$. But then since $X$ is normal and $Z$ has codimension $\geq 2$, $s$ extends uniquely to a section $\tilde{s} \in \Gamma(X, \ecal{L}^{\otimes n})$, and $\tilde{s}$ vanishes on $Z$ since it vanishes on $W \setminus Z$. Hence $X_{\tilde{s}} = (X \setminus Z)_{s}$ is an affine open neighborhood of $x$. Since we can do this for every $x \in X \setminus Z$ and $\ecal{L}|_{Z}$ is $\otimes$-generating, we conclude that $\ecal{L}$ is $\otimes$-generating by Lemma \ref{lemma-usefulcriterion}.
\end{proof}

\begin{remark}
    The proof still works if $X$ is assumed an algebraic space: $X \setminus Z$ admits an ample line bundle so it is a scheme, and therefore every point $x \in X \setminus Z$ still has a neighborhood basis consisting of affine open subschemes.
\end{remark}

Now we recall Hironaka's example. Let $P$ be a smooth projective threefold over an algebraically closed field $k$ and $C, D \subset X$ two smooth, irreducible curves which intersect at two points $p_0, p_1$ which are nodes for the reducible curve $C \cup D$. Let $\pi : X \to P$ be the morphism which, over $P \setminus \{p_1\}$ is the composition of the blowup in $C$ followed by the blowup in the strict transform of $D$, and over $P \setminus \{p_0\}$ is the composition of the blowup in $D$ followed by the blowup $D$ followed by the blowup in the strict transform of $C$. Then $X$ is a smooth proper threefold which is not projective. 

More precisely, let $L \subset X$ (resp. $M \subset X$) denote the preimage of a point of $C \setminus \{p_0, p_1\}$ (resp. $D \setminus \{p_0, p_1\}$). Then the fibers of $f$ over $p_0, p_1$ are unions of two projective lines intersecting in a node
$$\pi^{-1}(p_i) = L_i \cup M_i,$$
and we have algebraic equivalences
\begin{equation}
\label{equn-algequivs}
 L_0+M_0 \sim_{alg} L \sim_{alg} L_1, \hspace{5 em} M_0 \sim_{alg} M \sim_{alg} L_1+M_1.
\end{equation}
Hence the $1$-cycle $L_0 + M_1$
is algebraically equivalent to zero. See \cite[Example B.3.4.1]{Har77}, \cite[\href{https://stacks.math.columbia.edu/tag/08J1}{Tag 08J1}]{stacks-project}, and \cite{HironakaExample}.

\begin{lemma}
    The smooth proper threefold $X$ obtained above admits a $\otimes$-generating line bundle. 
\end{lemma}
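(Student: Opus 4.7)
The plan is to apply Lemma \ref{lemma-ampleincodim2}. Fix an ample line bundle $\ecal{A}$ on $P$ and consider
\[
\ecal{L} := \pi^{*}\ecal{A}^{\otimes N} \tens \ecal{O}_{X}(-E - 2F)
\]
on $X$, where $E$ and $F$ denote the exceptional divisors of $\pi$ dominating $C$ and $D$ respectively and $N > 0$ is an integer to be chosen large. We will show that $\ecal{L}$ satisfies the hypotheses of Lemma \ref{lemma-ampleincodim2} with $Z := \pi^{-1}(\{p_{0}, p_{1}\}) = L_{0} \cup M_{0} \cup L_{1} \cup M_{1}$, endowed with its reduced induced scheme structure. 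Since $X$ is smooth (hence normal) of dimension three and each $\pi^{-1}(p_{i})$ has dimension one, $Z$ has codimension two in $X$.

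To see that $\ecal{L}|_{X \setminus Z}$ is ample for $N$ large, observe that $X \setminus Z = \pi^{-1}(P \setminus \{p_{0}, p_{1}\})$, and over this open set $C$ and $D$ are smooth and disjoint, so $\pi$ restricts there to the blowup of the smooth quasi-projective threefold $P \setminus \{p_{0}, p_{1}\}$ along the smooth disjoint union $(C \sqcup D) \setminus \{p_{0}, p_{1}\}$. A fiberwise check shows that $\ecal{O}(-E - 2F)$ has positive degree ($1$ on fibers of $E \to C$ and $2$ on fibers of $F \to D$), so $\ecal{O}(-E - 2F)$ is $\pi$-ample over $P \setminus \{p_{0}, p_{1}\}$. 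The standard fact that a $\pi$-ample line bundle, twisted by a sufficiently large pullback of an ample line bundle on the base, becomes ample will then produce the required $N$.

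It remains to check that $\ecal{L}|_{Z}$ is $\tens$-ample. By Lemma \ref{lemma-irreduciblecomp} this reduces to each irreducible component $L_{i}, M_{i} \iso \bb{P}^{1}$, and by Proposition \ref{lem: tensor ample on curves} it suffices to verify that $\ecal{L} \cdot C \neq 0$ for each $C \in \{L_{0}, M_{0}, L_{1}, M_{1}\}$. Standard blowup formulas yield $E \cdot L = F \cdot M = -1$ and $E \cdot M = F \cdot L = 0$ for generic fibers $L, M$ of $\pi$ over $C, D$ respectively, giving $\ecal{L} \cdot L = 1$ and $\ecal{L} \cdot M = 2$. Using the algebraic equivalences $L_{0} + M_{0} \sim_{alg} L \sim_{alg} L_{1}$ and $M_{0} \sim_{alg} M \sim_{alg} L_{1} + M_{1}$, we then find
\[
\ecal{L} \cdot L_{1} = 1, \quad \ecal{L} \cdot M_{0} = 2, \quad \ecal{L} \cdot L_{0} = 1 - 2 = -1, \quad \ecal{L} \cdot M_{1} = 2 - 1 = 1,
\]
all nonzero, as required. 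The essential subtlety driving the choice is the asymmetry in the coefficients of $E$ and $F$: the algebraic equivalence $L_{0} + M_{1} \sim_{alg} 0$, which is precisely the obstruction to projectivity of $X$, forces any line bundle symmetric in $E$ and $F$ to have zero degree on $L_{0}$, and the coefficients $1$ and $2$ are chosen to break this symmetry.
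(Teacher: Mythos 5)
Your proof is correct and follows essentially the same strategy as the paper: apply Lemma \ref{lemma-ampleincodim2} to a line bundle that is ample away from the special fibers, and use the algebraic equivalences (\ref{equn-algequivs}) to check nonzero degree on the components $L_i, M_i$. The only (harmless) difference is that you write down an explicit bundle $\pi^*\ecal{A}^{\otimes N}\otimes\ecal{O}_X(-E-2F)$ and take $Z=\pi^{-1}(\{p_0,p_1\})$, whereas the paper extends an ample line bundle from $X\setminus\pi^{-1}(p_1)$ across the codimension-two locus and only needs $Z=\pi^{-1}(p_1)$, the positivity on $L_0,M_0$ then being automatic.
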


\begin{proof}
   The open $X \setminus \pi^{-1}(p_1)$ is projective over the quasi-projective scheme $P\setminus \{p_1\}$, so it has an ample line bundle. Since $X$ is smooth and $\pi^{-1}(p_1)$ has codimension 2, the ample line bundle extends uniquely to a line bundle $\ecal{L}$ on $X$. We check that $\ecal{L}$ is $\otimes$-generating using Lemma \ref{lemma-ampleincodim2}. The restriction to $X \setminus \pi^{-1}(p_1)$ is ample by construction. Since $\pi^{-1}(p_1) = L_1 \cup M_1$ it suffices to show $\operatorname{deg}\ecal{L}|_{L_1} \neq 0$ and $\operatorname{deg} \ecal{L}|_{M_1} \neq 0$. Applying (\ref{equn-algequivs}), we get
   $$
   \operatorname{deg}\ecal{L}|_{L_1} = (\ecal{L}\cdot L_1) = (\ecal{L}\cdot L_0) + (\ecal{L}\cdot M_0) > 0
   $$
   since $\ecal{L}|_{L_0}$ and $\ecal{L}|_{M_0}$ are ample. Also since $L_0 + M_1 \sim_{alg} 0$
   \[
   \operatorname{deg}\ecal{L}|_{M_1} = - \operatorname{deg}\ecal{L}|_{L_0} < 0. \qedhere
   \]
\end{proof}

\begin{remark}
\label{remark-hironakaspace}
    Hironaka also has an example of a connected smooth proper algebraic space of dimension 3 containing an irreducible curve $C$ which is algebraically equivalent to zero \cite[Example B.3.4.2]{Har77}. This algebraic space can admit no $\otimes$-generating line bundle since the restriction of any line bundle to $C$ has degree zero. 
\end{remark}

\section{Generating families of line bundles}
\label{section-families}

When we explained the proof of Theorem \ref{ref-theoremmain} to Johan de Jong, he suggested to us the following generalization. Again, the theorem and its proof work with the word ``scheme" replaced by ``algebraic space." 

\begin{theorem}
\label{theorem-multiplelinebundles}
    Let $X$ be a Noetherian scheme. Let $A \subset \operatorname{Pic}(X)$ be a finitely generated subgroup. Then $\langle A \rangle = D_{\operatorname{perf}}(X)$ if and only if for every integral closed subscheme $Z \subset X$ there are is an element $\ecal{L} \in A$ such that $\ecal{L}|_{Z}$ is big.
\end{theorem}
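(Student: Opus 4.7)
The plan is to mirror the proof of Theorem \ref{ref-theoremmain}, replacing ``a power $\ecal{L}^{\otimes n}$'' with ``some element of $A$'' throughout, and using crucially that $A$ is a subgroup: since $\ecal{L} \in A$ implies $\ecal{L}^{-1} \in A$, the dichotomy ``big or anti-big'' from Theorem \ref{ref-theoremmain} collapses to the single condition ``big.''

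For the ``if'' direction, the argument in Section \ref{section-mainresult} adapts nearly verbatim. The analog of Lemma \ref{lemma-equivcondit} (via Lemma \ref{lemma-sufficestochekirredcomponents}) reformulates the hypothesis as: for every non-empty closed subscheme $Y \subset X$ there exist $\ecal{L} \in A$ and $s \in \Gamma(Y, \ecal{L}|_Y)$ with $Y_s$ non-empty and affine. The analog of Lemma \ref{lemma-supportedcat} then takes, for $K \in D_{\operatorname{QCoh}, V(s)}(X)$, the Koszul-type complex $P_r = (\ecal{L}^{\otimes -r} \xrightarrow{s^r} \ecal{O}_X)$ and applies Rouquier's result together with the inductive assumption that $A|_{V(s^n)}$ classically generates $D_{\operatorname{perf}}(V(s^n))$, producing a nonzero morphism from a shift of either $\ecal{M}$ or $\ecal{M} \otimes \ecal{L}^{\otimes -r}$ for some $\ecal{M} \in A$; both of these line bundles lie in $A$ since $A$ is a group. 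The analog of Lemma \ref{lemma-main} (for $K$ with $K|_{X_s} \neq 0$, use Deligne's formula \ref{lemma-deligne} together with the fact that $\ecal{O}_{X_s}$ generates $D_{\operatorname{perf}}(X_s)$ on the affine open $X_s$) combined with Noetherian induction on $X$ closes the argument.

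For the ``only if'' direction, Lemma \ref{lemma-qaffinepullback} reduces the problem to showing: on an integral Noetherian scheme $X$, if a finitely generated subgroup $A \subset \operatorname{Pic}(X)$ satisfies $\langle A \rangle = D_{\operatorname{perf}}(X)$, then some $\ecal{L} \in A$ is big. I would induct on $\operatorname{rank}(A) := \dim_{\mathbb{Q}}(A \otimes_{\mathbb{Z}} \mathbb{Q})$. If every non-torsion $\ecal{L} \in A$ has $H^0(X, \ecal{L}) = 0$, then after passing to the generic fiber $X_K$ over $\operatorname{Spec}(H^0(X, \ecal{O}_X))$, every non-trivial element of the image $A|_{X_K}$ has vanishing global sections---non-torsion elements by flat base change, non-trivial torsion elements by Remark \ref{remark-torsionlinebundle} since $H^0(X_K, \ecal{O}_{X_K}) = K$ is a field. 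Proposition \ref{prop-existsasection} then forces $X_K = \operatorname{Spec}(K)$, so $\ecal{O}_X \in A$ is big by Lemma \ref{lemma-bigequiv}. This handles the base case $\operatorname{rank}(A) = 0$ and any situation where no non-torsion element of $A$ admits a global section. Otherwise, some non-torsion $\ecal{L} \in A$ has $0 \neq s \in H^0(X, \ecal{L})$, so $\ecal{L}|_{X_s}$ is trivial and $\operatorname{rank}(A|_{X_s}) < \operatorname{rank}(A)$; by induction applied to $X_s$, some $\ecal{M} \in A$ has $\ecal{M}|_{X_s}$ big, witnessed by $t \in \Gamma(X_s, \ecal{M}^{\otimes k}|_{X_s})$ with $(X_s)_t$ non-empty and affine.

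To complete the induction, I would lift $t$ to $X$ via Deligne's formula: for $r \gg 0$ there is $u \in \Gamma(X, \ecal{M}^{\otimes k} \otimes \ecal{L}^{\otimes r})$ with $u|_{X_s} = s^r t$, and then $us \in \Gamma(X, \ecal{M}^{\otimes k} \otimes \ecal{L}^{\otimes r+1})$ vanishes wherever $s$ does, so $X_{us} = X_u \cap X_s = (X_s)_t$ is non-empty and affine; hence $\ecal{M}^{\otimes k} \otimes \ecal{L}^{\otimes r+1} \in A$ is big on $X$. The main obstacle is organizing the only-if argument to handle torsion elements of $A$ uniformly---this is resolved by combining the Stein factorization, flat base change, and Remark \ref{remark-torsionlinebundle} so as to match the single hypothesis of Proposition \ref{prop-existsasection}.
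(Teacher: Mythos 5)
Your proposal is correct and follows essentially the same route as the paper's proof: the same adaptation of Lemmas \ref{lemma-sufficestochekirredcomponents}, \ref{lemma-supportedcat}, and \ref{lemma-main} for the ``if'' direction, and the same induction through Proposition \ref{prop-existsasection} (via Corollary \ref{corollary-steinisbirational}) plus the Deligne-formula lifting of a section from $X_s$ for the ``only if'' direction. The only (harmless) difference is bookkeeping: you induct on $\operatorname{rank}(A)$ by splitting on whether a \emph{non-torsion} element of $A$ admits a section and absorbing torsion classes into Remark \ref{remark-torsionlinebundle}, whereas the paper inducts on $\operatorname{rank}(A)+\# A_{\mathrm{tors}}$ and splits on whether \emph{any} non-trivial element admits a section.
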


\begin{remark}
    As in Lemma \ref{lemma-equiv}, the hypothesis that $\langle A \rangle = D_{\operatorname{perf}}(X)$ is equivalent to: For every $K \in D_{\operatorname{QCoh}}(X)$ there is $\ecal{L} \in A$ such that $\operatorname{RHom}(\ecal{L}, K) \neq 0$.
\end{remark}

\begin{example}
    If $X$ is a Noetherian scheme with a (finite) ample family of line bundles $\ecal{L}_1, \dots , \ecal{L}_n$ (for example if $X$ is regular and has affine diagonal), and if $A \subset \operatorname{Pic}(X)$ is the subgroup generated by the $\ecal{L}_i$, then $\langle A \rangle = D_{\operatorname{perf}}(X)$. This can be proven directly but also follows from Theorem \ref{theorem-multiplelinebundles}. On the other hand, if $X$ is Hironaka's smooth proper algebraic space over a field with an irreducible curve $C$ algebraically equivalent to zero, then $X$ cannot admit a group of line bundles as in the Theorem because the restriction of any line bundle on $X$ to $C$ has degree zero, so no tensor power has a non-zero global section. 
\end{example}

\begin{proof}
$\implies$. If $\langle A \rangle = D_{\operatorname{perf}}(X)$ and $Z \subset X$ is an integral closed subscheme, then the restrictions of members of $A$ to $Z$ generate $D_{\operatorname{perf}}(Z)$ by Lemma \ref{lemma-qaffinepullback}. Thus, replacing $X$ with $Z$, we are reduced to showing that if $X$ is integral and Noetherian and $\langle A \rangle = D_{\operatorname{perf}}(X)$, then there is an $\ecal{L} \in A$ which is big. We will prove this by induction on the sum $\operatorname{rank}(A) + \# A_{tors}$.

Case 1: $H^0(X, \ecal{L}) = 0$ for each $\ecal{L} \in A$ such that $\ecal{L} \not \cong \ecal{O}_X$.

Then by part (i) of Corollary \ref{corollary-steinisbirational}, $\ecal{O}_X$ is big and we are done.

Case 2: $H^0(X, \ecal{L}) \neq 0$ for some $\ecal{L} \in A$ such that $\ecal{L} \not \cong \ecal{O}_X$.
    
    Let $0 \neq s \in H^0(X, \ecal{L})$ where $\ecal{L} \in A$ and $\ecal{L} \not \cong \ecal{O}_X$.
    Then $\ecal{L}|_{X_s} \cong \ecal{O}_{X_s}$ and therefore the image of $A$ in $\operatorname{Pic}(X_s)$ is a non-trivial quotient of $A$. Since $X_s \to X$ is affine, the image of $A$ in $\operatorname{Pic}(X_s)$ generates $D_{\operatorname{perf}}(X_s)$ by Lemma \ref{lemma-qaffinepullback}.  
    By our inductive hypothesis, we see that there are an element $\ecal{M} \in A$ and a section $t \in \Gamma(X_s, \ecal{M}|_{X_s})$ such that $(X_s)_t$ is non-empty and affine. Then by Deligne's formula, there are an integer $n > 0$ and a section 
    $$
    u \in \Gamma(X, \ecal{L}^{\otimes n} \otimes \ecal{M})
    $$
    such that $u |_{X_s} = t$. Then $us$ is a global section of a member of $A$ such that $X_{us} = X_s \cap X_u = (X_s)_t$ is non-empty and affine, as needed.

    $\impliedby$. By Noetherian induction, we may assume that for every closed subscheme $Y \subsetneq X$, the image of $A$ in $\operatorname{Pic}(Y)$ generates $D_{\operatorname{perf}}(Y)$. By Lemma \ref{lemma-sufficestochekirredcomponents} and the hypotheses, there are an element $\ecal{L} \in A$ and global section $s \in \Gamma(X, \ecal{L})$ such that $X_s$ is non-empty and affine. 

    Let $0 \neq K \in D_{\operatorname{QCoh}}(X)$. Assume $\operatorname{RHom}_X(\ecal{M}, K) = 0$ for every $\ecal{M} \in A$. In particular this holds for $\ecal{M} = \ecal{L}^{\otimes n}$ and so Deligne's formula shows that $K|_{X_s} = 0$. Then arguing as in the proof of Lemma \ref{lemma-supportedcat}, there exists an integer $r > 0$ such that if $P_r = (\ecal{L}^{\otimes -r} \xrightarrow{s^r} \ecal{O}_X)$, then $P_r^\vee \otimes^{\mathbb{L}}K \neq 0$, and $P_r^\vee \otimes^{\mathbb{L}}K = Ri_*(K')$ where $i : Y \to X$ is the inclusion of a closed subscheme supported on $V(s)$. But then by hypothesis there is a member $\ecal{M} \in A$ such that 
    $$
    0 \neq \operatorname{RHom}_Y(\ecal{M}|_Y, K') = \operatorname{RHom}_X(\ecal{M}, P_r^\vee \otimes^{\mathbb{L}}K) = \operatorname{RHom}_X(\ecal{M} \otimes^{\mathbb{L}} P_r, K).
    $$
    Since $\ecal{M} \otimes^{\mathbb{L}} P_r$ is represented by the complex $(\ecal{M} \otimes\ecal{L}^{\otimes - r}\to \ecal{M})$ we see that either 
    $
    \operatorname{RHom}(\ecal{M} \otimes\ecal{L}^{\otimes - r}, K) \neq 0
    $
    or $\operatorname{RHom}(\ecal{M}, K) \neq 0$, 
    a contradiction. 
\end{proof}

\begin{corollary}
    Let $X$ be a Noetherian scheme. Let $A \subset \operatorname{Pic}(X)$ be a finitely generated subgroup. Let $0 \neq n \in \mathbb{Z}$. Let $f : Y \to X$ be a finite, surjective morphism of schemes. Then:
    \begin{enumerate}
        \item $\langle A \rangle = D_{\operatorname{perf}}(X)$ if and only if $\langle n A \rangle = D_{\operatorname{perf}}(X)$.
        \item $\langle A \rangle = D_{\operatorname{perf}}(X)$ if and only if $\langle f^*A \rangle = D_{\operatorname{perf}}(Y)$.
        \item $\langle A \rangle = D_{\operatorname{perf}}(X)$ if and only if for every irreducible component $Z \subset X$ with the reduced subscheme structure we have $\langle A|_{Z} \rangle = D_{\operatorname{perf}}(Z)$. \qedhere
    \end{enumerate} 
\end{corollary}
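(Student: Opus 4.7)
The plan is to derive all three parts directly from Theorem \ref{theorem-multiplelinebundles}, which characterizes generation $\langle B \rangle = D_{\operatorname{perf}}(X)$ for a finitely generated subgroup $B \subset \operatorname{Pic}(X)$ by the condition that for every integral closed subscheme $Z \subset X$, some $\ecal{L} \in B$ has $\ecal{L}|_Z$ big. Each of (i)--(iii) then reduces to transferring this bigness condition along the operation in question, and the relevant transfer results have all been established earlier.

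For (i), observe that $nA = (-n)A$ as subgroups of $\operatorname{Pic}(X)$, so we may assume $n > 0$. Elements of $nA$ are precisely those of the form $\ecal{L}^{\otimes n}$ with $\ecal{L} \in A$, and it is immediate from Definition \ref{defn-big} that a line bundle on an integral, quasi-compact and quasi-separated scheme is big if and only if any positive tensor power is big. Hence the bigness criterion of Theorem \ref{theorem-multiplelinebundles} applied to $A$ and to $nA$ coincide.

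For (ii), the forward direction is Lemma \ref{lemma-qaffinepullback} applied to the affine morphism $f$. For the reverse, given an integral closed subscheme $Z \subset X$, pick an irreducible component $W$ of $f^{-1}(Z)_{\mathrm{red}}$ mapping surjectively to $Z$ (as in the proof of Lemma \ref{lemma-finitesurjective}); then $W \to Z$ is finite and surjective. By hypothesis on $f^*A$ and Theorem \ref{theorem-multiplelinebundles}, some $\ecal{L} \in A$ has $f^*\ecal{L}|_W$ big, and Proposition \ref{prop-pullbackbigthenbig} then gives that $\ecal{L}|_Z$ is big. Part (iii) is handled analogously: the forward direction is Lemma \ref{lemma-qaffinepullback}, while conversely any integral closed subscheme $W \subset X$ is contained in some irreducible component $Z$ of $X$ (with reduced structure), and the criterion applied to $A|_Z$ at $W$ produces the required element of $A$.

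No substantive obstacle is expected once Theorem \ref{theorem-multiplelinebundles} is in hand; the three parts amount to packaging of the bigness transfer lemmas. The one point to highlight is the role of Proposition \ref{prop-pullbackbigthenbig} in the reverse direction of (ii): it descends bigness along a proper surjective generically finite morphism, which is exactly the mechanism that promotes the hypothesis on $W$ to a statement on $Z$. Without such a descent statement, finite surjective descent of generation would not be automatic, so this is the main technical input being invoked.
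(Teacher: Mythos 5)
Your proof is correct and follows the same route as the paper, which simply says the corollary ``follows from the corresponding properties of bigness as in Section \ref{subsection-someconsequences}''; your write-up is an accurate expansion of that one-line argument, correctly combining Theorem \ref{theorem-multiplelinebundles} with the invariance of bigness under positive tensor powers, Lemma \ref{lemma-qaffinepullback}, and Proposition \ref{prop-pullbackbigthenbig} exactly as in the proofs of Lemmas \ref{lem: tensor power of tensor ample}, \ref{lemma-irreduciblecomp}, and \ref{lemma-finitesurjective}.
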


\begin{proof}
    This follows from the corresponding properties of bigness as in Section \ref{subsection-someconsequences}.
\end{proof}

\medskip\noindent\textbf{Acknowledgements.} We are grateful to Johan de Jong for suggesting the generalization Theorem \ref{thm-familyintroversion} of our main result, to Martin Olsson for asking about the cone of $\otimes$-generating divisors. We would also like to thank Alexander Kuznetsov for helpful comments and Raymond Cheng for suggesting the title. We thank Leo Mayer for asking us the question that led to Example \ref{example-leo}. DI would also like to thank Riku Kurama, Ruoxi Li, Amal Mattoo and Saket Shah for interesting discussions in an early stage of the project. NO is partially supported by the National Science Foundation under Award No.
2402087.

\bibliography{bib}

\end{document}